\newcommand{\Om}{\Omega}
\newcommand{\om}{\omega}
\newcommand{\cB}{\mathcal{B}}
\newcommand{\cC}{\mathcal{C}}
\newcommand{\cD}{\mathcal{D}}
\newcommand{\cE}{\mathcal{E}}
\newcommand{\cF}{\mathcal{F}}
\newcommand{\cG}{\mathcal{G}}
\newcommand{\cH}{\mathcal{H}}
\newcommand{\cI}{\mathcal{I}}
\newcommand{\cL}{\mathcal{L}}
\newcommand{\cM}{\mathcal{M}}
\newcommand{\cN}{\mathcal{N}}
\newcommand{\cP}{\mathcal{P}}
\newcommand{\cQ}{\mathcal{Q}}
\newcommand{\cT}{\mathcal{T}}
\newcommand{\cU}{\mathcal{U}}
\newcommand{\cY}{\mathcal{Y}}
\newcommand{\cZ}{\mathcal{Z}}
\newcommand{\dbD}{\mathbb{D}}
\newcommand{\dbE}{\mathbb{E}}
\newcommand{\dbF}{\mathbb{F}}
\newcommand{\dbG}{\mathbb{G}}
\newcommand{\dbH}{\mathbb{H}}
\newcommand{\dbI}{\mathbb{I}}
\newcommand{\dbK}{\mathbb{K}}
\newcommand{\dbL}{\mathbb{L}}
\newcommand{\dbN}{\mathbb{N}}
\newcommand{\dbP}{\mathbb{P}}
\newcommand{\dbQ}{\mathbb{Q}}
\newcommand{\dbR}{\mathbb{R}}
\newcommand{\dbS}{\mathbb{S}}
\newcommand{\dbU}{\mathbb{U}}
\newcommand{\dbY}{\mathbb{Y}}
\newcommand{\sint}{\stackrel{\mbox{\tiny$\bullet$}}{}}
\newcommand{\Ind}{\mathbb{1}}
\newcommand{\rmD}{{\rm D}}
\newcommand{\prog}{\operatorname{Prog}}
\DeclareMathOperator*{\esssup}{\textnormal{ess\,sup}}
\DeclareMathOperator*{\essinf}{\textnormal{ess\,inf}}
\DeclareMathOperator{\sgn}{\mathrm{sgn}}
\begin{document}

\section{Introduction}

Let $(\Om,\cF,\{\cF_t\}_{t\ge 0},\dbP)$ be a filtered probability space, supporting a $d-$dimensional Brownian motion $W$. The martingale representation theorem states that any integrable $\cF_\tau-$measurable random variable $\xi$, for some $\dbF-$stopping time $\tau$, can be represented as $\xi=\dbE\xi+(Z\cdot W)_\tau+N_\tau$, for some square integrable $\dbF-$predictable process $Z$, and some martingale $N$ with $N_0=0$ and $[N,W]=0$. In particular when $\dbF$ is the (augmented) canonical filtration of the Brownian motion, $N=0$. This result can be seen as the path-dependent counterpart of the heat equation. Indeed, a standard density argument reduces to the case $\xi=g(W_{t_0},\ldots,W_{t_n})$ for an arbitrary partition $0=t_0<\ldots<t_n=T$ of $[0,T]$, where the representation follows from a backward resolution of the heat equation $\partial_tv+\frac12\Delta v=0$ on each time interval $[t_{i-1},t_i]$, $i=1,\ldots,n$, and the $Z$ process is identified to the space gradient of the solution.

As a first extension of the martingale representation theorem, the seminal work of Pardoux \& Peng \cite{PP90} introduced the theory of backward stochastic differential equations in finite horizon.
In words, this theory provides a representation of an $\cF_T-$measurable random variable $\xi$ with appropriate integrability as $\xi=Y_T$ with 
 $$ Y_{t}=Y_0-\int_0^{t} f_s(Y_s,Z_s)ds+(Z\cdot W)_{t}+N_{t}, \quad t\geq 0, $$ 
 where $f$ is a given random field. 
In the Markov setting where $\xi=g(W_T)$ and $f_t(\om,y,z)=f\big(t,W_t(\om),y,z\big)$, $t\ge 0$, it turns out that $Y_t(\om)=v(t,W_t(\om))$ for some deterministic function $v:\dbR_+\times\dbR^d\longrightarrow\dbR$, which is easily seen to correspond to the semilinear heat equation 
  $$\partial_tv+\frac12\Delta v+f(.,v,Dv)=0, $$
  by the fact that the $Z$ process again identifies the space gradient of $v$.

It was extended further to the random horizon setting by \cite{Peng91}, Darling \& Pardoux \cite{DP97}. On one hand, these results provide a representation for an $\cF_\tau-$measurable random variable $\xi$ with appropriate integrability as $\xi=Y_\tau$ with 
 $$ Y_{t\wedge\tau}=Y_0-\int_0^{t\wedge\tau} f_s(Y_s,Z_s)ds+(Z\cdot W)_{t\wedge\tau}, \quad t\ge 0, $$ 
where $f$ is a given random field. On the other hand, they give probabilistic interpretation to solutions of semilinear elliptic PDEs. As our interest in this paper is on the random horizon setting, we refer the interested reader to the related works by El Karoui \& Huang \cite{EH97}, Briand \& Hu \cite{BH98}, Briand \& Carmona \cite{BC00}, Bender \& Kohlmann \cite{BK00}, Royer \cite{Roy04}, Bahlali, Elouaflin \& N'zi \cite{BEN04}, Hu and Tessitore \cite{HT07}, Popier \cite{Pop07}, Briand and Confortola \cite{BC08}, Wang, Ran and Chen \cite{WRC07}, Papapantoleon, Possama\"i and Saplaouras \cite{PPS18}. We also mention the related works of Hamad\`ene, Lepeltier \& Wu \cite{HLW99}, Chen \& Wang \cite{CW00} and Hu and Schweizer \cite{HS11}, which study BSDEs with infinite horizon. 

Our main interest in this paper is on the extension to the fully nonlinear second order parabolic equations, as initiated in the finite horizon setting by Soner, Touzi \& Zhang \cite{STZ12}, and further developed by Possama\"{\i}, Tan \& Zhou \cite{PTZ15}, see also the first attempt by Cheridito, Soner, Touzi \& Victoir \cite{CSTV07}, and the closely connected BSDEs in a nonlinear expectation framework of Hu, Ji, Peng \& Song \cite{HJPS14a, HJPS14b} (called GBSDEs). This extension is performed on the canonical space of continuous paths with canonical process denoted by $X$. The key idea is to reduce the fully nonlinear representation to a semilinear representation which is required to hold simultaneously under an appropriate family $\cP$ of singular semimartingale measures on the canonical space. Namely, an $\cF_T-$ random variable $\xi$ with appropriate integrability is represented as 
$\xi=Y_T,$ where
 \begin{align*}
   Y_t=Y_0-\int_0^t F_s(Y_s,Z_s,\widehat\sigma_s)ds+(Z\sint X)_t+U^\dbP_t,
   ~~t\ge 0,~~\dbP-\mbox{a.s. for all}~~\dbP\in\cP.
 \end{align*}
Here, $\widehat\sigma_s^2ds=d\langle X\rangle_s$, and $U^\dbP$ is a supermartingale with $U^\dbP_0=0$, $[U^\dbP,X]=0$, $\dbP-$a.s. for all $\dbP\in\cP$ satisfying the minimality condition $\sup_{\dbP\in\cP}\dbE^\dbP[U^\dbP_T]=0$. Loosely speaking, in the Markov setting where $Y_t(\om)=v\big(t,X_t(\om)\big)$ for some deterministic function $v$, the last representation implies that $v$ is a supersolution of a semilinear parabolic PDE parameterized by the diffusion coefficient 
$$ -\partial_t v - \frac{1}{2}{\rm Tr}\big[\sigma\sigma^{\top}D^2v\big]-F(t,x,v,Dv,\sigma) \ge 0, $$ 
and the minimality condition induces the fully nonlinear parabolic PDE 
$$ -\partial_t v - \sup_{\sigma}\left\{\frac12{\rm Tr}\big[\sigma\sigma^{\top}D^2v\big]+F(t,x,v,Dv,\sigma)\right\} = 0. $$

Our main contribution is to extend the finite horizon fully nonlinear representation of \cite{STZ12} and \cite{PTZ15} to the context of a random horizon defined by a finite $\dbF-$stopping time. In view of the formulation of  second order backward SDEs as backward SDEs holding simultaneously under a non-dominated family of singular measures, we review --and in fact complement-- the corresponding theory of backward SDEs, and we develop the theory of reflected backward SDEs, which is missing in the literature, and which plays a crucial role in the wellposedness of second order backward SDEs. 

Finally, we emphasize that backward SDEs and their second order extension provide a Sobolev-type of wellposedness as uniqueness holds within an appropriate integrability class of the solution $Y$ and the corresponding ``space gradient'' $Z$. Also, our extension to the random horizon setting allows in particular to cover the elliptic fully nonlinear second order PDEs with convex dependence on the Hessian component.

The paper is organized as follows. Section \ref{sect:pre} sets the notations used throughout the paper. Our main results are contained in Section \ref{sect:mainresults}, with proofs reported in the remaining sections. Namely, Section \ref{sect:rbsde} contains the proofs related to backward SDEs and the corresponding reflected version, while Sections \ref{sect:2bsde} and \ref{sect:existence} focus on the uniqueness and the existence, respectively, for the second order backward SDEs.

\section{Preliminaries}\label{sect:pre}

\subsection{Canonical space} \label{sect:canonical}

Fix $d\in\mathbb{N}$, and let $\Omega 
= 
\big\{\om\in\cC\big([0, \infty) ; \dbR^d\big) :  \om_0={\bf 0} 
\big\}$
be the space of continuous paths starting from the origin equipped with the distance defined by 
$ \|{\om}-{\om}'\|_\infty:=\sum_{n\ge 0}2^{-n}\big(\sup_{0\leq t\leq n}\|{\om}_{t}-{\om}'_{t}\|\wedge 1\big). $ 
Denote by $X$ the canonical process. Let $\mathcal{M}_1$ be the collection of all probability measures on $(\Omega, \cF)$, equipped with the topology of weak convergence.  
Denote by $\dbF:=(\cF_t)_{t\geq 0}$ the raw filtration generated by the canonical process $ X$.  
Denote by $\dbF^+:=(\cF^+_t)_{t\geq 0}$ the right limit of $(\cF_t)_{t\geq 0}$. For each $\mathbb{P}\in \mathcal{M}_1$, we denote by $\dbF^{+,\dbP}$ the augmented filtration of $\dbF^+$ under $\mathbb{P}$. 
The filtration $\dbF^{+,\dbP}$ is the coarsest filtration satisfying the usual conditions.  
We denote by $\dbF^U:=\big(\cF^U_t\big)_{t\ge 0}$ and $\dbF^{+,U}:=\big(\cF^{+,U}_t\big)_{t\ge 0}$ the (right-continuous) universal completed filtration defined by 
$$ \cF_t^U:=\bigcap_{\dbP\in \mathcal{M}_1}\cF_t^\dbP  \quad  \mbox{ and } \quad \cF^{+,U}_t := \bigcap_{\dbP\in \mathcal{M}_1}\cF^{+,\dbP}_t. 
$$
Clearly, $\dbF^{+,U}$ is right-continuous. 
Similarly, for $\cP\subseteq \mathcal{M}_1$, we introduce $\dbF^\cP:=\big(\cF^\cP_t\big)_{t\ge 0}$ and $\dbF^{+,\cP}:=\big(\cF^{+,\cP}_t\big)_{t\ge 0}$, where
	\begin{align*}
      \cF_t^\cP:=\bigcap_{\dbP\in \cP}\cF_t^\dbP  \quad \mbox{and} \quad
      \cF^{+,\cP}_t := \bigcap_{\dbP\in \cP}\cF^{+,\dbP}_t. 
	\end{align*}
For any family $\cP\subseteq\cM_1$, we say that a property holds $\cP-$quasi-surely, abbreviated as $\cP-$q.s., if it holds $\dbP$-a.s. for all $\dbP\in\cP$.

Define $\cP_{loc}$ the subset of $\cM_1$ such that, for each $\dbP\in \cP_{loc}$, $X$ is $\dbP$-local martingale whose quadratic variation $\langle X \rangle$ is absolutely continuous in $t$ with respect to the Lebesgue measure.  
Note that the $d\times d$-matrix-valued  processes $\langle X \rangle$ can be defined pathwisely, and we may introduce the corresponding $\dbF$-progressively measurable density processes
  \begin{align*}
  	\widehat{a}_t
  	:=
  	\limsup_{n\rightarrow \infty} n \big(\langle X \rangle_t-\langle X\rangle_{t-\frac1n}\big),
  \end{align*}
so that 
  $$ \langle X\rangle_t = \int_0^t\widehat{a}_sds, \quad t\geq 0, \quad \mathbb{P}\mbox{-}a.s., \quad \mbox{for all } \dbP\in\cP_{loc}. $$
For later use, we observe that, as $\widehat a_t\in\dbS^d_+$, the set of $d\times d$ nonnegative-definite symmetric matrices, we may define a measurable\footnote{\label{inverse}Any matrix $S\in\dbS^d_+$ has a decomposition $S = Q^{\top}_S\Lambda_S Q_S$ for some orthogonal matrix $Q_S$, and a diagonal matrix $\Lambda_S$, with Borel-measurable maps $S\mapsto Q_S$ and $S\mapsto \Lambda_S$, as this decomposition can be obtained by e.g.~the Rayleigh quotient iteration. This implies the Borel measurability of the generalized inverse map $\dbS^d_+\ni S\longmapsto S^{-1}:=Q^{\top} \Lambda^{-1} Q\in \dbS^d_+$, where $\Lambda^{-1}$ is the diagonal element defined by $\Lambda^{-1}_{ii}:=\Lambda^{-1}_{ii}\Ind_{\{\Lambda_{ii}\neq 0\}}$, $i=1,\ldots,d$.} generalized inverse $\widehat a_t^{-1}$, and a measurable square root $\widehat a_t^\frac12=:\widehat \sigma_t$.

\subsection{Spaces and norms}

Let $p>1$ and $\alpha\in\dbR$.

\vspace{3mm}

\noindent {\bf (i)} \textit{One-measure integrability classes:} for a probability measure $\dbP\in\cM_1$, let $\tau$ be an $\dbF^{+, \mathbb{P}}$-stopping time. We denote:

\begin{itemize}
	\item $\dbL^p_{\alpha,\tau}(\dbP)$ is the space of $\dbR$-valued and $\cF^{+, \mathbb{P}}_\tau$-measurable random variables $\xi$, such that 
	        \begin{align*}
	        	\|\xi\|^p_{\dbL^p_{\alpha,\tau}(\dbP)} 
	        	:= 
	        	\dbE^{\dbP}\big[\big|e^{\alpha\tau}\xi\big|^p\big] <\infty. 
	        \end{align*}
	\item $\dbD^p_{\alpha,\tau}(\dbP)$ is the space of $\dbR$-valued, $\dbF^{+, \mathbb{P}}$-adapted processes $Y$ with c\`adl\`ag paths, 
	         such that \footnote{If the stopping time $\tau$ is finite, the norm is indeed 
	         	$\|Y\|^p_{\dbD^p_{\alpha,\tau}(\dbP)} := \dbE^{\dbP}\Big[\sup_{0\leq t\leq \tau}\big|e^{\alpha t}Y_{t}\big|^p\Big] <\infty. $}
         	\begin{align*}
         		\|Y\|^p_{\dbD^p_{\alpha,\tau}(\dbP)} 
         		:= \dbE^{\dbP}\bigg[\sup_{0\leq t<\infty}\big|e^{\alpha (t\wedge\tau)}Y_{t\wedge \tau}\big|^p\bigg] <\infty. 
         	\end{align*}
    \item $\dbH^p_{\alpha,\tau}(\dbP)$ is the space of $\dbR^d$-valued,  $\dbF^{+, \mathbb{P}}$-progressively measurable processes $Z$ such that
            \begin{align*}
            	\|Z\|^p_{\dbH^p_{\alpha,\tau}(\dbP)}
            	:= \dbE^{\dbP}\bigg[\bigg(\int_0^\tau \big|e^{\alpha t}\widehat\sigma_t^{\top}Z_t\big|^2dt\bigg)^{\frac{p}{2}}\bigg] <\infty. 
            \end{align*}
    \item $\dbN^p_{\alpha,\tau}(\dbP)$ is the space of $\dbR$-valued, $\dbF^{+, \mathbb{P}}$-adapted martingales $N$ such that 
            \begin{align*}
            	\|N\|^p_{\dbN^p_{\alpha,\tau}(\dbP)} 
            	:= \dbE^{\dbP}\bigg[\bigg(\int_0^\tau e^{2\alpha t}d[N]_t\bigg)^{\frac{p}{2}}\bigg] <\infty. 
            \end{align*}
    \item $\dbI^p_{\alpha,\tau}(\dbP)$ is the set of scalar $\dbF^{+, \mathbb{P}}$-predictable processes $K$ with c\`adl\`ag nondecreasing paths, s.t.
            \begin{align*}
            	\|K\|^p_{\dbI^p_{\alpha,\tau}(\dbP)} 
            	:= \dbE^{\dbP}\bigg[\bigg(\int_0^\tau e^{\alpha t}dK_t\bigg)^p\bigg] <\infty. 
            \end{align*}
    \item $\dbU^p_{\alpha,\tau}(\dbP)$ is the set of c\`adl\`ag $\dbF$-supermartingales $U$, with Doob-Meyer decomposition $U=N-K$ into the difference of a martingale and a predictable non-decreasing process, such that 
            \begin{align*}
                \|U\|^p_{\dbU^p_{\alpha,\tau}(\dbP)} 
                := \|N\|^p_{\dbN^p_{\alpha,\tau}(\dbP)} + \|K\|^p_{\dbI^p_{\alpha,\tau}(\dbP)} <\infty. 	
            \end{align*}
\end{itemize}

\noindent {\bf (ii)} \textit{Integrability classes under dominated nonlinear expectation:} Let us enlarge the canonical space to $\overline \Om:=\Om\times\Om$ and denote by $(X,W)$ the coordinate process in $\overline \Om$. 
Denote by $\overline\dbF$ the filtration generated by $(X,W)$. For each $\mathbb{P}\in \cP_{loc}$, we may construct a probability measure $\overline\dbP$ on $\overline\Om $ such that $\overline\dbP\circ X^{-1}=\dbP$, $W$ is a $\overline \dbP$-Brownian motion and $dX_t = \widehat\sigma_t dW_t$, $\overline\dbP$-a.s. From now on, we abuse the notation, and keep using $\dbP$ to represent $\overline\dbP$ on $\overline \Om$. Denote by $\cQ_L(\mathbb{P})$ the set of all probability measures $\dbQ^\lambda$ such that
  $$ \rmD_t^{\dbQ^\lambda|\dbP} := \frac{d\dbQ^\lambda}{d\dbP}\bigg|_{\overline\cF_t} 
                                 = \exp{\bigg(\int_0^t\lambda_s\cdot dW_s - \frac{1}{2}\int_0^t|\lambda_s|^2ds\bigg)}, \quad t\geq 0, 
  $$
for some $\dbF^{+, \mathbb{P}}$-progressively measurable process $\lambda=(\lambda)_{t\geq 0}$ uniformly bounded by $L$. By Girsanov's theorem,
 $ W^{\lambda} := W-\int_0^{\cdot}\lambda_s ds $ is a $\dbQ^\lambda$-Brownian motion on any finite horizon, and thus $X^{\lambda}:= X -\int_0^\cdot \widehat\sigma_t \lambda_t dt $ is a $\dbQ^\lambda$-martingale on any finite horizon. For $\dbP\in\cP_{loc}$, we denote 
  \begin{align*}
  	\cE^\dbP[\cdot] := \sup_{\dbQ\in\cQ_L(\dbP)}\dbE^{\dbQ}[\cdot],
  \end{align*}
  and we introduce the subspace $\cL^p_{\alpha, \tau}(\dbP)=\bigcap_{\dbQ\in\cQ_L(\dbP)}\dbL^p_{\alpha, \tau}(\dbQ)$ of random variable $\xi$ such that 
  \begin{align*}
  	\sup_{\dbQ\in\cQ_L(\dbP)}\|\xi\|_{\dbL^p_{\alpha,\tau}(\dbQ)} = \cE^\dbP\big[|e^{\alpha\tau}\xi|^p\big] < \infty.
  \end{align*}
We define similarly the subspaces $\cD^p_{\alpha, \tau}(\dbP)$, $\cH^p_{\alpha, \tau}(\dbP)$, $\cN^p_{\alpha, \tau}(\dbP)$, and the subsets $\cI^p_{\alpha, \tau}(\dbP)$, $\cU^p_{\alpha, \tau}(\dbP)$.

\vspace{3mm}

\noindent {\bf (iii)} {\it Integrability classes under non-dominated nonlinear expectation:} Let $\cP\subseteq\cP_{loc}$ be a subset of probability measures, and denote 
	\begin{align*}
		\cE^\cP[\cdot] := \sup_{\dbP\in\cP}\cE^{\dbP}[\cdot].
	\end{align*}
Let $\dbG:=\{\cG_t\}_{t\geq 0}$ be a filtration with $\cG_t\supseteq\cF_t$ for all $t\geq 0$, so that $\tau$ is also a $\dbG$-stopping time. We define the subspace $\cL^p_{\alpha,\tau}(\cP, \dbG)$ as the collection of all $\cG_\tau$-measurable $\mathbb{R}$-valued random variables $\xi$, such that 
	\begin{align*}
		\|\xi\|^p_{\cL_{\alpha,\tau}^p(\mathcal P)} := \cE^\cP\big[\big|e^{\alpha\tau}\xi\big|^p\big] <\infty. 
	\end{align*}
We define similarly the subspaces $\cD^p_{\alpha, \tau}(\cP, \dbG)$ and $\cH^p_{\alpha, \tau}(\cP, \dbG)$ by replacing $\mathbb{F}^{+, \mathbb{P}}$ with $\mathbb{G}$.

\section{Main results} \label{sect:mainresults}

\subsection{Random horizon backward SDE}

For a probability measure $\dbP\in\cP_{loc}$, an $\mathbb{F}$-stopping time $\tau$, which may be infinite, an $\cF^{+, \mathbb{P}}_\tau$-measurable random variable $\xi$, and a {\it generator} $F:\dbR_+\times\Omega\times\dbR\times\dbR^d\times\dbS^d\longrightarrow\dbR\cup\{\infty\}$, $\prog\otimes\cB(\dbR)\otimes\cB(\dbR^d)\otimes\cB(\dbS^d)$-measurable
\footnote{By $\prog$ we denote the $\sigma$-algebra generated by progressively measurable processes. Consequently, for every fixed $(y,z)\in\dbR\times\dbR^d$, the process $\big(F_t(y,z,\widehat\sigma_t)\big)_{t\geq 0}$ is progressively measurable.}, we set
	\begin{align*}
		f_t(\om,y,z) := F_t\big(\om,y,z,\widehat\sigma_t(\om)\big),
	     \quad 
		(t,\om,y,z)\in\dbR_+\times\Om\times\dbR\times\dbR^d,
	\end{align*}
	and we consider the following backward stochastic differential equation (BSDE): for $t$, $t'\in \mathbb{R}_+$, $t\leq t'$,
	\begin{equation}  \label{bsde1w}
	   \left\{
	      \begin{aligned}
	         Y_{t\wedge \tau} = Y_{t'\wedge\tau}+ \int^{t' \wedge \tau}_{t\wedge\tau} \Big(f_s(Y_{s}, Z_s)ds - Z_s \cdot dX_s - dN_s\Big), \quad \dbP\mbox{-a.s.} \\
	         Y_\tau=\xi\quad {\rm on} \quad \{\tau<\infty\}.
	      \end{aligned}
	  \right.
	\end{equation}

Here, $Y$ is a c\`adl\`ag adapted scalar process, $Z$ is a predictable $\dbR^d$-valued process, and $N$ a c\`adl\`ag $\dbR$-valued martingale with $N_0=0$ orthogonal to $X$, i.e., $[X, N]=0$. We recall that $dX_s=\widehat\sigma_sdW_s$, $\dbP-$a.s.

By freezing the pair $(y,z)$ to $0$, we set $f^0_t:=f_t(0,0)$. 

\vspace{2mm}

\begin{assumption}\label{assum:bsdeF}
	The generator satisfies the following conditions.
	\begin{enumerate}[$(i)$]
		\item $F$ {\rm Lipschitz:} there is a constant $L\geq 0$, such that for all $(y_1, z_1)$, $(y_2, z_2)\in \dbR\times\dbR^d$, $\sigma\in\dbS^d$,
		        \begin{align*}
		        	\big|F_t(y_1, z_1,\sigma)-F_t(y_2, z_2,\sigma)\big| \le L\big(|y_1-y_2|+\big|\sigma^{\top}(z_1-z_2)\big|\big), 
		        	\quad dt\otimes d\dbP\mbox{-a.e.}
		        \end{align*}
		\item $F$ {\rm Monotone:} there is a constant $\mu\in\dbR$, such that for all $z\in \mathbb{R}^d$, $(y_1, y_2)\in \mathbb{R}^2$, $\sigma\in\dbS^d$,
		        \begin{align*}
		        	(y_1-y_2)\big(F_t(y_1, z,\sigma)-F_t(y_2, z,\sigma)\big) \leq -\mu |y_1-y_2|^2, 
		        	\quad dt\otimes d\dbP\mbox{-a.e.}
		        \end{align*}
	\end{enumerate}
\end{assumption}	

\begin{assumption}\label{assum:bsde-integ}
	$\tau$ is a stopping time, $\xi$ is $\cF_\tau-$measurable, and 
	\begin{align*} 
	\Vert\xi{\mathbb 1}_{\{\tau<\infty\}}\Vert_{\cL^q_{\rho,\tau}(\mathbb{P})}<\infty , 
       	\quad \mbox{and} \quad
	  \overline{f}^\dbP_{\rho,q,\tau} := \cE^\dbP\bigg[\bigg(\int_0^\tau \big|e^{\rho t}f_t^0\big|^2 ds\bigg)^{\frac{q}{2}}\bigg]^{\frac{1}{q}}<\infty,
	\end{align*}
	for some $\rho>-\mu,~q>1.$
\end{assumption}	

\vspace{2mm}

\begin{remark}
	In the context of a bounded stopping time $\tau\le T$, the monotonicity assumption can be deduced from  the Lipschitz assumption by the following standard argument. Set $\widetilde Y_t:=e^{\lambda t}Y_t$ and apply It\^o's formula. It is straightforward that the wellposedness of the backward SDE \eqref{bsde1w} is equivalent to a similar wellposedness problem with terminal data $\tilde\xi:=e^{\lambda\tau}\xi$ and nonlinearity 
	  $$ \widetilde F_t(\tilde y,\tilde z,\sigma):=-\lambda\tilde{y}+e^{\lambda t}F_t\big(e^{-\lambda t}\tilde{y},e^{-\lambda t}\tilde{z},\sigma\big). $$ 
	Clearly, $\widetilde F$ inherits the Lipschitz property of $F$, and satisfies the monotonicity condition for sufficiently large $\lambda$. Finally, $\tilde\xi$ is in the same integrability class as $\xi$ for bounded $\tau$. We emphasize that the above mentioned technique applies throughout this paper, and thus when pulling back to the context of finite horizon, the monotonicity assumption could be removed.
	
	However, if one applies the previous argument in the case as $\tau$ is not bounded, then $\tilde \xi$ would fit different integrability condition from $\xi$. Therefore, the monotonicity condition is necessary.
\end{remark}

\vspace{2mm}

\begin{theorem} [Existence and uniqueness] \label{thm:bsde}
	Under Assumptions \ref{assum:bsdeF} and \ref{assum:bsde-integ}, the backward SDE \eqref{bsde1w} has a unique\footnote{The solution is unique modulo the norms of the corresponding spaces.} solution $(Y,Z,N)\in\cD ^{p}_{\eta, \tau}(\mathbb{P})\times\cH^{p}_{\eta, \tau}(\mathbb{P})\times\cN^{p}_{\eta, \tau}(\mathbb{P})$, for all $p\in(1,q)$ and $\eta\in[-\mu,\rho)$, with
	  \begin{equation} \label{estznk}
	  	 \|Y\|^p_{\cD^p_{\eta,\tau}(\dbP)}  + \|Z\|^p_{\cH^p_{\eta,\tau}(\dbP)} + \|N\|^p_{\cN^p_{\eta,\tau}(\dbP)} 
	  	   \leq \mbox{Const}\Big(\|\xi{\mathbb 1}_{\{\tau<\infty\}}\|^p_{\cL^q_{\rho,\tau}(\dbP)} + \big(\overline{f}^\dbP_{\rho,q,\tau}\big)^p\Big). 
	  \end{equation}
\end{theorem}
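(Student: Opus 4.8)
The plan is to establish existence and uniqueness of the random-horizon BSDE \eqref{bsde1w} via the standard two-pronged strategy — a priori estimates plus a fixed-point argument — adapted to the random-horizon setting with the exponential weight $e^{\eta t}$ and the nonlinear expectation $\cE^\dbP$. Let me sketch the main steps.

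=== PROOF PROPOSAL ===

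\textbf{Step 1: A priori estimates.} The plan is to first derive the estimate \eqref{estznk} for any solution $(Y,Z,N)$, which will simultaneously give uniqueness and the right target spaces. I would apply It\^o's formula to $e^{p\eta t}|Y_t|^p$ (or, to avoid differentiability issues at $Y=0$, to $e^{2\eta t}|Y_t|^2$ first and then bootstrap to general $p$). Writing $\widehat Z_t := \widehat\sigma_t^{\bf T}Z_t$ so that $Z_t\cd dX_t = \widehat Z_t\cd dW_t$ under $\dbP$ by Lemma \ref{lem:xw}, the It\^o expansion produces a drift term containing $2\eta|Y_t|^2$, the generator contribution $-2Y_tf_t(Y_t,Z_t)$, the martingale terms, and the quadratic-variation term $|\widehat Z_t|^2dt + d[N]_t$. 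The key is to use the monotonicity (Assumption \ref{assum:bsdeF}(ii)) to absorb the $Y$-dependence of $f$: combining the $2\eta|Y_t|^2$ term with $-2\mu|Y_t|^2$ from monotonicity yields a coefficient $2(\eta+\mu)\ge 0$ when $\eta\ge-\mu$, giving the crucial sign needed to control $\int e^{2\eta t}|Y_t|^2dt$. The Lipschitz condition (Assumption \ref{assum:bsdeF}(i)) then handles the $z$-dependence via Young's inequality, $|\widehat\sigma_t^{\bf T}Z_t|\,|Y_t|\le \tfrac{1}{4}|\widehat Z_t|^2+C|Y_t|^2$, so that the $\int_0^\tau|\widehat Z_t|^2dt$ and $\int_0^\tau d[N]_t$ terms can be moved to the left. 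One subtlety is that everything must hold under the whole family $\cQ_L(\dbP)$ simultaneously to control the $\cE^\dbP$-norms; I would perform the estimate under an arbitrary $\dbQ\in\cQ_L(\dbP)$, note that the change of measure is Girsanov with bounded density $\la$ (hence merely shifts the drift by a term already absorbed into the Lipschitz budget), and then take the supremum over $\dbQ$. The Burkholder--Davis--Gundy inequality converts the $\sup_t$ in the $\cD^p$-norm into the quadratic-variation terms, closing the estimate.

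\textbf{Step 2: Existence via contraction.} With the a priori estimates in hand, I would set up a Picard-type fixed-point map. Given an input $(y,z)\in\cD^p_{\eta,\tau}\times\cH^p_{\eta,\tau}$, freeze the generator at $f_t(y_t,z_t)$ to obtain a \emph{linear} problem whose solution is the martingale-representation of the conditional expectation
\[
  Y_t = \cE_t^\dbP\!\Big[\xi + \int_t^\tau f_s(y_s,z_s)\,ds\Big],
\]
where the orthogonal martingale $N$ appears because $\dbF^{+,\dbP}$ need not be Brownian-generated (the canonical space carries the extra component $W$, so $X$'s own filtration is strictly smaller). This defines the map $\Phi:(y,z)\mapsto(Y,Z)$. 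The contraction is established by applying the Step-1 estimate to the \emph{difference} of two solutions, which satisfies a BSDE with zero terminal data and generator increment controlled by $L(|\Delta y|+|\widehat\sigma^{\bf T}\Delta z|)$; choosing the weight $\eta$ large enough (or equivalently exploiting the gap $\rho>-\mu$ and the strict inequality $p<q$) makes $\Phi$ a strict contraction on the weighted space. I must verify that $\Phi$ maps the space into itself, which is exactly the content of the a priori estimate applied with the frozen generator, using Assumption \ref{assum:bsde-integ} to ensure $\int_0^\tau e^{\rho t}|f_s(y_s,z_s)|\,ds$ has finite $\cE^\dbP$-norm.

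\textbf{Step 3: Passage between exponents and the horizon.} The final bookkeeping is to reconcile the two exponents: the data is controlled at level $(\rho,q)$, while the solution is sought at level $(\eta,p)$ for $p\in(1,q)$ and $\eta\in[-\mu,\rho)$. The gap $\eta<\rho$ provides an integrable factor $e^{(\eta-\rho)t}$ that, together with a H\"older inequality in the $(q/p)$-conjugate exponents, lets me trade the higher $L^q$-integrability of $\xi$ and $f^0$ for the finiteness of the $L^p$-norm over the random (possibly unbounded in expectation) horizon $\tau$. This is the mechanism by which a finite but unbounded stopping time is handled without assuming $\tau$ bounded.

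\textbf{Main obstacle.} I expect the principal difficulty to be the \emph{random, unbounded horizon} interacting with the orthogonal martingale $N$ and the nonlinear expectation $\cE^\dbP$ simultaneously. Unlike the fixed-horizon case, one cannot simply truncate at a deterministic $T$; the exponential weight with $\eta\ge-\mu$ and the strict gap $\eta<\rho$ must do the work of guaranteeing that the contributions beyond any level set of $\tau$ decay, and this decay has to survive the supremum over the Girsanov family $\cQ_L(\dbP)$. Keeping the estimates uniform over $\dbQ\in\cQ_L(\dbP)$ while controlling $N$ (whose bracket enters the $\cN^p$-norm and which only exists because the filtration is not Brownian) is where the argument is most delicate, and is the step I would write out in full detail.
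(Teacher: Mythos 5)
Your Step 1 and Step 2 both run into the same obstruction, and it is precisely the one this paper is designed to circumvent. In Step 1 you absorb the Lipschitz-in-$z$ term via Young's inequality, $L|Y_t|\,|\widehat\sigma_t^{\bf T}Z_t|\le \tfrac14|\widehat\sigma_t^{\bf T}Z_t|^2+L^2|Y_t|^2$; after combining with the monotonicity term this leaves a drift coefficient of order $2(\eta+\mu)-O(L^2)$ on $\int_0^\tau e^{2\eta t}|Y_t|^2dt$, which has the wrong sign under the standing hypothesis $\eta+\mu\ge 0$ unless you additionally assume $\eta+\mu\gtrsim L^2$. On an unbounded horizon you cannot rescue this with Gronwall, and the sup-norm trick (trading $\int e^{2\eta t}|Y_t|^2dt$ against $\sup_t e^{2\eta' t}|Y_t|^2$ for $\eta'>\eta$, as in Proposition \ref{EstimationRBSDE}) only yields the $Z$ and $N$ bounds \emph{in terms of} $\|Y\|_{\cD^p_{\eta'}}$ — it is circular for the $Y$ estimate itself. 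The same defect kills Step 2: the contraction constant of your Picard map inherits this $L^2$ penalty, so the map is a strict contraction only under a Darling--Pardoux-type condition $\rho>-\mu+L^2/2$, which the paper's Example shows is strictly stronger than Assumption \ref{assum:bsde-integ}. Your remark that "the gap $\rho>-\mu$ and the strict inequality $p<q$" make $\Phi$ contractive is exactly the claim that fails.

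The missing idea is the \emph{active} use of the Girsanov family $\cQ_L(\dbP)$: rather than estimating uniformly over $\dbQ\in\cQ_L$ (the passive direction you describe), one applies Tanaka's inequality (Proposition \ref{TanakaIneq}) to the \emph{first} power $e^{\eta t}|Y_t|$ and chooses the specific kernel $\widehat\lambda_s:=L\,\sgn(Y_s)\,\widehat\sigma_s^{\bf T}Z_s/|\widehat\sigma_s^{\bf T}Z_s|$ so that the term $L|\widehat\sigma_s^{\bf T}Z_s|$ is absorbed entirely into the drift of $X$ under $\dbQ^{\widehat\lambda}$, with no quadratic residue. This yields $e^{\eta(t\wedge\tau)}|Y_{t\wedge\tau}|\le \dbE^{\dbQ^{\widehat\lambda}}[e^{\eta\tau}|\xi|+\int_0^\tau e^{\eta s}|f^0_s|ds\,|\,\cF^{+,\dbP}_{t\wedge\tau}]$, and the Doob-type inequality under the nonlinear expectation (Lemma \ref{supDoob}) then gives the $\cD^p_{\eta,\tau}$ bound — this is the sole reason the data norms are $\cE^\dbP$-norms rather than $\dbE^\dbP$-norms. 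For existence the paper does not use Picard at all: it realizes the BSDE as the reflected BSDE with obstacle $S\equiv-\infty$, whose solution is built from a Snell-envelope/finite-horizon truncation $\tau_n=\tau\wedge n$ and shown to be Cauchy in $\cD^p_{\eta,\tau}\times\cH^p_{\eta,\tau}\times\cN^p_{\eta,\tau}$ via the stability estimates of Theorem \ref{thm:rbsdecomp-stab}(ii), which themselves rest on the same linearization. Your Step 3 (trading the $(\rho,q)$ integrability of the data for the $(\eta,p)$ norm of the solution through the factor $e^{-(\rho-\eta)s}$ and H\"older) is correct and matches the paper.
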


Except for the estimate \eqref{estznk}, whose proof is reported in Section \ref{sect:specialbsde}, the wellposedness part of the last result is a special case of Theorem \ref{thm:rbsde} below, with obstacle $S\equiv-\infty$.

\vspace{2mm}

\begin{remark}
	The norm, with which we  propose the integrability condition on the coefficients (Assumption \ref{assum:bsde-integ}) and the solution space in Theorem \ref{thm:bsde}, is novel. It is mainly motivated by the following reasons.
	\begin{itemize}
		\item In the initial investigation on the random horizon backward SDE by Peng \cite{Peng91} and Darling \& Pardoux \cite{DP97}, it requires a similar integrability condition as Assumption \ref{assum:bsde-integ} with $\bar\rho:=\rho+L^2/2$ instead of $\rho$ and $\dbE^\dbP$ instead of $\cE^\dbP$. The following Example \ref{eg:condition} illustrates the relevance of our assumption in the simple case of a linear generator.
		In the  works generalizing the result in \cite{DP97}, see e.g.~\cite{BH98,Roy04}, to our knowledge, it is always assumed that $\mu>0$, i.e., the generator is strictly monotone, and the coefficients $\xi, f^0$ are bounded, which is a special case of our Assumption \ref{assum:bsde-integ}. For $\mu=0$, i.e., the generator $f$ is monotone, Royer \cite{Roy04} provided the existence and uniqueness under assumptions that the generator $f$ depending only on $z$ is bounded and $\xi$ is bounded. This result was later generalized by Hu \& Tessitore \cite{HT07}, Briand \& Confortola \cite{BC08} and Papapantoleon et al. \cite{PPS18} to a more general setting. 
		Our Theorem \ref{thm:bsde} generalizes these previous results by allowing for $\mu\leq 0$, thanks to the new norms under which we set up the wellposedness result.
		 \vspace{2mm}
		\item The backward SDE can be viewed as a nonlinear representation of a random variable by an It\^o process with a particular generator function. For the sake of applications, we would like that the representation is a `one-to-one mapping' between the random variable space and the solution space of backward SDE. Here, on the one hand, according to Theorem \ref{thm:bsde}, given {$\xi{\mathbb 1}_{\{\tau<\infty\}}\in \bigcup_{q>1, \rho>-\mu}\cL^q_{\rho,\tau}$,} we may find the solution in $\bigcup_{q>1, \rho>-\mu} \cD ^{q}_{\rho, \tau}(\mathbb{P})\times\cH^{q}_{\rho, \tau}(\mathbb{P})\times\cN^{q}_{\rho, \tau}(\mathbb{P})$. On the other hand, given $Y_0\in \dbR$, $(Z, U)\in \bigcup_{q>1, \rho>-\mu}  \cH^{q}_{\rho, \tau}(\mathbb{P})\times\cN^{q}_{\rho, \tau}(\mathbb{P})$, we may construct an It\^o process (by solving an ODE) such that $Y_\tau{\mathbb 1}_{\{\tau<\infty\}}\in \bigcup_{q>1, \rho>-\mu}\cL^q_{\rho,\tau}$. This builds up the desired one-to-one correspondence. 
	\end{itemize}
	Again, we remind that, unlike in \cite{RTY18}, the application of the new norm in Assumption \ref{assum:bsde-integ} is not to pursue a weaker integrability condition for the wellposedness of backward SDE.
\end{remark}

\vspace{2mm}

\begin{example} \label{eg:condition}
	Let $\dbP:=\dbP_0$, be the Wiener measure on $\Omega$, so that $X$ is a $\dbP_0-$Brownian motion. Let $\tau:={\rm H}_1$, where ${\rm H}_x:=\inf\{t>0\,:\,X_t\geq x\}$, $\xi:=|X_{1\wedge\tau}|$, and $f_t(\omega,y,z):=-\mu y +Lz$ for some constants $0<\mu<1\leq L$. Notice that $f^0=0$, and $\xi\in\cL^2_{0,\tau}(\dbP_0)$ by direct verification:
	  \begin{align*}
	  	\cE^{\dbP_0}\big[|\xi|^2\big] 
	  	  \leq \sup_{\dbQ\in\cQ_L(\dbP_0)}\dbE^{\dbP_0}\Big[\rmD^{\dbQ|\dbP_0}_1|\xi|^2\Big]
	  	  \leq \sup_{\dbQ\in\cQ_L(\dbP_0)}\dbE^{\dbP_0}\Big[\big(\rmD^{\dbQ|\dbP_0}_1\big)^2\Big]^{\frac12}\dbE^{\dbP_0}\big[|\xi|^4\big]^{\frac12}
	  	  <\infty.
	  \end{align*}
	We next show that Darling \& Pardoux's condition is not satisfied. To see this, observe that the event set $A:=\big\{\omega\in\Omega\,:\, \sup_{0\leq t\leq 1}X_t<1, \, X_1\in\left[\tfrac{1}{2},\tfrac{3}{4}\right]\big\}$ satisfies $\dbP_0[A]>0$, and therefore
	\begin{align*}
	  \dbE^{\dbP_0}\big[e^{2L^2\tau}|\xi|^2\big] \ge \frac{1}{4}\dbE^{\dbP_0}\big[e^{2L^2\tau}\Ind_A\big] 
	   &\ge \frac{1}{4}\dbE^{\dbP_0}\Big[\Ind_A \dbE^{\dbP_0}\big[e^{2L^2 {\rm H}_{1-X_1}}\big|X_1\big]\Big] \\
	   &\ge \frac{1}{4}\dbE^{\dbP_0}\Big[\Ind_A \dbE^{\dbP_0}\big[e^{2L^2 {\rm H}_{1/4}}\big]\Big] = \infty.
	\end{align*}
\end{example}

\vspace{2mm}

We also have the following comparison and stability results, which are direct consequences of Theorem \ref{thm:rbsdecomp-stab} below, obtained by setting the obstacle to $-\infty$ therein, together with the estimate \eqref{estznk} in Theorem \ref{thm:bsde}. 
	
\vspace{2mm}	
	
\begin{theorem}\label{thm:bsdecomp-stab}
	Let $(f,\xi)$, $(f',\xi')$ be two sets of parameters satisfying the conditions of Theorem \ref{thm:bsde} with some stopping time $\tau$, and the corresponding solutions $(Y,Z,N)$, $(Y',Z',N')$.
	\begin{enumerate}[{\bf (i)}]
		\item {\rm Stability}. Denoting $\delta\xi:=\xi-\xi'$, $\delta Y:=Y-Y'$, $\delta Z:=Z-Z'$, $\delta U:=U-U'$ and $\delta f= f-f'$, we have for all $1<p<p'< q$ and $-\mu<\eta<\eta'<\rho$:
		\begin{align*}
		\|\delta Y\|^p_{\cD^p_{\eta,\tau}(\dbP)} 
		\le C_{p,p',\eta}\bigg\{\|\delta\xi{\mathbb 1}_{\{\tau<\infty\}}\|^p_{\cL^{p'}_{\eta,\tau}(\dbP)} 
		+ \cE^{\dbP}\bigg[\bigg(\int_0^\tau \big|e^{\eta t}\delta f_t(Y_t,Z_t)\big|dt\bigg)^{p'}\bigg]^{\frac{p}{p'}}\bigg\} 
		\end{align*}
		and
		\begin{align*}
	 	\|\delta Z\|^p_{\cH^p_{\eta,\tau}(\dbP)}\! + \!\|\delta N\|^p_{\cN^p_{\eta,\tau}(\dbP)} 
		\le
		C_{p,\eta,\eta'} \bigg\{\|\delta Y\|^p_{\cD^p_{\eta',\tau}(\dbP)}
		\!+\!\cE^{\dbP}\bigg[\bigg(\int_0^\tau \big|e^{\eta t}\delta f_t(Y_t,Z_t)\big|dt\bigg)^{p}\bigg]\bigg\}. 
		\end{align*} 
		\item {\rm Comparison}. Assume $\xi \leq \xi'$, $\dbP$-a.s. on $\{\tau<\infty\}$, and $f(y, z) \leq f'(y, z)$ for all $(y, z) \in \dbR\times\dbR^d$, $dt\otimes\dbP$-a.e. Then, $Y_{\tau_0}\leq Y'_{\tau_0}$, $\dbP$-a.s. for all finite stopping time $\tau_0\leq\tau$, $\dbP$-a.s. 
	\end{enumerate}
\end{theorem}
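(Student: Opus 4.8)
The plan is to reduce all three claims to a single \emph{linear} backward SDE for the difference $\delta Y=Y-Y'$, and then to exploit the change of measure encoded in the family $\cQ_L(\dbP)$. Subtracting the two equations \eqref{bsde1w} and splitting the generator increment as
\[
 f_s(Y_s,Z_s)-f'_s(Y'_s,Z'_s)
 =\delta f_s(Y_s,Z_s)+\big(f'_s(Y_s,Z_s)-f'_s(Y'_s,Z'_s)\big),
\]
I linearise the last bracket using Assumption \ref{assum:bsdeF}: monotonicity and Lipschitz continuity in $y$ give $f'_s(Y_s,Z_s)-f'_s(Y'_s,Z_s)=\alpha_s\,\delta Y_s$ with $-L\le\alpha_s\le-\mu$, while Lipschitz continuity in $\widehat\sigma^{\bf T}z$ gives $f'_s(Y'_s,Z_s)-f'_s(Y'_s,Z'_s)=\beta_s\cdot\widehat\sigma_s^{\bf T}\delta Z_s$ with $|\beta_s|\le L$; crucially, since $F$ depends on $z$ only through $\widehat\sigma^{\bf T}z$, the vector $\beta_s$ may be chosen in the range of $\widehat\sigma_s^{\bf T}$. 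Recalling from Lemma \ref{lem:xw} that $\delta Z_s\cdot dX_s=\widehat\sigma_s^{\bf T}\delta Z_s\cdot dW_s$, the difference then solves the affine equation with driver $\alpha_s\,\delta Y_s+\beta_s\cdot\widehat\sigma_s^{\bf T}\delta Z_s+\delta f_s(Y_s,Z_s)$ and martingale part $\widehat\sigma^{\bf T}\delta Z\cdot dW+d\delta N$.

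For the comparison I would absorb the $\beta$-term by Girsanov: as $|\beta|\le L$, the measure $\dbQ:=\dbQ^{\beta}\in\cQ_L(\dbP)$ turns $W^{\beta}=W-\int_0^\cdot\beta_s\,ds$ into a Brownian motion. Writing $\Gamma_t:=\exp(\int_0^t\alpha_s\,ds)>0$ and applying It\^o to $\Gamma\,\delta Y$, a direct computation gives $d(\Gamma_t\delta Y_t)=-\Gamma_t\delta f_t(Y_t,Z_t)\,dt+\Gamma_t\widehat\sigma_t^{\bf T}\delta Z_t\cdot dW^\beta_t+\Gamma_t\,d\delta N_t$; here the choice $\beta\in\mathrm{range}\,\widehat\sigma^{\bf T}$ is what keeps $\delta N$ drift-free under $\dbQ$, because the integrand of the $X$-orthogonal martingale $\delta N$ lies in $\ker\widehat\sigma$. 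Localising along $\tau_n\uparrow\tau$, taking $\dbQ$-conditional expectation given $\cF_{\tau_0}$, and passing to the limit yields
\[
 \Gamma_{\tau_0}\,\delta Y_{\tau_0}
 =\dbE^{\dbQ}\Big[\Gamma_\tau\,\delta\xi+\int_{\tau_0}^\tau\Gamma_s\,\delta f_s(Y_s,Z_s)\,ds\,\Big|\,\cF_{\tau_0}\Big].
\]
Since $\Gamma>0$, $\delta\xi\le0$ and $\delta f\le0$ by hypothesis, the right-hand side is nonpositive, whence $Y_{\tau_0}\le Y'_{\tau_0}$, $\dbP$-a.s.

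For the stability estimates the same representation, now retained in $L^{p'}$, controls $\delta Y$: since $\alpha_s\le-\mu\le\eta$ one has $e^{\eta t}\Gamma_s/\Gamma_t\le e^{\eta s}$ for $s\ge t$, so taking absolute values, applying conditional Jensen and the Doob maximal inequality under each $\dbQ\in\cQ_L(\dbP)$, and passing to the supremum over $\dbQ$ bounds $\|\delta Y\|_{\cD^p_{\eta,\tau}(\dbP)}$ by $\|\delta\xi\|_{\cL^{p'}_{\eta,\tau}(\dbP)}$ and the weighted norm of $\delta f(Y,Z)$. For $(\delta Z,\delta N)$ I would apply It\^o to $e^{2\eta t}|\delta Y_t|^2$: after the same Girsanov absorption of the $\beta$-term, the brackets $\int_0^\tau e^{2\eta s}\big(|\widehat\sigma_s^{\bf T}\delta Z_s|^2\,ds+d[\delta N]_s\big)$ are isolated on one side, the favourably-signed term $2(\eta-\alpha_s)e^{2\eta s}|\delta Y_s|^2\ge0$ is dropped, the terminal value is dominated by $\sup_t|e^{\eta t}\delta Y_t|$, and the cross term $\delta Y\,\delta f$ is split by Young's inequality; taking $\cE^\dbP$ and invoking the already-derived bound on $\|\delta Y\|_{\cD^{p}_{\eta',\tau}(\dbP)}$ gives the second estimate. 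As the paper indicates, both statements also follow at once from the reflected counterpart Theorem \ref{thm:rbsdecomp-stab} with obstacle $S\equiv-\infty$, together with \eqref{estznk}.

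The main obstacle is not the linearisation but the control of the stochastic integrals at the \emph{random, a priori unbounded} horizon $\tau$, uniformly over the non-dominated family $\cQ_L(\dbP)$: the localisation $\tau_n\uparrow\tau$ in the comparison representation and the limit passage in the energy estimate both require that the boundary and remainder terms vanish simultaneously for every $\dbQ\in\cQ_L(\dbP)$. This is exactly what the strict gap $\rho>\eta\ge-\mu$ of Assumption \ref{assum:bsde-integ} secures, as the exponential weights $e^{\eta t}$ with $\eta<\rho$ upgrade the a priori integrability of $(Y,Z,N)$ into the uniform integrability needed to interchange limit and conditional $\dbQ$-expectation. A secondary technical point, already flagged above, is the compatibility of the Girsanov change of measure with the $X$-orthogonal martingale $\delta N$, handled by selecting the $z$-linearisation coefficient $\beta$ in the range of $\widehat\sigma^{\bf T}$.
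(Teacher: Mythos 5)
Your proposal is correct, and it is worth recording how it relates to the paper's route. The paper does not prove Theorem \ref{thm:bsdecomp-stab} directly: it deduces it from the reflected version (Theorem \ref{thm:rbsdecomp-stab} with $S\equiv-\infty$) together with the a priori estimate \eqref{estznk}. For the stability part your argument is essentially the one carried out in Section \ref{sect:rbsde-stability}: where you linearise exactly and write $\Gamma_t=\exp(\int_0^t\alpha_s\,ds)$, the paper applies Tanaka's inequality (Proposition \ref{TanakaIneq}) to $e^{\eta's}|\delta Y_s|$ and absorbs the $z$-increment with the kernel $\widehat\lambda_s=L\,\sgn(\delta Y_s)\,\widehat\sigma_s^{\bf T}\delta Z_s/|\widehat\sigma_s^{\bf T}\delta Z_s|$; these are interchangeable devices, both leading to the conditional-expectation bound that Lemma \ref{supDoob} converts into the $\cD^p_{\eta,\tau}$ estimate, and your energy identity for $e^{2\eta t}|\delta Y_t|^2$ followed by BDG and Young is exactly Step 2 of that proof. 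Your observation that the Girsanov kernel must be chosen in the range of $\widehat\sigma^{\bf T}$ so that the $X$-orthogonal martingale $\delta N$ stays drift-free is precisely the point the paper uses when it notes that $N$ remains a $\dbQ^{\widehat\lambda}$-martingale by $[X,N]=0$. Where you genuinely diverge is the comparison: the paper obtains it by comparing the finite-horizon approximating solutions (citing a known finite-horizon comparison result) and passing to the limit $n\to\infty$, whereas you give a direct random-horizon argument via the signed representation $\Gamma_{\tau_0}\delta Y_{\tau_0}=\dbE^{\dbQ}[\Gamma_\tau\delta\xi+\int_{\tau_0}^\tau\Gamma_s\delta f_s(Y_s,Z_s)\,ds\,|\,\cF_{\tau_0}]$. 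Your route is more self-contained and yields comparison and the first stability bound from a single identity; the paper's route has the advantage of covering the reflected case (where the exact linearisation no longer closes because of the reflection term) with one proof. Your identification of the real difficulty --- uniform integrability at the unbounded horizon, secured by the gap $\rho>\eta\ge-\mu$, needed to pass to the limit along $\tau_n\uparrow\tau$ under every $\dbQ\in\cQ_L(\dbP)$ --- matches the paper's own justification of the same limit passage.
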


\vspace{2mm}

\begin{remark} \label{rem:supersolution}
	Following \cite{EPQ97} we say that $(Y,Z)$ is a supersolution (resp.~subsolution) of the BSDE with parameters $(f,\xi)$ if the martingale $N$ in \eqref{bsde1w} is replaced by a supermartingale (resp.~submartingale).
	A direct examination of the proof of the last comparison result reveals that the conclusion is unchanged if $(Y,Z)$ is a subsolution of BSDE$(f,\xi)$, and $(Y',Z')$ is a supersolution of BSDE$(f',\xi')$.
\end{remark}

\subsection{Random horizon reflected backward SDE}

We now consider an obstacle defined by $(S_t)_{t\geq 0}$, and we search for a representation similar to \eqref{bsde1w} with the additional requirement that $Y\ge S$. This is achieved at the price of pushing up the solution $Y$ by substracting a supermartingale $U$ with minimal action. We then consider the following reflected backward stochastic differential equation (RBSDE): for $t$, $t'\in \mathbb{R}_+$, $t\leq t'$,
   \begin{equation} \label{RBSDEeq1}
      \left\{
        \begin{aligned}
           Y_{t \wedge\tau} = Y_{t'\wedge\tau} +\int_{t\wedge \tau}^{t' \wedge \tau}\Big(f_s(Y_s,Z_s)ds - Z_s\cdot dX_s - dU_s\Big), \quad 
              Y \ge S,\quad \dbP\mbox{-a.s.} \\
           \dbE^\dbP\bigg[\int_0^{t\wedge\tau} 1\wedge \big((Y_{r-}-S_{r-})\big)dU_r\bigg] = 0, \quad \mbox{for all}~~t\ge 0, \\
           Y_\tau=\xi\quad {\rm on}\quad \{\tau<\infty\}.
        \end{aligned}
      \right.
   \end{equation} 
where $U_{\wedge t}$ is a c\`adl\`ag $\dbP$-supermartingale, for all $t\ge 0$, starting from $U_0=0$, orthogonal to $X$, i.e.~$[X,U]=0$. The last minimality requirement is the so-called Skorokhod condition.\footnote{This condition coincides the standard Skorokhod condition in the literature. Indeed, by using the corresponding Doob-Meyer decomposition $U=N-K$ into a martingale $N$ and a nondecreasing process $K$, and recalling that $Y\ge S$ , it follows that  $0=\dbE^\dbP\big[\int_0^{\tau\wedge t} \big(1\wedge(Y_{r-}-S_{r-})\big)dU_r\big] = \dbE^\dbP\big[-\int_0^{\tau\wedge t}\big(1\wedge (Y_{r-}-S_{r-})\big)dK_r\big]$ is equivalent to $\int_0^\tau (Y_{r-}-S_{r-})dK_r=0$, $\dbP-$a.s. by the arbitrariness of $t\ge 0$.}

\vspace{2mm}

\begin{theorem}[Existence and uniqueness] \label{thm:rbsde}
	Let Assumptions \ref{assum:bsdeF} and \ref{assum:bsde-integ} hold true, and let $S$ be a c\`adl\`ag $\dbF^{+,\dbP}$-adapted process with $\|S^+\|_{\cD^q_{\rho,\tau}(\dbP)}<\infty$. Then, the reflected backward SDE \eqref{RBSDEeq1} has a unique solution $(Y,Z,U)\in\cD^{p}_{\eta,\tau}(\dbP)\times\cH^{p}_{\eta,\tau}(\dbP)\times\cU^{p}_{\eta,\tau}(\dbP)$, for all $p\in(1,q)$ and $\eta\in[-\mu,\rho)$. 
\end{theorem}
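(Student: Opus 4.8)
The plan is to obtain uniqueness from a priori estimates together with the Skorokhod condition, and existence from a penalisation scheme reducing the reflected equation to a monotone family of non-reflected backward SDEs. Throughout, the exponential weight $e^{\eta t}$ with $\eta\ge-\mu$ is the device that makes the random (possibly unbounded) horizon tractable: it turns the monotonicity constant $\mu$ into genuine dissipativity, while the gap $\rho>-\mu$ in Assumption \ref{assum:bsde-integ} provides the slack needed to close the estimates. The technical core is the a priori estimate. For a solution $(Y,Z,U)$ with Doob--Meyer decomposition $U=N-K$, I would apply It\^o's formula to $e^{2\eta t}|Y_{t\wedge\tau}|^2$ on $[0,\tau]$. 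Using Assumption \ref{assum:bsdeF}(ii) in the form $-2Yf_s(Y,Z)\ge -2Yf^0_s-2LY|\widehat\sigma_s^{\bf T}Z|+2\mu Y^2$, the coefficient of $Y^2$ becomes $2(\eta+\mu)\ge0$, dissipative precisely at the stated threshold $\eta\ge-\mu$. The Lipschitz dependence in $z$ is linearised: writing the $z$-increment of $f$ as $b_s\cdot\widehat\sigma_s^{\bf T}Z_s$ with $|b_s|\le L$, a Girsanov change of measure to $\dbQ^b\in\cQ_L(\dbP)$ removes it, which is exactly why all norms are expressed through $\cE^\dbP=\sup_{\dbQ\in\cQ_L(\dbP)}\dbE^\dbQ$. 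Passing from this $L^2$ bound to the $L^p$ bounds for $1<p<q$ is done by the Briand--Delyon--Hu--Pardoux--Stoica technique (It\^o on $e^{p\eta t}|Y|^p$ with Burkholder--Davis--Gundy and Doob), which simultaneously controls $\|Z\|_{\cH^p_{\eta,\tau}(\dbP)}$ and the two parts of $U$ in $\cU^p_{\eta,\tau}(\dbP)$.

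For uniqueness I would apply the same computation to the difference $(\delta Y,\delta Z,\delta U)$ of two solutions, with $\delta U=\delta N-\delta K$. The terminal values cancel and the generator difference is handled by the linearisation above; the decisive point is the sign of the reflection term. Since $dK$ (resp.~$dK'$) charges only $\{Y_{-}=S_{-}\}$ (resp.~$\{Y'_{-}=S_{-}\}$) while $Y,Y'\ge S$, one gets $\int_0^\tau e^{2\eta r}(Y_{r-}-Y'_{r-})\,d(K-K')_r\le0$, so the reflection term has a favourable sign and may be discarded. Dissipativity then forces $\delta Y\equiv0$, and the estimate on $(\delta Z,\delta U)$ gives $\delta Z\equiv0$, $\delta U\equiv0$ in the relevant norms.

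For existence I would first settle the non-reflected case $S\equiv-\infty$ by a fixed-point argument: the map sending a frozen pair $(Y,Z)$ to the solution of the backward SDE with driver process $f_s(Y_s,Z_s)$, built from the Galtchouk--Kunita--Watanabe orthogonal decomposition of the associated martingale, is a contraction in a suitably weighted version of $\cD^p_{\eta,\tau}(\dbP)\times\cH^p_{\eta,\tau}(\dbP)\times\cN^p_{\eta,\tau}(\dbP)$, the contraction constant being controlled by the weight; the a priori estimates then deliver the bounds in the target norms for every admissible $\eta$. For general $S$ I would penalise: for $n\ge1$ the generator $f^n_s(y,z):=f_s(y,z)+n(y-S_s)^-$ still satisfies Assumption \ref{assum:bsdeF} with the same $(\mu,L)$, and Assumption \ref{assum:bsde-integ} holds for it because $f^{n,0}=f^0+nS^+$ and $\|S^+\|_{\cD^q_{\rho,\tau}(\dbP)}<\infty$; hence the case already proved yields a unique $(Y^n,Z^n,N^n)$. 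By comparison for non-reflected backward SDEs (itself a linearisation argument) $Y^n\le Y^{n+1}$, and the a priori estimates bound the sequence uniformly in $n$. Writing $K^n_t:=n\int_0^t(Y^n_s-S_s)^-ds$, I would pass to the limit: monotone convergence gives $Y:=\lim_nY^n\ge S$; the uniform bounds yield $Z\in\cH^p_{\eta,\tau}(\dbP)$ and $N\in\cN^p_{\eta,\tau}(\dbP)$; and the uniform bound on $K^n$ with Peng's monotone-limit theorem produces a nondecreasing predictable $K$, so that $U:=N-K\in\cU^p_{\eta,\tau}(\dbP)$ and $(Y,Z,U)$ solves \eqref{RBSDEeq1}, the Skorokhod condition being recovered from $(Y^n-S)^-\to0$ and the convergence of $K^n$.

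The genuinely delicate step is this last convergence of the penalisation and the verification of the minimality (Skorokhod) condition over a random, possibly unbounded, horizon in the exponentially weighted $L^p$ norms: one must upgrade the weak convergence of $(Z^n,K^n)$ to strong convergence and identify the limit $U$ as a bona fide $\dbP$-supermartingale orthogonal to $X$ with $U_0=0$. A secondary difficulty is making the $L^2\to L^p$ estimates uniform in $\tau$ at the boundary value $\eta=-\mu$, where dissipativity is only marginal and one must genuinely exploit the strict gap $\rho>-\mu$ of Assumption \ref{assum:bsde-integ}.
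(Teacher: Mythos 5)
Your uniqueness argument is sound and is essentially the paper's stability estimate (Theorem \ref{thm:rbsdecomp-stab} (ii)): It\^o's formula on $e^{2\eta t}|\delta Y_t|^2$, the sign $\delta Y_{r-}\,d\delta K_r\le 0$ from the Skorokhod condition, Girsanov for the $z$-linearisation, and dissipativity at $\eta\ge-\mu$. Your existence argument, however, takes a genuinely different route (penalisation) from the paper, and as written it has two concrete gaps.

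First, the claim that $f^n_s(y,z):=f_s(y,z)+n(y-S_s)^-$ ``still satisfies Assumption \ref{assum:bsdeF} with the same $(\mu,L)$'' is false: the penalty term is Lipschitz in $y$ with constant $n$, so the Lipschitz constant degenerates to $L+n$ (only the monotonicity constant $\mu$ is preserved, since $y\mapsto n(y-S)^-$ is non-increasing). This is not cosmetic: every constant in the a priori estimates you invoke (Proposition \ref{EstimationRBSDE} and its non-reflected analogue) depends on the Lipschitz constant, so citing ``the a priori estimates'' cannot yield bounds uniform in $n$. To get uniform bounds one must redo the It\^o computation treating $n(Y^n_s-S_s)^-ds=dK^n_s$ as the increasing process and using $-Y^n_{s-}dK^n_s\le -S_{s-}dK^n_s\le S^+_{s-}dK^n_s$ together with Young's inequality to absorb $K^n_\tau$ --- the El Karoui--Kapoudjian--Pardoux--Peng--Quenez argument --- and in the present weighted random-horizon setting this requires spending part of the gap $\rho>-\mu$ (note also that $\|S^+\|_{\cD^q_{\rho,\tau}(\dbP)}<\infty$ only controls $\int_0^\tau e^{2\rho'' t}|S^+_t|^2dt$ for $\rho''<\rho$, since $\tau$ may be unbounded, so Assumption \ref{assum:bsde-integ} for $f^n$ holds only after shrinking $\rho$). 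None of this is carried out. Second, and more seriously, the step you yourself flag as ``genuinely delicate'' --- strong $\cH^p$-convergence of $Z^n$, convergence of $K^n$, and recovery of the Skorokhod condition in the limit --- is exactly where penalisation is known to be problematic for a merely c\`adl\`ag obstacle even in the finite-horizon $L^2$ Brownian case; it is not a routine upgrade and no argument is offered. The paper avoids both difficulties by a different construction: it first solves the RBSDE with generator $-\mu y$ via the Snell envelope of an optimal stopping problem (which handles the obstacle and the Skorokhod condition directly), then truncates the horizon at $\tau_n=\tau\wedge n$ and invokes the finite-horizon RBSDE theory of \cite{BPTZ16}, obtaining the Cauchy property of $(Y^n,Z^n,U^n)$ from the stability estimate and verifying the Skorokhod condition for the limit by a direct $\varepsilon$-discretisation of the integrand. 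Your outline would need the EKPPQ uniform estimates and a convergence/identification argument for c\`adl\`ag obstacles to become a proof.
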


\vspace{2mm}

The existence part of this result is proved in Section \ref{sect:rbsde-existence}. The uniqueness is a consequence of claim \textbf{(i)} of the following stability and comparison results.

\vspace{2mm}

\begin{theorem} \label{thm:rbsdecomp-stab}
	Let $(f,\xi,S)$ and $(f',\xi',S')$ be two sets of parameters satisfying the conditions of Theorem \ref{thm:rbsde}, with corresponding solutions $(Y,Z,U)$ and $(Y',Z',U')$.
	\begin{enumerate}[{\bf (i)}]
		\item {\rm Comparison.} Assume $\xi\le\xi'$, $\dbP$-a.s. on $\{\tau<\infty\}$, $f(y,z)\leq f'(y,z)$ for all $(y,z)\in\dbR\times\dbR^d$, and $S\leq S'$, $dt\otimes\dbP$-a.e. Then, $Y_{\tau_0}\leq Y'_{\tau_0}$, $\dbP$-a.s., for all finite stopping time $\tau_0\leq\tau$, $\dbP$-a.s.
		\item {\rm Stability.} Let $S=S'$, and denote $\delta\xi:=\xi-\xi'$, $\delta Y:=Y-Y'$, $\delta Z:=Z-Z'$, $\delta U:=U-U'$ and $\delta f= f-f'$. Then, for all $1<p<p'< q$ and $-\mu\leq\eta<\eta'<\rho$, we have:
		\begin{align*}
		& \|\delta Y\|^p_{\cD^p_{\eta,\tau}(\dbP)} 
		+ \|\delta Z\|^p_{\cH^p_{\eta,\tau}(\dbP)} 
		+ \|\delta U\|^p_{\cN^p_{\eta,\tau}(\dbP)} 
		\\
		& \hspace{1mm}\leq C_{p,p',\eta,\eta'}
		\Big\{\Delta_\xi  + \Delta_f  \\
		& \hspace{21mm} +\Big(\Delta_\xi^{\frac12}  + \Delta_f^{\frac12} \Big)
		\Big(\big(\overline{f}^\dbP_{\eta',p,\tau}\big)^{\frac{p}{2}}
		+ \big(\overline{f'}^\dbP_{\eta',p,\tau}\big)^{\frac{p}{2}}
		+ \|Y\|_{\cD^p_{\eta',\tau}(\dbP)}^{\frac{p}{2}} 
		+ \|Y'\|_{\cD^p_{\eta',\tau}(\dbP)}^{\frac{p}{2}}
		\Big)
		\Big\} 
		\end{align*}
		where 
		\begin{align*}
			\Delta_\xi:=\big\|\delta\xi{\mathbb 1}_{\{\tau<\infty\}}\big\|^p_{\cL^{p'}_{\eta',\tau}(\dbP)} 
			 \quad \mbox{and} \quad 
			\Delta_f:=\cE^{\dbP}\bigg[\bigg(\int_0^\tau e^{\eta' s}\big|\delta f_s(Y_s,Z_s)\big|ds\bigg)^{p'}\bigg]^{\frac{p}{p'}}.
		\end{align*}
	Moreover, $\delta\overline{U}:=\int_0^{\cdot\wedge \tau} e^{\eta s}d\delta U_s$ satisfies
		\begin{align*}
	       \big\|\delta\overline{U}\big\|_{\cD_{0,\tau}^p}^p
	        \leq C_{p,L,\eta,\eta'}\Big(\|\delta Y\|^p_{\cD^p_{\eta',\tau}(\dbP)} + \|\delta Z\|^p_{\cH^p_{\eta',\tau}(\dbP)} +\Delta_f\Big).
		\end{align*}
	\end{enumerate}
\end{theorem}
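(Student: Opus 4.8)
The plan is to prove the two claims by the standard linearization/\`a-priori-estimate machinery for reflected BSDEs, adapted to the random horizon and to the dominated nonlinear expectation $\cE^\dbP$. For the comparison claim (i), I would first \emph{linearize} the difference of the two equations. Writing $\delta Y:=Y-Y'$, one has on $[\tau_0,\tau]$
\[
\delta Y_{\tau_0}
=\delta\xi
+\int_{\tau_0}^\tau\big(f_s(Y_s,Z_s)-f'_s(Y'_s,Z'_s)\big)ds
-\int_{\tau_0}^\tau\big(\delta Z_s\cdot dX_s+d\delta U_s\big).
\]
Using Assumption \ref{assum:bsdeF}(i)--(ii) I would write $f_s(Y_s,Z_s)-f'_s(Y'_s,Z'_s)=\alpha_s\,\delta Y_s+\beta_s\cdot\widehat\sigma_s^{\bf T}\delta Z_s+\big(f_s-f'_s\big)(Y'_s,Z'_s)$ with $\alpha$ bounded (monotone part, $\alpha_s\le-\mu$) and $\beta$ bounded by $L$ (Lipschitz part). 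The process $\beta$ being bounded by $L$ is precisely what lets me introduce the measure $\dbQ^\lambda\in\cQ_L(\dbP)$ with $\lambda:=\beta$; under $\dbQ^\lambda$, Girsanov turns $(\delta Z_s\cdot dX_s)$ together with the $\beta$-drift into a $\dbQ^\lambda$-local martingale increment. Multiplying by the discount factor $\Gamma_s:=\exp(\int_{\tau_0}^s\alpha_r dr)$, the discounted $\Gamma\,\delta Y$ becomes a $\dbQ^\lambda$-local supermartingale after incorporating $d\delta U=dN^{\delta}-dK^{\delta}$: the key sign observation is that the driving nonincreasing reflection part enters with the right sign once one invokes the Skorokhod conditions for both equations together with $Y\ge S$, $Y'\ge S'\ge S$. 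Taking $\dbQ^\lambda$-conditional expectation and using $\delta\xi\le0$ and $(f-f')(Y',Z')\le0$ yields $\Gamma_{\tau_0}\delta Y_{\tau_0}\le$ a nonpositive term, hence $Y_{\tau_0}\le Y'_{\tau_0}$ $\dbP$-a.s. The delicate point here, and the one I expect to be the main obstacle, is handling the reflection terms: since $S\le S'$ but the two obstacles differ, the two Skorokhod conditions do not simply cancel, and one must argue that $\int(\mathbf 1\wedge(Y_{r-}-S_{r-}))dK_r=0$ forces $dK^{\delta}$ to push in the favorable direction on the set $\{\delta Y>0\}$; this requires the local-time/occupation argument of El Karoui et al. adapted to the c\`adl\`ag supermartingale setting.

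For the stability claim (ii), where $S=S'$, the strategy is an It\^o/Tanaka estimate on $e^{\eta s}\delta Y$. First I would apply It\^o's formula to $|e^{\eta t}\delta Y_t|^p$ (or, to avoid smoothness issues at $0$, to $(\varepsilon+|e^{\eta t}\delta Y_t|^2)^{p/2}$ and let $\varepsilon\downarrow0$). The crucial structural fact is that because $S=S'$, the reflection terms again combine favorably: $\int(\delta Y_{r-})\,d(\delta U)_r\le0$, since whenever $\delta Y>0$ it is $U'$ (with obstacle $S'=S$) that is flat and $U$ that may decrease, and symmetrically. This kills the otherwise uncontrollable $dU$ contribution and reduces the estimate to exactly the BSDE-type inequality one would get in Theorem \ref{thm:bsdecomp-stab}. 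Then, using the monotonicity to absorb the $\alpha\,\delta Y$ drift (this is where $\eta\ge-\mu$ is used to make the discounted drift have the right sign) and the Lipschitz bound to control the $\beta\cdot\widehat\sigma^{\bf T}\delta Z$ term, I would obtain a bound on $\cE^\dbP[\sup|e^{\eta t}\delta Y_t|^p]$ and on the bracket $\int_0^\tau|e^{\eta s}\widehat\sigma_s^{\bf T}\delta Z_s|^2ds$ plus $\int_0^\tau e^{2\eta s}d[\delta U]_s$ in terms of $\Delta_\xi$ and $\Delta_f$.

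The cross terms of the form $(\Delta_\xi^{1/2}+\Delta_f^{1/2})(\cdots)^{p/2}$ that appear in the stated right-hand side are the signature of the \emph{nonlinear} stability estimate: they arise from a Young/Cauchy--Schwarz splitting of the product of $\delta Y$ (or $\delta Z$) against the driver differences, where one factor is estimated in the finer norm at exponent $p'$ and discount $\eta'$ (hence the $\overline f^\dbP_{\eta',p,\tau}$ and $\|Y\|_{\cD^p_{\eta',\tau}}$ terms), while the remaining factor is raised to the half power. To produce them I would, after the basic It\^o estimate, separate the term $\int_0^\tau e^{\eta s}\delta Y_s\,\delta f_s(Y_s,Z_s)ds$ by writing $|\delta Y_s|\le\sup|\delta Y|$ and bounding $\int e^{\eta s}|\delta f_s|ds$ by $\Delta_f^{1/p'}$, then applying Young's inequality with the conjugate exponents built from $p,p'$; the gap $\eta<\eta'$ is exactly what absorbs the exponential weight mismatch via a deterministic bound $e^{(\eta-\eta')s}\le1$. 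For the final estimate on $\delta\overline U$, I would simply rearrange the linearized equation to isolate $d\delta U_s$ and take $\cD^p_{0,\tau}$ norms, using the already-established bounds on $\delta Y$, $\delta Z$ together with the Lipschitz control on the driver, which produces the claimed constant $C_{p,L,\eta,\eta'}$. Throughout, the passage from one-measure estimates to the $\cE^\dbP=\sup_{\dbQ\in\cQ_L(\dbP)}\dbE^\dbQ$ formulation is routine because $\cQ_L(\dbP)$ is a fixed family of equivalent measures with uniformly bounded densities, so all $\dbP$-a.s. pathwise inequalities and all Burkholder--Davis--Gundy constants transfer uniformly in $\dbQ$.
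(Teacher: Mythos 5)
Your plan for part (ii) is essentially the paper's argument: a Tanaka estimate on $e^{\eta's}|\delta Y_s|$ combined with the Skorokhod-condition sign fact $\delta Y_{s-}\,d\delta K_s\le 0$ (valid since $S=S'$), a Girsanov change to $\dbQ^{\widehat\lambda}\in\cQ_L(\dbP)$ absorbing the Lipschitz-in-$z$ term, the Doob-type Lemma \ref{supDoob} for the $\cD^p$ bound on $\delta Y$, then It\^o on $e^{2\eta s}(\delta Y_s)^2$ plus BDG/Young for $\delta Z$ and $\delta U$, and finally isolating $d\delta U$ from the equation. One small discrepancy in mechanism: the mixed terms $(\Delta_\xi^{1/2}+\Delta_f^{1/2})(\cdots)^{p/2}$ do not come from splitting $\int e^{2\eta s}\delta Y_s\,\delta f_s\,ds$ (that term only yields $\|\delta Y\|^p_{\cD^p}+\Delta_f$); in the paper they arise from the BDG bound of $\int_0^\tau e^{4\eta s}|\delta Y_{s-}|^2 d[\delta N]_s$ via $d[\delta N]\le 2(d[N]+d[N'])$, after which $\|N\|^{p/2}_{\cN^p}$, $\|N'\|^{p/2}_{\cN^p}$ are converted into $\overline{f}^{\,p/2}$ and $\|Y\|^{p/2}_{\cD^p}$ by the a priori estimate of Proposition \ref{EstimationRBSDE}. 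You would need that a priori estimate (or an equivalent) to land on the stated right-hand side.

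For part (i) you take a genuinely different route. The paper does not linearize the difference of the two reflected equations at all: it compares the Snell envelopes $y_{\tau_n}\le y'_{\tau_n}$ used as terminal data in the approximating finite-horizon RBSDEs of Section \ref{sect:rbsde-existence}, invokes the known finite-horizon comparison (\cite[Proposition 3.2]{RS12}) to get $Y^n\le{Y'}^n$, and passes to the limit using the convergence established in Proposition \ref{prop:rbsde-existence}. This sidesteps entirely the delicate point you correctly identify, namely controlling the reflection terms when $S\le S'$ but $S\ne S'$. Your direct argument can be made to work --- on $\{\delta Y_{s-}>0\}$ one has $Y_{s-}>Y'_{s-}\ge S'_{s-}\ge S_{s-}$, so the Skorokhod condition forces $dK_s=0$ there and hence $\Ind_{\{\delta Y_{s-}>0\}}\,d\delta K_s=-\Ind_{\{\delta Y_{s-}>0\}}\,dK'_s\le 0$, which is the favorable sign for $(\delta Y)^+$ --- but it requires the Tanaka/occupation argument you mention plus a uniform-integrability passage $n\wedge\tau\to\tau$ to take conditional expectations over the unbounded horizon, neither of which is needed in the paper's approach since those difficulties are outsourced to the cited finite-horizon result and to the already-proven convergence of the approximating scheme. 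The trade-off is that your route is self-contained, while the paper's is shorter but leans on the construction from the existence proof.
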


The proof of \textbf{(ii)} is reported in Section \ref{sect:rbsde-stability}, while \textbf{(i)} is proved at the end of Section \ref{sect:rbsde-existence}.

Notice that the stability result is incomplete as the differences $\delta Y$, $\delta Z$ and $\delta U$ are controlled by the norms of $Y$ and $Y'$. However, in contrast with the estimate \eqref{estznk} in the backward SDE context, we have unfortunately failed to derive a similar control of $(Y,Z,U)$ by the ingredients $\xi$, $f^0$ and $S$ in the present context of random horizon reflected backward SDE due to the presence of the orthogonal martingale $N$ in the general filtration, see also \cite{BPTZ18}.

\subsection{Random horizon second order backward SDE}

Following Soner, Touzi \& Zhang \cite{STZ12}, we introduce second order backward SDE as a family of backward SDEs defined on the supports of a convenient family of singular probability measures. For this reason, we introduce the subset of $\cP_{loc}$:
	\begin{equation} \label{cP0}
		\cP_0 = \Big\{\dbP\in\cP_{loc}: f^0_t(\omega)<\infty, 
		\mbox{ for Leb}\!\otimes\!\dbP\mbox{-a.e.}~(t,\omega)\in\dbR_+\times\Omega\Big\},
	\end{equation}
  where we recall that $f^0_t(\omega)=F_t\big(\omega,0,0,\widehat\sigma_t(\omega)\big)$. 
Note that in the context of stochastic control, which is the major application of second order backward SDE, the set $\cP_0$ defined above is the set of all admissible controls of volatility. We also define for all finite stopping times $\tau_0$:
   \begin{equation*}
   	  \cP_\dbP(\tau_0) := \big\{\dbP'\in\cP_0:~\dbP'=\dbP~\mbox{on}~\cF_{\tau_0}\big\},
   	    \quad \mbox{and} \quad
   	  \cP_\dbP^+(\tau_0) := \bigcup_{h>0}\cP_\dbP\big(\tau_0+h\big).
   \end{equation*}
We remark that the definition of $\cP_\dbP^+(\tau_0)$ differs slightly from the one in \cite{STZ12,STZ13}, in which the authors studied second order backward SDEs under the extra uniform continuity condition.

For a finite $\mathbb{F}$-stopping time $\tau$, the second order backward SDE (2BSDE, hereafter) is defined by
	\begin{equation}  \label{2bsdel}
		Y_{t\wedge\tau} = \xi + \int_{t\wedge\tau}^\tau\Big(F_s (Y_s,  Z_s,\widehat \sigma_s )ds - Z_s \cdot dX_s - dU_s\Big),
		 \quad \cP_0\mbox{-q.s.}
	\end{equation}
  for some supermartingale $U$ together with a convenient minimality condition.

\vspace{2mm}

\begin{definition} \label{def:2bsde}
	Let $p>1$ and $\eta\in\dbR$.
	A process $(Y,Z)\in\cD ^p_{\eta,\tau}\big(\mathcal P_0, \mathbb{F}^{+,\cP_0}\big)\times \mathcal H^p_{\eta,\tau}\big(\mathcal P_0,\mathbb{F}^{\cP_0}\big)$ is said to be a solution of the 2BSDE \eqref{2bsdel}, if for all $\dbP\in\cP_0$, the process
	 \begin{align*}
	 	U^\dbP_{t\wedge\tau} := Y_{t\wedge\tau}- Y_0 + \int_0^{t\wedge\tau}\Big(F_s (Y_s,  Z_s,\widehat \sigma_s )ds - Z_s \cdot dX_s\Big),
	 	  \quad t\geq 0, \quad \dbP\mbox{-a.s.}
	 \end{align*}
	is a c\`adl\`ag $\dbP$-local supermartingale starting from $U^\dbP_0=0$, orthogonal to $X$, i.e.~$[X,U^\dbP]=0$, $\dbP-$a.s.~and satisfying the minimality condition
	 \begin{align*}
	 	U^\dbP_{s\wedge\tau} = \esssup^\dbP_{\dbP'\in\cP_\dbP^+(s\wedge\tau)}\dbE^{\dbP'}\big[ U^{\dbP'}_{t\wedge\tau} \big| \cF^{+,\dbP'}_{s\wedge\tau}\big],
	 	~~\dbP\mbox{-a.s.} \quad \mbox{for all }~0\leq s\leq t.
	 \end{align*}
\end{definition}

\vspace{2mm}

\begin{remark}
	Notice that the last definition relaxes slightly \eqref{2bsdel} by allowing for a dependence of $U$ on the underlying probability measure. This dependence is due to the fact that the stochastic integral $Z\sint X:=\int_0^\cdot Z_s\cdot dX_s$ is defined $\dbP-$a.s. under all $\dbP\in\cP_0$, and should rather be denoted by $(Z\sint X)^\dbP$ in order to emphasize the $\dbP-$dependence. 
	
	By Theorem 2.2 in Nutz \cite{Nut12}, the family $\{(Z\sint X)^\dbP\}_{\dbP\in \cP_0}$ can be aggregated as a medial limit $(Z\sint X)$ under the acceptance of Zermelo-Fraenkel set theory with axiom of choice together with the continuum hypothesis into our framework. In this case, $(Z\sint X)$ can be chosen as an $\mathbb{F}^{+, \cP_0}$-adapted process, and the family $\{U^\dbP\}_{\dbP\in\cP_0}$ can be aggregated into the resulting medial limit $U$, i.e., $U=U^\dbP$, $\dbP-$a.s. for all $\dbP\in\cP_0$. 
\end{remark}

The following assumption requires the additional notations:
  $$ \xi^{t,\om}(\om') := \xi(\om\otimes_t\om'),
       \quad 
     f^{0,t,\om}_s(\om') := F_{t+s}\big(\om\otimes_t\om',0,0,\widehat\sigma_s(\om')\big),
       \quad 
     \vec{\tau}^{t,\om}:=\tau^{t,\om}-t,
  $$
  which involve the paths concatenation operator 
   $$ (\om\otimes_t\om')_s :=\Ind_{\{s\le t\}}\om_s+\Ind_{\{s>t\}}\big(\om_t+\om'_{s-t}\big), $$ 
  and
   \begin{align*}
   	  \cP(t,\om) := \Big\{\dbP\in\cP_{loc}:~f^{0,t,\om}_s(\om')<\infty, \quad \mbox{for~~Leb}\otimes\dbP\mbox{-a.e.}~(s,\om')\in\dbR_+\times\Om\Big\},
   \end{align*}
  so that $\cP_0=\cP(0,{\bf 0})$. 
  
 \vspace{2mm} 
  
\begin{assumption} \label{assum:2bsde-integ}
	We assume that
	\begin{enumerate}[$(i)$]
		\item $\tau$ is a stopping time with 
		        $$ \displaystyle \lim_{n\to\infty}\!\!\cE^{\cP_0}\big[\Ind_{\{\tau\geq n\}}\big] = 0, $$ 
		      $\xi$ is  $\cF_\tau-$measurable, and there are constants $\rho>-\mu$, and $q>1$ such that
		      \begin{align*}
		      	\big\|\xi\big\|_{\cL^q_{\rho,\tau}(\cP_0)} < \infty, \quad \mbox{and} \quad 
		      	\overline{F}^0_{\rho,q,\tau} := \cE^{\cP_0}\bigg[\bigg(\int_0^\tau\big|e^{\rho t}f_t^0\big|^2dt\bigg)^{\frac{q}{2}}\bigg]^{\frac1q} < \infty.
		      \end{align*}
		\item Furthermore, the following dynamic version of $(i)$ holds for all $(t, \omega)\in \llbracket 0, \tau\rrbracket$:
		      \begin{align*}
		      	\big\|\xi^{t, \omega}\big\|_{\mathcal L^q_{\rho,\vec{\tau}^{t,\om}}(\mathcal{P}(t, \omega))} < \infty, 
		      	 \quad \mbox{and} \quad
		      	\overline{F}_{\rho,q}^{0,t,\omega} 
		      	:= \cE^{\cP(t,\omega)}\bigg[\bigg(\int_0^{\vec{\tau}^{t,\om}}\big|e^{\rho s}f^{0,t,\om}_s\big|^2ds \bigg)^{\frac{q}{2}} \bigg]^{\frac1q} 
		      	< \infty.
		      \end{align*}
    \end{enumerate}
\end{assumption}

\vspace{2mm}

\begin{theorem} \label{mainthm}
	Under Assumptions \ref{assum:bsdeF} and \ref{assum:2bsde-integ} $(i)$, the 2BSDE \eqref{2bsdel} has at most one solution $(Y,Z)\in \mathcal D^p_{\eta,\tau}\big(\mathcal P_0, \mathbb{F}^{+, \cP_0}\big)\times \mathcal H^p_{\eta,\tau}\big(\mathcal P_0, \mathbb{F}^{\cP_0}\big)$, for all $p\in(1,q)$ and $\eta\in[-\mu,\rho)$, with
	 \begin{equation} \label{apriori-2bsde}
	 	\|Y\|^p_{\cD^p_{\eta,\tau}(\cP_0)}+ \|Z\|^p_{\cH^p_{\eta,\tau}(\cP_0)} 
	 	\leq C_{p,q,\eta,\rho}\Big(\|\xi\|^p_{\cL^q_{\rho,\tau}(\cP_0)} + \big(\overline{F}^0_{\rho,q,\tau}\big)^p\Big).
	 \end{equation}
	Under the additional Assumption \ref{assum:2bsde-integ} $(ii)$, such a solution $(Y,Z)$ for the 2BSDE \eqref{2bsdel} exists.

	If $\cP_0$ is saturated\footnote{We say that the family $\cP_0$ is saturated if, for all $\dbP\in\cP_0$, we have $\dbQ\in\cP_0$ for every probability measure $\dbQ\sim\dbP$ on $(\Omega,\cF)$ such that $X$ is $\dbQ-$local martingale. The assertion follows by the same argument as in \cite[Theorem 5.1]{PTZ15}.}, then $U^\dbP$ is a $\dbP$-a.s. non-increasing process for all $\dbP\in\cP_0$.
\end{theorem}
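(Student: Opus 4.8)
The plan is to follow the three--step programme that is by now standard for second order backward SDEs (\cite{STZ12,PTZ15}): a representation formula for any solution yielding uniqueness and the estimate \eqref{apriori-2bsde}; a dynamic--programming construction for existence; and a perturbation argument for the saturated case. For uniqueness, the heart of the matter is to show that any solution $(Y,Z)$ of \eqref{2bsdel} satisfies
\[ Y_{\tau_0}=\esssup^\dbP_{\dbP'\in\cP_\dbP^+(\tau_0)}\cY^{\dbP'}_{\tau_0},\qquad \dbP\mbox{-a.s., for all }\dbP\in\cP_0\mbox{ and all stopping times }\tau_0\le\tau, \]
where $\cY^{\dbP'}$ is the first component of the unique solution, provided by Theorem \ref{thm:bsde}, of the backward SDE \eqref{bsde1w} under $\dbP'$ with data $(f,\xi)$. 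Since the right--hand side depends only on $(F,\xi,\tau)$ and not on the chosen solution, this pins down $Y$ uniquely $\cP_0$--q.s.; uniqueness of $Z$ then follows by identifying $\widehat a Z$ with the $\cP_0$--q.s.\ defined density of $d\langle Y,X\rangle$ with respect to $dt$.

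To prove the representation, the inequality ``$\ge$'' is a supersolution comparison: for fixed $\dbP'\in\cP_\dbP^+(\tau_0)$ the supermartingale $U^{\dbP'}$ makes $(Y,Z)$ a supersolution of \eqref{bsde1w} under $\dbP'$, so Remark \ref{rem:supersolution} applied to Theorem \ref{thm:bsdecomp-stab} gives $Y\ge\cY^{\dbP'}$, hence $Y_{\tau_0}\ge\cY^{\dbP'}_{\tau_0}$. The reverse inequality ``$\le$'' is the crux: writing $\delta Y:=Y-\cY^{\dbP'}$, $\delta Z:=Z-\cZ^{\dbP'}$ and linearising the generator via Assumption \ref{assum:bsdeF}(i) as $f_s(Y_s,Z_s)-f_s(\cY^{\dbP'}_s,\cZ^{\dbP'}_s)=a_s\delta Y_s+b_s\cdot\widehat\sigma_s^{\bf T}\delta Z_s$ with bounded adapted $(a,b)$, one solves the resulting linear equation through an equivalent change of measure $\dbQ\in\cQ_L(\dbP')$ and an exponential discount $\Gamma$, obtaining $\delta Y_{\tau_0}=\dbE^{\dbQ}\big[\int_{\tau_0}^\tau\Gamma_s\,dU^{\dbP'}_s\,\big|\,\cF^{+,\dbP'}_{\tau_0}\big]$; taking the essential infimum over $\dbP'\in\cP_\dbP^+(\tau_0)$ and invoking the minimality condition of Definition \ref{def:2bsde} forces this infimum to vanish. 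The estimate \eqref{apriori-2bsde} is then obtained by inserting the BSDE estimate \eqref{estznk} into the representation and taking suprema over $\cP_0$, its right--hand side being uniformly controlled under Assumption \ref{assum:2bsde-integ}(i).

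For existence, under the additional Assumption \ref{assum:2bsde-integ}(ii), I would build the candidate by dynamic programming: for $(t,\omega)$ let $\cY^{\dbP',t,\omega}$ solve the shifted backward SDE under $\dbP'\in\cP(t,\omega)$ with horizon $\vec{\tau}^{t,\omega}$ and shifted data --- well posed by Theorem \ref{thm:bsde} precisely because the dynamic integrability of Assumption \ref{assum:2bsde-integ}(ii) holds --- and set $Y_t(\omega):=\esssup^\dbP_{\dbP'\in\cP(t,\omega)}\cY^{\dbP',t,\omega}_0$. The main tasks are: (a) a dynamic programming principle for $Y$, via measurable selection and pasting of measures in $\cP(t,\omega)$; (b) regularity, i.e.\ a c\`adl\`ag $\cP_0$--q.s.\ aggregated modification of $Y$ which, under each $\dbP\in\cP_0$, is a $\dbP$--supermartingale modulo the generator drift --- this is where I expect the reflected backward SDE theory of Theorems \ref{thm:rbsde} and \ref{thm:rbsdecomp-stab} to be essential, the upward push of the solution being governed exactly as a Skorokhod reflection term; and (c) applying the Doob--Meyer decomposition and the orthogonal martingale representation under each $\dbP$ to produce $Z$ and $U^\dbP$ simultaneously, with $Z$ aggregated into a single $\cP_0$--q.s.\ process via the measure--independence of $d\langle Y,X\rangle$. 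The minimality condition is then read off the dynamic programming principle. I expect the main obstacle to lie in this existence part, namely the measurable--selection and pasting arguments underlying the dynamic programming principle together with the quasi--sure regularity and aggregation.

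Finally, assume $\cP_0$ is saturated and fix $\dbP\in\cP_0$ with Doob--Meyer decomposition $U^\dbP=N^\dbP-K^\dbP$; proving $U^\dbP$ non-increasing amounts to showing $N^\dbP\equiv 0$. I would perturb $\dbP$ in a direction orthogonal to $X$: for bounded $\dbF$--predictable $\beta$, let $\dbQ\sim\dbP$ have density $\cE\big(\int_0^\cdot\beta_s\,dN^\dbP_s\big)$. Since $[X,N^\dbP]=0$, $X$ remains a $\dbQ$--local martingale, so saturation gives $\dbQ\in\cP_0$; as $\dbQ\sim\dbP$, the stochastic integral $Z\sint X$ and hence $U^\dbP$ are unchanged, so that $U^{\dbQ}=U^\dbP$, $\dbQ$--a.s., and is therefore a $\dbQ$--supermartingale. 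By Girsanov the $\dbQ$--compensator of $U^\dbP$ equals $\int_0^\cdot\beta_s\,d\langle N^\dbP\rangle_s-K^\dbP$, which must be non-increasing; the arbitrariness of $\beta$ then forces $d\langle N^\dbP\rangle=0$, i.e.\ $N^\dbP\equiv 0$ and $U^\dbP=-K^\dbP$, as claimed.
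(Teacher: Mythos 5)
Your overall strategy coincides with the paper's: uniqueness via the representation $Y_{\tau_0}=\esssup^\dbP_{\dbP'\in\cP_\dbP^+(\tau_0)}\cY^{\dbP'}_{\tau_0}$ (supersolution comparison for ``$\ge$'', linearization plus the minimality condition for ``$\le$''), existence via the value function of shifted BSDEs, dynamic programming, a c\`adl\`ag modification $V^+$, and a reflected BSDE with obstacle $V^+$ under each measure, and the saturation claim via an equivalent change of measure in the direction of $N^\dbP$ --- which is precisely the argument of the cited reference that the paper itself does not reproduce.

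The one step you gloss over, and which as written does not go through, is the reverse inequality of the representation taken directly at the random horizon. Your identity $\delta Y_{\tau_0}=\dbE^{\dbQ}\big[\int_{\tau_0}^\tau\Gamma_s\,dU^{\dbP'}_s\,\big|\,\cF^{+,\dbP'}_{\tau_0}\big]$ needs (i) a moment bound on the exponential weight $\Gamma$, which the paper only obtains uniformly on bounded time intervals (the bound is of order $e^{L(t_2-t_1)}$ and degenerates as the horizon grows), and (ii) the minimality condition of Definition \ref{def:2bsde}, which is formulated only between bounded times $s\le t$. The paper therefore first proves the representation between two deterministic times, $Y_{t_1\wedge\tau}=\esssup^\dbP_{\dbP'}\cY^{\dbP'}_{t_1\wedge\tau}[Y_{t_2\wedge\tau},t_2\wedge\tau]$, and only then sends $t_2=n\to\infty$; this limiting step is exactly where the tail hypothesis $\lim_n\cE^{\cP_0}[\Ind_{\{\tau\ge n\}}]=0$ of Assumption \ref{assum:2bsde-integ} (i) and the two-horizon stability estimate of Proposition \ref{prop:BSDEstab2times} enter, via the bound $|e^{\eta\tau}\xi-e^{\eta(n\wedge\tau)}Y_{n\wedge\tau}|\le 2\,\Ind_{\{\tau\ge n\}}e^{\eta\tau}\sup_{s\le\tau}|Y_s|$ and the control of the tail of $\int_{n\wedge\tau}^\tau e^{\eta s}|F_s(\cY_s^{\dbP'},\cZ_s^{\dbP'},\widehat\sigma_s)|\,ds$. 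Neither ingredient appears in your outline, and without them the passage to the random horizon is unjustified. The rest of your plan matches the paper's route; for existence note only that the identification $Y^\dbP=V^+$ is not read off the Skorokhod condition alone but follows from a localization argument (on $[0,\tau_\eps]$ the reflected equation degenerates to a BSDE, contradicting the dynamic programming principle for $V^+$ if $Y^\dbP_0>V^+_0$), which is consistent with, though more specific than, what you describe.
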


\vspace{2mm}

Similar to Soner, Touzi \& Zhang \cite{STZ12}, the following comparison result for second order backward SDEs is a by-product of our construction; the proof is provided in Proposition \ref{prop:representation}.

\vspace{2mm}

\begin{proposition}\label{prop:2BSDEcomp}
	Let $(Y,Z)$ and $(Y',Z')$ be solutions of 2BSDEs with parameters $(F,\xi)$ and $(F',\xi')$, respectively, which satisfy Assumptions \ref{assum:bsdeF} and \ref{assum:2bsde-integ}. Suppose further that $\xi\leq\xi'$ and $F_t\big(y,z,\widehat\sigma_t\big)\le F'_t\big(y,z,\widehat\sigma_t\big)$ for all $(y,z)\in\dbR\times\dbR^d$, $dt\otimes\cP_0$-q.s. Then, we have $Y\leq Y'$, $dt\otimes\cP_0$-q.s.~on $\llbracket 0,\tau\rrbracket$.
\end{proposition}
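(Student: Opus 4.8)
The plan is to reduce the comparison to the backward SDE comparison principle via a nonlinear representation of the $Y$-component of a 2BSDE solution, which is the content of Proposition \ref{prop:representation}. First I would establish that, for every $\dbP\in\cP_0$ and every stopping time $\tau_0\le\tau$,
\[ Y_{\tau_0}=\esssup^\dbP_{\dbP'\in\cP_\dbP^+(\tau_0)}\cY^{\dbP'}_{\tau_0}(f,\xi), \qquad \dbP\text{-a.s.}, \]
and likewise $Y'_{\tau_0}=\esssup^\dbP_{\dbP'\in\cP_\dbP^+(\tau_0)}\cY^{\dbP'}_{\tau_0}(f',\xi')$, where $\cY^{\dbP'}(f,\xi)$ denotes the $Y$-component of the solution of the backward SDE \eqref{bsde1w} under $\dbP'$, on $\llbracket\tau_0,\tau\rrbracket$, with generator $f$ and terminal condition $\xi$. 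This representation is the dynamic-programming counterpart of the minimality condition of Definition \ref{def:2bsde}: decomposing $Y$ under each $\dbP'$ as the BSDE solution $\cY^{\dbP'}(f,\xi)$ plus the supermartingale $U^{\dbP'}$, the essential-supremum form of the minimality condition identifies $\cY^{\dbP'}$ as the lower envelope and hence $Y$ as the upper envelope over $\cP_\dbP^+(\tau_0)$.

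Granting the representation, the result follows quickly. Fix $\dbP\in\cP_0$ and a stopping time $\tau_0\le\tau$. For each fixed $\dbP'\in\cP_\dbP^+(\tau_0)$, the processes $\cY^{\dbP'}(f,\xi)$ and $\cY^{\dbP'}(f',\xi')$ solve, under the common measure $\dbP'$, backward SDEs with parameters $(f,\xi)$ and $(f',\xi')$. Since $\xi\le\xi'$ holds $\cP_0$-q.s., hence $\dbP'$-a.s., and $f_t(y,z)\le f'_t(y,z)$ for all $(y,z)$, $dt\otimes\dbP'$-a.e., the comparison principle of Theorem \ref{thm:bsdecomp-stab}(ii) gives $\cY^{\dbP'}_{\tau_0}(f,\xi)\le\cY^{\dbP'}_{\tau_0}(f',\xi')$, $\dbP'$-a.s., and therefore $\dbP$-a.s., because $\dbP'=\dbP$ on $\cF_{\tau_0+h}$ for some $h>0$. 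Taking the essential supremum over $\dbP'\in\cP_\dbP^+(\tau_0)$ preserves the inequality, whence $Y_{\tau_0}\le Y'_{\tau_0}$, $\dbP$-a.s. Letting $\dbP$ vary over $\cP_0$ yields $Y_{\tau_0}\le Y'_{\tau_0}$, $\cP_0$-q.s., for every stopping time $\tau_0\le\tau$. Applying this at the deterministic times of a countable dense subset of $\dbR_+$ and invoking the right-continuity of $Y$ and $Y'$ upgrades the conclusion to $Y\le Y'$, $dt\otimes\cP_0$-q.s.~on $\llbracket0,\tau\rrbracket$.

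The main obstacle is not this comparison step but the representation of Proposition \ref{prop:representation}. Its proof requires a dynamic-programming / pasting argument over the non-dominated family: one must show that the minimality condition of Definition \ref{def:2bsde} is equivalent to the essential-supremum formula, which relies on the stability of $\cP_0$ under conditioning and concatenation (via the shifted data $\xi^{t,\om}$, $f^{0,t,\om}$ and the family $\cP(t,\om)$ of Assumption \ref{assum:2bsde-integ}), together with a measurable selection to aggregate the family $\{\cY^{\dbP'}\}$ and the $\dbP$-dependent stochastic integrals into $\dbF^{+,\cP_0}$-measurable objects. One further point to verify is that both essential suprema are taken over the same index set: since $F\le F'$ forces $f^0\le f'^0$ and hence $\cP_0'\subseteq\cP_0$ for the families associated with $F'$ and $F$, one should either restrict to a common family or check that the representation of $Y'$ remains valid over $\cP_\dbP^+(\tau_0)$, so that the monotonicity of the essential supremum can indeed be applied.
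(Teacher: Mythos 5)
Your proposal is correct and follows essentially the same route as the paper: Proposition \ref{prop:representation} establishes the representation $Y_{t\wedge\tau}=\esssup^{\dbP}_{\dbP'\in\cP_\dbP^+(t\wedge\tau)}\cY^{\dbP'}_{t\wedge\tau}[\xi,\tau]$, and the paper then obtains the comparison exactly as you do, by applying the BSDE comparison of Theorem \ref{thm:bsdecomp-stab} under each fixed $\dbP'$ and taking the essential supremum. Your closing remark about the two families $\cP_0$ and $\cP_0'$ possibly differing is a legitimate bookkeeping point that the paper passes over silently, but it does not change the argument.
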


\section{Wellposedness of random horizon reflected BSDEs}  \label{sect:rbsde}

Throughout this section, we fix a probability measure $\dbP\in\cP_{loc}$, and we omit the dependence on $\dbP$ in all of our notations. We also observe that $\cQ_L:=\cQ_L(\dbP)$ is stable under concatenation. 

For all $\dbQ^\lambda\in\cQ_L$, it follows from Girsanov's Theorem that 
\begin{itemize}
	\item $W^\lambda:=W-\int_0^\cdot \lambda_sds$ is a $\dbQ^\lambda$-Brownian motion, $X^\lambda:= X-\int_0^\cdot\widehat\sigma_s\lambda_sds$ is a $\dbQ^\lambda$-local martingale, and we may rewrite the RBSDE as 
	       \begin{align*}
	       	  dY_t = -f^\lambda_t(Y_t,Z_t)dt + Z_t\cdot dX^\lambda_t+ dU_t, \quad \mbox{where} \quad 
	       	   f^\lambda_t(y,z):=f_t(y,z)-\widehat\sigma^{\top}_tz\cdot\lambda_t
	       \end{align*}
        	satisfies the Assumption \ref{assum:bsdeF} with Lipschitz coefficient $2L$.
   	\item $U$ remains a $\dbQ^\lambda$-supermartingale, with the same Doob-Meyer decomposition as under $\dbP$.
\end{itemize}

\subsection{Auxiliary inequalities}

We first state a Doob-type inequality. For simplicity, we write $\cE[\cdot]:= \cE^\dbP[\cdot]$.

\begin{lemma}  \label{supDoob}
	Let $(M_t)_{0\leq t\leq \tau}$ be a uniformly integrable martingale under some $\widehat{\dbQ}\in\cQ_L$. 
	Then, 
	\begin{align*}
		\cE\bigg[\sup_{0\leq t\leq \tau}|M_t|^p\bigg] 
		\leq \frac{q}{q-p}\big(\cE\big[|M_\tau|^q\big]\big)^{\frac{p}{q}}, \quad \mbox{for all}~~0<p<q.
	\end{align*}
\end{lemma}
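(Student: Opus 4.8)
The plan is to reduce the nonlinear-expectation Doob inequality to the classical Doob maximal inequality under a single measure $\widehat{\dbQ}$, and then control the nonlinear expectation $\cE[\cdot]=\sup_{\dbQ\in\cQ_L}\dbE^{\dbQ}[\cdot]$ by exploiting the specific structure of the family $\cQ_L$. Recall that every $\dbQ\in\cQ_L$ has a density $\rmD^{\dbQ|\dbP}$ driven by a bounded process $\lambda$, so the measures in $\cQ_L$ are mutually equivalent and their densities are controlled. First I would observe that, under the fixed measure $\widehat{\dbQ}$ with respect to which $M$ is a uniformly integrable martingale, the process $|M|^p$ is a $\widehat{\dbQ}$-submartingale (for $p>1$, by Jensen; for $0<p\le 1$ one works with $|M|$ itself and an exponent). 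The classical $L^r$-Doob inequality then gives, for any $r>1$,
 \beaa
 \dbE^{\widehat{\dbQ}}\Big[\sup_{0\le t\le\tau}|M_t|^{p}\Big]
 &\le&
 \Big(\frac{r}{r-1}\Big)^{p}\dbE^{\widehat{\dbQ}}\big[|M_\tau|^{pr}\big],
 \eeaa
and the natural choice is $pr=q$, i.e.~$r=q/p$, which produces exactly the constant $q/(q-p)$ appearing in the statement.

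The main difficulty is that the left-hand side of the claim is $\cE[\cdot]=\sup_{\dbQ\in\cQ_L}\dbE^{\dbQ}[\cdot]$, a supremum over \emph{all} measures in $\cQ_L$, whereas $M$ is only assumed to be a uniformly integrable martingale under the \emph{single} measure $\widehat{\dbQ}$. Thus the key step is to pass from $\dbE^{\dbQ}[\sup|M|^p]$ for an arbitrary $\dbQ\in\cQ_L$ back to quantities measured under $\widehat{\dbQ}$. Here I would use that $\cQ_L$ is stable under concatenation (noted at the start of Section~\ref{sect:rbsde}) and, more importantly, that for $\dbQ,\widehat{\dbQ}\in\cQ_L$ the density $d\dbQ/d\widehat{\dbQ}$ is again of the admissible exponential form with a bounded integrand. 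The cleanest route is to run the $\dbQ$-Doob argument \emph{pathwise in the supremum} and then take the supremum over $\dbQ$ at the level of the terminal random variable: since $\sup_{0\le t\le\tau}|M_t|^p$ is a fixed random variable, I would bound
 \beaa
 \cE\Big[\sup_{0\le t\le\tau}|M_t|^p\Big]
 \;=\;
 \sup_{\dbQ\in\cQ_L}\dbE^{\dbQ}\Big[\sup_{0\le t\le\tau}|M_t|^p\Big],
 \eeaa
and for each such $\dbQ$ apply Doob under $\dbQ$ if $M$ is also a martingale under $\dbQ$, or otherwise dominate $\dbE^{\dbQ}[\cdot]$ by $\dbE^{\widehat{\dbQ}}[(d\dbQ/d\widehat{\dbQ})\,\cdot\,]$ and run Doob under $\widehat{\dbQ}$ together with a H\"older split to absorb the density.

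I expect the genuinely delicate point to be ensuring the constant stays exactly $q/(q-p)$ rather than picking up extra factors from the density of $\dbQ$ relative to $\widehat{\dbQ}$; the elegant resolution, which I would aim for, is to note that applying the $L^{q/p}$-Doob inequality under each $\dbQ\in\cQ_L$ individually already yields $\dbE^{\dbQ}[\sup|M|^p]\le \frac{q}{q-p}\,\dbE^{\dbQ}[|M_\tau|^q]^{p/q}$ whenever $M$ is a $\dbQ$-(sub)martingale, and then taking $\sup_{\dbQ}$ on both sides turns the right-hand side into $\frac{q}{q-p}\,\cE[|M_\tau|^q]^{p/q}$ by subadditivity of the supremum under the concave map $x\mapsto x^{p/q}$. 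The only thing to verify carefully is that $M$ remains a (local) martingale under every $\dbQ\in\cQ_L$, which follows from Girsanov together with the orthogonality/martingale structure recorded at the start of the section, so that Doob is legitimately applicable $\dbQ$-by-$\dbQ$. This avoids any explicit manipulation of Radon--Nikodym densities and keeps the sharp constant intact.
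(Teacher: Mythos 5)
There is a genuine gap, and it sits exactly at the point you flag as ``the only thing to verify carefully.'' Your argument rests on the claim that $M$ remains a (sub)martingale under every $\dbQ\in\cQ_L$, so that Doob's inequality can be applied $\dbQ$-by-$\dbQ$. This is false: $M$ is only assumed to be a uniformly integrable martingale under the single measure $\widehat\dbQ=\dbQ^{\widehat\lambda}$, and Girsanov gives the \emph{opposite} conclusion --- under a different $\dbQ^{\lambda}\in\cQ_L$ the process $M$ acquires a drift $d\langle M,\int(\lambda_s-\widehat\lambda_s)\cdot dW_s\rangle$, which vanishes only if $[M,W]=0$ or $\lambda=\widehat\lambda$. (Take $M=W^{\widehat\lambda}$ stopped at a bounded time for a concrete counterexample.) Your fallback --- dominating $\dbE^{\dbQ}[\,\cdot\,]$ by $\dbE^{\widehat\dbQ}\big[(d\dbQ/d\widehat\dbQ)\,\cdot\,\big]$ and absorbing the density by H\"older --- does not rescue the statement either: it forces a strictly higher moment of $M_\tau$ under $\widehat\dbQ$ on the right-hand side, rather than $\cE\big[|M_\tau|^q\big]^{p/q}$ with the constant $\tfrac{q}{q-p}$. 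Two smaller issues: the $L^{q/p}$-Doob inequality yields the constant $\big(\tfrac{q}{q-p}\big)^{p}$, not $\tfrac{q}{q-p}$; and the lemma covers all $0<p<q$, including $p\le 1$, where the $L^r$-Doob route is unavailable and one must go through the weak-type maximal inequality.

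The idea you are missing is the one the paper builds the whole proof around: concatenation. Fix $x>0$, set $T_x:=\tau\wedge\inf\{t:|M_t|>x\}$, and use optional sampling \emph{under $\widehat\dbQ$} to write $M_{T_x}=\dbE^{\widehat\dbQ}[M_\tau\,|\,\cF_{T_x}]$. Then, for an arbitrary $\dbQ\in\cQ_L$, Jensen gives $\dbE^{\dbQ}\big[|M_{T_x}|^q\big]\le\dbE^{\dbQ}\big[\dbE^{\widehat\dbQ}[|M_\tau|^q\,|\,\cF_{T_x}]\big]=\dbE^{\dbQ\otimes_{T_x}\widehat\dbQ}\big[|M_\tau|^q\big]\le\cE\big[|M_\tau|^q\big]=:c$, because $\cQ_L$ is stable under concatenation at the stopping time $T_x$. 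This yields the weak-type bound $x^q\,\dbQ[\sup_t|M_t|>x]\le c$ uniformly in $\dbQ\in\cQ_L$ without ever needing $M$ to be a $\dbQ$-martingale, and the layer-cake integration $\int_0^\infty p\,x^{p-1}\big(1\wedge cx^{-q}\big)dx=\tfrac{q}{q-p}c^{p/q}$ produces the stated constant for all $0<p<q$.
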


\begin{proof}
	Let $x>0$ and $T_x^n:=\tau\wedge n\wedge \inf\{t\geq 0,\, |M_t|>x\},$ with the convention $\inf\emptyset =\infty$. From the definition of concatenation and the optional sampling theorem, we obtain for all $\dbQ\in\cQ_L$:
	  \begin{align*}
	  	\dbE^\dbQ\big[|M_{T_x^n}|^q\big] 
	  	  &= \dbE^\dbQ\Big[\big|\dbE^{\widehat\dbQ}\big[M_{\tau}\big|\cF_{T_x^n}\big]\big|^q\Big] 
	  	   \le \dbE^{\dbQ}\Big[\dbE^{\widehat\dbQ}\big[|M_{\tau}|^q\big|\cF_{T_x^n}\big]\Big] \\
	  	  & = \dbE^{\dbQ\otimes_{T_x^n}\widehat\dbQ}\big[|M_{\tau}|^q\big]
	  	    \le \cE\big[|M_{\tau}|^q\big]=:c,
	  \end{align*}
	as $\dbQ\otimes_{T_x^n}\widehat\dbQ\in\cQ_L$. Then, denoting $M_*:=\sup_{0\leq t\leq \tau}|M_t|$, we see that
	\begin{align*}
	  x^q\dbQ\left[M_*>x\right] \le x^q\dbQ[T_x\le \tau]  
	   &= \lim_{n\rightarrow \infty} x^q\dbQ[T_x^n\le \tau] \\
	   & \leq \lim_{n\rightarrow \infty} \dbE^\dbQ\big[|M_{T_x^n}|^q\Ind_{\{T_x^n\le \tau\}}\big] 
	     \leq \lim_{n\rightarrow\infty}\dbE^\dbQ\big[|M_{T_x^n}|^q\big]\leq c, 
	\end{align*}
	and we deduce that
	\begin{align*}
	   \dbE^\dbQ\big[M_*^p\big] 
	    & = \dbE^\dbQ\bigg[\int_0^\infty  \Ind_{\{M_*>x\}}p x^{p-1}dx\bigg]
	      = \int_0^\infty  \dbQ\left[M_*>x\right]p x^{p-1}dx  \\
	    & \le \int_0^\infty  [1\wedge (cx^{-q})]p x^{p-1}dx 
	      = \frac{qc^{\frac{p}{q}}}{q-p}. 
	\end{align*}
	The required inequality follows from the arbitrariness of $\dbQ\in\cQ_L$. 
\end{proof}

\vspace{3mm}

The following result is well-known, we report its proof for completeness as we could not find a reference for it. 
We shall denote $\sgn(x):=\Ind_{\{x>0\}}-\Ind_{\{x<0\}}$, for all $x\in\dbR$.

\vspace{2mm}

\begin{proposition}  \label{TanakaIneq}
	For any semimartingale $X$, we have 
	  $$ |X_t| - |X_0| \geq \int_0^t\sgn(X_{s-})dX_s, \quad t\ge 0. $$
\end{proposition}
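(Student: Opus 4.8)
The plan is to establish the inequality by smooth convex approximation combined with the general It\^o formula for semimartingales with jumps, exploiting the convexity of $x\mapsto|x|$ to control the sign of all correction terms. For $\eps>0$, introduce the smooth convex approximation $f_\eps(x):=\sqrt{x^2+\eps^2}-\eps$, which satisfies $f_\eps\in C^2(\dbR)$, $f_\eps''\ge 0$, $|f_\eps'|\le 1$, together with the pointwise convergences $f_\eps(x)\to|x|$ and $f_\eps'(x)=x/\sqrt{x^2+\eps^2}\to\sgn(x)$ for every $x\in\dbR$. In particular $f_\eps'(0)=0=\sgn(0)$, which is exactly why the statement is compatible with the convention $\sgn(0)=0$.

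First I would apply the It\^o formula for a $C^2$ function of a general semimartingale to obtain
\[
f_\eps(X_t)=f_\eps(X_0)+\int_0^tf_\eps'(X_{s-})\,dX_s
+\tfrac12\int_0^tf_\eps''(X_{s-})\,d[X]^c_s
+\sum_{0<s\le t}\big(f_\eps(X_s)-f_\eps(X_{s-})-f_\eps'(X_{s-})\Delta X_s\big),
\]
where $[X]^c$ denotes the continuous part of the quadratic variation. Both correction terms are nonnegative: the integral against $d[X]^c$ because $f_\eps''\ge 0$ and $[X]^c$ is nondecreasing, and each summand in the jump series because convexity of $f_\eps$ yields $f_\eps(y)\ge f_\eps(x)+f_\eps'(x)(y-x)$ with $x=X_{s-}$, $y=X_s$. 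Dropping these nonnegative terms gives the $\eps$-level inequality
\[
f_\eps(X_t)-f_\eps(X_0)\ \ge\ \int_0^tf_\eps'(X_{s-})\,dX_s,\qquad t\ge 0.
\]

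Finally I would pass to the limit $\eps\downarrow 0$. The left-hand side converges pointwise in $\omega$, since $f_\eps(X_t)\to|X_t|$ and $f_\eps(X_0)\to|X_0|$. For the right-hand side, the integrands $f_\eps'(X_{s-})$ are predictable, uniformly bounded by $1$, and converge pointwise to $\sgn(X_{s-})$; since the constant process $1$ is integrable with respect to $X$ on each $[0,t]$, the dominated convergence theorem for stochastic integrals gives $\int_0^\cdot f_\eps'(X_{s-})\,dX_s\to\int_0^\cdot\sgn(X_{s-})\,dX_s$ uniformly on compacts in probability. Extracting a subsequence along which this convergence holds a.s.\ for the fixed $t$, the inequality is preserved in the limit, which is the claim. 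The main obstacle is precisely this last limiting step: one must invoke the dominated convergence theorem for stochastic integrals rather than pathwise dominated convergence, and in the jump case one must carry the full It\^o formula with the compensated-jump series and verify convexity termwise, rather than relying on the continuous Tanaka formula with its local-time term.
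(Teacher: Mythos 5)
Your proof is correct and follows essentially the same route as the paper: a $C^2$ convex approximation of $|x|$, Itô's formula with the quadratic-variation and jump correction terms dropped by convexity, and passage to the limit via the dominated convergence theorem for stochastic integrals (with the integrands uniformly bounded by $1$). The only difference is cosmetic — you use $f_\eps(x)=\sqrt{x^2+\eps^2}-\eps$ where the paper uses mollified functions $\varphi_n$ agreeing with $|x|$ outside $(-n^{-2},n^{-2})$ — and both choices satisfy $f'(0)=0=\sgn(0)$, so the limit identification is the same.
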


\begin{proof}
	Consider a decreasing sequence of $C^2$, symmetric convex functions $\varphi_n$ on $\dbR$, such that $\varphi_n(x)=|x|$ on $(-\frac{1}{n^2},\frac{1}{n^2})^c$, 
	and $\varphi'_n(x)$ increases to $1$ for $x>0$ and  $\varphi'_n(x)$ decreases to $-1$ for $x<0$, i.e., $\varphi'_n(x)$ converges to $\sgn(x)$. 
	By It\^o's formula and convexity of $\varphi_n$, we obtain that 
	  \begin{align*}
	  	\varphi_n(X_t)-\varphi_n(X_0) 
	  	 = \int_0^t\varphi'_n(X_{s-})dX_s &+ \frac{1}{2}\int_0^t\varphi''_n(X_{s-})d[X^c]_s \\
	  	  &+ \sum_{0<s\leq t}\big\{\Delta\varphi_n(X_s)-\varphi'_n(X_{s-})\Delta X_s\big\}.
	  \end{align*}
	By convexity of $\varphi_n$, this implies that $\varphi_n(X_t)-\varphi_n(X_0)\ge\int_0^t\varphi'_n(X_{s-})dX_s$.  The required inequality follows by sending $n\to\infty$ in the above inequality and by applying the dominated convergence theorem for stochastic integrals (see, e.g., \cite[Section IV, Theorem 32]{Pro05}). 
\end{proof}

\subsection{A priori estimates}

\begin{proposition}  \label{EstimationRBSDE}
	Under the conditions of Theorem \ref{thm:rbsde}, let $(Y,Z,U)\in \cD^p_{\beta,\tau}\times\cH^p_{\beta,\tau}\times\cU^p_{\beta,\tau}$ be a solution of RBSDE \eqref{RBSDEeq1}. 
	For each $p\in(1,q)$ and $-\mu\leq\alpha<\beta<\rho$, there exists a constant $C_{p,L,\alpha,\beta}$ such that 
	  \begin{align*}
	  	 \|Z\|^p_{\cH^p_{\alpha,\tau}} + \|U\|^p_{\cU^p_{\alpha,\tau}} 
	  	   \leq C_{p,L,\alpha,\beta} \Big(\big(\overline{f}^\dbP_{\beta,p,\tau}\big)^p+ \|Y\|^p_{\cD^p_{\beta,\tau}}\Big).  
	  \end{align*}
\end{proposition}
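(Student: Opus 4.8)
The plan is to derive the a priori estimate by applying a weighted It\^o formula to the process $e^{2\alpha t}|Y_t|^2$ (or better, to $|e^{\alpha t}Y_t|^2$), extracting the quadratic terms $\int |\widehat\sigma^{\bf T}Z|^2$ and $\int d[U]$ that generate the $\cH^p$ and $\cU^p$ norms, and controlling the remaining terms by the monotonicity and Lipschitz structure of $f$. The weighting by $e^{2\alpha t}$ is the standard device that converts the monotonicity constant $-\mu$ into a favorable sign as long as $\alpha \ge -\mu$; this is why the hypothesis $-\mu\le\alpha$ appears. Working first under a fixed $\dbQ^\lambda\in\cQ_L$, where by the remark at the start of the section $U$ remains a supermartingale with the same Doob--Meyer decomposition $U=N-K$ and $f$ is replaced by $f^\lambda$ (Lipschitz with constant $2L$), I would write the semimartingale decomposition of $e^{2\alpha t}|Y_t|^2$ and integrate from a stopping time up to $\tau$.

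Concretely, first I would apply It\^o to $e^{2\alpha (t\wedge\tau)}|Y_{t\wedge\tau}|^2$. This produces: the boundary term $e^{2\alpha\tau}|\xi|^2$; a drift term $\int e^{2\alpha s}\big(2\alpha|Y_s|^2 + 2Y_s f_s(Y_s,Z_s)\big)ds$; the bracket term $\int e^{2\alpha s}\big(|\widehat\sigma_s^{\bf T}Z_s|^2\,ds + d[N]_s\big)$; a martingale part $\int e^{2\alpha s}Y_{s-}(Z_s\cdot dX_s + dN_s)$; and the term coming from the non-decreasing process, $-2\int e^{2\alpha s}Y_{s-}\,dK_s$, which by the Skorokhod condition is controlled since $\int Y_{s-}dK_s$ relates to $\int S_{s-}dK_s \le \int S^+_{s-}dK_s$. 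The key inequality is the monotonicity--Lipschitz bound $2Y_s f_s(Y_s,Z_s) \le 2Y_s f_s^0 - 2\mu|Y_s|^2 + 2L|Y_s|\,|\widehat\sigma_s^{\bf T}Z_s|$, which after absorbing the cross term by Young's inequality $2L|Y_s||\widehat\sigma_s^{\bf T}Z_s|\le \tfrac12|\widehat\sigma_s^{\bf T}Z_s|^2 + 2L^2|Y_s|^2$ leaves half the quadratic $Z$-term on the left and a remainder controlled by $(2\alpha+2\mu+2L^2)|Y_s|^2 + 2|Y_s||f^0_s|$. Since $\alpha<\beta<\rho$ I have room to absorb the $|Y|^2$ terms into the $\cD^p_{\beta,\tau}$ norm by upgrading the weight from $\alpha$ to $\beta$, at the cost of the constant $C_{p,L,\alpha,\beta}$.

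The technical heart is passing from this $L^2$-type identity to the $\cL^p$-type estimate ($p<q$, possibly $p<2$) under the nonlinear expectation $\cE = \cE^\dbP = \sup_{\dbQ\in\cQ_L}\dbE^\dbQ$. Here I would take the power $p/2$, use the Burkholder--Davis--Gundy inequality to dominate the supremum of the martingale part by its bracket, and then invoke Young's inequality to split off a small multiple of $\|Z\|^p_{\cH^p_\alpha}+\|U\|^p_{\cU^p_\alpha}$ so it can be reabsorbed on the left. The supremum over $\dbQ$ is handled by taking $\cE[\cdot]$ throughout and exploiting that each bound holds uniformly in $\dbQ\in\cQ_L$, together with the Doob-type Lemma \ref{supDoob} to control running suprema. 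The term $\int S^+_{s-}dK_s \le \sup_s(e^{\alpha s}S^+_s)\int e^{\alpha s}dK_s$ must be split by Young's inequality into $\varepsilon\|K\|^p_{\cI^p_\alpha}$ plus $\|S^+\|^p_{\cD^q_{\rho,\tau}}$, which is finite by hypothesis and hidden inside the constant.

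The main obstacle I anticipate is the reflection term $\int e^{2\alpha s}Y_{s-}\,dK_s$ and the reabsorption bookkeeping for $p<2$. Controlling the $dK$ integral requires care because $K$ appears both in what I am trying to estimate (the $\cU^p$ norm contains $\|K\|_{\cI^p}$) and in this drift term; I must use the RBSDE equation itself to write $K$ in terms of $Y,\xi,\int f\,ds,Z\sint X,N$, then estimate $\|K\|_{\cI^p}$ a posteriori and feed it back, ensuring the coupling constant is strictly less than one so the fixed-point-style absorption closes. Keeping all constants uniform in $\dbQ\in\cQ_L$ while simultaneously taking suprema inside and outside the stochastic integrals, and correctly exploiting the gap $\alpha<\beta$ to convert weighted $|Y|^2$ terms into the stated $\|Y\|^p_{\cD^p_\beta}$ norm, is where the delicate part of the argument lies.
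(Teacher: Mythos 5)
Your overall architecture (It\^o on $e^{2\alpha t}Y_t^2$ under each $\dbQ^\lambda\in\cQ_L$, the monotonicity--Lipschitz bound with Young's inequality to keep half of $|\widehat\sigma^{\bf T}Z|^2$ on the left, raising to the power $p/2$, BDG, $\varepsilon$-absorption, and a final supremum over $\dbQ\in\cQ_L$) is exactly the paper's. But there is a genuine gap at the point you yourself flag as the "technical heart'': the estimate of $\|K\|_{\cI^p_{\alpha,\tau}}$. Your plan is to express $K$ from the RBSDE equation in terms of $Y,\xi,\int f\,ds, Z\sint X, N$ and "feed it back, ensuring the coupling constant is strictly less than one.'' This does not close: writing $\int_0^\cdot e^{\delta s}dK_s = Y_0 - e^{\delta t}Y_t + \int_0^t e^{\delta s}(f_s-\delta Y_s)ds - \int_0^t e^{\delta s}(Z_s\cdot dX_s+dN_s)$ and taking $p$-th moments forces you, via BDG, to pay $\|Z\sint X+N\|^p_{\dbN^p_{\delta,\tau}}$ with a fixed BDG constant; since $[X,N]=0$ gives $\|Z\sint X+N\|^p\le C(\|Z\sint X+U\|^p+\|K\|^p)$ with $C$ again a fixed (not tunable) constant, $\|K\|^p$ reappears on the right with a coefficient that is nowhere near less than one, and there is no free $\varepsilon$ in this particular step to shrink it. The paper's resolution is a different idea: the process $e^{\delta t}Y_t+\int_0^te^{\delta s}\big(f^\lambda_s(Y_s,Z_s)-\delta Y_s\big)ds$ is a $\dbQ^\lambda$-supermartingale whose Doob--Meyer non-decreasing part is precisely $\int_0^\cdot e^{\delta s}dK_s$, and \cite[Lemma A.1]{BPTZ16} bounds $\dbE^{\dbQ^\lambda}\big[\big(\int_0^\tau e^{\delta s}dK_s\big)^p\big]$ directly by the $p$-th moment of the running supremum of that supermartingale --- no martingale part, hence no $N$ and no $K$ on the right, only $\|Y\|$, $\|Z\|$ and $f^0$ (inequality \eqref{estimationKeq4}); the residual $\|Z\|$ is then absorbed because it enters \eqref{estimationKeq9} multiplied by the tunable $\varepsilon$ from the earlier Young step. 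Without this lemma (or an equivalent $L^p$ control of the compensator of a supermartingale by its running supremum), your fixed-point scheme fails for the $K$-component.

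A secondary deviation: you propose to handle $\int e^{2\alpha s}Y_{s-}\,dK_s$ through the Skorokhod condition, replacing $Y_{s-}$ by $S_{s-}\le S^+_{s-}$ and ending up with an additive $\|S^+\|^p_{\cD^q_{\rho,\tau}}$ that you propose to "hide inside the constant.'' That changes the statement: the claimed constant $C_{p,L,\alpha,\beta}$ does not depend on $S$, and the right-hand side contains only $\overline{f}^\dbP_{\beta,p,\tau}$ and $\|Y\|_{\cD^p_{\beta,\tau}}$. The paper never invokes the Skorokhod condition in this proof; it simply bounds $|Y_{s-}|\,dK_s\le \big(\sup_s e^{\alpha' s}|Y_s|\big)e^{(2\alpha-\alpha')s}dK_s$ for some $\alpha'\in(\alpha,\rho)$ and applies Young's inequality, so the obstacle never enters the estimate. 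You should do the same.
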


\begin{proof}  
	Let $U=N-K$ be the Doob-Meyer decomposition of the supermartingale $U$. 
	
	\vspace{3mm}
	
	\noindent {\bf 1.} We first prove that
	  \begin{align} \label{estimationKeq5}
	    \|Z\|^p_{\dbH^p_{\alpha,\tau}(\dbQ^\lambda)} +\|N\|^p_{\dbN^p_{\alpha,\tau}(\dbQ^\lambda)} 
	     \leq C_p\Big(\big\|Z\sint X^\lambda+U\big\|^p_{\dbN^p_{\alpha,\tau}(\dbQ^\lambda)} +\|K\|^p_{\dbI^p_{\alpha,\tau}(\dbQ^\lambda)}\Big),
   	  \end{align}
	 and 
	  \begin{align}  \label{estimationKeq6}
	    \widetilde{c}_p \Big(\|Z\|^p_{\dbH^p_{\alpha,\tau}(\dbQ^\lambda)} + \|U\|^p_{\dbN^p_{\alpha,\tau}(\dbQ^\lambda)}\Big)
	      & \leq \big\|Z\sint X^\lambda+U\big\|^p_{\dbN^p_{\alpha,\tau}(\dbQ^\lambda)} \nonumber  \\
	      & \leq \widetilde{C}_p\Big(\|Z\|^p_{\dbH^p_{\alpha,\tau}(\dbQ^\lambda)} + \|U\|^p_{\dbN^p_{\alpha,\tau}(\dbQ^\lambda)}\Big).
	  \end{align}
	We only prove \eqref{estimationKeq5}, the second claim follows by similar arguments.
	
	As $[X^\lambda, N]=\widehat\sigma\sint [W^\lambda,N]=0$, we obtain that
	  \begin{align*}
	     \|Z\|^p_{\dbH^p_{\alpha,\tau}(\dbQ^\lambda)} +\|N\|^p_{\dbN^p_{\alpha,\tau}(\dbQ^\lambda)}
	    & \leq c_p\dbE^{\dbQ^\lambda}\bigg[\bigg(\int_0^\tau \!\!e^{2\alpha s}\big(d\big[Z\!\sint\! X^{\lambda}\big]_s +d[N]_s\big) \bigg)^{\frac{p}{2}}\bigg] \\
    	& = c_p\big\|Z\sint X^\lambda + N\big\|^p_{\dbN^p_{\alpha,\tau}(\dbQ^\lambda)}.
	  \end{align*}
	We continue by estimating the right hand side term:
	  \begin{align*}
	    &\hspace{-3mm}
	      \big\|Z\sint X^\lambda + N\big\|^p_{\dbN^p_{\alpha,\tau}(\dbQ^\lambda)} \\
	    & \leq 2^{\frac{p}{2}}\dbE^{\dbQ^\lambda}\bigg[\bigg(\int_0^\tau e^{2\alpha s}d\big[Z\sint X^{\lambda}+U\big]_s 
	                   + \int_0^\tau e^{2\alpha s}d[K]_s\bigg)^{\frac{p}{2}}\bigg] \\
	    & \leq 2^p\bigg(\dbE^{\dbQ^\lambda}\bigg[\bigg(\int_0^\tau  e^{2\alpha s}d\big[Z\sint X^{\lambda}+U\big]_s\bigg)^{\frac{p}{2}}\bigg]
	                   + \dbE^{\dbQ^\lambda}\bigg[\bigg(\int_0^\tau e^{2\alpha s}d[K]_s\bigg)^{\frac{p}{2}}\bigg]\bigg) \\
	    & \leq 2^p\bigg(\dbE^{\dbQ^\lambda}\bigg[\bigg(\int_0^\tau e^{2\alpha s}d\big[Z\sint X^{\lambda}+U\big]_s\bigg)^{\frac{p}{2}}\bigg]
	                   + \dbE^{\dbQ^\lambda}\bigg[\bigg(\int_0^\tau e^{\alpha s}dK_s\bigg)^{p}\bigg]\bigg) \\
	    & = 2^p\Big(\big\|Z\sint X^\lambda+U\big\|^p_{\dbN^p_{\alpha,\tau}(\dbQ^\lambda)}+\|K\|^p_{\dbI^p_{\alpha,\tau}(\dbQ^\lambda)}\Big),
	  \end{align*}
	 where we used the estimate 
	  \begin{align*}
	  	\int_0^\tau e^{2\alpha s}d[K]_s 
	  	 = \sum_{0<s\leq\tau}e^{2\alpha s}(\Delta K_s)^2 \!\le\! \left(\sum_{0<s\leq\tau}e^{\alpha s}(\Delta K_s)\right)^2
	  	 \leq \left(\int_0^\tau e^{\alpha s}dK_s\right)^2,
	  \end{align*}
	 since $K$ is non-decreasing.  
	
	\vspace{3mm}
	
	\noindent {\bf 2.} 
	Denote $U^\lambda:=Z\sint X^\lambda + U=\sigma^{\top}Z\sint W^\lambda + U$. 
	By It\^o's formula, for $t'\in \mathbb{R}_+$, 
	  \begin{align*}
	  	0 \le Y^2_0 
	  	  &= e^{2\alpha (t' \wedge \tau)}Y_{t' \wedge \tau}^2 \\
	  	  &\quad + \int_0^{t' \wedge\tau} e^{2\alpha t}\Big(-2\alpha Y_t^2dt
	  	    +2Y_{t-}\big(f^\lambda_t(Y_t, Z_t)dt-dU^\lambda_t\big) -\big|\widehat\sigma_t^{\top}Z_t\big|^2dt -d[U]_t \Big).
	  \end{align*}
	It follows from Assumption \ref{assum:bsdeF} and Young's inequality that 
	  \begin{align*}
	  	 2yf^\lambda_t(y,z) \leq -2\mu y^2 + 2|y||f^0_t| + 4L|y||\widehat\sigma_t^{\top}z| \le -2\mu y^2+|f^0_t|^2
	  	    +\ell |y|^2+\frac12\big|\widehat\sigma_t^{\top}z\big|^2,
	  \end{align*}
	  with $\ell:=1+8L^2$. 
	Then, as $\alpha + \mu \geq 0$,
	  \begin{align*}
	     & \hspace{-3mm}\int_0^{t' \wedge \tau} e^{2\alpha t}\Big(\frac12\big|\widehat\sigma_t^{\top}Z_t\big|^2dt +d[U]_t\Big) \\
	     &\leq  e^{2\alpha (t' \wedge \tau)}Y_{t' \wedge \tau}^2 + \int_0^{t' \wedge \tau} e^{2\alpha t}\big(\big|f^0_t\big|^2+\ell Y_t^2\big)dt 
	                                                             - 2\int_0^{t' \wedge \tau} e^{2\alpha t}Y_{t-}dU^\lambda_t  \\
	     &\leq \int_0^{t' \wedge \tau} e^{2\alpha t}\big|f^0_t\big|^2dt 
                	+ \Big(1+\frac{\ell}{2(\alpha'-\alpha)}\Big)\sup_{0\leq t<\infty}e^{2\alpha'(t\wedge \tau)}Y^2_{t\wedge \tau} 
	                - 2\int_0^{t' \wedge \tau} e^{2\alpha t}Y_{t-}dU^\lambda_t,
	  \end{align*}
	  for an arbitrary $\alpha'\in(\alpha,\rho)$. 
	Let $t' \rightarrow \infty$. 
	It follows that
	  \begin{equation} \label{estimationKeq7}
	     \begin{aligned}
	         &\hspace{-3mm}\|Z\|^p_{\dbH^p_{\alpha,\tau}(\dbQ^\lambda)} + \|U\|^p_{\dbN^p_{\alpha,\tau}(\dbQ^\lambda)} \\
	         & \leq C_{p,\alpha,\alpha',L} \bigg(\dbE^{\dbQ^\lambda}\bigg[\bigg(\int_0^\tau \big|e^{\alpha t}f^0_t\big|^2dt\bigg)^{\frac{p}{2}}\bigg] 
	              + \|Y\|^p_{\dbD^p_{\alpha',\tau}(\dbQ^\lambda)}	+ E^\lambda \bigg),   
	     \end{aligned}
 	  \end{equation}
  	  where
	  \begin{align*}
	    E^\lambda
	      &:= \dbE^{\dbQ^\lambda}\bigg[\bigg|\int_0^\tau e^{2\alpha t}Y_{t-}dU^\lambda_t\bigg|^{\frac{p}{2}}\bigg]  \\
	      &\,\le \overline{C}_p\bigg(\dbE^{\dbQ^\lambda}\bigg[\sup_{0\leq t\leq \tau}\bigg|\int_0^t e^{2\alpha s}Y_{s-}\big(Z_s\sint dX_s^\lambda+dN_s\big)\bigg|^{\frac{p}{2}} + \bigg|\int_0^\tau e^{2\alpha s}Y_{s-}dK_s\bigg|^{\frac{p}{2}}\bigg]\bigg)  \\
	      &\,\leq C'_p\bigg(\dbE^{\dbQ^\lambda}
	           \bigg[\bigg(\int_0^\tau e^{4\alpha s}Y_{s-}^2d\big[Z\sint X^\lambda+ N\big]_s\bigg)^{\frac{p}{4}}\bigg]
             	+\dbE^{\dbQ^\lambda}\bigg[\bigg|\int_0^\tau e^{2\alpha s}Y_{s-}dK_s\bigg|^{\frac{p}{2}} \bigg] \bigg),
	  \end{align*}
	by the BDG inequality. Since $K$ is non-decreasing, we applying Young's inequality with an arbitrary $\varepsilon>0$ to deduce
		\begin{align*}
		  E^\lambda 
		    &\le C'_p\dbE^{\dbQ^\lambda} \bigg[\sup_{0\le s<\infty} |e^{\alpha' (s\wedge\tau)}Y_{s\wedge \tau}|^{\frac{p}{2}}
		                     \bigg\{\bigg(\int_0^\tau e^{2(2\alpha -\alpha')s}d\big[Z\sint X^\lambda+N\big]_s\bigg)^{\frac{p}{4}} \\
		    & \hspace{64mm} +\bigg(\int_0^\tau e^{(2\alpha-\alpha')s}dK_s\bigg)^{\frac{p}{2}}\bigg\}\bigg] \\
		    &\le \frac{(C'_p)^2}{\varepsilon}\|Y\|^p_{\dbD^p_{\alpha',\tau}(\dbQ^\lambda)} 
		      + \frac{\varepsilon}{2}\dbE^{\dbQ^\lambda}\bigg[\bigg(\int_0^\tau e^{2(2\alpha-\alpha')s}d\big[Z\sint X^\lambda + N\big]_s\bigg)^{\frac{p}{2}}\bigg] \\
		    & \hspace{33.5mm} + \frac{\varepsilon}{2}\dbE^{\dbQ^\lambda}\bigg[\bigg(\int_0^\tau e^{(2\alpha-\alpha')s}dK_s\bigg)^p\bigg]  \\
		    &\leq \frac{(C'_p)^2}{\varepsilon}\|Y\|^p_{\dbD^p_{\alpha',\tau}(\dbQ^\lambda)}
		      + \frac{\varepsilon}{2}(C_p''+1)\|K\|^p_{\dbI^p_{2\alpha-\alpha',\tau}(\dbQ^\lambda)}  \\
		    & \hspace{33.5mm} + \frac{\varepsilon}{2}C_p''\dbE^{\dbQ^\lambda}\bigg[\bigg(\int_0^\tau e^{2(2\alpha-\alpha')s}d\big[U^\lambda\big]_s\bigg)^{\frac{p}{2}}\bigg],
		\end{align*}
     where the last inequality follows from \eqref{estimationKeq5}. Plugging this estimate into \eqref{estimationKeq7}, and using \eqref{estimationKeq6} together with the fact that $2\alpha - \alpha'<\alpha$, we obtain 
	\allowdisplaybreaks
	\begin{equation} \label{estimationKeq9} 
		\begin{aligned} 
		   & \hspace{-3mm}\Big(1-\frac{\overline{C}_pC_{p,\alpha,\alpha',L}C_p''}{2}\varepsilon\Big) \big\|U^\lambda\big\|_{\dbN^p_{\alpha,\tau}(\dbQ^\lambda)} \\
		   & \leq \overline{C}_pC_{p,\alpha,\alpha',L} \bigg(\dbE^{\dbQ^{\lambda}} \bigg[\bigg(\int_0^\tau \!\!\!\big|e^{\alpha s}f^0_s\big|^2ds\bigg)^{\frac{p}{2}}\bigg]   
		               +\bigg(1+\frac{(C'_p)^2}{\varepsilon}\bigg) \|Y\|^p_{\dbD^p_{\alpha',\tau}(\dbQ^\lambda)}  \\
		   &\hspace{68.5mm} +\frac{\varepsilon}{2}(C''_p\!+\!1) \|K\|^p_{\dbI^p_{2\alpha-\alpha',\tau}(\dbQ^\lambda)} \bigg).  
		\end{aligned}
	\end{equation}

	\vspace{3mm}
	
	\noindent {\bf 3.} We shall prove in Step 4 below that for $\delta<\delta'<\rho$:
	   \begin{align} \label{estimationKeq4}
	   	 \|K\|^p_{\dbI^p_{\delta,\tau}(\dbQ^\lambda)} 
	   	 \leq \overline{C}^K_{p,\delta,\delta', L} \bigg(\|Y\|^p_{\dbD^p_{\delta',\tau}(\dbQ^\lambda)} + \|Z\|^p_{\dbH^p_{\delta',\tau}(\dbQ^\lambda)} 
	   	           + \dbE^{\dbQ^\lambda}\bigg[\bigg(\int_0^\tau \big|e^{\delta' s}f^0_s\big|^2ds\bigg)^{\frac{p}{2}}\bigg]\bigg).
	   \end{align}
	Plugging this inequality with $\delta:=2\alpha-\alpha'$ and $\delta':=\alpha$ in \eqref{estimationKeq9}, and using the left hand side inequality of \eqref{estimationKeq6}, we see that we may choose $\varepsilon>0$ conveniently such that 
	   \begin{align}  \label{estimationKeq11}
	   	  \|Z\|^p_{\dbH^p_{\alpha,\tau}(\dbQ^\lambda)}
	   	    \leq C^Z_{p,\alpha,\alpha', L}\bigg(\|Y\|^p_{\dbD^p_{\alpha',\tau}(\dbQ^\lambda)} 
	   	          + \dbE^{\dbQ^\lambda}\bigg[\bigg(\int_0^\tau e^{2\alpha s}|f^0_s|^2ds\bigg)^{\frac{p}{2}}\bigg]\bigg),
	   \end{align}
	 for some constant $C^Z_{p,\alpha,\alpha', L}>0$. 
    Plugging this inequality into \eqref{estimationKeq4} with $(\delta,\delta'):=(\alpha,\alpha')$ induces the estimate 
       \begin{align}  \label{estimationKeq13}
       	  \|K\|^p_{\dbK^p_{\alpha,\tau}(\dbQ^\lambda)}  
       	  \leq C^K_{p,\alpha,\alpha', L}\bigg(\|Y\|^p_{\dbD^p_{\alpha',\tau}(\dbQ^\lambda)} 
       	           +\dbE^{\dbQ^\lambda}\bigg[\bigg(\int_0^\tau e^{2\alpha' s}\big|f^0_s\big|^2ds\bigg)^{\frac{p}{2}}\bigg]\bigg),
       \end{align} 
	  for some constant $C^K_{p,\alpha,\alpha', L}$. 
	Combining with \eqref{estimationKeq9}, and recalling that $2\alpha-\alpha'<\alpha$, in turn, this implies an estimate for $\big\|U^\lambda\big\|^p_{\dbN^p_{\alpha,\tau}(\dbQ^\lambda)}$ which can be plugged into \eqref{estimationKeq5} to provide: 
	   \begin{align}  \label{estimationKeq14}
	   	  \|N\|^p_{\dbN^p_{\alpha,\tau}(\dbQ^\lambda)} 
	   	  \leq C^N_{p,\alpha,\alpha', L}\bigg(\|Y\|^p_{\dbD^p_{\alpha',\tau}(\dbQ^\lambda)} 
	   	        + \dbE^{\dbQ^\lambda}\bigg[\bigg(\int_0^\tau e^{2\alpha' s}\big|f^0_s\big|^2ds\bigg)^{\frac{p}{2}}\bigg]\bigg).
	   \end{align}
	Since the constants in \eqref{estimationKeq11}, \eqref{estimationKeq13} and \eqref{estimationKeq14} do not depend on $\dbQ\in\cQ_L$, the proof of this proposition is completed by taking supremum over the family of measures $\dbQ\in\cQ_L$. 
	
	\vspace{3mm}
	
	\noindent {\bf 4.} We now prove \eqref{estimationKeq4}. By It\^o's formula, we have 
	   \begin{align*}
	   	 e^{\delta (t\wedge \tau)}Y_{t\wedge \tau}+ \int_0^{t\wedge \tau} e^{\delta s}\big(f^\lambda_s(Y_s,Z_s)-\delta Y_s\big)ds 
	   	  = Y_0 + \int_0^{t\wedge \tau} e^{\delta s}(Z_s\cdot dX^\lambda_s+dU_s).
	   \end{align*}
	As $(Z,N)\in \dbH^p_{\delta,\tau}(\dbQ^\lambda)\times\dbN^p_{\delta,\tau}(\dbQ^\lambda)$ and $K$ is nondecreasing, the process 
	   \begin{equation*}
	   	  e^{\delta (t\wedge \tau)}Y_{t\wedge \tau} + \int_0^{t\wedge\tau} e^{\delta s}\big(f^\lambda_s(Y_s,Z_s)-\delta Y_s\big)ds,
	   \end{equation*}
  	   is a supermartingale under $\dbQ^\lambda$. 
	By \cite[Lemma A.1]{BPTZ18} and Assumption \ref{assum:bsdeF}, we obtain that
	\allowdisplaybreaks
	 \begin{align}  \label{estimationKeq1}
	  \dbE^{\dbQ^\lambda} \bigg[\bigg(\int_0^\tau e^{\delta s}dK_s\bigg)^p\bigg] 
	   &\leq C_p \dbE^{\dbQ^\lambda}\bigg[\sup_{0\leq u<\infty}\big(e^{\delta (u\wedge \tau)}Y_{u\wedge\tau} 
	             +\int_0^{u\wedge\tau} e^{\delta s}\big(f^\lambda_s(Y_s,Z_s)-\delta Y_s\big)ds\big)^p \bigg] \nonumber \\
	   &\leq C_{p,\delta, L}\dbE^{\dbQ^\lambda}\bigg[\sup_{0\leq u<\infty}|e^{\delta (u\wedge \tau)}Y_{u\wedge \tau}|^p 
	             + \bigg(\int_0^\tau e^{\delta s}\big|f^0_s\big|ds\bigg)^p \nonumber \\
	   &\hspace{25mm} + \bigg(\int_0^\tau e^{\delta s}|Y_s|ds\bigg)^p + \bigg(\int_0^\tau e^{\delta s}|\widehat\sigma_s^{\top}Z_s|ds\bigg)^p \bigg]. 
	 \end{align}
	Finally, for $\delta'\in(\delta,\rho)$, we observe that
	  \begin{align}  \label{estimationKeq2}
	    \bigg(\int_0^\tau \!\! e^{\delta s}|Y_s|ds\bigg)^p
	      &\leq \sup_{0\leq s<\infty}|e^{\delta' (s\wedge \tau)}Y_{s\wedge \tau}|^p \bigg(\int_0^\tau \!\! e^{-(\delta'-\delta)s}ds\bigg)^p \nonumber \\
	      &\leq \frac{1}{(\delta'-\delta)^p}\sup_{0\leq s<\infty}|e^{\delta' (s\wedge \tau)}Y_{s\wedge \tau}|^p,
	  \end{align}
	  and by the Cauchy-Schwarz inequality
	  \begin{align} \label{estimationKeq3}
	     \bigg(\int_0^\tau e^{\delta s}\big|f^0_s\big|ds\bigg)^p
	       &\leq \bigg(\int_0^\tau \big|e^{\delta' s}f^0_s\big|^2ds\bigg)^{\frac{p}{2}}\bigg(\int_0^\tau e^{-2(\delta'-\delta)s}ds\bigg)^{\frac{p}{2}} \nonumber \\
	       &\leq \frac{1}{(2\delta'-2\delta)^{\frac{p}{2}}}\bigg(\int_0^\tau \big|e^{\delta' s}f_s^0\big|^2ds\bigg)^{\frac{p}{2}},
	  \end{align}
	  and, similarly,
	  \begin{equation} \label{estimationKeq3.5}
	     \bigg(\int_0^\tau e^{\delta s}\big|\widehat\sigma_s^{\top}Z_s\big|ds\bigg)^p
	       \leq \frac{1}{(2\delta'-2\delta)^{\frac{p}{2}}}\bigg(\int_0^\tau \big|e^{\delta' s}\widehat\sigma_s^{\top}Z_s\big|^2ds\bigg)^{\frac{p}{2}} .
	  \end{equation} 
	The required inequality \eqref{estimationKeq4} follows from \eqref{estimationKeq1}, \eqref{estimationKeq2}, \eqref{estimationKeq3} and \eqref{estimationKeq3.5}.   
\end{proof}

\vspace{2mm}

\subsection{Stability of reflected backward SDEs} \label{sect:rbsde-stability}

\begin{proof}[Proof of Theorem \ref{thm:rbsdecomp-stab} {\bf (ii)}]
	Clearly, the process $(\delta Y, \delta Z,\delta U)$ satisfies the following equation 
	  \begin{align}  \label{dynamics-deltaY}
	  	 \delta Y_{t\wedge\tau} 
	  	   = \delta Y_{t' \wedge \tau} + \int_{t\wedge\tau}^{t' \wedge \tau} g_s(\delta Y_s,\delta Z_s)ds - \delta Z_s\cdot dX_s - d\delta U_s,\quad t\leq t',
	  \end{align}
	  where $g_s(\delta Y_s,\delta Z_s):= f_s(Y_s, Z_s) - f_s(Y_s-\delta Y_s,Z_s-\delta Z_s). $
	
	\vspace{3mm}              
	
	\noindent {\bf 1.} In this step, we prove that, for some constant $C_{p,p'}$, 
	   \begin{equation}  \label{estimationDYeq1}
	    \begin{aligned} 
	  	  &\cE\bigg[\sup_{0\leq t<\infty}e^{p\eta' (t\wedge \tau)}|\delta Y_{t\wedge\tau}|^p\bigg]  \\
	  	  & \qquad\leq C_{p,p'}\cE\bigg[e^{p'\eta'\tau}\big|\delta\xi {\mathbb 1}_{\{\tau<\infty\}}\big|^{p'}
	  	      + \bigg(\int_{0}^{\tau} e^{\eta' s}|\delta f_s(Y_s,Z_s)|ds\bigg)^{p'}\bigg]^{\frac{p}{p'}}.
	    \end{aligned}
	   \end{equation}
	It follows from Proposition \ref{TanakaIneq} that
	    \begin{equation}  \label{RBSDEPro1eq1}
	      \begin{aligned}
	         & e^{\eta'(t' \wedge\tau)}|\delta Y_{t' \wedge \tau}| - e^{\eta' (t\wedge\tau)}|\delta Y_{t\wedge\tau}|  \\
	         & \quad \geq\int_{t\wedge\tau}^{t' \wedge \tau} e^{\eta' s}\big(\eta'|\delta Y_s| - \sgn(\delta Y_{s})g_s(\delta Y_s,\delta Z_s)\big)ds \\
	         & \quad\qquad + \int_{t\wedge\tau}^{t' \wedge \tau} e^{\eta' s}\big(\sgn(\delta Y_{s})\delta Z_s\cdot dX_s + \sgn(\delta Y_{s-})d\delta U_s\big).
	      \end{aligned}
	    \end{equation}
	As $f$ and $f'$ satisfy Assumption \ref{assum:bsdeF}, we obtain that 
	  \begin{equation*}
	  	 \sgn(\delta Y_{s})g_s(\delta Y_s,\delta Z_s) \leq |\delta f_s(Y_s,Z_s)| + L\big|\widehat\sigma^{\top}_s\delta Z_s\big| -\mu|\delta Y_s|. 
	  \end{equation*}
	Considering the Doob-Meyer decomposition $U=N-K$ and $U'=N'-K'$, and denoting $\delta N$ and $\delta K$ the corresponding differences, it follows from the Skorokhod condition that
	  \begin{align}
	     \delta Y_{s-} d\delta K_s 
	       &= (Y'_{s-}-Y_{s-})(dK'_s-dK_s) \nonumber \\
	       &= (Y'_{s-}-S_{s-})dK'_s -(Y'_{s-}-S_{s-})dK_s - (Y_{s-}-S_{s-})dK'_{s}  + (Y_{s-}-S_{s-})dK_{s} \nonumber \\
	       &= -(Y'_{s-}-S_{s-})dK_s - (Y_{s-}-S_{s-})dK'_s \le 0,   \label{RBSDEPro1eq4}
	  \end{align}
	  so that 
	  \begin{align*}
	  	 \sgn(\delta Y_{s-})d\delta K_s = \Ind_{\{\delta Y_{s-}\neq 0\}}\frac{\sgn(\delta Y_{s-})}{\delta Y_{s-}}\delta Y_{s-} d\delta K_s \leq 0.
	  \end{align*}
	Then, denoting 
	  \begin{align*}
	  	{\widehat\lambda}_s:=L\sgn(\delta Y_s)\frac{\widehat\sigma^{\top}_s\delta Z_s}{|\widehat\sigma^{\top}_s\delta Z_s|}\Ind_{\{|\widehat\sigma^{\top}_s\delta Z_s|\neq 0\}} 
	  	 \quad \mbox{and} \quad
	  	X^{\widehat\lambda}:=X-\int_0^.\widehat\sigma_s\widehat\lambda_sds,
	  \end{align*}
	   it follows from inequality \eqref{RBSDEPro1eq1} and $-\mu<\eta'$ that 
	  \begin{align*}
	     e^{\eta' (t\wedge\tau)}|\delta Y_{t\wedge\tau}| 
	       &\leq e^{\eta'\tau}|\delta Y_{t' \wedge \tau}| + \int_{t\wedge\tau}^{t' \wedge \tau} e^{\eta' s} |\delta f_s(Y_s,Z_s)|ds \\
	       & \hspace{22mm} - \int_{t\wedge\tau}^{t' \wedge \tau} e^{\eta' s} \Big(\sgn(\delta Y_s)\delta Z_s\cdot dX^{\widehat\lambda}_s + \sgn(\delta Y_{s-})d\delta N_s\Big).
	  \end{align*}
	As $\delta Z\!\in\!\cH^p_{\eta,\tau}(\dbP)$ and $\delta N\!\in\!\cN^p_{\eta,\tau}(\dbP)$, we deduce from the BDG inequality that the last two terms are $\dbQ^{\widehat\lambda}-$uniformly integrable martingales. Then, with $\tau_n:=n\wedge\tau$ and $n\geq t$:
	  \begin{align*}
	  	 e^{\eta'(t\wedge\tau)}|\delta Y_{t\wedge\tau}|
	  	  \leq \lim_{n\to\infty}\dbE^{\dbQ^{\widehat\lambda}}\bigg[e^{\eta'\tau_n}|\delta Y_{\tau_n}| 
	  	           + \int_{t\wedge\tau}^{\tau_n} e^{\eta' s}|\delta f_s(Y_s,Z_s)|ds\bigg|\cF_{t\wedge\tau}^{+,\dbP}\bigg].
	  \end{align*}
	For any $\eta''\in (\eta', \rho)$, $||\delta Y||_{\cD^p_{\eta'',\tau}}<\infty$, so that $\sup_{n\in\mathbb{N}}e^{\eta''\tau_n}\delta Y_{\tau_n}<\infty$, $\dbQ^{\widehat\lambda}$-a.s., which implies
	  \begin{align*}
	     \lim_{n\rightarrow \infty}e^{\eta' \tau_n}|\delta Y_n|
	       &= \lim_{n\rightarrow \infty}e^{\eta' \tau_n}\big|\delta Y_{\tau_n}{\mathbb 1}_{\{\tau<\infty\}}\big|
	           +\lim_{n\rightarrow \infty}e^{\eta' \tau_n}\big|\delta Y_{\tau_n}{\mathbb 1}_{\{\tau=\infty\}}\big| \\
	       &\leq \lim_{n\rightarrow \infty}e^{\eta' \tau_n}\big|\delta Y_{\tau_n}{\mathbb 1}_{\{\tau<\infty\}}\big|
	           +\lim_{n\rightarrow \infty}e^{(\eta' -\eta'')\tau_n}\big|e^{\eta''\tau_n}\delta Y_{\tau_n}{\mathbb 1}_{\{\tau=\infty\}}\big| \\
	       &= \lim_{n\rightarrow \infty}e^{\eta' \tau_n}\big|\delta Y_{\tau_n}{\mathbb 1}_{\{\tau<\infty\}}\big|
	        = e^{\eta' \tau}\big|\delta Y_{\tau}{\mathbb 1}_{\{\tau<\infty\}}\big|.
	  \end{align*}
	From the dominated convergence theorem and monotone convergence theorem and the fact that $e^{\eta't}Y_t$ and $e^{\eta't}Y'_t$ are uniformly integrable.
	  \begin{align*}
	  	 e^{\eta'(t\wedge\tau)}|\delta Y_{t\wedge\tau}| 
	  	  \leq \dbE^{\dbQ^{\widehat\lambda}}\bigg[e^{\eta'\tau}|\delta\xi{\mathbb 1}_{\{\tau<\infty\}}| 
	  	        + \int_{0}^{\tau} e^{\eta' s}|\delta f_s(Y_s,Z_s)|ds\,\bigg|\cF_{t\wedge\tau}^{+,\dbP}\bigg].
	  \end{align*}
	By Lemma \ref{supDoob}, we deduce that for any $p'\in(p,q)$:
	  \begin{align*}
	  	 \cE\bigg[\sup_{0\leq t<\infty}\big|e^{\eta' t}\delta Y_{t\wedge \tau}\big|^p\bigg] 
	  	  \leq \frac{p'}{p'-p}\cE\bigg[\bigg(e^{\eta'\tau}|\delta\xi {\mathbb 1}_{\{\tau<\infty\}}| 
	  	         + \int_{0}^{\tau} e^{\eta' s}|\delta f_s(Y_s,Z_s)|ds\bigg)^{p'}\bigg]^{\frac{p}{p'}},
	  \end{align*}
	  which induces the required inequality \eqref{estimationDYeq1}. 
	
	\vspace{3mm}
	
	\noindent {\bf 2.} Let $-\mu\leq\eta<\eta'$. By It\^o's formula, we have for $t'\in \mathbb{R}_+$,
	  \begin{equation*}
	    \begin{aligned}
	      & \hspace{-5mm} e^{2\eta(t' \wedge \tau)}(\delta Y_{t' \wedge \tau})^2 - (\delta Y_0)^2 \\
	      &= 2\eta\int_0^{t' \wedge \tau} e^{2\eta s}(\delta Y_s)^2ds - 2\int_0^{t' \wedge \tau} e^{2\eta s}\delta Y_sg_s(\delta Y_s,\delta Z_s)ds  \\
	      &\qquad + 2\int_0^{t' \wedge \tau} e^{2\eta s}\delta Y_{s-}\delta Z_s\cdot dX_s + 2\int_0^{t' \wedge \tau} e^{2\eta s}\delta Y_{s-}d\delta N_s  \\
	      &\qquad - 2\int_0^{t' \wedge \tau} e^{2\eta s}\delta Y_{s-}d\delta K_s + \int_0^{t' \wedge \tau} e^{2\eta s}\left(\big|\widehat\sigma^{\top}_s\delta Z_s\big|^2ds+d[\delta U]_s\right).
	    \end{aligned}
	  \end{equation*}
	Again Assumption \ref{assum:bsdeF} implies that 
	  \begin{align*}
	  	 \delta Y_s g_s(\delta Y_s,\delta Z_s)\leq |\delta Y_s||\delta f_s(Y_s,Z_s)|+L|\delta Y_s|\big|\widehat\sigma^{\top}_s\delta Z_s\big| - \mu|\delta Y_s|^2, 
	  \end{align*}
 	  and therefore, together with \eqref{RBSDEPro1eq4} and the fact that $\eta+\mu\geq0$, we obtain that
	  \begin{align*}
	      & \hspace{-5mm} \int_0^{t' \wedge \tau}e^{2\eta s} \big(\big|\widehat\sigma^{\top}_s\delta Z_s\big|^2ds + d[\delta U]_s\big) \\
	        &\leq e^{2\eta (t' \wedge \tau)}(\delta Y_{t' \wedge \tau})^2 
	             - 2\int_0^{t' \wedge \tau} e^{2\eta s}\delta Y_{s-}\big(\delta Z_s\cdot dX_s + d\delta N_s\big) \\
	        &\hspace{31mm} + 2\int_0^{t' \wedge \tau} e^{2\eta s}|\delta Y_s|\big(|\delta f_s(Y_s,Z_s)|+L\big|\widehat\sigma^{\top}_s\delta Z_s\big|\big)ds.
	  \end{align*}
	Then,  
	  \begin{align*}
	     & \hspace{-5mm} \int_0^{\tau} e^{2\eta s} \big(\big|\widehat\sigma^{\top}_s\delta Z_s\big|^2ds + d[\delta U]_s\big) \\
 	     & \leq \sup_{0\leq t< \infty} e^{2\eta (t\wedge \tau)}(\delta Y_{t\wedge \tau})^2 
	          + 2\int_0^{\tau} e^{2\eta s}|\delta Y_s|\big(|\delta f_s(Y_s,Z_s)| + 2L\big|\widehat\sigma^{\top}_s\delta Z_s\big|\big)ds \\
	     & \hspace{39mm} + 2\sup_{0\leq u\leq \tau}\bigg|\int_0^ue^{2\eta s}\delta Y_s\delta Z_s\cdot dX^{\lambda}_s 
	              +\int_0^ue^{2\eta s}\delta Y_{s-}d\delta N_s\bigg|,
	  \end{align*}
	  where $\lambda=(\lambda_s)_{0\leq s\leq\tau}$ is an arbitrary process uniformly bounded by $L$.  By Young's inequality and the fact that $\eta<\eta'$, we have
	  \begin{align*}
	     2\int_0^\tau e^{2\eta s}|\delta Y_s||\delta f_s(Y_s,Z_s)|ds 
	      &\leq 2\Big(\sup_{0\leq s< \infty}e^{\eta (s\wedge \tau)}|\delta Y_{s\wedge \tau}|\Big)\int_0^\tau e^{\eta s}|\delta f_s(Y_s,Z_s)|ds \\ 
	      &\leq \sup_{0\leq s<\infty}e^{2\eta (s\wedge \tau)}|\delta Y_{s\wedge \tau}|^2 + \bigg(\int_0^\tau e^{\eta s}|\delta f_s(Y_s,Z_s)|ds\bigg)^2,
	  \end{align*}
	  and \allowdisplaybreaks
	  \begin{align*}
	     & 2\int_0^\tau e^{2\eta s}|\delta Y_s|\big|\widehat\sigma^{\top}_s\delta Z_s\big|ds \\
	     &\qquad \leq \frac{1}{\varepsilon}\int_0^\tau e^{2\eta s}|\delta Y_s|^2ds + \varepsilon\int_0^\tau e^{2\eta s}\big|\widehat\sigma^{\top}_s\delta Z_s\big|^2ds \\
	     &\qquad \leq \frac{1}{\varepsilon}\Big(\sup_{0\leq s< \infty} e^{2\eta' (s\wedge \tau)}|\delta Y_{s\wedge\tau}|^2\Big) \int_0^\tau e^{-2(\eta'-\eta)s}ds
	            + \varepsilon\int_0^\tau e^{2\eta s}\big|\widehat\sigma^{\top}_s\delta Z_s\big|^2ds  \\
	     &\qquad \leq \frac{1}{2\varepsilon(\eta'-\eta)}\sup_{0\leq s< \infty}e^{2\eta' (s\wedge \tau)}|\delta Y_{s\wedge \tau}|^2
	            + \varepsilon\int_0^\tau e^{2\eta s}\big|\widehat\sigma^{\top}_s\delta Z_s\big|^2ds,
	  \end{align*}
	  for an arbitrary $\varepsilon>0$. 
	Therefore, by choosing an $\varepsilon>0$ conveniently, we obtain
	  \begin{equation}  \label{deltaZetNK}
	   \begin{aligned} 
	     &\hspace{-5mm} \big\|\delta Z\big\|^p_{\dbH^p_{\eta,\tau}(\dbQ^\lambda)} + \big\|\delta U\big\|^p_{\dbN^p_{\eta,\tau}(\dbQ^\lambda)} \\
	     &\leq C_{p,\eta,\eta'}\bigg(\big\|\delta Y\big\|^p_{\dbD^p_{\eta',\tau}(\dbQ^\lambda)} 
	           + \dbE^{\dbQ^\lambda}\bigg[\bigg(\int_0^\tau e^{\eta s}|\delta f_s(Y_s,Z_s)|ds\bigg)^p\bigg]\bigg) \\
	     & \qquad + C_{p,\eta,\eta'}\dbE^{\dbQ^\lambda} \bigg[\sup_{0\leq u\leq \tau}\bigg|\int_0^u e^{2\eta s}\big(\delta Y_{s}\delta Z_{s}\cdot dX^{\lambda}_s 
	                       + \delta Y_{s-}d\delta N_s\big)\bigg|^{\frac{p}{2}}\bigg].
	   \end{aligned}
	  \end{equation}
	  for some constant $C_{p,\eta,\eta'}>0$. By the BDG inequality, Young's inequality and the Cauchy-Schwarz inequality, we obtain 
	  \begin{align*}
	     & \hspace{-3mm} \dbE^{\dbQ^\lambda} \bigg[\sup_{0\leq u\leq \tau}\Big|\int_0^u\!e^{2\eta s}\delta Y_{s-}(\delta Z_s\cdot dX^{\lambda}_s +d\delta N_s)\bigg|^{\frac{p}{2}}\bigg] \\
	     &\leq d_p\dbE^{\dbQ^\lambda} \bigg[\bigg[\int_0^\cdot\! e^{2\eta s}\delta Y_{s-} (\delta Z_s\cdot dX^{\lambda}_s +d\delta N_s)\bigg]_\tau^{\frac{p}{4}}\bigg] \\
	     &= d_p\dbE^{\dbQ^\lambda} \bigg[\bigg(\int_0^\tau e^{4\eta s}|\delta Y_{s-}|^2 \big(\big|\widehat\sigma^{\top}_s\delta Z_s\big|^2ds 
	        + d[\delta N]_s\big)\bigg)^{\frac{p}{4}}\bigg] \\
	     &\leq d'_p\dbE^{\dbQ^\lambda}\bigg[\bigg(\int_0^\tau e^{4\eta s}|\delta Y_s|^2\big|\widehat\sigma^{\top}_s\delta Z_s\big|^2ds\bigg)^{\frac{p}{4}} 
	             + \bigg(\int_0^\tau e^{4\eta s}|\delta Y_{s-}|^2d[\delta N]_s\bigg)^{\frac{p}{4}}\bigg] \\
	     &\leq \frac{d'_p}{2\varepsilon'} \big\|\delta Y\big\|^p_{\dbD^p_{\eta,\tau}(\dbQ^\lambda)} 
	            + \frac{d'_p\varepsilon'}{2}\big\|\delta Z\big\|^p_{\dbH^p_{\eta,\tau}(\dbQ^\lambda)} \\
	     &\hspace{30mm} + d'_p\big\|\delta Y\big\|^{\frac{p}{2}}_{\dbD^p_{\eta,\tau}(\dbQ^\lambda)}\Big(\big\|N\big\|^{\frac{p}{2}}_{\dbN^p_{\eta,\tau}(\dbQ^\lambda)}
	            + \big\|N'\big\|^{\frac{p}{2}}_{\dbN^p_{\eta,\tau}(\dbQ^\lambda)}\Big), 
	  \end{align*}
	  for some $\varepsilon'>0$, where we used $d[\delta N]_s \leq 2(d[N]_s+d[N']_s)$. 
	Plugging this estimate into \eqref{deltaZetNK}, and by the arbitrariness of $\lambda$, we obtain 
	  \begin{align*}
	    \big\|\delta Z\big\|^p_{\cH^p_{\eta,\tau}} + \big\|\delta U\big\|^p_{\cN^p_{\eta,\tau}} 
	      &\leq C'_{p,\eta,\eta'}\Big\{\big\|\delta Y\big\|^p_{\cD^p_{\eta',\tau}} + \cE\Big[\Big(\int_0^\tau e^{\eta s}|\delta f_s(Y_s,Z_s)|ds\Big)^p\Big] \\
	      & \hspace{27mm} + \big\|\delta Y\big\|^{\frac{p}{2}}_{\cD^p_{\eta',\tau}}\Big(\big\|N\big\|^{\frac{p}{2}}_{\cN^p_{\eta,\tau}} 
	                      + \big\|N'\big\|^{\frac{p}{2}}_{\cN^p_{\eta,\tau}} \Big)\Big\}.
	\end{align*}
	Together with \eqref{estimationDYeq1} from Step 1, and Proposition \ref{EstimationRBSDE}, this induces the first estimate in Theorem \ref{thm:rbsdecomp-stab} {\bf (ii)}.
	
	\vspace{3mm}
	
	\noindent {\bf 3.} It remains to verify the announced estimate on $\int_0^te^{\alpha s}d\delta U_s$. 
	Given the dynamics of $\delta Y$ in \eqref{dynamics-deltaY}, it follows from a direct application of It\^o's formula and the use of Assumption \ref{assum:bsdeF} that:
	 \begin{align*}
	    &\hspace{-5mm}\sup_{0\leq u\leq \tau}\bigg|\int_0^{u}e^{\alpha s}d\delta U_s\bigg| \\
	    &\leq 2\sup_{0\leq s<\infty} e^{\alpha (s\wedge \tau)}|\delta Y_{s\wedge \tau}| 
	           + (|\alpha|+L)\int_0^\tau e^{\alpha s}|\delta Y_s|ds + 2L\int_0^\tau e^{\alpha s}\big|\widehat\sigma^{\top}_s\delta Z_s\big|ds \\
	    & \qquad + \int_0^\tau e^{\alpha s}\big|\delta f_s(Y_s,Z_s)\big|ds + \sup_{0\leq u\leq \tau}\bigg|\int_0^{u}e^{\alpha s}\delta Z_s\cdot dX^\lambda_s\bigg|.
	 \end{align*}
	By the BDG inequality and the Cauchy-Schwarz inequality, we obtain for $\beta\in(\alpha,\rho)$:
	\allowdisplaybreaks
	  \begin{align*}
	     & \dbE^{\dbQ^\lambda}\Big[\sup_{0\leq u\leq \tau}\Big|\int_0^{u}e^{\alpha s}d\delta U_s\Big|^p\Big] \\
	     & \quad\leq C_p \bigg(\dbE^{\dbQ^\lambda} \bigg[\sup_{0\leq s<\infty} \big|e^{\alpha (s\wedge \tau)}\delta Y_{s\wedge\tau}\big|^p \bigg] \\
	     & \hspace{18mm} + (|\alpha|+L)^p \bigg(\int_0^\infty e^{-(\beta-\alpha)s}ds \bigg)^p \dbE^{\dbQ^\lambda} \bigg[\sup_{0\leq s<\infty} e^{p\beta (s\wedge\tau)}|\delta Y_{s\wedge\tau}|^p \bigg]  \\
	     & \hspace{18mm} + (2L)^p \bigg(\int_0^\infty e^{-2(\beta-\alpha)s}ds \bigg)^{\frac{p}{2}} \dbE^{\dbQ^\lambda} \bigg[ \bigg(\int_0^\tau e^{2\beta s}\big|\widehat\sigma^{\top}_s\delta Z_s\big|^2ds\bigg)^{\frac{p}{2}}\bigg] \\
	     & \hspace{18mm} + \dbE^{\dbQ^\lambda} \bigg[ \bigg(\int_0^\tau e^{\alpha s}\big|\delta f_s(Y_s,Z_s)\big|ds \bigg)^p \bigg]
	                     + d_p\dbE^{\dbQ^\lambda} \bigg[ \bigg(\int_0^\tau e^{2\alpha s}\big|\widehat\sigma^{\top}_s\delta Z_s\big|^2ds\bigg)^{\frac{p}{2}}\bigg]\bigg) \\
  	     & \quad\leq C_{p,\alpha,\beta,L} \bigg(\big\|\delta Y\big\|_{\dbD^p_{\beta,\tau}(\dbQ^\lambda)}^p + \big\|\delta Z\big\|_{\dbH^p_{\beta,\tau}(\dbQ^\lambda)}^p 
	                             + \dbE^{\dbQ^\lambda}\bigg[\bigg(\int_0^\tau e^{\alpha s} \big|\delta f_s(Y_s,Z_s)\big|ds\bigg)^p\bigg]\bigg),
	\end{align*}
	for some constant $C_{p,\alpha,\beta,L}$.
\end{proof}

\subsection{Wellposedness of reflected backward SDEs}  \label{sect:rbsde-existence}

We start from the so-called Snell envelope defined by the dynamic optimal stopping problem\footnote{Indeed, from $\|S^+\|_{\cD^q_{\rho,\tau}(\dbP)}<\infty$, we have $\lim_{t\rightarrow\infty}\widehat{S}_{t\wedge\tau}\leq 0$, $\mathbb{P}$-a.s. on $\{\tau=\infty\}$.}: 
  \begin{align*}
  	 \widehat{y}_{t\wedge\tau}
  	  := \esssup_{\theta\in \cT_{t,\tau}}\dbE^{\dbP}\Big[\widehat{\xi}\Ind_{\{\theta\geq \tau\}}+\widehat{S}_{\theta}\Ind_{\{\theta<\tau\}}\Big|\cF_{t\wedge\tau}^{+,\dbP}\Big], 
  	   \quad \mbox{with}\,\,\,
  	  \widehat{\xi} := e^{-\mu\tau}\xi{\mathbb 1}_{\{\tau<\infty\}}, \,\,\, \widehat{S}_t := e^{-\mu t}S_t,
  \end{align*} 
  where $\cT_{t,\tau}$ denotes the set of all $\dbF^{+, \mathbb{P}}-$stopping times $\theta$ with $t\wedge\tau\le\theta\le\tau$.
Following the proof of \cite[Proposition 5.1]{KKPPQ97} and the theory of optimal stopping, see e.g., \cite{ElK81}, we deduce that there exists an $X-$integrable process $\widehat{z}$, such that:
  \begin{align*}
    \begin{cases}
      \displaystyle \widehat{y}_{t\wedge\tau} = \widehat{\xi} - \int_{t\wedge\tau}^\tau \widehat{z}_s\cdot dX_s - \int_{t\wedge\tau}^\tau d\widehat{u}_s, \\
        \vspace{2mm}
      \widehat{y}_t \geq \widehat{S}_t, \quad t\geq 0, \quad \dbP\mbox{-a.s.} \\
        \vspace{2mm}
      \displaystyle \dbE^\dbP\bigg[\int_0^{\tau\wedge t} \big(1\wedge(\widehat{y}_{t-}-\widehat{S}_{t-})\big)d\widehat{u}_t \bigg] = 0,~~\mbox{for all}~~t\ge 0,
    \end{cases}
  \end{align*}
  where $\widehat{u}$ is local supermartingale, starting from $\widehat{u}_0=0$, orthogonal to $X$, i.e., $[X, \widehat{u}]=0$. 
In other words, $(\widehat{y},\widehat{z},\widehat{u})$ is a solution of the RBSDE with generator $f\equiv 0$ and obstacle $\widehat{S}$. 
Then, it follows by It\^o's formula that the triple $(y,z,u)$, defined by 
  \begin{align*}
  	y_t:=e^{\mu t}\widehat{y}_t, \quad z_t:=e^{\mu t}\widehat{z}_t, \quad u_t:=\int_0^te^{\mu s}d\widehat{u}_s, \quad t\ge 0,
  \end{align*}
  is a solution of the following RBSDE, for $t$, $t'\in \mathbb{R}_+$, $t\leq t'$, 
  \begin{align*}
    \begin{cases}
       \displaystyle y_{t\wedge\tau} = y_{t' \wedge\tau} - \mu\int_{t\wedge\tau}^{t' \wedge \tau} y_sds - \int_{t\wedge\tau}^{t' \wedge \tau} z_s\cdot dX_s 
                       - \int_{t\wedge\tau}^{t' \wedge \tau} du_s,  ~  \\ 
       \vspace{2mm}
       y_t \geq S_t,\quad t\geq 0, \quad \dbP\mbox{-a.s.} \quad \mbox{and} \quad y_\tau=\xi \ \mbox{on}\ \{\tau<\infty\} \\
       \vspace{2mm}
       \displaystyle \dbE^\dbP\bigg[\int_0^{t\wedge \tau}\big(1\wedge(y_{t-}-S_{t-})\big)du_t\bigg] = 0, ~~\mbox{for all}~~t\ge 0,
    \end{cases}
  \end{align*}
  where $u$ is local supermartingale, starting from $u_0=0$, orthogonal to $X$, i.e., $[X, u]=0$.

\begin{lemma}\label{lem:yz}
	For all $\alpha\in[-\mu,\rho)$ and $p\in(1,q)$, we have
	 \begin{align*}
	 	\big\|y\big\|_{\cD_{\rho,\tau}^p}^p+\big\|z\big\|_{\cH_{\alpha,\tau}^p}^p
	 	\leq C_{p,q,L,\alpha,\rho}\Big(\big\|\xi{\mathbb 1}_{\{\tau<\infty\}}\big\|_{\cL_{\rho,\tau}^q}^q 
	 	+ \big\|S^+\big\|_{\cL_{\rho,\tau}^q}^q\Big)^{\frac{p}{q}}.
	 \end{align*}
\end{lemma}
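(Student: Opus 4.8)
The plan is to control the two summands separately: bound $\|y\|_{\cD^p_{\rho,\tau}}$ directly from the optimal stopping representation of $\widehat{y}$, and then deduce the bound on $\|z\|_{\cH^p_{\alpha,\tau}}$ from the a priori estimate of Proposition \ref{EstimationRBSDE}. Since $S^+$ is a process, I read the term $\|S^+\|_{\cL^q_{\rho,\tau}}$ through the quantity $\cE\big[\big(\sup_{0\le s\le\tau}e^{\rho s}S^+_s\big)^q\big]$ furnished by the standing hypothesis $\|S^+\|_{\cD^q_{\rho,\tau}}<\infty$ of Theorem \ref{thm:rbsde}.

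For the first term, recall $y_t=e^{\mu t}\widehat{y}_t$ and that the admissible stopping times in $\cT_{t,\tau}$ range over $[t\wedge\tau,\tau]$, with $\theta=\tau$ admissible. Taking $\theta=\tau$ for a lower bound and the trivial pointwise bound of the payoff for an upper bound yields
$$|\widehat{y}_{t\wedge\tau}| \le \dbE^\dbP\Big[|\widehat{\xi}| + \sup_{t\wedge\tau\le s\le\tau}\widehat{S}_s^+ \,\Big|\, \cF^{+,\dbP}_{t\wedge\tau}\Big].$$
I would then multiply by the $\cF^{+,\dbP}_{t\wedge\tau}$-measurable factor $e^{(\rho+\mu)(t\wedge\tau)}$ and bring it inside the conditional expectation. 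Using $\widehat{\xi}=e^{-\mu\tau}\xi$, $\widehat{S}_s=e^{-\mu s}S_s$, $t\wedge\tau\le s\le\tau$ and $\rho+\mu>0$, one checks pathwise that $e^{(\rho+\mu)(t\wedge\tau)}|\widehat{\xi}|=e^{\rho\tau}|\xi|\,e^{(\rho+\mu)(t\wedge\tau-\tau)}\le e^{\rho\tau}|\xi|$ and $e^{(\rho+\mu)(t\wedge\tau)}\widehat{S}_s^+=e^{\rho s}S^+_s\,e^{(\rho+\mu)(t\wedge\tau-s)}\le e^{\rho s}S^+_s$, so that
$$e^{\rho(t\wedge\tau)}|y_{t\wedge\tau}| \le \dbE^\dbP\big[\zeta \,\big|\, \cF^{+,\dbP}_{t\wedge\tau}\big]=:M_t, \qquad \zeta:=e^{\rho\tau}|\xi|+\sup_{0\le s\le\tau}e^{\rho s}S^+_s.$$
The process $M$ is a uniformly integrable $\dbP$-martingale with $M_\tau=\zeta\in L^q(\dbP)$, and since $\dbP=\dbQ^0\in\cQ_L(\dbP)$ it is in particular a martingale under an element of $\cQ_L$. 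Applying Lemma \ref{supDoob} with exponents $p<q$ gives $\|y\|^p_{\cD^p_{\rho,\tau}}\le\cE\big[\sup_{0\le t\le\tau}M_t^p\big]\le\tfrac{q}{q-p}\big(\cE[\zeta^q]\big)^{p/q}$, and the elementary inequality $\zeta^q\le 2^{q-1}\big(e^{q\rho\tau}|\xi|^q+(\sup_s e^{\rho s}S^+_s)^q\big)$ together with subadditivity of $\cE$ bounds this by $C\big(\|\xi\|^q_{\cL^q_{\rho,\tau}}+\|S^+\|^q_{\cD^q_{\rho,\tau}}\big)^{p/q}$.

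For the second term, observe that $(y,z,u)$ solves the reflected equation \eqref{RBSDEeq1} with obstacle $S$, terminal value $\xi$ and generator $g(y,z):=-\mu y$, which fulfils Assumption \ref{assum:bsdeF} with Lipschitz and monotonicity constants $|\mu|$ and $\mu$, and with $g^0\equiv 0$. I would therefore invoke Proposition \ref{EstimationRBSDE} with some $\beta\in(\alpha,\rho)$: as $\overline{f}^\dbP_{\beta,p,\tau}=0$ here, it yields $\|z\|^p_{\cH^p_{\alpha,\tau}}\le C\,\|y\|^p_{\cD^p_{\beta,\tau}}\le C\,\|y\|^p_{\cD^p_{\rho,\tau}}$, the last step because $e^{\beta t}\le e^{\rho t}$ for $t\ge 0$. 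Chaining with the first estimate closes the proof.

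The main obstacle is twofold. In the first step the delicate point is the bookkeeping of the exponential weights: it is crucial that the obstacle enters only through indices $s\ge t\wedge\tau$, which forces $e^{(\rho+\mu)(t\wedge\tau-s)}\le 1$ and is precisely what makes the weight $\rho$ — rather than the larger $\rho+\mu$ — sufficient on the right-hand side; this is exactly where $\rho>-\mu$ is used. In the second step, Proposition \ref{EstimationRBSDE} is an a priori estimate presupposing membership $(y,z,u)\in\cD^p_{\beta,\tau}\times\cH^p_{\beta,\tau}\times\cU^p_{\beta,\tau}$, which is stronger than what we conclude; before applying it I would first secure this finite integrability by running the It\^o computation underlying Proposition \ref{EstimationRBSDE} along the localizing sequence $\tau_n:=n\wedge\tau$ and passing to the limit by monotone convergence, the Skorokhod condition being used to discard the term $\int y_{s-}\,du_s$ via $y\ge S$.
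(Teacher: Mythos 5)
Your proposal is correct and follows essentially the same route as the paper: the bound on $y$ comes from the two-sided Snell-envelope estimate combined with the exponential-weight bookkeeping (using $\rho+\mu>0$) and Lemma \ref{supDoob}, and the bound on $z$ comes from Proposition \ref{EstimationRBSDE} applied to the RBSDE with generator $-\mu y$ and vanishing $f^0$. Your closing remark about first securing the membership $(y,z,u)\in\cD^p_{\beta,\tau}\times\cH^p_{\beta,\tau}\times\cU^p_{\beta,\tau}$ by localization before invoking the a priori estimate is a point the paper glosses over, and is a welcome addition rather than a deviation.
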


\begin{proof}
	By the definition of $\widehat{y}$, we have
	  \begin{equation*}
	    \dbE^\dbP\Big[e^{-\mu\tau}\xi{\mathbb 1}_{\{\tau<\infty\}}\Big|\cF^{+,\dbP}_{t\wedge\tau}\Big]  
	      \leq \widehat{y}_{t\wedge \tau} 
	      \leq \esssup_{\theta\in\cT_{t,\tau}}\dbE^{\dbP}\Big[e^{-\mu\tau}|\xi|{\mathbb 1}_{\{\tau<\infty\}}+ e^{-\mu \theta}S^+_{\theta}\Big|\cF_{t\wedge\tau}^{+,\dbP}\Big].
	  \end{equation*}
	Then, for $\alpha\in [-\mu,\rho]$, 
	  \begin{align*}
	    -\dbE^\dbP\Big[e^{\alpha\tau}|\xi|{\mathbb 1}_{\{\tau<\infty\}}\big|\cF_{t\wedge\tau}^{+,\dbP}\Big] 
	      &\leq -e^{(\alpha+\mu)(t\wedge\tau)}\dbE^\dbP\Big[e^{-\mu\tau}|\xi|{\mathbb 1}_{\{\tau<\infty\}}\Big|\cF_{t\wedge\tau}^{+,\dbP}\Big] \\
	      &\leq e^{(\alpha+\mu)(t\wedge\tau)}\widehat{y}_{t\wedge\tau} = e^{\alpha (t\wedge\tau)}y_{t\wedge\tau}  \\
	      &\leq \esssup_{\theta\in\cT_{t,\tau}}\dbE^{\dbP}\Big[e^{\alpha\tau}|\xi|{\mathbb 1}_{\{\tau<\infty\}}+ e^{\alpha\theta}S^+_{\theta}\Big|\cF_{t\wedge\tau}^{+,\dbP}\Big] \\
	      &\leq \dbE^{\dbP}\Big[e^{\alpha\tau}|\xi|{\mathbb 1}_{\{\tau<\infty\}}+ \sup_{0\leq s<\infty}e^{\alpha (s\wedge \tau)}S^+_{s\wedge \tau}\Big|\cF_{t\wedge\tau}^{+,\dbP}\Big], 
	  \end{align*}
	  and therefore 
	  \begin{align*}
	  	e^{\alpha (t\wedge\tau)}|y_{t\wedge\tau}| \leq \dbE^{\dbP}\Big[e^{\alpha\tau}|\xi|{\mathbb 1}_{\{\tau<\infty\}}+ \sup_{0\leq s<\infty}e^{\alpha (s\wedge\tau)}S^+_{s\wedge \tau}\Big|\cF_{t\wedge\tau}^{+,\dbP}\Big]. 
	  \end{align*}
	By Lemma \ref{supDoob}, this implies that
	  \begin{align*}  
	     &\cE \bigg[\sup_{0\leq s<\infty}\big|e^{\alpha (s\wedge\tau)}y_{s\wedge\tau}\big|^p\bigg]  \\
    	 &\quad \leq C_p\cE\bigg[\sup_{0\leq s\leq\tau}\dbE^\dbP\bigg[|e^{\alpha\tau}\xi{\mathbb 1}_{\{\tau<\infty\}}|^p
	           +\sup_{0\leq u<\infty} \big(e^{\alpha (u\wedge\tau)}S_{u\wedge\tau}^+\big)^p\Big|\cF_s^{+,\dbP}\bigg]\bigg] \\
	     &\quad \leq C_{p,p'}\Big( \big\|\xi{\mathbb 1}_{\{\tau<\infty\}}\big\|_{\cL_{\alpha,\tau}^{p'}}^{p'} + \big\|S^+\big\|_{\cD_{\alpha,\tau}^{p'}}^{p'}\Big)^{\frac{p}{p'}}, 
	  \end{align*}
	  for all $1<p<p'$. 
	By our assumption on $\xi$ and $S^+$, we see that we need to restrict to $p'<q$ in order to ensure that the last bound is finite. 
	Moreover, by Proposition \ref{EstimationRBSDE}, we have for some $\alpha' >\alpha$, 
	  \begin{align*}
	    \cE\bigg[\bigg(\int_0^\tau e^{2\alpha t}\big|\widehat\sigma^{\top}_tz_t\big|^2dt\bigg)^{\frac{p}{2}}\bigg]
	      & \leq C_{p,\alpha,\alpha', L} \cE\bigg[\sup_{0\leq t<\infty}\big|e^{\alpha' (t\wedge\tau)}y_{t\wedge\tau}\big|^p\bigg] \\
	      & \leq C_{p,p',\alpha,\alpha', L}\cE\bigg[\big|e^{\alpha'\tau}\xi{\mathbb 1}_{\{\tau<\infty\}}\big|^{p'}
	             +\sup_{0\leq t<\infty} \big(e^{\alpha'(t\wedge\tau)}S_{t\wedge\tau}^+\big)^{p'}\bigg]^{\frac{p}{p'}}.
	  \end{align*}
	By our assumption on $\xi$ and $S^+$, we see that we need to restrict $\alpha$ to the interval $[-\mu,\rho)$ in order to ensure that the last bound is finite.
\end{proof} 

\vspace{3mm}

Now, we construct a sequence of approximating solutions to the RBSDE, using the finite horizon RBSDE result in \cite{BPTZ18} and on the optimal stopping problem above. 

Let $\tau_n:=\tau\wedge n$, and $(Y^n,Z^n,U^n)$ be the solution to the following RBSDE 
 \begin{align*}
   \begin{cases}
     \displaystyle Y^n_{t\wedge\tau} = y_{\tau_n} +\int_{t\wedge\tau_n}^{\tau_n}f_s(Y_s^n,Z_s^n)ds - \int_{t\wedge\tau_n}^{\tau_n} \big(Z^n_s\cdot dX_s + dU^n_s\big), \\
     \vspace{2mm}
     Y^n_{t\wedge\tau_n} \ge S_{t\wedge\tau_n}, ~~ t\geq 0, \quad \dbP\mbox{-a.s.}, \\
     \vspace{2mm}
     \displaystyle  \dbE^{\dbP}\bigg[\int_0^{t\wedge\tau_n}\big(1\wedge(Y^n_{t-}-S_{t-})\big)dU^n_t\bigg] = 0,~~\mbox{for all}~~t\ge 0.
   \end{cases}
 \end{align*}
We extend the definition of $Y^n$ for $t\wedge\tau\geq\tau_n$ by
  \begin{align*}
  	Y^n_{t\wedge\tau} = y_{t\wedge\tau}, \quad Z^n_{t\wedge\tau} = z_{t\wedge\tau}, \quad U_{t\wedge\tau}^n = u_{t\wedge\tau}, 
  \end{align*}
  so that $(Y^n,Z^n,U^n)$ is a solution of the RBSDE with parameters $(f^n,\xi,S)$: for $t$, $t'\in \mathbb{R}_+$,
   \begin{align}  \label{RBSDEn}
   	 \begin{cases}
   	   \displaystyle Y^n_{t\wedge\tau} = Y^n_{t'\wedge\tau}  +\int_{t\wedge\tau}^{t' \wedge \tau}\!\!\!f^n_s(Y_s^n,Z_s^n)ds 
   	                   - \int_{t\wedge\tau}^{t' \wedge\tau} \big(Z^n_s\cdot dX_s + dU^n_s\big), \\
   	     \vspace{2mm}
   	     Y^n \ge S, \quad \dbP\mbox{-a.s.} \quad \mbox{and} \quad Y^n_\tau=\xi~~\mbox{on}\ \{\tau<\infty\}  \\
   	     \vspace{2mm}
   	     \displaystyle \dbE^{\dbP}\bigg[\int_0^{t\wedge\tau}\big(1\wedge(Y^n_{t-}-S_{t-})\big)dU^n_t\bigg] = 0,~~\mbox{for all}~~t\ge 0,
   	 \end{cases}
   \end{align}
   where 
   \begin{align*}
   	  f^n_t(y,z):=f_s(y,z)\Ind_{\{s\leq n\}}-\mu y\Ind_{\{s>n\}}, \quad t\geq 0, \quad (y,z)\in\dbR\times\dbR^d.
   \end{align*}
The following result justifies the existence statement in Theorem \ref{thm:rbsde}.

\vspace{3mm}

\begin{proposition} \label{prop:rbsde-existence}
	For all $\eta\in[-\mu,\rho)$ and $p\in(1,q)$, the sequence $\{(Y^n,Z^n,U^n)\}_{n\in\dbN}$ converges in $\cD_{\eta,\tau}^p\times \cH_{\eta,\tau}^p\times\cU_{\eta,\tau}^p$ to some $(Y,Z,U)$, which is a solution of the random horizon RBSDE with the parameters $(f,\xi,S)$.
\end{proposition}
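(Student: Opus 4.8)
The plan is to show that the approximating sequence $\{(Y^n,Z^n,U^n)\}_n$ is Cauchy in the (complete) spaces $\cD^p_{\eta,\tau}\times\cH^p_{\eta,\tau}\times\cU^p_{\eta,\tau}$ and then to pass to the limit in \eqref{RBSDEn}. First I would record that each triple of data $(f^n,\xi,S)$ satisfies the hypotheses of Theorem \ref{thm:rbsde}: the generator $f^n$ inherits the Lipschitz and $\mu$-monotonicity properties of Assumption \ref{assum:bsdeF} (the added branch $-\mu y$ being monotone with constant exactly $\mu$ and having no $z$-dependence), while $(f^n)^0=f^0\Ind_{\{\cdot\le n\}}$ gives $\overline{(f^n)}^\dbP_{\rho,q,\tau}\le\overline{f}^\dbP_{\rho,q,\tau}<\infty$. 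Hence, by the finite-horizon result of \cite{BPTZ16} together with the Snell-envelope extension on $(\tau_n,\tau]$, each $(Y^n,Z^n,U^n)$ is a genuine solution in the required spaces. To obtain uniform bounds, I would compare $(Y^n,Z^n,U^n)$ with the solution $(y,z,u)$ of the RBSDE with generator $-\mu y$ produced before Lemma \ref{lem:yz}, applying the estimate \eqref{estimationDYeq1} with $(y,z)$ as the reference solution: the generator difference $-\mu y_s-f^n_s(y_s,z_s)$ is supported on $[0,\tau_n]$ and is dominated along $(y,z)$ by the fixed quantity $\int_0^\tau e^{\eta' s}\big(|\mu|\,|y_s|+|f^0_s|+L|y_s|+L\big|\widehat\sigma^{\bf T}_s z_s\big|\big)ds$, which is $\cE^\dbP$-integrable to the power $p'$ by Lemma \ref{lem:yz} and Assumption \ref{assum:bsde-integ} (for $\eta'<\rho$). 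This yields $\sup_n\|Y^n\|_{\cD^p_{\eta',\tau}}<\infty$, and then Proposition \ref{EstimationRBSDE} gives uniform bounds on $\|Z^n\|_{\cH^p_{\eta',\tau}}$ and $\|U^n\|_{\cU^p_{\eta',\tau}}$ as well.

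Next I would fix $m>n$ and apply the stability estimate of Theorem \ref{thm:rbsdecomp-stab}{\rm(ii)} to $(f^n,\xi,S)$ and $(f^m,\xi,S)$, which share terminal value and obstacle, so that $\Delta_\xi=0$. Taking $(Y^n,Z^n)$ as the reference solution, the difference $\delta f=f^n-f^m$ is supported on $(\tau_n,\tau_m]$, where $(Y^n,Z^n)=(y,z)$ by construction, so $|\delta f_s(Y^n_s,Z^n_s)|=|\mu y_s+f_s(y_s,z_s)|\le |\mu|\,|y_s|+|f^0_s|+L|y_s|+L\big|\widehat\sigma^{\bf T}_s z_s\big|$ on $(\tau_n,\tau]$. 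Consequently $\Delta_f\le\varepsilon_n$, where $\varepsilon_n$ is the $\cE^\dbP$-norm of the tail $\int_{\tau_n}^\tau$ of the same dominating integrand; $\varepsilon_n$ is independent of $m$ and decreases monotonically to $0$, so by dominated convergence for $\cE^\dbP$ (the tail being monotone decreasing to $0$ and dominated by the $\cE^\dbP$-integrable envelope of Step~1) one has $\varepsilon_n\to0$. Since the right-hand side of the stability estimate is $C(\varepsilon_n+\varepsilon_n^{1/2}R)$ with $R$ collecting the uniform bounds of Step~1, the differences vanish as $n\to\infty$ uniformly in $m$, whence the sequence is Cauchy and converges to a limit $(Y,Z,U)$.

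It then remains to pass to the limit in \eqref{RBSDEn}. For the dynamics I would split $f^n_s(Y^n_s,Z^n_s)-f_s(Y_s,Z_s)=\big(f^n_s(Y^n_s,Z^n_s)-f^n_s(Y_s,Z_s)\big)+\big(f^n_s(Y_s,Z_s)-f_s(Y_s,Z_s)\big)$: the first bracket is controlled by the Lipschitz property and the $\cD^p$/$\cH^p$-convergence $(Y^n,Z^n)\to(Y,Z)$, while the second is supported on $(\tau_n,\tau]$ and vanishes by the same tail argument; together with the $\cN^p$-convergence of the stochastic integrals $Z^n\sint X$ and of the martingale parts, this produces the dynamics $Y_{\cdot\wedge\tau}=\xi+\int_{\cdot\wedge\tau}^\tau f_s(Y_s,Z_s)ds-\int_{\cdot\wedge\tau}^\tau\big(Z_s\cdot dX_s+dU_s\big)$. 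The constraint $Y\ge S$ passes to the limit from $Y^n\ge S$ and the uniform convergence $Y^n\to Y$ in $\cD^p_{\eta,\tau}$.

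The main obstacle is the Skorokhod minimality condition. Writing the Doob--Meyer decompositions $U^n=N^n-K^n$ and $U=N-K$, the condition is equivalent (as in the footnote to \eqref{RBSDEeq1}) to $\dbE^\dbP\big[\int_0^{t\wedge\tau}\big(1\wedge(Y_{r-}-S_{r-})\big)dK_r\big]=0$, the $N$-integral vanishing automatically since $N$ is a martingale. I would pass to the limit in the identity $\dbE^\dbP\big[\int_0^{t\wedge\tau}\big(1\wedge(Y^n_{r-}-S_{r-})\big)dK^n_r\big]=0$ by splitting the discrepancy into $\int_0^{t\wedge\tau}\big((1\wedge(Y_{r-}-S_{r-}))-(1\wedge(Y^n_{r-}-S_{r-}))\big)dK_r$, handled by the convergence of left limits $Y^n_{r-}\to Y_{r-}$ (a consequence of $\cD^p$-convergence) and the integrability of $dK_r$, and $\int_0^{t\wedge\tau}\big(1\wedge(Y^n_{r-}-S_{r-})\big)d(K-K^n)_r$, whose integrand is bounded by $1$ and which is controlled by the total-variation convergence of the compensators $K^n\to K$. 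This last convergence is exactly what the $\cU^p$-statement of the proposition asserts, and it is secured by the stability estimate together with its addendum on $\delta\overline{U}:=\int_0^\cdot e^{\eta s}d\delta U_s$ in Theorem \ref{thm:rbsdecomp-stab}{\rm(ii)}; ensuring that this controls the total variation of $K^n-K$ (and not merely the quadratic variation of $\delta U$) is where I expect the technical effort to concentrate.
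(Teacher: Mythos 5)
Your overall architecture (well-posedness of each approximating RBSDE, the uniform bound on $\|Y^n\|_{\cD^p_{\eta',\tau}}$ by comparison with $(y,z)$ via \eqref{estimationDYeq1}, the Cauchy property from the stability estimate with $\Delta_\xi=0$ and a tail bound on $\Delta_f$, and the passage to the limit in the dynamics) is essentially the paper's. One minor caveat: your appeal to ``dominated convergence for $\cE^\dbP$'' to get $\varepsilon_n\to0$ is not automatic for a supremum over $\cQ_L$; the paper instead extracts an explicit factor of order $e^{-c(\rho-\eta')n}$ from Cauchy--Schwarz against the $\cE^\dbP$-integrable envelope, which is what makes the tail vanish uniformly over the dominating family.

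The genuine gap is in the Skorokhod step. Your second error term $\dbE^\dbP\big[\int_0^{t\wedge\tau}\big(1\wedge(Y^n_{r-}-S_{r-})\big)\,d(K-K^n)_r\big]$ integrates a merely bounded (by $1$) integrand against the signed measure $d(K-K^n)$, so making it small requires the total variation of $K^n-K$ to vanish. None of the available estimates deliver this: Theorem \ref{thm:rbsdecomp-stab}(ii) controls $\|\delta U\|_{\cN^p_{\eta,\tau}}$, a quadratic-variation norm, and its addendum controls $\sup_t\big|\int_0^te^{\eta s}d\delta U_s\big|$, a sup-norm of the running integral; neither implies total-variation convergence of the compensator difference. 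Indeed, even uniform convergence of nondecreasing processes $K^n\to K$ does not force the total variation of $K^n-K$ to tend to zero (a staircase converging uniformly to the identity leaves the variation of the difference of order one). The paper circumvents this by inserting a piecewise-constant approximation $\varphi^\varepsilon$ of $\varphi=1\wedge(Y-S)$ with $|\varphi-\varphi^\varepsilon|\le\varepsilon$ and splitting into three terms: the integral of $\varphi^\varepsilon$ against $d(U^n-U)$ reduces to a finite sum of increments of $U^n-U$ and vanishes by sup-norm convergence alone; the integral of $\varphi-\varphi^\varepsilon$ against $d(K^n-K)$ is bounded by $\varepsilon\,\dbE^\dbP\big[K^n_{\tau\wedge t}+K_{\tau\wedge t}\big]$, which only needs the uniform bounds on the two total variations separately; and the term with $\varphi^n-\varphi$ against $dK^n$ is handled as you do. Without this (or an equivalent) device, your argument for the minimality condition does not close.
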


\begin{proof}
	\noindent \textbf{1.} We first show that $\{(Y^n,Z^n,U^n)\}_{n\in\dbN}$ is a Cauchy sequence in $\cD_{\eta,\tau}^p\times \cH_{\eta,\tau}^p\times\cU_{\eta,\tau}^p$, which induces the convergence of $(Y^n,Z^n,U^n)$ towards some $(Y,Z,U)$ in $\cD_{\eta,\tau}^p\times \cH_{\eta,\tau}^p\times\cU_{\eta,\tau}^p$. 
	
	By the stability result of Theorem \ref{thm:rbsdecomp-stab} (ii), we have the following estimate for the differences $(\delta Y,\delta Z,\delta U):= (Y^n-Y^m,Z^n-Z^m,U^n-U^m)$, $n>m$, 
	  \begin{equation} \label{yzuCauchy}
	  	\begin{aligned}
	  	  & \|\delta Y\|^p_{\cD^p_{\eta,\tau}} + \|\delta Z\|^p_{\cH^p_{\eta,\tau}} + \|\delta U\|^p_{\cN^p_{\eta,\tau}}  \\ 
	  	  & \hspace{10mm} \leq C \Big\{\Delta_f+\Delta_f^{\frac12}\Big(2\big(\overline{f}^\dbP_{\eta',p,\tau}\big)^{\frac{p}{2}}
	  	                           + \|Y^m\|_{\cD^p_{\eta',\tau}}^{\frac{p}{2}} + \|Y^n\|_{\cD^p_{\eta',\tau}}^{\frac{p}{2}}\Big)\Big\},
	  	\end{aligned}
	  \end{equation}
	where, by the Lipschitz property of $f$ in Assumption \ref{assum:bsdeF},
	  \begin{align*}
	  	 \Delta_f^{\frac{p'}{p}}
	  	   &= \cE\bigg[\bigg(\int_{\tau_m}^{\tau_n} e^{\eta's}\big|\delta f_s(Y^m_s,Z^m_s)\big|ds\bigg)^{p'}\bigg] \\
	  	   &\leq C_{p',\eta',L}\bigg(\cE\bigg[\bigg(\int_{\tau_m}^{\tau_n} e^{\eta's}\big|f_s^0\big|ds\bigg)^{p'}\bigg] 
	  	           + \cE\bigg[\bigg(\int_{\tau_m}^{\tau_n} e^{\eta' s}|y_s|ds\bigg)^{p'}\bigg] \\
	  	   & \hspace{55mm}  + \cE\bigg[\bigg(\int_{\tau_m}^{\tau_n} e^{\eta' s}\big|\widehat\sigma^{\top}_sz_s\big|ds\bigg)^{p'}\bigg] \bigg).
	  \end{align*}
	By the Cauchy-Schwartz inequality, we have 
	  \begin{align*}
	  	 \cE\bigg[\bigg(\int_{\tau_m}^{\tau_n} \!\!e^{\eta' s}\big|f_s^0\big|ds\bigg)^{p'}\bigg] 
	  	   &\leq \bigg(\frac{e^{-2(\rho-\eta')m}}{2(\rho-\eta')}\bigg)^{\frac{p'}{2}}\cE\bigg[\bigg(\int_{0}^{\tau}e^{2\rho s}\big|f_s^0\big|^2ds\bigg)^{\frac{p'}{2}}\bigg] \\
	  	   &\leq \bigg(\frac{e^{-2(\rho-\eta')m}}{2(\rho-\eta')}\bigg)^{\frac{p'}{2}}\Big(\overline{f}^\dbP_{\rho,q,\tau}\Big)^{p'}.
	  \end{align*}
	Similarly, for $\eta<\eta'<\eta''<\rho$, we obtain that
	  \begin{align*}
	     \cE\bigg[\bigg(\int_{\tau_m}^{\tau_n} e^{\eta' s}\big|\widehat\sigma^{\top}_sz_s\big|ds \bigg)^{p'}\bigg]
	       &\leq \left(\frac{e^{-2(\eta''-\eta')m}}{2(\eta''-\eta')}\right)^{\frac{p'}{2}}\|\delta Z\|^{p'}_{\cH^{p'}_{\eta'',\tau}} \\
	       &\leq C\bigg(\frac{e^{-2(\eta''-\eta')m}}{2(\eta''-\eta')}\bigg)^{\frac{p'}{2}}
	             \Big(\big\|\xi{\mathbb 1}_{\{\tau<\infty\}}\big\|_{\cL^q_{\rho,\tau}}^{p'}+ \big\|S^+\big\|_{\cD^q_{\rho,\tau}}^{p'} \Big), 
	  \end{align*}
	and 
	  \begin{align*}
	    \cE\bigg[\bigg(\int_{\tau_m}^{\tau_n} e^{\eta' s}|y_s|ds\bigg)^{p'}\bigg]
	      &\leq \bigg(\frac{e^{-(\rho-\eta')m}}{\rho-\eta'}\bigg)^{p'}\|y\|^{p'}_{\cD^{p'}_{\rho,\tau}}  \\
	      &\leq C \bigg(\frac{e^{-(\rho-\eta')m}}{\rho-\eta'}\bigg)^{p'}
	         \Big(\big\|\xi{\mathbb 1}_{\{\tau<\infty\}}\big\|_{\cL^q_{\rho,\tau}}^{p'} + \big\|S^+\big\|_{\cD^q_{\rho,\tau}}^{p'}\Big). 
	  \end{align*}
	The last three estimates show that $\Delta_f\longrightarrow 0$ as $m,n\to\infty$, so that the required Cauchy property would follow from \eqref{yzuCauchy} once we establish that $\big\|Y^n\big\|_{\cD_{\eta',\tau}^p}$ is bounded uniformly in $n$. To see this, notice that $\big\|Y^n\big\|_{\cD_{\eta',\tau}^p}\le \big\|y\big\|_{\cD_{\eta',\tau}^p}+\big\|Y^n-y\big\|_{\cD_{\eta',\tau}^p}$, where $\big\|y\big\|_{\cD_{\eta',\tau}^p}$ is finite by Lemma \ref{lem:yz}, and thus it reduces our task to controlling $\big\|Y^n-y\big\|_{\cD_{\eta',\tau}^p}$. 
	To do this, we use \eqref{estimationDYeq1} to obtain
	  \begin{align*}
	  	 &\cE\bigg[\sup_{0\leq s\leq\tau}e^{p\eta' s}|Y_s^n-y_s|^p\bigg]
	  	   \leq C_{p,p'} \cE\bigg[\bigg(\int_0^n e^{\eta' s}|f_s(y_s,z_s)-\mu y_s|ds\bigg)^{p'}\bigg]^{\frac{p}{p'}} \\
	  	 &\qquad \le C_{p,p',\mu, L} \cE\bigg[\bigg(\int_0^\tau e^{\eta' s}\big|f_s^0\big|ds\bigg)^{p'} + \bigg(\int_0^\tau e^{\eta' s}|y_s|ds\bigg)^{p'}
	  	                    + \bigg(\int_0^\tau e^{\eta' s}\big|\widehat\sigma^{\top}_sz_s\big|ds\bigg)^{p'}\bigg]^{\frac{p}{p'}},     
	  \end{align*}
	and we argue as above to verify that the last bound is finite, using the integrability condition on $f^0$ in Assumption \ref{assum:bsdeF}, together with Lemma \ref{lem:yz}.
	
	\vspace{3mm}
	
	\noindent {\bf 2.} We next prove that the limit process $U$ is a c\`adl\`ag supermartingale with $[U,X]=0$. Theorem \ref{thm:rbsdecomp-stab} (ii) also implies that
	\begin{align*}
	   \cE\bigg[\sup_{0\leq t<\infty}\big|\overline{U}^n_{t\wedge\tau}-\overline{U}^m_{t\wedge\tau}\big|^p\bigg] \longrightarrow 0,
	   \quad\mbox{where}~~\overline{U}^n:=\int_0^.e^{\eta s}dU^n_s.
	\end{align*}    
	Then, there exists a limit process $\overline U\in\cD^p_{0,\tau}(\dbP)$. As $\overline U^n$ is a c\`adl\`ag $\dbQ-$uniformly integrable supermartingale for all $\dbQ\in\cQ_L$, we may deduce that its limit $\overline U$ is also a c\`adl\`ag $\dbQ-$uniformly integrable supermartingale for all $\dbQ\in\cQ_L$. Define $U_t:=\int_0^te^{-\eta s}d\overline U_s$, $t\geq 0$. Clearly, $U\in\cD^p_{\eta,\tau}(\dbP)$. As the integrand $e^{-\eta s}$ is positive, the process $U$ is a supermartingale. By Kunita-Watanabe inequality for semimartingales, we obtain 
	   \begin{align*}
	     \int_0^\tau e^{2\eta s}\big|d[U,X]_s\big| 
	       &\leq \int_0^\tau e^{2\eta s}\big|d[U-U^n,X]_s\big| + \int_0^\tau e^{2\eta s}\big|d[U^n,X]_s\big| \\
 	       &\leq \bigg(\int_0^\tau e^{2\eta s}d[U-U^n]_s\bigg)^{\frac{1}{2}}\bigg(\int_0^\tau e^{2\eta s}d[X]_s\bigg)^{\frac{1}{2}}.  
	   \end{align*}
	Theorem \ref{thm:rbsdecomp-stab} (ii) also states that the right-hand side converges a.s.~to $0$, at least along a subsequence, which implies that $[U,X]=0$.  
	
	\vspace{3mm}
	
	\noindent {\bf 3.} Clearly, $Y\ge S$, $\dbP-$a.s. In this step, we prove that the limit supermartingale $U$ satisfies the Skorokhod condition. To do this, denote $\varphi^n:=1\wedge(Y^n-S)$, $\varphi:=1\wedge(Y-S)$, and let us show that the convergence of $(Y^n,U^n)$ to $(Y,U)$ implies that
	  \begin{align*}
	  	 a_n := \dbE\bigg[\int_0^{\tau\wedge t}\varphi^n_{r-}dU^n_r\bigg]-\dbE\bigg[\int_0^{\tau\wedge t}\varphi_{r-}dU_r\bigg] \longrightarrow 0
	  	  ~~\mbox{as $n\to\infty$, ~~for all}~~t\geq 0.
	  \end{align*}
	
	For $\varepsilon>0$, let $\tau^\varepsilon_{0}=0$, 
	  \begin{align*}
	  	\tau^\varepsilon_{i+1} := \inf\big\{r>\tau^\varepsilon_{i}\,:\,|\varphi_r-\varphi_{\tau_{i}^\varepsilon}|\geq \varepsilon\big\},
	  \end{align*}
	  and 
	  \begin{align*}
	  	 \varphi^{\varepsilon} := \sum_{i\ge 0} \varphi_{\tau^{\varepsilon}_i}\Ind_{[\tau^\varepsilon_{i},\tau^\varepsilon_{i+1})}, 
	  	  \quad\mbox{so that }\, |\varphi-\varphi^\varepsilon|\le\varepsilon.
	  \end{align*}
	We first decompose 
	  \begin{align*}
	     a_n \leq \bigg|\dbE\bigg[\int_0^{\tau\wedge t}(\varphi^n-\varphi)_{r-}dU^n_r\bigg]\bigg| 
	              &+ \bigg|\dbE\bigg[\int_0^{\tau\wedge t}(\varphi-\varphi^\varepsilon)_{r-}d(U^n-U)_r\bigg]\bigg| \\
	              &+ \bigg|\dbE\bigg[\int_0^{\tau\wedge t}\varphi^\varepsilon_{r-}d(U^n-U)_r\bigg]\bigg|.
	  \end{align*}
	Since $\varphi^\varepsilon$ is piecewise constant, bounded by $1$, and $U^n\to U$ in $\cD^p_{\eta,\tau}$, we get 
	  \begin{equation*}
	   \lim_{n\to\infty}\dbE\bigg[\int_0^{\tau\wedge t}\varphi_{s-}^\varepsilon d(U^n_s-U_s)\bigg] = 0.
	  \end{equation*}
	For the second term, we have
	  \begin{align*}
	    0 & \leq \bigg|\dbE\bigg[\int_0^{\tau\wedge t}(\varphi_{s-}-\varphi^\varepsilon_{s-})d(U^n_s-U_s)\bigg]\bigg| \\
	      & \leq \bigg|\dbE\bigg[\int_0^{\tau\wedge t}(\varphi_{s-}-\varphi^\varepsilon_{s-})d(N^n_s-N_s)\bigg]\bigg| 
	            + \bigg|\dbE\bigg[\int_0^{\tau\wedge t}(\varphi_{s-}-\varphi^\varepsilon_{s-})d(K^n_s-K_s)\bigg]\bigg| \\
	      & = \varepsilon \dbE\big[K_{\tau\wedge t} + K_{\tau\wedge t}^n\big].
	  \end{align*}
	By \eqref{estimationKeq13} and $|f^{n,0}|\leq |f^{0}|$ we obtain that 
	  \begin{align*}
	   \dbE[K_{\tau\wedge t}^n] 
	     &\leq C\bigg(\|Y^n\|_{\cD^p_{\alpha,\tau}} + \dbE\bigg[\bigg(\int_0^{\tau\wedge t}e^{2\rho s}|f^{n,0}_s|^2ds\bigg)^{\frac{q}{2}}\bigg]\bigg) \\
	     &\leq C\bigg(\|Y^1\|_{\cD^p_{\alpha,\tau}} + \|Y\|_{\cD^p_{\alpha,\tau}} 
	         + \dbE\bigg[\bigg(\int_0^{\tau\wedge t}e^{2\rho s}|f^{0}_s|^2ds\bigg)^{\frac{q}{2}}\bigg]\bigg) < \infty.     
	  \end{align*}
	Hence, we may control the second term by choosing $\varepsilon$ arbitrarily small. 
	For the first term, we have 
	  \begin{align*}
	     0 &\leq \bigg|\dbE\bigg[\int_0^{\tau\wedge t}(\varphi^n_{s-}-\varphi_{s-})dU^n_s\bigg]\bigg| 
	         = \bigg|\dbE\bigg[\int_0^{\tau\wedge t}(\varphi^n_{s-}-\varphi_{s-})dK^n_s\bigg]\bigg| \\
	       &\leq \dbE\bigg[\bigg(\sup_{0\leq s\leq \tau\wedge t}|Y^n_{s}-Y_{s}|\wedge 1\bigg)K^n_{\tau\wedge t}\bigg] 
	        \leq \dbE\bigg[\sup_{0\leq s\leq \tau\wedge t}|Y^n_{s}-Y_{s}|^{p^*}\wedge 1\bigg]^{\frac{1}{p^*}}\dbE\big[(K^n_{\tau\wedge t})^p\big]^{\frac{1}{p}}.
	  \end{align*}
	Again we may show that $\dbE\big[(K^n_{\tau\wedge t})^p\big]$ is bounded by a constant, independent of $n\in\dbN$. As $Y^n\to Y$ in $\cD^p_{\eta,\tau}$, we have 
	  \begin{align*}
	  	\sup_{0\leq s\leq \tau\wedge t}|Y^n_{s}-Y_{s}|^{p^*} \to 0, \quad a.s.
	  \end{align*}
	By dominated convergence, we have 
	  \begin{align*}
	  	 \lim_{n\to\infty} \dbE\bigg[\sup_{0\leq s\leq \tau\wedge t}|Y^n_{s}-Y_{s}|^{p^*}\wedge 1\bigg] = 0. 
	  \end{align*}
	Hence, we have 
	  \begin{align*}
	  	 \lim_{n\to\infty}\dbE\bigg[\int_0^{\tau\wedge t}(\varphi^n_{s-}-\varphi_{s-})dU^n_s\bigg] = 0. 
	  \end{align*}
	All together, we have 
	  \begin{align*}
	  	 \lim_{n\to\infty}\dbE\bigg[\int_0^{\tau\wedge t}\varphi^n_{s-}dU^n_s-\int_0^{\tau\wedge t}\varphi_{s-}dU_s\bigg] = 0,
	  \end{align*}
	  and the assertion follows. 
	
	\vspace{3mm}
	
	\noindent {\bf 4.} We finally verify that $(Y,Z,U)$ satisfies the differential part of the RBSDE. 
	The following verification is reported for the convenience of the reader, and reproduces exactly the line of argument in \cite[Section 5.2, Step 3]{DP97}. 
	For any $\alpha\in\dbR$ and $t\geq 0$, we have by It\^o's formula and \eqref{RBSDEn} that, for $t$, $t'\in \mathbb{R}_+$, $t\leq t'$,
	  \begin{align*}
	     e^{\alpha(t\wedge\tau)}Y^n_{t\wedge\tau} 
	       &= e^{\alpha(t'\wedge\tau)}Y_{t'\wedge\tau } +\int_{t\wedge\tau}^{t' \wedge\tau}e^{\alpha s}\Big\{\big(f^n_s(Y_s^n,Z_s^n)-\alpha Y_s^n\big)ds 
	                     - \big(Z^n_s\cdot dX_s +dU^n_s\big)\Big\} \\
	       &= e^{\alpha(t'\wedge\tau)}Y_{t'\wedge\tau } +\int_{t\wedge\tau}^{t' \wedge \tau}e^{\alpha s}\Big\{\big(f_s(Y_s^n,Z_s^n)-\alpha Y_s^n\big)ds 
	                                                    - \big(Z^n_s\cdot dX_s +dU^n_s\big)\Big\} \\
	       & \hspace{23.5mm} - \int_{t\wedge\tau_n}^{t' \wedge \tau}e^{\alpha s}\big(f_s(Y_s^n,Z_s^n)+\mu Y_s^n\big)ds.
	  \end{align*}
	We choose $\alpha<\eta$. Then, it is easily seen that $e^{\alpha(t\wedge\tau)}Y^n_{t\wedge\tau} \longrightarrow e^{\alpha(t\wedge\tau)}Y_{t\wedge\tau}$, for all $t\ge 0$, and so that $e^{\alpha\tau}Y^n_{\tau} \longrightarrow e^{\alpha\tau}\xi$, on $\{\tau<\infty\}$. Moreover, 
	  \begin{align*}
	  	 \int_{t\wedge\tau}^{t' \wedge\tau}e^{\alpha s}dU^n_s \longrightarrow \int_{t\wedge\tau}^{t' \wedge \tau}e^{\alpha s}dU_s, \quad\mbox{in }~ \dbL^p.
	  \end{align*}
	By the BDG inequality, it also follows that 
	  \begin{align*}
	  	 \int_{t\wedge\tau}^{t' \wedge\tau} e^{\alpha s}Z^n_s\cdot dX_s \longrightarrow \int_{t\wedge\tau}^{t' \wedge\tau} e^{\alpha s}Z_s\cdot dX_s, 
	  	    \quad \mbox{in }~ \dbL^p, \quad \mbox{for all }~ t\ge 0.
	  \end{align*}
	Moreover, we have 
	  \begin{align*}
	  	 \int_{t\wedge\tau}^{t' \wedge \tau}e^{\alpha s} Y_s^n ds \longrightarrow \int_{t\wedge\tau}^{t' \wedge\tau}e^{\alpha s} Y_s ds,  \quad \mbox{in }~\dbL^p,
	  	   \quad \mbox{for all } ~t\ge 0, 
	  \end{align*}
	  due to the following estimate 
	  \begin{align*}
	       &\dbE\bigg[\bigg(\int_0^\tau e^{\alpha s}|Y_s^n-Y_s|ds\bigg)^p\bigg] \\
	       &\qquad \leq \dbE\bigg[\sup_{0\leq s<\infty} e^{p\eta (s\wedge\tau)}|Y_{s\wedge\tau}^n-Y_{s\wedge\tau}|^p\bigg(\int_0^\tau e^{-(\eta-\alpha) s}ds\bigg)^p\bigg] \\
	       &\qquad \leq \frac{1}{(\eta-\alpha)^p}\dbE\bigg[\sup_{0\leq s<\infty} e^{p\eta (s\wedge\tau)}|Y_{s\wedge\tau}^n-Y_{s\wedge\tau}|^p\bigg] 
	             \longrightarrow 0, \quad \mbox{ as } n\to\infty.
	  \end{align*}
	From a similar argument, we also have 
	  \begin{align*}
	  	\int_{t\wedge\tau_n}^{t' \wedge \tau}e^{\alpha s}\big(f_s(Y_s^n,Z_s^n)+\mu Y_s^n\big)ds \longrightarrow 0, \quad \mbox{in }~\dbL^p,
	  \end{align*}
	  and by Lipschitz continuity of $f$ we see that 
	  \begin{align*}
	  	 \int_{t\wedge\tau}^{t' \wedge\tau}e^{\alpha s}\big|f_s(Y_s^n,Z_s^n)-f_s(Y_s,Z_s)\big|ds \longrightarrow 0, \quad \mbox{in }~\dbL^p, \quad \mbox{for all }~t\ge 0.
	  \end{align*}
	Therefore, we have proved that 
	  \begin{align*}
	     e^{\alpha(t\wedge\tau)}Y_{t\wedge\tau} 
	      = e^{\alpha(t' \wedge \tau)} Y_{t' \wedge\tau} +\int_{t\wedge\tau}^{t' \wedge \tau}e^{\alpha s}\Big\{\big(f_s(Y_s,Z_s)-\alpha Y_s\big)ds 
	         - \big(Z_s\cdot dX_s+dU_s\big)\Big\}, 
	  \end{align*}
	  thus completing the proof by It\^o's formula.
\end{proof}

We now prove the comparison result. In particular, this justifies the uniqueness statement in Theorem \ref{thm:rbsde}.

\begin{proof}[Proof of Theorem \ref{thm:rbsdecomp-stab} {\rm (i)}]
	Denote by $\big\{\big(Y^n,Z^n,U^n\big)\big\}_{n\in\dbN}$ and $\big\{\big({Y'}^n,{Z'}^n,{U'}^n\big)\big\}_{n\in\dbN}$ the approximating sequence of $(Y,Z,U)$ and $(Y',Z',U')$, using the triples $(y,z,u)$ and $(y',z',u')$, respectively, as in the last proof. Since $\xi\leq\xi'$ and $S\leq S'$, we have $y_{\tau_n}\le y'_{\tau_n}$. By standard comparison argument of BSDEs, see e.g.~\cite[Proposition 3.2]{RS12}, this in turns implies that $Y^n_{\tau_0}\le Y'^n_{\tau_0}$ for all stopping time $\tau_0\le\tau$. The required result follows by sending $n\to\infty$.  
\end{proof}

\subsection{Special case: backward SDE}  \label{sect:specialbsde}

\begin{proof}[Proof of Theorem \ref{thm:bsde}]
	By setting $S=-\infty$, the existence and uniqueness results follow from Theorem \ref{thm:rbsde}. In particular, the Skorokhod condition implies in the present setting that $U=N$ is a $\dbP-$martingale orthogonal to $X$. It remains to verify the estimates \eqref{estznk}.
	
	Let $\eta\ge-\mu$, and observe that Assumption \ref{assum:bsdeF} implies that 
	  \begin{align*}
	  	\sgn(y)f_s(y,z)\le -\mu|y| + L\big|\widehat\sigma_s^{\top}z\big| + \big|f^0_s\big|\le \eta |y| + L\big|\widehat\sigma_s^{\top}z\big| + \big|f^0_s\big|.
	  \end{align*}
	Then, by It\^o's formula, together with Proposition \ref{TanakaIneq}, we have 
	  \begin{align*}
	  	& e^{\eta(n\wedge\tau)}|Y_{n\wedge\tau}| - e^{\eta(t\wedge\tau)}|Y_{t\wedge\tau}| \\
	  	&\qquad \geq \int_{t\wedge\tau}^{n\wedge\tau} e^{\eta s}\Big\{\eta|Y_s|ds - \sgn(Y_{s-})\big(f_s(Y_s, Z_s)ds - Z_s\cdot dX_s - dN_s \big)\Big\} \\
	  	&\qquad \geq \int_{t\wedge\tau}^{n\wedge\tau} e^{\eta s}\Big\{- L\big|\widehat\sigma_s^{\top} Z_s\big|ds -|f^0_s| ds + \sgn(Y_{s})Z_s\cdot dX_s + \sgn(Y_{s-})dN_s\Big\}.
	  \end{align*}
	Introduce 
	  \begin{equation*}
	  	 \widehat{\lambda}_s:= L\sgn(Y_s)\frac{\widehat\sigma_s^{\top}Z_s}{|\widehat\sigma_s^{\top}Z_s|}\Ind_{\{|\widehat\sigma_s^{\top}Z_s|\neq 0\}},
	  \end{equation*}
      and recall that $N$ remains a martingale under $\dbQ^{\widehat\lambda}$ by the orthogonality $[X,N]=0$. 
    Then, taking conditional expectation under $\dbQ^{\widehat\lambda}$, we obtain
	  \begin{align*} 
	     e^{\eta(t\wedge\tau)}|Y_{t\wedge\tau}| 
	      &\leq \lim_{n\to\infty} \dbE^{\dbQ^{\widehat\lambda}} \bigg[e^{\eta(n\wedge\tau)}|Y_{n\wedge\tau}| + \int_{0}^{n\wedge\tau}\!e^{\eta s}\big|f^0_s\big|ds \;\bigg|\cF^{+,\dbP}_{t\wedge\tau} \bigg]  \\
	      &= \dbE^{\dbQ^{\widehat\lambda}}\bigg[e^{\eta\tau}|\xi{\mathbb 1}_{\{\tau<\infty\}}| + \int_{0}^{\tau}\!e^{\eta s}\big|f^0_s\big|ds\;\bigg|\cF^{+,\dbP}_{t\wedge\tau}\bigg],
	  \end{align*}
	  by the uniform integrability of the process $\{e^{\eta s}Y_s\}_{s\ge 0}$. 
	By Lemma \ref{supDoob}, this provides  
	  \begin{align}  \label{BSDEapriori1}
	    \big\| Y \big\|_{\cD^p_{\eta,\tau}}^p 
	      &\leq \frac{p'}{p'-p}\cE\left[\bigg(e^{\eta \tau}|\xi{\mathbb 1}_{\{\tau<\infty\}}| + \int_{0}^{\tau} e^{\eta s}\big|f^0_s\big|ds\bigg)^{p'}\right]^{\frac{p}{p'}} \nonumber \\
	      &\leq C_{p,p',\eta,\eta'}\Big\{\big\|\xi{\mathbb 1}_{\{\tau<\infty\}}\big\|_{\cL_{\eta',\tau}^{p'}}^{p}+ \big(\overline{f}^0_{\eta',p',\tau}\big)^p\Big\}, 
	  \end{align}
	  for all $p'\in(p,q)$ and $-\mu\leq\eta<\eta'\leq\rho$ with some constant $C_{p,p',\eta,\eta'}$. 
	Next we can follow the lines of the proof of Proposition \ref{EstimationRBSDE} to show that
	  \begin{align*}
	  	 \big\|Z\big\|^p_{\cH^p_{\eta'',\tau}(\dbP)} + \big\|N\big\|^p_{\cN^p_{\eta'',\tau}(\dbP)} 
	  	  \leq C_{p,L,\eta,\eta''}\Big(\|Y\|^p_{\cD^p_{\eta,\tau}(\dbP)} + \big(\overline{f}^0_{\eta,p,\tau}\big)^p\Big),
	  \end{align*}
	  for $\eta''<\eta$. 
	Combined with \eqref{BSDEapriori1}, this induces the required estimate.
\end{proof}

\begin{proof}[Proof of Theorem \ref{thm:bsdecomp-stab}]
	The comparison and stability result follow from Theorem \ref{thm:rbsdecomp-stab} and Theorem \ref{thm:bsde} respectively. 
\end{proof}

For later use, we need a version of the stability result allowing for different horizons. This requires to extend the generator and the solution of the BSDE beyond the terminal time by: 
  \begin{align*}
  	f_{t\vee\tau}(y,z) = 0, \quad Y_{t\vee\tau} = \xi, \quad Z_{t\vee\tau} = 0, \quad N_{t\vee\tau} = 0, \quad \mbox{for all}~~t\ge 0. 
  \end{align*}

\begin{proposition}  \label{prop:BSDEstab2times}
	For finite stopping times $\tau$ and $\tau'$, suppose $(f,\xi,\tau)$ and $(f',\xi',\tau')$ safisfy Assumptions \ref{assum:bsdeF} with the same parameters $L$ and $\mu$.
	Let 
	  \begin{align*}
	  	\delta Y = Y - Y', \quad \delta Z = Z - Z', \quad \delta N = N - N', \quad \delta f = f - f', \quad \delta \xi = \xi - \xi'.
	  \end{align*}
	Then, for all stopping time $\tau_0\leq \tau\wedge\tau'$, and all $\eta\in[-\mu,\rho)$, $1<p<p'<q$, we have
	  \begin{align*}
	  	\big|e^{\eta\tau_0}\delta Y_{\tau_0}\big|  
	  	 \leq \esssup_{\dbQ\in\cQ_L}\dbE^{\dbQ}\left[\big|e^{\eta\tau}\xi-e^{\eta\tau'}\xi'\big| 
	  	       + \int_{\tau_0}^{\tau\vee\tau'}e^{\eta s}\big|\delta f_s(Y_s,Z_s)\big|ds\bigg|\cF_{\tau_0}^{+,\dbP}\right].
	  \end{align*}
\end{proposition}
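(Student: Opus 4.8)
The plan is to mimic Step~1 of the proof of Theorem~\ref{thm:rbsdecomp-stab}~(ii) and the a priori estimate in the proof of Theorem~\ref{thm:bsde}, but applied to the difference of the two \emph{stopped and discounted} value processes, so that the mismatch of the two horizons is absorbed into a single terminal term. Concretely, set $M:=\tau\vee\tau'$ and introduce $\zeta_s:=e^{\eta(s\wedge\tau)}Y_{s\wedge\tau}$ and $\zeta'_s:=e^{\eta(s\wedge\tau')}Y'_{s\wedge\tau'}$. With the convention that $Y,Z,N$ (resp.~$Y',Z',N'$) are frozen beyond $\tau$ (resp.~$\tau'$), both $\zeta$ and $\zeta'$ are constant after their own horizon, so that $\zeta_s-\zeta'_s=e^{\eta\tau}\xi-e^{\eta\tau'}\xi'$ for all $s\ge M$, while $\zeta_{\tau_0}-\zeta'_{\tau_0}=e^{\eta\tau_0}\delta Y_{\tau_0}$ since $\tau_0\le\tau\wedge\tau'$. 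This is exactly the reason to work with the stopped discount rather than with $e^{\eta s}\delta Y_s$: the terminal value of the difference is precisely the desired $e^{\eta\tau}\xi-e^{\eta\tau'}\xi'$.

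First I would compute the semimartingale dynamics of $\delta\zeta:=\zeta-\zeta'$ on $\llbracket\tau_0,M\rrbracket$ and apply Tanaka's inequality (Proposition~\ref{TanakaIneq}) to $|\delta\zeta|$. On the region $\{s\le\tau\wedge\tau'\}$ both equations are active and, writing the generator difference as $f_s(Y_s,Z_s)-f'_s(Y'_s,Z'_s)=\delta f_s(Y_s,Z_s)+\big(f'_s(Y_s,Z_s)-f'_s(Y'_s,Z'_s)\big)$, Assumption~\ref{assum:bsdeF} (monotonicity in $y$, Lipschitz in $z$) bounds the second bracket by $-\mu|\delta Y_s|$ plus $L\big|\widehat\sigma_s^{\bf T}\delta Z_s\big|$. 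Using $\eta\ge-\mu$, the drift generated by $\delta\zeta$ is then dominated by $e^{\eta s}\big(|\delta f_s(Y_s,Z_s)|+L|\widehat\sigma_s^{\bf T}\delta Z_s|\big)$, exactly as in the single-horizon computation. As there, I would remove the $z$-term by the change of measure $\widehat\lambda_s:=L\,\sgn(\delta\zeta_s)\frac{\widehat\sigma_s^{\bf T}\delta Z_s}{|\widehat\sigma_s^{\bf T}\delta Z_s|}\Ind_{\{|\widehat\sigma_s^{\bf T}\delta Z_s|\neq0\}}$, which is bounded by $L$ so that $\dbQ^{\widehat\lambda}\in\cQ_L$; under $\dbQ^{\widehat\lambda}$ the Lipschitz term merges into $\delta Z\sint X^{\widehat\lambda}$, and together with $\delta N$ it forms a $\dbQ^{\widehat\lambda}$-uniformly integrable martingale by the BDG inequality, since $\delta Z\in\cH^p_{\eta,\tau}$ and $\delta N\in\cN^p_{\eta,\tau}$.

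The delicate point, and what I expect to be the main obstacle, is the gap region $\{\tau\wedge\tau'<s\le M\}$ where one of the two processes, say $\zeta$ on $\{\tau<s\le\tau'\}$, is already frozen at the constant $e^{\eta\tau}\xi$ while the other still evolves. There the discount attached to the frozen process ($e^{\eta\tau}$) no longer matches the running discount $e^{\eta s}$ of the active one, so that $\sgn(\delta\zeta_s)$ is aligned with $e^{\eta(\tau-s)}\xi-Y'_s$ rather than with $\xi-Y'_s$, and a naive pointwise bound produces spurious terms that are not of the form $|\delta f|$. The crux is therefore to show that, after linearising $f'$ around this shifted anchor and again invoking $\eta+\mu\ge0$, the frozen-process drift over the gap is still controlled so that only $e^{\eta s}|\delta f_s(Y_s,Z_s)|$ survives beyond the $z$-term absorbed by $\dbQ^{\widehat\lambda}$. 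Granting this control, I would integrate the Tanaka inequality over $\llbracket\tau_0,M\rrbracket$ and take $\dbE^{\dbQ^{\widehat\lambda}}[\,\cdot\,|\cF_{\tau_0}^{+,\dbP}]$ to annihilate the martingale part (via optional sampling and the uniform integrability of $\{e^{\eta s}Y_s\}$ and $\{e^{\eta s}Y'_s\}$, as in the proof of Theorem~\ref{thm:bsde}), which leaves $e^{\eta\tau_0}|\delta Y_{\tau_0}|\le\dbE^{\dbQ^{\widehat\lambda}}\big[\,|e^{\eta\tau}\xi-e^{\eta\tau'}\xi'|+\int_{\tau_0}^{M}e^{\eta s}|\delta f_s(Y_s,Z_s)|\,ds\,\big|\,\cF_{\tau_0}^{+,\dbP}\big]$. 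Bounding the right-hand side by the $\esssup$ over $\dbQ\in\cQ_L$ and recalling $M=\tau\vee\tau'$ then yields the claim.
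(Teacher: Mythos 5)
Your overall route is the same as the paper's: apply Proposition \ref{TanakaIneq} to the difference of the two discounted, stopped solutions, use the monotonicity and Lipschitz conditions together with $\eta\ge-\mu$ on the common region, absorb the $z$-Lipschitz term by the Girsanov kernel $\widehat\lambda$ bounded by $L$, and take $\dbE^{\dbQ^{\widehat\lambda}}[\,\cdot\,|\cF_{\tau_0}^{+,\dbP}]$ after checking uniform integrability of the martingale part. Your choice of $\zeta_s=e^{\eta(s\wedge\tau)}Y_{s\wedge\tau}$ is also the right one to make the terminal term come out exactly as $e^{\eta\tau}\xi-e^{\eta\tau'}\xi'$, and the common-region estimate is correct and standard.

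The problem is that you explicitly defer the one step that is not standard. On the gap region, say $\tau'<s\le\tau$ with $(Y',Z',N',f')$ frozen, you have $d(\delta\zeta_s)=d\zeta_s=e^{\eta s}\big[(\eta Y_s-f_s(Y_s,Z_s))ds+Z_s\cdot dX_s+dN_s\big]$, so the Tanaka drift contains $\eta\,\sgn(\delta\zeta_s)\,e^{\eta s}Y_s$ --- the \emph{level} of the still-active solution, not a difference. Writing $e^{\eta s}Y_s=\delta\zeta_s+e^{\eta\tau'}\xi'$ turns this into $\eta|\delta\zeta_s|+\eta\,\sgn(\delta\zeta_s)\,e^{\eta\tau'}\xi'$, and the second piece can be as negative as $-|\eta|\,e^{\eta\tau'}|\xi'|$ on a set of positive $ds$-measure; it is not absorbed by $(\eta+\mu)\ge0$ (the frozen generator is $0$, whose monotonicity constant is $0$, not $\mu$), not dominated by $e^{\eta s}|\delta f_s(Y_s,Z_s)|$, and not removable by the change of measure. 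So the control you ask the reader to ``grant'' is precisely where the argument either needs a genuinely different idea (e.g.\ a different anchoring of the frozen solution, which changes the terminal term to $e^{\eta(\tau\vee\tau')}|\xi-\xi'|$ and forces $\eta\ge0$ on the gap, or an extra term $|\eta|\int_{\tau\wedge\tau'}^{\tau\vee\tau'}e^{\eta s}|Y_s|\,ds$ on the right-hand side) or breaks down. Identifying the crux is not the same as resolving it: as written, the proposal proves the estimate only on $\{\tau=\tau'\}$ or for $\eta=0$, and the two-horizon case --- the entire point of the proposition --- is left open.
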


\begin{proof}
	By Proposition \ref{TanakaIneq} and the Lipschitz and monotonicity conditions of Assumption \ref{assum:bsdeF}, 
	  \begin{align*}
	     \big|e^{\eta{\tau_0}}\delta Y_{\tau_0}\big| 
	       &\leq \big|e^{\eta\tau}\xi-e^{\eta\tau'}\xi'\big| +\int_{\tau_0}^{\tau\vee\tau'}e^{\eta s}\big|\delta f_s(Y_s,Z_s)\big|ds
	               + e^{\eta s}\sgn(\delta Y_s)\delta Z_s\cdot\big(dX_s - \widehat\sigma_s\widehat\lambda_sds\big) \\
	       &\hspace{26.5mm} - \int_{\tau_0}^{\tau\vee\tau'}e^{\eta s}\sgn(\delta Y_{s-})d\delta N_s,
	  \end{align*}
	  with 
	  \begin{align*}
	  	\widehat\lambda_s:=L\sgn(\delta Y_s)\frac{\widehat\sigma_s^\top\delta Z_s}{|\widehat\sigma_s^\top\delta Z_s|}\Ind_{\{|\widehat\sigma_s^\top\delta Z_s|\neq 0\}}.
	  \end{align*}
	Taking conditional expectation under $\dbQ^{\widehat\lambda}\in\cQ_L$ induces the required inequality. 
\end{proof}

\section{Second order backward SDE: representation and uniqueness}  \label{sect:2bsde}

We shall use the additional notation:
  \begin{align*}
  	 \cE^{\dbP,+}_{t}[\cdot] 
  	  := \esssup^\dbP_{\dbP' \in \cP_\dbP^+(t)}\esssup^{\dbP'}_{\dbQ\in \cQ_L(\dbP')}\dbE^{\dbQ}[\cdot |\cF^+_t], \quad \mbox{for all} \quad 
  	    t\geq 0,~~\dbP\in\cP_0.
  \end{align*}

\begin{remark} \label{rem:esupassum}
	It follows from Assumption \ref{assum:2bsde-integ} and Doob's inequality that for any $q'<q$
	 \begin{align*}
	 	&\sup_{\dbP\in\cP_0}\sup_{\dbQ\in\cQ_L(\dbP)}\dbE^{\dbQ}\bigg[\sup_{0\leq t\leq \tau}\!\!\cE^{\mathbb{P},+}_{t}\big[(e^{\rho\tau}|\xi|)^{q'}\big]\bigg] \\
	 	& \quad + \sup_{\dbP\in\cP_0}\sup_{\dbQ\in\cQ_L(\dbP)}\dbE^{\dbQ}\bigg[\sup_{0\leq t\leq \tau}\cE^{\dbP, +}_{t}\bigg[\bigg(\int^\tau_0\big|e^{\rho s}f_s^0\big|^2ds\bigg)^\frac{q'}{2}\bigg]\bigg]<\infty.
	 \end{align*}
	We also note that $\cE_t^{\dbP,+}\big[\Ind_{\{\tau\geq n\}}\big]$ is a $\dbP$-supermartingale. 
	Then, by Doob's martingale inequality, we have 
	  \begin{equation*}
	     \dbE^\dbP\left[\cE_t^{\dbP,+}\big[\Ind_{\{\tau\geq n\}}\big]\right] \leq C\cE^{\cP_0}\big[\Ind_{\{\tau\geq n\}}\big] \longrightarrow 0,
	  \end{equation*}
	  so that 
	  \begin{equation*}
	  	 \dbE^\dbP\left[\lim_{n\to\infty}\cE_t^{\dbP,+}\big[\Ind_{\{\tau\geq n\}}\big]\right] = 0,
	  \end{equation*}
	  by dominated convergence theorem, and therefore 
	  \begin{align*}
	  	\lim_{n\to\infty}\cE_t^{\dbP,+}\big[\Ind_{\{\tau\geq n\}}\big] = 0, \quad \dbP\mbox{-a.s.}
	  \end{align*}
\end{remark}

\vspace{3mm}

Similarly to Soner, Touzi \& Zhang \cite{STZ12}, the uniqueness follows from the representation of the $Y$ component of the 2BDSE \eqref{2bsdel} by means of the family of backward SDEs. For all $\mathbb{P}\in \cP_0$, we denote by $\cY^{\mathbb{P}}[\xi_0,\tau_0]$ the $Y-$component of the solution of the backward SDE:
  \begin{align} \label{bsdenonshift}
  	\cY^{\dbP}_{t\wedge \tau} 
  	  = \xi_0 + \int^{\tau_0}_{t\wedge\tau_0} F_s\big(\cY^{\dbP}_s, \cZ^\dbP_s, \widehat{\sigma}_s\big)ds 
  	          - \int^{\tau_0}_{t\wedge\tau_0} \big(\cZ^{\dbP}_s\cdot dX_s+d\cN^{\dbP}_s\big), \quad t\ge 0,\quad \dbP\mbox{-a.s.}
  \end{align}
  where $\xi_0$ is an $\cF_{\tau_0}-$measurable random variable for some stopping time $\tau_0\le\tau$. 
Under our conditions on $(F,\xi)$, the wellposedness of these BSDEs for $\xi_0\in\cL^p_{\eta,\tau_0}(\dbP)$ follows from Theorem \ref{thm:bsde}. 
Remark that in the sequel we always consider the version of $\cY^{\dbP}$ such that $\cY^{\dbP}_{t\wedge \tau}\in \mathcal{F}^{+}_{t\wedge \tau}$ by the result of Lemma \ref{lem:cYfinite}.

The following statement provides a representation for the 2BSDE, and justifies the comparison (and uniqueness) result of Proposition \ref{prop:2BSDEcomp}.

\vspace{3mm}

\begin{proposition}  \label{prop:representation}
	Let Assumptions \ref{assum:bsdeF} and Assumption \ref{assum:2bsde-integ} hold true, and let $(Y, Z)\in \cD^p_{\eta,\tau}\big(\cP_0, \dbF^{+, \cP_0}\big)\times \cH^p_{\eta,\tau}\big(\cP_0, \dbF^{\cP_0}\big)$ be a solution of the 2BSDE \eqref{2bsdel}, for some $p\in(1,q)$ and $\eta\in[-\mu,\rho)$. 
	Then,
	  \begin{align} 
	     Y_{t_1\wedge \tau} 
	      &= \esssup^\dbP_{\dbP' \in \cP_\dbP^+(t_1\wedge \tau)}\cY^{\dbP'}_{t_1\wedge \tau} \big[Y_{t_2\wedge \tau}, t_2\wedge\tau\big]  \label{2bsderepresentationt1t2} \\ 
	      &= \esssup^\dbP_{\dbP' \in \cP_\dbP^+(t_1\wedge \tau)} \cY^{\dbP'}_{t_1\wedge \tau} [\xi,\tau],
	         \quad \dbP\mbox{-a.s. for all}~~\dbP\in \cP_0,~0\leq t_1\leq t_2. \label{2bsderepresentationt1}
	  \end{align}
	In particular, the 2BSDE has at most one solution in $\cD^p_{\eta,\tau}\big(\cP_0, \dbF^{+, \cP_0}\big)\times \cH^p_{\eta,\tau}\big(\cP_0, \dbF^{\cP_0}\big)$, satisfying the estimate \eqref{apriori-2bsde}, and the comparison result of Proposition \ref{prop:2BSDEcomp} holds true.
\end{proposition}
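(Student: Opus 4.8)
The plan is to prove the chain \eqref{2bsderepresentationt1t2}--\eqref{2bsderepresentationt1} by a double inequality, and then to read off the uniqueness, the bound \eqref{apriori-2bsde}, and the comparison of Proposition~\ref{prop:2BSDEcomp} as corollaries. Throughout I fix $\dbP\in\cP_0$ and $0\le t_1\le t_2$, and I write $K^{\dbP'}$ for the nondecreasing predictable part in the Doob--Meyer decomposition $U^{\dbP'}=M^{\dbP'}-K^{\dbP'}$ of the supermartingale attached to the 2BSDE under $\dbP'$ in Definition~\ref{def:2bsde}.

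For the inequality ``$\ge$'', fix $\dbP'\in\cP_\dbP^+(t_1\wedge\tau)$. By Definition~\ref{def:2bsde} the triple $(Y,Z,U^{\dbP'})$ solves, $\dbP'$-a.s., exactly the dynamics defining $\cY^{\dbP'}[\xi,\tau]$ except that the martingale $\cN^{\dbP'}$ is replaced by the supermartingale $U^{\dbP'}$; in the language of Remark~\ref{rem:supersolution}, $(Y,Z)$ is a supersolution of $\mathrm{BSDE}(F(\cdot,\widehat\sigma),\xi)$ under $\dbP'$, and likewise of $\mathrm{BSDE}(F(\cdot,\widehat\sigma),Y_{t_2\wedge\tau})$ on $\llbracket t_1\wedge\tau,t_2\wedge\tau\rrbracket$. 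The comparison of Theorem~\ref{thm:bsdecomp-stab}(ii), extended to supersolutions as in Remark~\ref{rem:supersolution}, then gives $Y_{t_1\wedge\tau}\ge\cY^{\dbP'}_{t_1\wedge\tau}[\xi,\tau]$ and $Y_{t_1\wedge\tau}\ge\cY^{\dbP'}_{t_1\wedge\tau}[Y_{t_2\wedge\tau},t_2\wedge\tau]$, $\dbP'$-a.s.; since $\dbP'=\dbP$ on $\cF_{t_1\wedge\tau+h}$ for some $h>0$, both hold $\dbP$-a.s., and taking the $\dbP$-essential supremum over $\dbP'\in\cP_\dbP^+(t_1\wedge\tau)$ yields ``$\ge$'' in both lines.

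For the reverse inequality I linearize. Writing $\delta Y:=Y-\cY^{\dbP'}[\xi,\tau]$ and $\delta Z:=Z-\cZ^{\dbP'}$, Assumption~\ref{assum:bsdeF} lets me express $F_s(Y_s,Z_s,\widehat\sigma_s)-F_s(\cY^{\dbP'}_s,\cZ^{\dbP'}_s,\widehat\sigma_s)=a_s\,\delta Y_s+\widehat\sigma_s^{\bf T}\delta Z_s\cdot b_s$ with $a_s\le-\mu$ and $|b_s|\le L$, $dt\otimes\dbP'$-a.e. Choosing $\dbQ:=\dbQ^{b}\in\cQ_L(\dbP')$ and the discount $\Gamma_s:=\exp(\int_0^s a_r\,dr)$, the integrals against $X^{b}$ and against $\cN^{\dbP'}$ become $\dbQ$-martingales and, since $\delta Y_\tau=0$, I obtain $\delta Y_{t_1\wedge\tau}=\dbE^{\dbQ}\big[\int_{t_1\wedge\tau}^{\tau}\Gamma_{t_1\wedge\tau}^{-1}\Gamma_s\,dK^{\dbP'}_s\,\big|\,\cF^{+}_{t_1\wedge\tau}\big]\le C\,\dbE^{\dbQ}\big[K^{\dbP'}_\tau-K^{\dbP'}_{t_1\wedge\tau}\,\big|\,\cF^{+}_{t_1\wedge\tau}\big]$, because $\Gamma$ is bounded and $\eta\ge-\mu$. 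The minimality condition now supplies the vanishing of the right-hand side along a sequence: as $U^{\dbP'}$ is a $\dbP'$-supermartingale agreeing with $U^{\dbP}$ up to $t_1\wedge\tau$, one has $\dbE^{\dbP'}\big[U^{\dbP'}_{\tau}\,\big|\,\cF^{+}_{t_1\wedge\tau}\big]-U^{\dbP}_{t_1\wedge\tau}=-\dbE^{\dbP'}\big[K^{\dbP'}_\tau-K^{\dbP'}_{t_1\wedge\tau}\,\big|\,\cF^{+}_{t_1\wedge\tau}\big]$, and the identity $U^{\dbP}_{t_1\wedge\tau}=\esssup^{\dbP}_{\dbP'}\dbE^{\dbP'}[U^{\dbP'}_{\tau}\,|\,\cF^{+}_{t_1\wedge\tau}]$ produces a maximizing sequence $(\dbP'_n)$ along which $\dbE^{\dbP'_n}[K^{\dbP'_n}_\tau-K^{\dbP'_n}_{t_1\wedge\tau}\,|\,\cF^{+}_{t_1\wedge\tau}]\to0$. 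Transferring this from $\dbE^{\dbP'_n}$ to $\dbE^{\dbQ_n}$ through the bounded $\cQ_L$-densities, and using the uniform integrability of $\{K^{\dbP'_n}_\tau\}$ furnished by \eqref{estimationKeq13}, gives $\essinf_n\delta Y^{\dbP'_n}_{t_1\wedge\tau}=0$, i.e.\ ``$\le$''. To turn the essential supremum into a genuine limit I will first check that $\{\cY^{\dbP'}_{t_1\wedge\tau}\}_{\dbP'}$ is upward directed, which follows from the stability of $\cP_\dbP^+(t_1\wedge\tau)$ under pasting; the same pasting stability, via the flow property of $\{\cY^{\dbP'}\}$, links \eqref{2bsderepresentationt1t2} to \eqref{2bsderepresentationt1} by substituting the representation of $Y_{t_2\wedge\tau}$.

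The consequences are then immediate. Any two solutions share the right-hand side of \eqref{2bsderepresentationt1}, so their $Y$-components agree $\cP_0$-q.s.\ and, through the a priori estimate \eqref{estznk} for the underlying BSDEs, their $Z$-components agree in $\cH^p_{\eta,\tau}(\cP_0)$; the bound \eqref{apriori-2bsde} results from applying $\cE^{\cP_0}$ to \eqref{estznk} for $\cY^{\dbP'}[\xi,\tau]$ together with Assumption~\ref{assum:2bsde-integ}(i). Finally, for Proposition~\ref{prop:2BSDEcomp}, the hypotheses $\xi\le\xi'$ and $F\le F'$ give $\cY^{\dbP'}[\xi,\tau]\le{\cY'}^{\dbP'}[\xi',\tau]$ for each fixed $\dbP'$ by Theorem~\ref{thm:bsdecomp-stab}(ii), and taking essential suprema yields $Y\le Y'$. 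I expect the main obstacle to be the reverse inequality above: converting the minimality condition, phrased through the linear expectations $\dbE^{\dbP'}$ of $U^{\dbP'}$, into the vanishing of the $\dbE^{\dbQ_n}$ of the increments of $K^{\dbP'_n}$ that actually control $\delta Y$, which requires careful handling of the equivalent change of measure inside $\cQ_L(\dbP')$, of the uniform integrability of the nondecreasing parts, and of the directedness needed to pass from the essential supremum to a maximizing sequence.
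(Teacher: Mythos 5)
Your ``$\ge$'' direction (supersolution plus the comparison of Theorem \ref{thm:bsdecomp-stab} extended as in Remark \ref{rem:supersolution}, then transfer from $\dbP'$ to $\dbP$ and essential supremum) is exactly the paper's argument, and your derivation of uniqueness, of \eqref{apriori-2bsde} and of Proposition \ref{prop:2BSDEcomp} from the representation is also in line with the paper. The problem is the reverse inequality, which you attempt directly at the terminal horizon $\tau$. Three things break there. First, the minimality condition of Definition \ref{def:2bsde} is stated only for finite $0\le s\le t$, i.e.\ it controls $\dbE^{\dbP'}[K^{\dbP'}_{t\wedge\tau}-K^{\dbP'}_{s\wedge\tau}\,|\,\cF^+_{s\wedge\tau}]$; the version at $t=\infty$ that you invoke (with $U^{\dbP'}_\tau$ and $K^{\dbP'}_\tau$) is not available without a limiting argument that itself needs uniform integrability over the unbounded horizon. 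Second, your claim that the transfer from $\dbE^{\dbP'_n}$ to $\dbE^{\dbQ_n}$ goes ``through the bounded $\cQ_L$-densities'' is false: the densities $\rmD^{\dbQ|\dbP'}_t=\exp(\int_0^t\lambda\cdot dW-\tfrac12\int_0^t|\lambda|^2)$ are stochastic exponentials, unbounded even on finite intervals, and their $L^r(\dbP')$-norms blow up as $t\to\infty$; over a random, possibly unbounded $\tau$ the needed H\"older interpolation has no uniform constant. Third, the discount $\Gamma_s=\exp(\int a_r\,dr)$ is bounded only when $\mu\ge0$; for $\mu<0$ (which the assumptions allow) it grows like $e^{|\mu|s}$ and must be absorbed by the weight $e^{\eta s}$ on $dK$, which again forces you to keep track of weighted increments rather than $K_\tau-K_{t_1\wedge\tau}$ itself.

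The paper's proof is structured precisely to avoid these issues: it first proves \eqref{2bsderepresentationt1t2} on the \emph{finite} window $\llbracket t_1\wedge\tau,t_2\wedge\tau\rrbracket$, where the relevant constant is $\dbE^{\dbP'}[(\sup_{s}\Gamma^{\dbP'}_s)^{\frac{p+1}{p-1}}\,|\,\cF^+_{t_1\wedge\tau}]\le e^{L(t_2-t_1)}C_p'\cdots$ --- finite exactly because $t_2-t_1$ is finite --- and where the minimality condition applies verbatim in the form $K^\dbP_{t_1\wedge\tau}=\essinf^\dbP_{\dbP'}\dbE^{\dbP'}[K^{\dbP'}_{t_2\wedge\tau}\,|\,\cF^+_{t_1\wedge\tau}]$, combined with the uniform bound $C^{\dbP,p,\alpha}_{t_1}<\infty$ via a double H\"older inequality. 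Only afterwards does it pass from \eqref{2bsderepresentationt1t2} to \eqref{2bsderepresentationt1} by taking $t_2=n\to\infty$, and this step is not the soft ``substitute the representation of $Y_{t_2\wedge\tau}$'' you describe: it is a genuine convergence $\cY^{\dbP'}[Y_{n\wedge\tau},n\wedge\tau]\to\cY^{\dbP'}[\xi,\tau]$ proved through the two-horizon stability result of Proposition \ref{prop:BSDEstab2times}, and it consumes Assumption \ref{assum:2bsde-integ}(i) --- both the tail condition $\cE^{\cP_0}[\Ind_{\{\tau\ge n\}}]\to0$ (via Remark \ref{rem:esupassum}) and the tail integrability of $f^0$, $\cY^{\dbP'}$, $\cZ^{\dbP'}$. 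Your proposal never uses the condition $\cE^{\cP_0}[\Ind_{\{\tau\ge n\}}]\to0$, which is a reliable sign that the infinite-horizon passage has been skipped rather than solved. To repair the argument you should adopt the same two-stage structure: finite $t_2$ first, then the limit $n\to\infty$ with the tail estimates.
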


\begin{proof}
 The uniqueness of $Y$ is an immediate consequence of \eqref{2bsderepresentationt1}, and implies the uniqueness of $Z$, $\widehat{a}_tdt\otimes\cP_0$-q.s.~by the fact that 
   \begin{equation*}
   	  \langle Y,X\rangle_t = \left\langle \int_0^\cdot Z_s\cdot dX_s,X\right\rangle_t = \int_0^t\widehat a_s Z_sds, \quad \dbP\mbox{-a.s.}
   \end{equation*}
 This representation also implies the comparison result as an immediate consequence of the corresponding comparison result of the BSDEs $\cY^{\dbP}[\xi,\tau]$.
 
 \vspace{3mm}
 
\noindent {\bf 1.} We first prove \eqref{2bsderepresentationt1t2}. 
 Fix some arbitrary $\dbP\in\cP_0$ and $\dbP'\in\cP_\dbP^+(t_1\wedge\tau)$. 
 By Definition \ref{def:2bsde} of the solution of the 2BSDE \eqref{2bsdel}, we see that $Y$ is a supersolution of the BSDE on $\llbracket t_1\wedge\tau, t_2\wedge\tau \rrbracket$ under $\dbP'$ with terminal condition $Y_{t_2\wedge\tau}$ at time $t_2\wedge\tau$. 
 By the comparison result of Theorem \ref{thm:bsdecomp-stab} (ii), see also Remerk \ref{rem:supersolution}, this implies that $Y_{t_1\wedge\tau}\ge \cY^{\dbP'}_{t_1\wedge\tau}\big[Y_{t_2\wedge \tau}, t_2\wedge\tau\big]$, $\dbP'$-a.s. 
 As $\cY_{t_1}^{\dbP'}$ is $\cF_{t_1\wedge\tau}^+$-measurable and $Y_{t_1}$ is $\cF_{t_1\wedge\tau}^{+,\cP_0}$-measurable, the inequality also holds $\dbP$-a.s., by definition of $\cP_\dbP^+(t_1)$ and the fact that measures extend uniquely to the completed $\sigma$-algebras. 
 Then, by arbitrariness of $\dbP'$, 
     \begin{align*}
     	Y_{t_1\wedge\tau} 
     	\geq \esssup_{\dbP'\in\cP_\dbP^+(t_1\wedge\tau)}^\dbP \cY^{\dbP'}_{t_1\wedge\tau}\big[Y_{t_2\wedge \tau}, t_2\wedge\tau\big], 
     	 \quad \dbP\mbox{-a.s. for all}~~ \dbP\in\cP_0.
     \end{align*}
 We next prove the reverse inequality. 
 Denote $\delta Y:=Y-\cY^{\dbP'}\big[Y_{t_2\wedge \tau}, t_2\wedge\tau\big]$, $\delta Z:=Z-\cZ^{\dbP'}\big[Y_{t_2\wedge \tau}, t_2\wedge\tau\big]$ and $\delta U:=U^{\dbP'}-\cN^{\dbP'}\big[Y_{t_2\wedge \tau}, t_2\wedge\tau\big]$. Recall that $U^{\dbP'}$ is a $\dbP'$-supermartingale with decomposition $U^{\dbP'}=N^{\dbP'}-K^{\dbP'}$. For $\alpha\in[-\mu,\eta]$, it follows by It\^o's formula, together with the Lipschitz property of $F$ in Assumption \ref{assum:bsdeF} that there exist two bounded processes $a^{\dbP'}$ and $b^{\dbP'}$, uniformly bounded by the Lipschitz constant $L$ of $F$, such that 
    \begin{align*}
    	e^{\alpha(t_1\wedge \tau)}\delta Y_{t_1\wedge\tau} 
    	 = \int_{t_1\wedge \tau}^{t_2\wedge \tau} e^{\alpha s}\big(a_s^{\dbP'}\delta Y_s + b^{\dbP'}_s\cdot\widehat\sigma_s^{\top}\delta Z_s\big)ds 
    	   - \int_{t_1\wedge \tau}^{t_2\wedge \tau}  e^{\alpha s}\big(\widehat\sigma_s^{\top}\delta Z_s\cdot dW_s + d\delta U_s\big),
    \end{align*}
    which implies that
     \begin{align*}
     	e^{\alpha(t_1\wedge\tau)}\delta Y_{t_1\wedge\tau} 
     	  = - \dbE^{\dbP'}\bigg[\int_{t_1\wedge\tau}^{t_2\wedge\tau}\Gamma^{\dbP'}_se^{\alpha s}d\delta U_s^{\dbP'}\bigg|\cF_{t_1\wedge\tau}^+\bigg]
     	  = \dbE^{\dbP'}\bigg[\int_{t_1\wedge\tau}^{t_2\wedge\tau}\Gamma^{\dbP'}_se^{\alpha s}dK_s^{\dbP'}\bigg|\cF_{t_1\wedge\tau}^+\bigg], 
     \end{align*}
    with 
     \begin{align*}
     	\Gamma_s^{\dbP'} := \exp\bigg(\int_{t_1\wedge\tau}^s\Big(a_u^{\dbP'}-\frac{1}{2}\big|b_u^{\dbP'}\big|^2\Big)du + \int_{t_1\wedge\tau}^s b_u^{\dbP'}\cdot dW_u\bigg).
     \end{align*}
 As $a_u^{\dbP'},b_u^{\dbP'}$ are uniformly bounded by $L$, it follows from the Doob maximal inequality that 
     \begin{align*}
        &\dbE^{\dbP'}\bigg[\bigg(\sup_{t_1\wedge\tau\leq s\leq t_2\wedge\tau}\Gamma^{\dbP'}_s\bigg)^{\frac{p+1}{p-1}}\bigg|\cF_{t_1\wedge\tau}^+\bigg] \\
        &\quad \leq e^{L(t_2-t_1)}C'_p \dbE^{\dbP'}\bigg[\exp\bigg(-\int_{t_1\wedge\tau}^{t_2\wedge\tau}\frac{p+1}{2(p-1)}\big|b_u^{\dbP'}\big|^2du 
 	      + \frac{p+1}{p-1}\int_{t_1\wedge\tau}^{t_2\wedge\tau} b_u^{\dbP'}\cdot dW_u\bigg)\bigg|\cF_{t_1\wedge\tau}^+\bigg] \\
        &\quad < C_p < \infty,
     \end{align*}
    where $C_p$ is a constant independent of $\mathbb{P}'$. 
 Then, it follows from H\"older's inequality that 
     \begin{align*}
        & e^{-|\alpha|t_1}\delta Y_{t_1\wedge\tau} \\
        & \quad \leq 
             \dbE^{\dbP'}\bigg[\bigg(\sup_{t_1\wedge\tau\leq s\leq t_2\wedge\tau}\Gamma^{\dbP'}_s\bigg)^{\frac{p+1}{p-1}}\bigg|\cF_{t_1\wedge\tau}^+\bigg]^{\frac{p-1}{p+1}}
             \dbE^{\dbP'}\bigg[\bigg(\int_{t_1\wedge\tau}^{t_2\wedge\tau}e^{\alpha s}dK_s^{\dbP'}\bigg)^{\frac{p+1}{2}}\bigg|\cF_{t_1\wedge\tau}^+\bigg]^{\frac{2}{p+1}} \\
        & \quad \leq
             C^{\frac{p+1}{p-1}}_{p}\left(C^{\dbP,p,\alpha}_{t_1}\right)^{\frac{1}{p+1}}
             \dbE^{\dbP'}\bigg[\int_{t_1\wedge\tau}^{t_2\wedge\tau}e^{\alpha s}dK_s^{\dbP'}\bigg|\cF_{t_1\wedge\tau}^+\bigg]^{\frac{1}{p+1}} \\
        & \quad \leq
             C^{\frac{p+1}{p-1}}_{p}\Big(C^{\dbP,p,\alpha}_{t_1}\Big)^{\frac{1}{p+1}} e^{(\alpha t_1)\vee(\alpha t_2)}
             \dbE^{\dbP'}\bigg[\int_{t_1\wedge\tau}^{t_2\wedge\tau}dK_s^{\dbP'}\bigg|\cF_{t_1\wedge\tau}^+\bigg]^{\frac{1}{p+1}},
     \end{align*}
     where
     \begin{align*}
     	C^{\dbP,p,\alpha}_{t_1}
     	 := \esssup_{\dbP'\in\cP_\dbP^+(t_1\wedge\tau)}^\dbP
     	      \dbE^{\dbP'}\bigg[\bigg(\int_{t_1\wedge\tau}^{t_2\wedge\tau}e^{\alpha s} dK_s^{\dbP'}\bigg)^p\bigg|\cF_{t_1\wedge\tau}^+\bigg].
     \end{align*}
 As it follows from the minimality condition in Definition \ref{def:2bsde} that 
     \begin{equation*}
     	K^{\dbP}_{t_1\wedge\tau} = \essinf_{\dbP'\in\cP_\dbP^+(t_1\wedge\tau)}^\dbP\dbE^{\dbP'}\big[K^{\dbP'}_{t_2\wedge\tau}\big|\cF_{t_1\wedge\tau}^+\big],
     \end{equation*}
     and $C^{\dbP,p,\alpha}_{t_1}<\infty$ (see \eqref{CPpst}), we obtain that 
     \begin{equation*}
     	Y_{t_1\wedge\tau} - \esssup_{\dbP'\in\cP_\dbP^+(t_1\wedge\tau)}^\dbP\cY_{t_1\wedge\tau}^{\dbP'} \leq 0,\quad \dbP\mbox{-a.s.}
     \end{equation*} 
     thus providing the required equality.
 
 \vspace{3mm}
 
 \noindent {\bf 2.} Given \eqref{2bsderepresentationt1t2}, we now show \eqref{2bsderepresentationt1} by proving that
   \begin{equation*} \label{eq:representation2}
   	  \lim_{n\to\infty}\esssup^{\dbP}_{\dbP'\in\cP_\dbP^+(t\wedge\tau)}\big|\delta\cY^{\dbP', n}_{t\wedge\tau}\big|=0, 
   	   \quad \dbP\mbox{-a.s.~~ where} \quad \delta\cY^{\dbP', n}:=\cY^{\dbP'}[\xi,\tau]-\cY^{\dbP'}[Y_{n\wedge\tau},n\wedge\tau]. 
   \end{equation*}
 By the stability result of Proposition \ref{prop:BSDEstab2times}, we have 
   \begin{align*}
      & \big|e^{\eta({t\wedge\tau})}\delta\cY^{\dbP', n}_{t\wedge\tau}\big| \\
      & \quad\leq \esssup^{\dbP'}_{\dbQ\in\cQ_L(\dbP')} 
              \dbE^\dbQ\left[\big|e^{\eta\tau}\xi - e^{\eta(n\wedge\tau)}Y_{n\wedge\tau}\big| 
              + \int_{n\wedge\tau}^\tau e^{\eta s}\big|F_s\big(\cY_s^{\dbP'}[\xi,\tau],\cZ_s^{\dbP'}[\xi,\tau],\widehat\sigma_s\big)\big|ds\bigg| \cF_{t\wedge\tau}^+ \right].
   \end{align*}
 Notice that 
   \begin{align*}
   	 \big|e^{\eta\tau}\xi - e^{\eta(n\wedge\tau)}Y_{n\wedge\tau}\big|
   	  =\Ind_{\{\tau\geq n\}}\big|e^{\eta\tau}\xi - e^{\eta(n\wedge\tau)}Y_{n\wedge\tau}\big|
   	  \leq 2\Ind_{\{\tau\geq n\}}e^{\eta \tau}\sup_{0\le s\le\tau}|Y_s|.
   \end{align*}
 Then, it follows from H\"older's inequality that for some $p'>p$,
   \begin{align*}
   	 \cE_{t\wedge\tau}^{\dbP,+}\Big[\big|e^{\eta\tau}\xi-e^{\eta(n\wedge\tau)}Y_{n\wedge\tau}\big|^p\Big|\cF^+_{t\wedge\tau}\Big]
   	  \leq 2 \cE_{t\wedge\tau}^{\dbP,+}\bigg[\sup_{0\leq s\leq\tau}e^{p'\eta s}|Y_s|^{p'}\bigg]^{\frac{p}{p'}}
   	          \cE_{t\wedge\tau}^{\dbP,+}\big[\Ind_{\{\tau\geq n\}}\big]^{\frac{p'-p}{p'}} \longrightarrow 0,
   \end{align*}
   as $n\to \infty$, due to the fact that $Y\in\cD_{\tau,\eta}^p(\cP_0)$ and $\cE_{t\wedge\tau}^{\dbP,+}\left[\Ind_{\{\tau\geq n\}}\right] \longrightarrow 0$ by Remark \ref{rem:esupassum}. 
 This leads to
   \begin{equation} \label{end:representation}
     \begin{aligned} 
       &\limsup_{n\to\infty}\big|e^{\eta({t\wedge\tau})}\delta\cY^{\dbP', n}_{t\wedge\tau}\big| \\
       &\quad \leq \limsup_{n\to\infty}\esssup^{\dbP'}_{\dbQ\in\cQ_L(\dbP')}
             \dbE^\dbQ\bigg[\int_{n\wedge\tau}^\tau \!\! e^{\eta s}\big|F_s\big(\cY_s^{\dbP'}[\xi,\tau],\cZ_s^{\dbP'}[\xi,\tau],\widehat\sigma_s\big)\big|ds \bigg|\cF_{t\wedge\tau}^+\bigg].
     \end{aligned}
   \end{equation}
 We next write $\cY_s^{\dbP'}:=\cY_s^{\dbP'}[\xi,\tau]$, $\cZ_s^{\dbP'}:=\cZ_s^{\dbP'}[\xi,\tau]$, and estimate that
   \begin{align*}
     & \int_{n\wedge\tau}^\tau e^{\eta s}\big|F_s\big(\cY_s^{\dbP'},\cZ_s^{\dbP'},\widehat\sigma_s\big)\big|ds \\
     &\quad \leq \int_{n\wedge\tau}^\tau e^{\eta s}\big|f^0_s\big|ds
              + L\int_{n\wedge\tau}^\tau e^{\eta s}\big|\cY_s^{\dbP'}\big|ds
              + L\int_{n\wedge\tau}^\tau e^{\eta s}\big|\widehat\sigma_s^{\top}\cZ_s^{\dbP'}\big|ds \\
     &\quad \leq \bigg(\frac{e^{-2(\eta'-\eta)n}}{2(\eta'-\eta)}\bigg)^{\frac{1}{2}}
              \bigg[\bigg(\int_{0}^\tau e^{2\eta' s}\big|f^0_s\big|^2ds\bigg)^{\frac12}
               +L\bigg(\int_{0}^\tau e^{2\eta' s}\big|\widehat\sigma_s^{\top}\cZ_s^{\dbP'}\big|^2ds\bigg)^{\frac12}\bigg] \\
     & \qquad\qquad +L\bigg(\frac{e^{-(\eta'-\eta)n}}{\eta'-\eta}\bigg)\sup_{0\leq s\leq \tau}e^{\eta's}\big|\cY_s^{\dbP'}\big|.
   \end{align*}
 By the integrability condition on $f^0$ in Assumption \ref{assum:bsdeF}, and the fact that $(\cY^{\dbP'},\cZ^{\dbP'})\in\cD_{\eta',\tau}^p(\dbP')\times\cH_{\eta',\tau}^p(\dbP')$ by the wellposedness result of backward SDEs in Theorem \ref{thm:bsde}, this implies that
  \begin{align*}
  	\cE^{\dbP,+}_{t\wedge\tau}\bigg[\bigg(\int_{n\wedge\tau}^\tau e^{\eta s}\Big|F_s\big(\cY_s^{\dbP'},\cZ_s^{\dbP'},\widehat\sigma_s\big)\Big|ds\bigg)^p\bigg] \longrightarrow 0, \quad \dbP\mbox{-a.s.}
  \end{align*}
  and therefore $\big|e^{\eta({t\wedge\tau})}\delta\cY^{\dbP', n}_{t\wedge\tau}\big|\longrightarrow 0,$ by \eqref{end:representation}.
 
 \vspace{3mm}
 
 \noindent {\bf 3.} We finally verify the estimate \eqref{apriori-2bsde}. 
 By the representation \eqref{2bsderepresentationt1} proved in the previous step, and following the proof of Proposition \ref{prop:V+DPP}, we may show that 
   \begin{equation*}
   	  \cE^{\cP_0}\bigg[\sup_{0\leq t\leq\tau}e^{p\eta t}|Y_t|^p\bigg] 
   	   \leq C_p \cE^{\cP_0}\bigg[\sup_{0\leq t\leq\tau}\cE_{t}^{\dbP,+}\bigg[\big|e^{\eta\tau}\xi\big|^p 
   	          + \bigg(\int_0^\tau e^{\eta s}\big|f^0_s\big|ds\bigg)^p\bigg]\bigg].
   \end{equation*}
 By Remark \ref{rem:esupassum} we obtain that 
   $ \|Y\|^p_{\cD^p_{\eta,\tau}(\cP_0)} \leq C_p\big(\|\xi\|^p_{\cL^q_{\rho,\tau}(\cP_0)}+(\overline{F}^0_{\rho,q,\tau})^p\big)$.
 As, for each $\dbP\in\cP_0$, $\big(Y,Z,U^\dbP\big)$ is a solution of the RBSDE \eqref{semimartRBSDE}, the required estimate for the $Z$ component follows from Proposition \ref{EstimationRBSDE}.
\end{proof}

\section{Second order backward SDE: existence} \label{sect:existence}

In view of the representation \eqref{2bsderepresentationt1} in Proposition \ref{prop:representation}, we follow the methodology of Soner, Touzi \& Zhang \cite{STZ12, STZ13} by defining the dynamic version of this representation (which requires the additional notations of the next section), and proving that the induced process defines a solution of the 2BSDE. In order to bypass the strong regularity conditions of \cite{STZ12, STZ13}, we adapt the approach of Possama\"{\i}, Tan \& Zhou \cite{PTZ15} to ensure measurability of the process of interest.

\subsection{Shifted space}

We recall the concatenation map of two paths $\omega,\omega'$ at the junction time $t$ defined by 
   \begin{align*}
   	  (\omega\otimes_t\omega')_s:=\omega_s\mathbb{1}_{[0, t)}(s)+(\omega_t+\omega'_{s-t})\mathbb{1}_{[t, \infty)}(s), 
   	   \quad s\ge 0,
   \end{align*}
 and we define the $(t,\omega)-$shifted random variable
   \begin{align*}
   	  \xi^{t, \omega}({\omega}') := \xi(\omega\otimes_t{\omega'}), \quad \mbox{for all }~\om'\in\Om.
   \end{align*}
By a standard monotone class argument, we see that $\xi^{t, \omega}$ is $\cF_{s}-$measurable whenever $\xi$ is $\cF_{t+s}$-measurable. 
In particular, for an $\dbF$-stopping time $\tau$, $t\leq \tau$, then $\vec{\tau}^{t, \omega}:=\tau^{t, \omega}-t$ is still an $\dbF$-stopping time.
Similarly, for any $\dbF$-progressively measurable process $Y$, the shifted process
  \begin{equation*}
     Y_s^{t, \om}(\om') := Y_{t+s}(\om\otimes_t \om'), \quad s\ge 0,
  \end{equation*} 
  is also $\dbF$-progressively measurable. The above notations can naturally be extended to $(\tau,\om)-$ shifting for any finite $\dbF$-stopping time $\tau$.

 \vspace{2mm}

\begin{lemma}\label{oto}
	The mapping $(\om, t, \om')\in  \Omega\times\dbR_+\times\Omega \longmapsto \omega\otimes_t\omega' \in \Om $ is continuous. In particular, if $\xi$ is $\cF_\infty$-measurable function, then $\xi^{\cdot, \cdot}(\cdot)$ is $\cF_\infty \otimes \mathcal{B}\big([0, \infty)\big)\otimes \cF_\infty$-measurable.
\end{lemma}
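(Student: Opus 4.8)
The plan is to establish continuity of the concatenation map $\Phi(\omega, t, \omega') := \omega \otimes_t \omega'$ directly from the definition of the metric $\|\cdot\|_\infty$ on $\Omega$, and then to deduce the measurability statement as a formal consequence. Since $\|\cdot\|_\infty$ induces the topology of uniform convergence on compact time intervals, it suffices to fix $T > 0$ and show that whenever $(\omega^k, t^k, {\omega'}^k) \to (\omega, t, \omega')$, one has $\sup_{0 \leq s \leq T} \|(\omega^k \otimes_{t^k} {\omega'}^k)_s - (\omega \otimes_t \omega')_s\| \to 0$. Throughout I write $\psi^k := \omega^k \otimes_{t^k} {\omega'}^k$ and $\psi := \omega \otimes_t \omega'$.

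First I would split $[0,T]$ into three regions according to the position of $s$ relative to $t \wedge t^k$ and $t \vee t^k$. On $[0, t \wedge t^k)$ both concatenations equal their respective first path, so $\psi^k_s - \psi_s = \omega^k_s - \omega_s$, controlled by the local-uniform convergence $\omega^k \to \omega$. On $[t \vee t^k, T]$ both are in their ``second part''; writing $\psi^k_s - \psi_s = (\omega^k_{t^k} - \omega_t) + ({\omega'}^k_{s-t^k} - \omega'_{s-t})$ and inserting the intermediate terms $\omega_{t^k}$ and $\omega'_{s-t^k}$, the uniform convergences $\omega^k \to \omega$ and ${\omega'}^k \to \omega'$, together with the uniform continuity of $\omega$ and $\omega'$ on the compact $[0,T]$ and $t^k \to t$, force this to $0$ uniformly in $s$.

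The main obstacle, and the only place requiring care, is the transition interval between $t \wedge t^k$ and $t \vee t^k$, on which one path is still in its first part while the other has already switched to its second part. Here I would exploit that this interval has vanishing length $|t^k - t|$. Taking for instance $t^k < t$ and $s \in [t^k, t)$, I would write $\psi^k_s - \psi_s = (\omega^k_{t^k} - \omega_t) + {\omega'}^k_{s - t^k} + (\omega_t - \omega_s)$ and bound each piece: the first by convergence at a single time, the last by continuity of $\omega$ since $|s-t| \le |t^k-t|$, and the middle by noting $s - t^k \in [0, |t^k - t|)$ together with ${\omega'}^k_0 = \omega'_0 = \mathbf{0}$ and the uniform continuity of $\omega'$ near $0$. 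The case $t^k > t$ is symmetric, and combining the three regions yields continuity of $\Phi$.

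For the measurability assertion, I would argue that $\Omega$, $\dbR_+$ and $\Omega$ are separable metric spaces, so the Borel $\sigma$-algebra of the product coincides with the product $\sigma$-algebra $\cF_\infty \otimes \mathcal{B}\big([0, \infty)\big) \otimes \cF_\infty$, recalling that $\cF_\infty$ is precisely the Borel $\sigma$-algebra of $\Omega$ for the local-uniform topology. Since $\Phi$ is continuous it is Borel measurable, and $\xi^{\cdot,\cdot}(\cdot) = \xi \circ \Phi$ is the composition of the $\cF_\infty$-measurable map $\xi$ with the Borel map $\Phi$, hence $\cF_\infty \otimes \mathcal{B}\big([0, \infty)\big) \otimes \cF_\infty$-measurable, as claimed.
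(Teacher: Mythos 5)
Your proof is correct and follows essentially the same route as the paper: the paper establishes continuity by the single estimate $\|\omega\otimes_{t}\omega'-\overline{\omega}\otimes_{\overline{t}}\overline{\omega}'\|_\infty \leq \|\omega-\overline \omega\|_\infty+ \| \omega' - \overline{\omega}'\|_\infty+ \sup_{s\le |t-\overline t|}\big( \| \om_{s+\cdot} -\om\|_\infty  + \| \om'_{s+\cdot} -\om'\|_\infty\big)$, which packages exactly your three-region decomposition (the modulus-of-continuity term over a window of length $|t-\overline t|$ being your transition interval). Your explicit handling of the measurability conclusion, which the paper leaves implicit, is also standard and sound.
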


\begin{proof}
	We directly estimate that
	\begin{equation*}
	\|\omega\otimes_{t}\omega'-\overline{\omega}\otimes_{\overline{t}}\overline{\omega}'\|_\infty
	\leq \|\omega-\overline \omega\|_\infty+ \| \omega' - \overline{\omega}'\|_\infty+ \sup_{s\le |t-\overline t|} \Big( \| \om_{s+\cdot} -\om\|_\infty  + \| \om'_{s+\cdot} -\om'\|_\infty\Big).
	\end{equation*}
\end{proof}

For every probability measure $\mathbb{P}$ on $\Omega$ and $\mathbb{F}$-stopping time $\tau$, there exists a family of regular conditional probability distribution (for short r.c.p.d.) $(\mathbb{P}^\tau_\omega)_{\omega\in \Omega}$, see Theorem 1.3.4 in \cite{SV97}.
\footnote{By definition, an r.c.p.d. satisfies: 
	 \begin{itemize}
	 	\item For every $\omega\in \Omega$, $\mathbb{P}^\tau_\omega$ is a probability measure on $(\Omega, \cF)$;
	 	\item For every $A\in \cF$, the mapping $\omega\longmapsto\mathbb{P}^\tau_\omega(A)$ is $\cF_\tau$-measurable;
	 	\item The family $(\mathbb{P}^\tau_\omega)_{\omega\in \Omega}$ is a version of $\dbP |_{\cF_\tau}$, i.e.
	 	       $\dbE^\dbP[\xi|\cF_\tau](\omega)=\dbE^{\dbP^\tau_\omega}[\xi]$, $\dbP-{\rm a.s.}$ for all $\xi\in\dbL^1(\dbP)$.
	 	\item For every $\omega\in \Omega$, $\mathbb{P}^\tau_\omega(\Omega^{\omega}_\tau)=1$, where
	 	       $\Omega^\omega_\tau:=\{\omega' \in \Omega: \omega'_s=\omega_s,\ 0\leq s\leq \tau(\omega)\}.$
	 \end{itemize} }
The r.c.p.d.~$\mathbb{P}^\tau_\omega$ induces naturally a probability measure $\mathbb{P}^{\tau, \omega}$ on $(\Omega, \cF)$ such that 
    \begin{align*}
        \mathbb{P}^{\tau, \omega}(A):=\mathbb{P}^\tau_\omega(\omega\otimes_\tau A), \quad A\in \cF, 
         \quad {\rm where}\ \omega\otimes_\tau A:= \{\omega\otimes_\tau{\omega'}: {\omega'}\in A\}.
    \end{align*}
It is clear that $ \mathbb{E}^{\mathbb{P}^\tau_\omega}[\xi]=\mathbb{E}^{\mathbb{P}^{\tau, \omega}}[\xi^{\tau, \omega}]$, for every $\cF$-measurable random variable $\xi$.

\subsection{Backward SDEs on the shifted spaces}

For all $\mathbb{P}\in \cP(t, \omega)$, we introduce a family of random horizon BSDEs
   {\small 
   \begin{align} \label{bsdeshift}
   	 \cY^{t, \omega, \mathbb{P}}_{s\wedge \theta}
   	   = \xi^{t, \om} + \int^{\theta}_{s\wedge \theta} F^{t, \om}_r(\cY^{t, \omega, \mathbb{P}}_r, \cZ^{t, \omega, \mathbb{P}}_r, \widehat{\sigma}_r)dr 
   	     - \cZ^{t, \omega, \mathbb{P}}_r \cdot dX_r - d\cN^{t, \omega, \mathbb{P}}_r, \quad s\geq 0, \quad \dbP\mbox{-a.s.}
   \end{align}}

\noindent By Theorem \ref{thm:bsde}, this BSDE admits a unique solution.
Define the value function
  \begin{equation}\label{valfun}
     V_t(\omega):=\sup_{\mathbb{P}\in\cP(t, \omega)}\dbY^{t, \om,\dbP}[\xi,\tau], \quad \mbox{with} \quad\dbY^{t, \om,\dbP}[\xi,\tau] 
                := \mathbb{E}^\mathbb{P}\left[\cY^{t, \omega, \mathbb{P}}_0\right] .
  \end{equation}

In this section, we will prove the following measurability result, which is important for the discussion of the dynamic programming.

\begin{proposition}\label{prop:dbY}
	Under Assumptions \ref{assum:bsdeF}, the mapping 
	   \begin{equation*}
	   	 (t,\om,\dbP)\mapsto \dbY^{t, \om,\dbP}[\xi,\tau]
	   \end{equation*}
	   is $\cB\big([0,\infty)\big)\otimes\cF_\tau\otimes  \cB(\cM_1)$-measurable. 
\end{proposition}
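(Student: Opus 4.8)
The plan is to realize the shifted BSDE solution $\cY^{t,\omega,\dbP}$ as the limit of a Picard iteration and to propagate joint measurability through the iteration, the key analytic input being the measurable dependence of the conditional-expectation operator on the underlying law. First I would record the measurability of the data. By Lemma \ref{oto} the concatenation map is continuous, so $(t,\omega)\mapsto\xi^{t,\omega}$ and $(t,\omega)\mapsto F^{t,\omega}$ are measurable, and $\theta=\tau^{t,\omega}-t$ is an $\dbF$-stopping time depending measurably on $(t,\omega)$. Since $\xi$ is $\cF_\tau$-measurable and $t\le\tau$, the dependence of $\xi^{t,\omega}$ on $\omega$ is in fact $\cF_\tau$-measurable, which is what the target $\sigma$-algebra demands.

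Next I would set up the Picard scheme: put $\cY^{t,\omega,\dbP,0}\equiv 0$, $\cZ^{t,\omega,\dbP,0}\equiv 0$, and define $\cY^{t,\omega,\dbP,k+1}$ by freezing the generator at the previous iterate,
\[
\cY^{t,\omega,\dbP,k+1}_{s\wedge\theta}
=\dbE^\dbP\Big[\xi^{t,\omega}+\int_{s\wedge\theta}^\theta F^{t,\omega}_r\big(\cY^{t,\omega,\dbP,k}_r,\cZ^{t,\omega,\dbP,k}_r,\widehat\sigma_r\big)\,dr\,\Big|\,\cF_{s\wedge\theta}\Big],
\]
with $(\cZ^{t,\omega,\dbP,k+1},\cN^{t,\omega,\dbP,k+1})$ obtained from the martingale representation of the right-hand side. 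Arguing by induction, if $(\cY^{\cdot,\cdot,\cdot,k},\cZ^{\cdot,\cdot,\cdot,k})$ is jointly measurable in $(t,\omega,\dbP)$, then so is the $\cF$-measurable integrand $\xi^{t,\omega}+\int_0^\theta F^{t,\omega}_r(\cY^{t,\omega,\dbP,k}_r,\cZ^{t,\omega,\dbP,k}_r,\widehat\sigma_r)\,dr$; hence, by the Borel measurability of the map $(\dbP,\eta)\mapsto\dbE^\dbP[\eta\,|\,\cF_s]$ (which I would cite from the measurable-selection toolkit used in \cite{PTZ15}, resting on \cite{NvH13,SV97}), so is $\cY^{\cdot,\cdot,\cdot,k+1}$. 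The $\cZ$ component is recovered measurably from the Kunita--Watanabe density against the pathwise bracket $\langle\,\cdot\,,X\rangle$ constructed as in Section \ref{sect:canonical}, which preserves measurability.

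Then I would invoke convergence. Since the Lipschitz constant $L$ and the monotonicity constant $\mu$ are fixed and the integrability of Assumption \ref{assum:bsde-integ} holds along the shifts, the a priori estimate \eqref{estznk} shows the Picard iterates form a Cauchy sequence in $\cD^p_{\eta,\theta}(\dbP)\times\cH^p_{\eta,\theta}(\dbP)$ with contraction rate independent of $(t,\omega,\dbP)$. Thus $\cY^{t,\omega,\dbP,k}_0\to\cY^{t,\omega,\dbP}_0$, and, measurability being stable under pointwise limits, $(t,\omega,\dbP)\mapsto\cY^{t,\omega,\dbP}_0$ is measurable. Applying once more the measurability of $\dbP\mapsto\dbE^\dbP[\cdot]$ gives that $(t,\omega,\dbP)\mapsto\dbE^\dbP\big[\cY^{t,\omega,\dbP}_0\big]=\dbY^{t,\omega,\dbP}[\xi,\tau]$ is $\cB([0,\infty))\otimes\cF_\tau\otimes\cB(\cM_1)$-measurable, as claimed.

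The main obstacle I expect is the $\dbP$-dependence of the conditional-expectation and martingale-representation steps: the stochastic integral $\int\cZ\,dX$ is only defined $\dbP$-a.s., so one must fix canonical, $\dbP$-independent pathwise versions of the relevant brackets and verify that the resulting $\cZ^{k+1}$ genuinely varies measurably with $\dbP$, controlling the exceptional null sets uniformly. A secondary subtlety is keeping the $\omega$-dependence within $\cF_\tau$ rather than $\cF_\infty$, which I would handle by exploiting $\theta\le\tau^{t,\omega}-t$ and the fact that $(\xi,F)$ only probe the path up to $\tau$.
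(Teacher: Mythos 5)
Your overall strategy (Picard iteration combined with measurable dependence of conditional expectations on $\dbP$, in the spirit of \cite{PTZ15} and \cite{NN14}) is the right one, and your treatment of the measurability of the data and of the $\cF_\tau$-versus-$\cF_\infty$ issue is reasonable. The genuine gap is in the convergence step. You run the Picard scheme directly on the random-horizon BSDE with terminal time $\theta=\tau^{t,\om}-t$ and assert that the iterates are Cauchy ``with contraction rate independent of $(t,\omega,\dbP)$'', citing the a priori estimate \eqref{estznk}. That estimate bounds the solution by the data; it is not a contraction estimate for the Picard map. More importantly, the Picard map need not be a contraction here: in the scheme the generator is frozen at $(\cY^{t,\om,\dbP,k},\cZ^{t,\om,\dbP,k})$, so the monotonicity constant $\mu$ of Assumption \ref{assum:bsdeF} (ii) cannot be exploited --- it compares the generator at two values of the \emph{current} $y$-argument, which the frozen generator does not see. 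One is left with the full Lipschitz constant $L$, and on an unbounded horizon the usual contraction factor cannot be made small by enlarging the exponential weight, because the admissible weights are capped at $\rho$ by the integrability of $\xi$ and $f^0$; Assumption \ref{assum:bsde-integ} only requires $\rho>-\mu$, not $\rho$ large relative to $L$. So the iterates may fail to converge, and the argument breaks at precisely the point where the random horizon enters.

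The paper circumvents exactly this difficulty with a two-stage construction: it performs the Picard iteration only on \emph{finite} horizons (Lemma \ref{lem:cYfinite}), where the contraction is standard and joint measurability propagates essentially as you describe (including the measurable-subsequence device of \cite[Lemma 3.2]{NN14}, needed because the Picard limit is a priori only a norm limit defined up to $\dbP$-null sets), and then obtains $\dbY^{t,\om,\dbP}[\xi,\tau]$ as the limit in $n$ of the measurable maps $(t,\om,\dbP)\mapsto\dbE^\dbP\big[\cY^{n,t,\om,\dbP}_0\big]$ attached to truncated problems on $[0,n-t]$ with a suitably conditioned terminal datum $\xi^{n,t,\om}$; the convergence in $n$ is supplied by Proposition \ref{prop:rbsde-existence} with $S=-\infty$, not by a contraction. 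To repair your argument you would either need to justify the contraction on the random horizon under an additional condition relating $\rho$, $\mu$ and $L$ (not available under the standing assumptions), or insert the finite-horizon truncation layer as the paper does.
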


 We will first review in Section \ref{subsec:finite} the finite horizon argument of \cite{PTZ15}, and we next adapt it to our random horizon setting in Section \ref{subsec:infinite}.

\subsubsection{Measurability - finite horizon} \label{subsec:finite} 

Let $\tau =T$, where $T$ is a finite deterministic time. For the convenience of the reader we repeat the argument in \cite{PTZ15} in order to prove the finite horizon version of Proposition \ref{prop:dbY}. 
For each $\mathbb{P}\in \cP_{loc}$, we consider the following shifted BSDE
  \begin{equation} \label{bsdeshift12}
    \cY^{t, \om, \dbP}_s 
      = \xi^{t, \om} + \int^{T-t}_s F^{t, \om}_r\big(\cY^{t, \om, \dbP}_r, \cZ^{t, \om, \dbP}_r, \widehat{\sigma}_r\big)dr
                     -\cZ^{t, \om, \dbP}_r \cdot dX_r - d\cN^{t, \om, \dbP}_r,
       \quad \dbP\mbox{-a.s.}
  \end{equation}
  for $s\in [0,T-t]$.

\vspace{2mm}

\begin{lemma}\label{lem:cYfinite}
	Let $\tau=T$ be a deterministic time. Then, there exists a version of $\cY^{t, \omega, \mathbb{P}}$ such that the mapping 
	$(t, \om,s, \om',\dbP)\in [0, \infty)\times\Om\times[0, \infty)\times \Omega\times  \cP_{loc}\mapsto \cY^{t, \omega, \mathbb{P}}_s(\omega')\in \mathbb{R}$ is $\mathcal{B}\big([0, \infty)\big)\times \cF_\infty\times \mathcal{B}\big([0, \infty)\big)\times\cF_\infty\times\mathcal{B}(\mathcal{M}_{1})$-measurable.
\end{lemma}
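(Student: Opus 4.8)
The plan is to realise $\cY^{t,\om,\dbP}$ as the pointwise $\limsup$ of a Picard iteration whose every term is manifestly jointly Borel measurable, and then to invoke the stability of measurability under limits. Set $\cY^{t,\om,\dbP,0}\equiv 0$ and $\cZ^{t,\om,\dbP,0}\equiv 0$, and given the $k$-th iterate define the $\dbP$-martingale
$$ M^{k+1,t,\om,\dbP}_s := \dbE^{\dbP}\Big[\xi^{t,\om} + \int_0^{T-t} F^{t,\om}_r\big(\cY^{t,\om,\dbP,k}_r, \cZ^{t,\om,\dbP,k}_r, \widehat\sigma_r\big)\,dr \,\Big|\, \cF_s\Big], \quad s\in[0,T-t], $$
together with $\cY^{t,\om,\dbP,k+1}_s := M^{k+1}_s - \int_0^s F^{t,\om}_r(\cdots)\,dr$, while $(\cZ^{k+1},\cN^{k+1})$ are read off the martingale representation of $M^{k+1}$ under $\dbP$. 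By the finite-horizon wellposedness and a priori estimates of Theorem \ref{thm:bsde} (after the usual exponential reduction of the monotone generator to the Lipschitz case), this scheme converges in $\cD^p_{\eta,T-t}\times\cH^p_{\eta,T-t}\times\cN^p_{\eta,T-t}$ to the solution of \eqref{bsdeshift12} for every $\dbP\in\cP(t,\om)$; on $\cP_b\setminus\cP(t,\om)$ we simply keep the iterates and pass to the $\limsup$, which remains measurable regardless of convergence.

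The first measurability input is that the data of the iteration depend jointly measurably on $(t,\om,r,\om')$. The terminal value $(t,\om,\om')\mapsto\xi^{t,\om}(\om')$ is jointly measurable by Lemma \ref{oto}, and the same concatenation argument together with the $\prog\otimes\cB(\dbR)\otimes\cB(\dbR^d)\otimes\cB(\dbS^d)$-measurability of $F$ yields joint measurability of $(t,\om,r,\om',y,z)\mapsto F^{t,\om}_r\big(\om',y,z,\widehat\sigma_r(\om')\big)$. The decisive point is the joint measurability in $(\dbP,\om')$ of the conditional expectation: writing $\dbE^{\dbP}[\Phi\mid\cF_s](\om')=\dbE^{\dbP^s_{\om'}}[\Phi]$ through the regular conditional probability distribution, the map $(\dbP,\om')\mapsto\dbP^s_{\om'}$ is $\cB(\cM_1)\otimes\cF_s$-measurable (see \cite{SV97}), while $\dbQ\mapsto\dbE^{\dbQ}[\Phi]$ is $\cB(\cM_1)$-measurable, since $\cB(\cM_1)$, the Borel field of the weak topology, is generated by the evaluations $\dbQ\mapsto\dbE^{\dbQ}[\phi]$ for bounded continuous $\phi$, and this extends to bounded measurable $\Phi$ by a monotone class argument. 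Composing these two facts gives the joint measurability of $M^{k+1}$, hence of $\cY^{k+1}$, in all of $(t,\om,s,\om',\dbP)$.

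The main obstacle is to furnish a jointly measurable version of the control $\cZ^{k+1}$, which a priori is defined only $dr\otimes d\dbP$-a.e.\ by martingale representation and whose very stochastic integral is intrinsically $\dbP$-dependent. I would bypass this by recovering $\cZ^{k+1}$ as an explicit limit of conditional covariations: along a mesh $h_n\downarrow0$ the processes
$$ \cZ^{k+1,n}_s := \widehat a_s^{-1}\,\frac{1}{h_n}\,\dbE^{\dbP}\big[(M^{k+1}_{s+h_n}-M^{k+1}_s)(X_{s+h_n}-X_s)\,\big|\,\cF_s\big] $$
are jointly measurable by the very same regular-conditional-probability argument, where $\widehat a^{-1}$ is the Borel-measurable generalized inverse introduced in the Remark of Section \ref{sect:canonical}; since $d\langle M^{k+1},X\rangle_s=\widehat a_s\cZ^{k+1}_s\,ds$ and $[\cN^{k+1},X]=0$, one has $\cZ^{k+1,n}\to\cZ^{k+1}$, $dr\otimes d\dbP$-a.e., and a $\limsup$ over $n$ produces a jointly measurable representative (an analogous bracket formula, or the relation $\cN^{k+1}=M^{k+1}-M^{k+1}_0-\int_0^\cdot\cZ^{k+1}_r\cdot dX_r$, handles $\cN^{k+1}$). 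With every iterate thus measurable, I set $\cY^{t,\om,\dbP}_s(\om'):=\limsup_{k\to\infty}\cY^{t,\om,\dbP,k}_s(\om')$, which is $\cB([0,\infty))\otimes\cF_\infty\otimes\cB([0,\infty))\otimes\cF_\infty\otimes\cB(\cM_1)$-measurable and, by the convergence of the Picard scheme, coincides $\dbP$-a.s.\ with the solution of \eqref{bsdeshift12} for each $\dbP\in\cP(t,\om)$. This is exactly the finite-horizon construction of \cite{PTZ15} transcribed to the present framework, and the $\cZ$-measurability step is the one demanding the greatest care.
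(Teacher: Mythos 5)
Your proposal follows the same skeleton as the paper's proof: a Picard iteration in which every iterate is shown to be jointly Borel measurable in $(t,\om,s,\om',\dbP)$, followed by a $\limsup$ over the iteration index to produce a measurable version of the limit. The differences lie in the two delicate steps. For the joint measurability of the conditional expectation in $(\om',\dbP)$, the paper invokes \cite[Lemma 3.1]{NN14} directly, whereas you route through the r.c.p.d.; be aware that \cite[Theorem 1.3.4]{SV97} only provides an r.c.p.d.\ for each \emph{fixed} $\dbP$, so the asserted $\cB(\cM_1)\otimes\cF_s$-measurability of $(\dbP,\om')\mapsto\dbP^s_{\om'}$ is not a citation but itself requires the Neufeld--Nutz-type construction (differentiation along a countable generating algebra); as written this is the one under-justified link in your chain, though the statement you need is true and is precisely what the paper's reference supplies. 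For the control process, you recover $\cZ^{k+1}$ as a $\limsup$ of normalized conditional covariations $\widehat a_s^{-1}h_n^{-1}\dbE^\dbP[(M_{s+h_n}-M_s)(X_{s+h_n}-X_s)\mid\cF_s]$, which costs you a second application of the conditional-expectation measurability and a Lebesgue-differentiation argument under each $\dbP$; the paper instead uses Karandikar's pathwise bracket $\langle\cY^{k},X\rangle$ and the purely pathwise density $\widehat a^{-1}\limsup_n n(\langle\cY^{k},X\rangle_s-\langle\cY^{k},X\rangle_{s-1/n})$, which is $\dbP$-free by construction and hence measurably cheaper. Finally, your $\limsup$ over the whole Picard sequence is legitimate because the geometric contraction rate gives a.s.\ uniform convergence of the full sequence under each $\dbP$; the paper is slightly more conservative and extracts a measurably $\dbP$-dependent subsequence via \cite[Lemma 3.2]{NN14}. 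Net: same architecture, with the paper's pathwise-bracket device being the cleaner of the two at the $\cZ$-step, and your write-up needing the correct reference (or a proof) for the joint measurability of conditional expectations in the measure variable.
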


\begin{proof}
	We shall exploit the construction of the solution of the BSDE \eqref{bsdeshift12} by the Picard iteration, thus proving that for each step of the iteration, the induced process  $\cY^{n, t, \omega,\mathbb{P}}$  satisfies the required measurability. 
	
	\vspace{3mm}
	
	\noindent {\bf 1.} We start from the first step of the Picard iteration. Take the initial value $\cY^{0,t, \omega, \mathbb{P}}\equiv 0$ and $\cZ^{0,t, \omega, \mathbb{P}}\equiv 0$. 
	Define for all $t\le T$
	   \begin{align}
	     \overline{\cY}^{1,t, \omega, \mathbb{P}}_s 
	       &:= \mathbb{E}^{\dbP}\bigg[\xi^{t, \om} + \int^{T-t}_s F^{t, \om}_r(\cY^{0,t, \omega, \mathbb{P}}_r, \cZ^{0,t, \omega, \mathbb{P}}_r, \widehat{\sigma}_r)dr \bigg|\cF_{s}^+\bigg] \notag \\ 
	       &\,= \mathbb{E}^{\dbP}\bigg[\xi^{t, \om} + \int^{T-t}_0 F^{t, \om}_r(\cY^{0,t, \omega, \mathbb{P}}_r, \cZ^{0,t, \omega, \mathbb{P}}_r, \widehat{\sigma}_r)dr \bigg|\cF_{s}^+\bigg] \nonumber \\
	       &\quad \qquad -\int^s_0 F^{t, \om}_r(\cY^{0,t, \omega, \mathbb{P}}_r, \cZ^{0,t, \omega, \mathbb{P}}_r, \widehat{\sigma}_r)dr, 
	              \qquad s\in [0,T-t].  \label{defstep1}
	   \end{align}
	We extend the definition so that $\overline{\cY}^{1,t, \omega, \mathbb{P}}_s := \xi^{t, \om}$ on $\{s>T-t\}\cap\{t\le T\}$ and $\overline{\cY}^{1,t, \omega, \mathbb{P}}_s \equiv \xi(\om_{T\wedge \cdot})$ for $t>T$.
	By Lemma \ref{oto}, the mapping $\xi^{\cdot, \cdot}(\cdot): \Omega\times [0,T] \times\Omega\longrightarrow \mathbb{R}$ is $\cF_\infty\otimes\cB\big([0,\infty)\big)\otimes\cF_\infty$-measurable. 
	Similarly, the mapping
	  \begin{align*}
	  	 (t, \om, r,\om',\dbP)\mapsto F^{t, \om}_r\big(\om', \cY^{0,t, \omega, \mathbb{P}}_r(\omega'), \cZ^{0,t, \omega, \mathbb{P}}_r(\om'), \widehat{\sigma}_r(\om')\big)
	  \end{align*}
	is $\mathcal{B}\big([0, \infty)\big)\otimes\cF_\infty\otimes\mathcal{B}\big([0, \infty)\big)\otimes\cF_\infty\otimes\mathcal{B}(\cP_{loc})$-measurable, and by the Fubini theorem,
	  \begin{align*}
	  	(t, \om, \om',\dbP) \longmapsto 
	  	{\mathbb 1}_{\{t\le T\}}\int_0^{T-t} F^{t, \om}_r\big(\om', \cY^{0,t, \omega, \mathbb{P}}_r(\om'), \cZ^{0,t, \omega, \mathbb{P}}_r(\om'), \widehat{\sigma}_r(\om')\big)dr 
	  \end{align*}
	is $\mathcal{B}\big([0, \infty)\big)\otimes\cF_\infty\otimes\cF_\infty\otimes\mathcal{B}(\cP_{loc})$-measurable. It follows from Lemma 3.1 in \cite{NN14} that there exists a version, still noted by $\overline{\cY}^{1,t, \omega, \mathbb{P}}$, such that the mapping $(t, \om, \om',\dbP)\mapsto\overline{\cY}^{1,t, \omega, \mathbb{P}}_s(\omega')$ is $\mathcal{B}\big([0, \infty)\big)\otimes\cF_\infty\otimes\cF^+_{s}\otimes\mathcal{B}(\cP_{loc})$-measurable for each $s$. 
	
	\vspace{3mm}
	
	\noindent {\bf 2.} The function $\overline{\cY}^{1,t, \omega, \mathbb{P}}_s$ we just constructed is not necessarily $\dbP$-a.s.~c\`adl\`ag in $s$. 
	We next construct a version $\cY^{1,t, \omega, \mathbb{P}}$ (i.e., $\cY^{1,t, \om, \mathbb{P}}_s = \overline{\cY}^{1,t, \om, \mathbb{P}}_s$, $\mathbb{P}$-a.s.~for all $s$) which is measurable and $\dbP$-a.s.~c\`adl\`ag in $s$. Let $t^n_i:=i2^{-n}(T-t)$, and set for $s\ge 0$:
	   \begin{align*}
	   	 \cY^{1,t, \om, \dbP}_s := \limsup_{m\rightarrow \infty} \cY^{1,m,t,\om,\dbP}_s
	   	   ~~\mbox{with}~~
	   	  \cY^{1, m,t, \om, \dbP}_s
	   	   := \sum_{i=1}^{2^m}\overline{\cY}^{1,t, \om, \dbP}_{t^m_i}{\mathbb 1}_{[t^m_{i-1},t^m_i)}(s)+ {\xi}^{t, \om}{\mathbb 1}_{[T-t, \infty)}(s).
	   \end{align*}
	Clearly, $(t, \om,s,\om', \dbP)\mapsto {\cY}^{1, m, t, \omega, \mathbb{P}}_s(\om')$ is $\mathcal{B}\big([0, \infty)\big)\otimes\cF_\infty\otimes\mathcal{B}\big([0, \infty)\big)\otimes\cF_\infty\otimes\mathcal{B}(\cP_{loc})$-measurable, and so is $(t, \om,s, \om',\dbP)\mapsto {\cY}^{1,  t, \omega, \mathbb{P}}_s(\om')$. 
	Since the filtration $\dbF^{+,\dbP}$ satisfies the usual conditions and the conditional expectation in \eqref{defstep1} is an $\dbF^{+,\dbP}$-martingale, one can prove by a standard argument (see e.g.~\cite[Proposition I 3.14]{KS88}) that ${\cY}^{1,t, \omega, \mathbb{P}}$ is a $\dbP$-a.s.~c\`adl\`ag version of $\overline{\cY}^{1,t, \omega, \mathbb{P}}$.
	
	\vspace{3mm}
	
	\noindent{\bf 3.} Recall the inverse of a nonnegative-definite matrix in Footnote \ref{inverse}.  Define
	   \begin{align}  \label{defstep31}
	     {\cZ}^{1,t, \omega, \mathbb{P}}_s 
	       := \widehat{a}^{-1}_s\limsup_{n\rightarrow \infty}n\left(\langle {\cY}^{1,t, \omega, \mathbb{P}}, X\rangle_s- \langle{\cY}^{1,t, \omega, \mathbb{P}}, X\rangle_{(s-1/n)\vee 0}\right),
 	   \end{align}
	   where the $\limsup$ is componentwise. 
	Clearly, the mapping 
	  $(t, \om,s, \om',\dbP)\mapsto {\cZ}^{1,t, \omega, \mathbb{P}}_s(\om')$ 
	  is $\mathcal{B}\big([0, \infty)\big)\otimes\cF_\infty\otimes\mathcal{B}\big([0, \infty)\big)\otimes\cF_\infty\otimes\mathcal{B}(\cP_{loc})$-measurable. 
	Since ${\cY}^{1,t, \omega, \mathbb{P}}$ is c\`adl\`ag, by the uniqueness of the martingale representation (see e.g.~\cite[Lemma III 4.24]{JS03}), there exists an $\cF^{+, \mathbb{P}}$-martingale $\cN^{1,t, \omega, \mathbb{P}}$ orthogonal to $X$ under $\mathbb{P}$, such that for $t\le T$ and $s\in [0, T-t]$,
	  \begin{equation}\label{bsdeshift2}
	    \cY^{1,t, \om, \dbP}_s 
	      =  \xi^{t, \om} + \int^{T-t}_s F^{t, \om}_r(\cY^{0,t, \om, \dbP}_r, \cZ^{0, t, \om, \dbP}_r, \widehat{\sigma}_r)dr 
	                      - \cZ^{1,t, \om, \dbP}_r \cdot dX_r - d\cN_r^{1,t, \omega, \mathbb{P}},
	               ~\dbP\mbox{-a.s.}
	  \end{equation}
	
	\vspace{3mm}
	
	\noindent{\bf 4.} By replacing $\big(\cY^{0,t, \omega, \mathbb{P}}, \cZ^{0,t, \omega, \mathbb{P}}\big)$ in Steps \textbf{1} - \textbf{3} by $\big(\cY^{n,t, \omega, \mathbb{P}}, \cZ^{n,t, \omega, \mathbb{P}}\big)$, for an arbitrary $n\ge 1$, we may define $\big(\cY^{n+1,t, \omega, \mathbb{P}}, \cZ^{n+1,t, \omega, \mathbb{P}}, \cN^{n+1,t, \omega, \mathbb{P}}\big)$ such that the mappings 
	  \begin{align*}
	  	 (t, \om,s, \om',\dbP)\mapsto \big(\cY^{n+1,t, \omega, \mathbb{P}}(\om'), \cZ^{n+1,t, \omega, \mathbb{P}}(\om')\big)
	  \end{align*}
	  are $\mathcal{B}\big([0, \infty)\big)\otimes\cF_\infty\otimes\mathcal{B}\big([0, \infty)\big)\otimes\cF_\infty\otimes\mathcal{B}(\cP_{loc})$-measurable. 
	By the contracting feature of the Picard iteration, see e.g.~El Karoui, Peng \& Quenez \cite{EPQ97}, we have 
	  \begin{align*}
	  	 \big\|\cY^{n,t, \om, \dbP}-\cY^{t, \om, \dbP} \big\|_{\dbD^2_{T-t, \alpha}(\dbP)}\longrightarrow 0, ~\mbox{ as }~ n\to\infty. 
	  \end{align*}
	As before, we extend the definition so that ${\cY}^{t, \omega, \mathbb{P}}_s := \xi^{t, \om}$ on $\{s>T-t\}\cap\{t\le T\}$ and ${\cY}^{t, \omega, \mathbb{P}}_s \equiv \xi(\om_{T\wedge \cdot})$ for $t>T$.
	Then it follows from \cite[Lemma 3.2]{NN14} that there exists an increasing sequence $\{n^\dbP_k\}_{k\in \mathbb N}\subseteq \dbN$ such that $\dbP\longmapsto n^\dbP_k$ is measurable for each $k$ and
	  \begin{align*}
	  	 \lim_{k\rightarrow \infty}\sup_{0\leq s\leq T-t}\Big|\cY^{n^\dbP_k,t, \om, \dbP}_s-\cY^{t, \om, \dbP}_s\Big| = 0, \quad\dbP\mbox{-a.s}.
	  \end{align*}
	Besides, there exist $\cZ^{t, \omega, \mathbb{P}}\in \mathbb H^2_{T-t, \alpha}$ and $\cN^{t, \omega, \mathbb{P}}\in \mathbb N^2_{T-t, \alpha}$ as limits of the Picard sequence under each $(t, \omega, \mathbb{P})\in [0, T]\times\Omega\times \cP_{loc}$. 
	We conclude that $\big(\cY^{t, \omega, \mathbb{P}}, \cZ^{t, \omega, \mathbb{P}}, \cN^{t, \omega, \mathbb{P}}\big)$ is a solution to the BSDE \eqref{bsdeshift12}, and that $(t, \om, s,\om',\dbP)\mapsto \cY^{t, \omega, \mathbb{P}}_s(\om')$ is 
	  $\mathcal{B}\big([0, \infty)\big)\otimes\cF_\infty\otimes\mathcal{B}\big([0, \infty)\big)\otimes\cF_\infty\otimes\mathcal{B}(\cP_{loc})$-measurable.
	As $\cP_{loc}\subseteq\cB(\cM_1)$, the assertion follows. 
\end{proof}

\begin{remark} 
	In the finite horizon case, Proposition \ref{prop:dbY} is a direct corollary of Lemma \ref{lem:cYfinite}.
\end{remark}

\subsubsection{Measurability - random horizon} \label{subsec:infinite}

Let us return to our construction of the solution of the random horizon BSDE by means of a sequence of finite horizon BSDEs on $[0,\tau_n]$, $n\ge 1$, where $\tau_n:=n\wedge\tau$. For all $(t,\om)\in \llbracket 0, \tau\rrbracket$ and $\mathbb{P}\in \cP_{loc}$, consider the approximating sequence $\big(\cY^{n,t,\om,\dbP},\cZ^{n,t,\om,\dbP}, \cN^{n,t,\om,\dbP}\big)$ defined by: 
  \begin{equation}\label{bsden}
    \cY^{n,t,\om,\dbP}_s
      = \xi^{n,t,\om} + \int_s^{n-t}f^{t,\om}_s\big( \cY^{n,t,\om,\dbP}_s, \cZ^{n,t,\om,\dbP}_s\big)ds
                      - \cZ^{n,t,\om,\dbP}_s dX_s-d\cN^{n,t,\om,\dbP}_s,
  \end{equation} 
 $s\le n-t$, $\dbP^{t,\om}$-a.s., where $\vec{\tau}^{n,\om\otimes_t  X}:=(\tau^{n,\om\otimes_t X}-n)^+$,
  \begin{align*}
  	 \xi^{n,t,\om} := \dbE^{\dbP^{n,\om\otimes_t X}}\Big[ e^{-\mu\vec{\tau}^{n,\om\otimes_t X}}\xi^{n, \om\otimes_t {X}}\Big],
  	  \quad \mbox{and} \quad f^{t,\om}_s(y, z):=F^{t,\om}_s(y, z,\widehat\sigma_s){\mathbb 1}_{\{s\leq(\tau^{t,\om}-t)^+\}}
  \end{align*}
  satisfies Assumption \ref{assum:bsdeF}. 
Then $\big(\cY^{n,t,\om,\dbP},\cZ^{n,t,\om,\dbP}, \cN^{n,t,\om,\dbP}\big)$ is well-defined in $\cD^{p}_{\eta, \tau}(\dbP)\times \cH^{p}_{\eta, \tau}(\dbP)\times \cN^{p}_{\eta, \tau}(\dbP)$ for all $p\in(1,q)$ and $\eta\in[-\mu,\rho)$. 

\begin{proof}[Proof of Proposition \ref{prop:dbY}]
	As $\big(\cY^{n,t,\om,\dbP},\cZ^{n,t,\om,\dbP},\cN^{n,t,\om,\dbP}\big)$ is defined by the finite horizon BSDE, we may apply the results of previous subsection, thus obtaining a version of $\cY^{n, t, \omega, \mathbb{P}}$ such that
	$(t, \om, s,\om',\dbP)\longmapsto \cY^{n, t, \omega, \mathbb{P}}_s(\om')$ is $\mathcal{B}\big([0, \infty)\big)\otimes\cF_\infty\otimes\mathcal{B}\big([0, \infty)\big)\otimes\cF_\infty\otimes\mathcal{B}(\cP_{loc})$-measurable. This in turn implies that the mapping $(t, \om, \dbP)\longmapsto\overline{\mathbb{Y}}^{n, t, \om, \dbP}:=\dbE^\dbP\big[\cY^{n, t, \om, \dbP}_0\big]$ is $\mathcal{B}\big([0, \infty)\big)\otimes\cF_\infty\otimes\mathcal{B}(\cP_{loc})$-measurable. 
	
	By Proposition \ref{prop:rbsde-existence} (with $S=-\infty$), it follows that $\lim_{n\rightarrow \infty}\overline{\mathbb{Y}}^{n, t, \omega, \mathbb{P}}=\mathbb{Y}^{t, \omega, \mathbb{P}}[\xi, \tau].$ Then, the mapping $(t, \omega, \mathbb{P})\mapsto\mathbb{Y}^{t, \omega, \mathbb{P}}[\xi, \tau]$ is $\mathcal{B}\big([0, \infty)\big)\otimes\cF_\infty\otimes\mathcal{B}(\cP_{loc})$-measurable. 
	As $\cP_{loc}\subseteq\cB(\cM_1)$, the mapping $(t,\omega,\mathbb{P})\mapsto\mathbb{Y}^{t,\omega,\mathbb{P}}[\xi,\tau]$ is $\mathcal{B}\big([0, \infty)\big)\otimes\cF_\infty\otimes\mathcal{B}(\cM_1)$-measurable.
\end{proof}

\subsection{Dynamic programming principle} \label{sbsec: dppequa} 

The goal of this section is to prove that the dynamic value process $V$ satisfies the dynamic programming principle. We first focus on the underlying BSDEs for which the dynamic programming principle reduces to the following tower property, where we denote by $\cY[\xi_0,\tau_0]$ the $Y$ component of the solution of the BSDE with the terminal time $\tau_0$ and value $\xi_0$.

\begin{lemma} \label{lem:BSDEshift}
	Let Assumptions \ref{assum:bsdeF} and \ref{assum:bsde-integ} hold true. Then, for all stopping time $\tau_0\leq\tau$, and  $\mathbb{P}\in \cP_{loc}$:
	\begin{enumerate}[{\rm (i)}]
		\item $\dbE^\dbP\big[\cY^\dbP_{\tau_0}\big|\cF_{\tau_0}\big](\om) =\dbY^{\tau_0, \om, \dbP^{\tau_0,\om}}[\xi, \tau]$, for $\dbP$-a.e. $\om\in\Om$.
		\item $\cY^\dbP_{t\wedge\tau_0} [\xi, \tau] 
		= \cY^\dbP_{t\wedge\tau_0}\big[\cY^\dbP_{\tau_0}[\xi, \tau], \tau_0\big]
		= \cY^\dbP_{t\wedge \tau_0}\big[\dbE^\dbP\big[\cY^\dbP_{\tau_0}[\xi, \tau] \big|\cF_{\tau_0}\big], \tau_0\big],$ for all $t\ge 0$.
	\end{enumerate}
\end{lemma}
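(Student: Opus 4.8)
The claim (i) is a "flow property" identifying the conditional expectation of the BSDE solution with the value of a shifted BSDE, and (ii) is the tower property of the nonlinear $\cY$ operator. The natural approach is to prove (ii) first as a consequence of the uniqueness of BSDE solutions, and then deduce (i) by disintegrating the fixed-horizon BSDE through regular conditional probability distributions. The key technical device throughout is that the BSDE \eqref{bsde1w} admits a \emph{unique} solution in the relevant integrability class (Theorem \ref{thm:bsde}), so two processes that solve the same BSDE with the same terminal data must coincide.

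\textbf{Step 1: the tower property (ii).} The plan is to fix $\tau_0\le\tau$ and consider the solution $(\cY^\dbP[\xi,\tau],\cZ^\dbP,\cN^\dbP)$ of the BSDE on $\llbracket 0,\tau\rrbracket$. Restricting the dynamics to the interval $\llbracket t\wedge\tau_0,\tau_0\rrbracket$, the triple $(\cY^\dbP,\cZ^\dbP,\cN^\dbP)$ restricted there solves the BSDE with horizon $\tau_0$ and terminal value $\cY^\dbP_{\tau_0}[\xi,\tau]$. Since $\cY^\dbP_{\tau_0}[\xi,\tau]\in\cL^q_{\rho,\tau_0}(\dbP)$ by the a priori estimate \eqref{estznk}, Assumption \ref{assum:bsde-integ} is met for this shorter problem, so uniqueness in Theorem \ref{thm:bsde} yields $\cY^\dbP_{t\wedge\tau_0}[\xi,\tau]=\cY^\dbP_{t\wedge\tau_0}[\cY^\dbP_{\tau_0}[\xi,\tau],\tau_0]$, giving the first equality. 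For the second equality, I would argue that replacing the terminal value $\cY^\dbP_{\tau_0}[\xi,\tau]$ by its $\cF_{\tau_0}$-conditional expectation does not change the solution at times $\le\tau_0$: the terminal value enters the solution only through quantities measurable at $\tau_0$, and more precisely the map $\xi_0\mapsto\cY^\dbP_{t\wedge\tau_0}[\xi_0,\tau_0]$ satisfies $\cY^\dbP_{t\wedge\tau_0}[\xi_0,\tau_0]=\cY^\dbP_{t\wedge\tau_0}[\dbE^\dbP[\xi_0|\cF_{\tau_0}],\tau_0]$ because the difference solves a BSDE whose terminal condition $\xi_0-\dbE^\dbP[\xi_0|\cF_{\tau_0}]$ has vanishing conditional expectation, and the estimate of Proposition \ref{prop:BSDEstab2times} shows the difference at $\tau_0$ (hence before) is controlled by a conditional expectation of $|\xi_0-\dbE^\dbP[\xi_0|\cF_{\tau_0}]|$ that vanishes.

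\textbf{Step 2: the flow property (i).} Here the plan is to use the regular conditional probability distribution $(\dbP^{\tau_0}_\om)_\om$ and the identification $\dbE^{\dbP^{\tau_0}_\om}[\zeta]=\dbE^{\dbP^{\tau_0,\om}}[\zeta^{\tau_0,\om}]$ recalled after Lemma \ref{oto}. On the shifted space, the shifted process $(\cY^\dbP)^{\tau_0,\om}$ solves the shifted BSDE \eqref{bsdeshift} with data $\xi^{\tau_0,\om}$, horizon $\vec\tau^{\tau_0,\om}$, and generator $F^{\tau_0,\om}$, under $\dbP^{\tau_0,\om}$; this is the content of how concatenation interacts with the stochastic integrals and the $ds$-integral, using that $\dbP^{\tau_0,\om}\in\cP(\tau_0,\om)$ for $\dbP$-a.e.\ $\om$ and Assumption \ref{assum:2bsde-integ}(ii). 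By uniqueness of the shifted BSDE, $(\cY^\dbP)^{\tau_0,\om}=\cY^{\tau_0,\om,\dbP^{\tau_0,\om}}[\xi,\tau]$, so evaluating at the initial time and taking expectation under $\dbP^{\tau_0,\om}$ gives $\dbE^{\dbP^{\tau_0}_\om}[\cY^\dbP_{\tau_0}]=\dbE^{\dbP^{\tau_0,\om}}[\cY^{\tau_0,\om,\dbP^{\tau_0,\om}}_0]=\dbY^{\tau_0,\om,\dbP^{\tau_0,\om}}[\xi,\tau]$, which by the defining property of the r.c.p.d.\ is exactly $\dbE^\dbP[\cY^\dbP_{\tau_0}|\cF_{\tau_0}](\om)$.

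\textbf{Main obstacle.} I expect the delicate point to be the rigorous justification that the shifted solution $(\cY^\dbP)^{\tau_0,\om}$ genuinely solves the shifted BSDE under $\dbP^{\tau_0,\om}$ for $\dbP$-a.e.\ $\om$ simultaneously. This requires checking that the stochastic integral $\cZ\sint X$ and the orthogonal martingale part $\cN$ concatenate correctly --- i.e.\ that $(\cZ\sint X)^{\tau_0,\om}$ and $\cN^{\tau_0,\om}$ are again a stochastic integral and an orthogonal martingale under the r.c.p.d.\ --- which is a measurability-and-null-set argument rather than a formal computation, and which is where the pathwise construction of $\widehat\sigma$ from Section \ref{sect:canonical} and the measurability result of Proposition \ref{prop:dbY} are genuinely used. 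The $ds$-term and terminal value transform transparently, but the martingale parts demand care with the null exceptional set in $\om$.
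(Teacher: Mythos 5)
Your overall route is the one the paper itself indicates (its proof is omitted, with (i) attributed to uniqueness of the BSDE solution and (ii) to the argument of \cite[Lemma 2.7]{PTZ15}): your Step 1 for the first equality of (ii) (restrict the solution to $\llbracket t\wedge\tau_0,\tau_0\rrbracket$, check that the terminal datum $\cY^\dbP_{\tau_0}[\xi,\tau]$ has the required integrability via \eqref{estznk}, and invoke uniqueness) and your Step 2 for (i) (disintegrate through the r.c.p.d., identify the shifted triple as a solution of the shifted BSDE, apply uniqueness, and use $\dbE^{\dbP^{\tau_0}_\om}[\zeta]=\dbE^{\dbP^{\tau_0,\om}}[\zeta^{\tau_0,\om}]$) are exactly these arguments, and you correctly isolate the genuinely delicate point of (i), namely that $(\cZ\sint X)^{\tau_0,\om}$ and $\cN^{\tau_0,\om}$ remain a stochastic integral and an orthogonal martingale under $\dbP^{\tau_0,\om}$ outside a single $\dbP$-null set. (A minor slip: Assumption \ref{assum:2bsde-integ}(ii) is not among the hypotheses of the lemma and is not needed; the integrability of $\xi^{\tau_0,\om}$ and $f^{0,\tau_0,\om}$ under $\dbP^{\tau_0,\om}$ for $\dbP$-a.e.\ $\om$ follows from Assumption \ref{assum:bsde-integ} by disintegration.)

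There is, however, a genuine gap in your justification of the second equality in (ii). You claim that Proposition \ref{prop:BSDEstab2times} controls the difference $\cY^\dbP_{t\wedge\tau_0}[\xi_0,\tau_0]-\cY^\dbP_{t\wedge\tau_0}\big[\dbE^\dbP[\xi_0|\cF_{\tau_0}],\tau_0\big]$ by ``a conditional expectation of $|\xi_0-\dbE^\dbP[\xi_0|\cF_{\tau_0}]|$ that vanishes''. It does not vanish: the stability bound involves the conditional expectation, under measures $\dbQ\in\cQ_L$, of the \emph{absolute value} of the difference of terminal data, and $\dbE^\dbQ\big[\,|\xi_0-\dbE^\dbP[\xi_0|\cF_{\tau_0}]|\,\big|\,\cF^{+,\dbP}_{t\wedge\tau_0}\big]=0$ already forces $\xi_0=\dbE^\dbP[\xi_0|\cF_{\tau_0}]$ $\dbP$-a.s.; the cancellation coming from the vanishing of the \emph{signed} conditional expectation is invisible to this estimate (and would not help even in a linearized form, since the Girsanov/linearization density multiplying $\xi_0-\dbE^\dbP[\xi_0|\cF_{\tau_0}]$ is correlated with it). In fact, evaluating the claimed identity at $t$ large enough that $t\wedge\tau_0=\tau_0$ on a set of positive probability gives $\xi_0=\cY^\dbP_{\tau_0}[\xi_0,\tau_0]=\cY^\dbP_{\tau_0}\big[\dbE^\dbP[\xi_0|\cF_{\tau_0}],\tau_0\big]=\dbE^\dbP[\xi_0|\cF_{\tau_0}]$, so the second equality is \emph{equivalent} to the assertion that $\xi_0=\cY^\dbP_{\tau_0}[\xi,\tau]$ coincides $\dbP$-a.s.\ with an $\cF_{\tau_0}$-measurable random variable. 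This measurability must be established directly --- e.g.\ from the version of the solution with $\cY^\dbP_{t\wedge\tau}\in\cF^+_{t\wedge\tau}$ constructed in Lemma \ref{lem:cYfinite} (see the remark following \eqref{bsdenonshift}) together with an argument that $\cF^+_{\tau_0}$ and $\cF_{\tau_0}$ agree up to $\dbP$-null sets on the relevant random variables, or from the Picard construction --- rather than deduced from the stability estimate; once it is in hand, the second equality is immediate and no stability argument is needed.
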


The proof is omitted as (i) is a direct consequence of the uniqueness of the solution to BSDE, and (ii) is similar to \cite[Lemma 2.7]{PTZ15}. In order to apply the classic measurable selection results, we need the following properties of the probability families $\{\cP(t,\om)\}_{(t,\om)\in\llbracket 0,\tau\rrbracket}$.

\vspace{2mm}

\begin{lemma}\label{lem:ppt-prob}
	The graph $\llbracket\cP\rrbracket:=\{(t, \om, \dbP): \mathbb{P}\in \cP(t, \omega)\}$, is Borel-measurable in $\dbR_+\times\Omega\times \cM_1$. Moreover for all $(t,\om)\in\llbracket 0, \tau\rrbracket$ and all stopping time $\tau_0$ valued in $[t,\tau]$, denoting $\vec{\tau}_0^{t, \omega}:=\tau_0^{t, \omega}-t$, we have:
	\begin{enumerate}[{\rm (i)}]
		\item $\cP(t, \omega)=\cP(t, \omega_{\cdot\wedge t})$, and for all $\dbP\in \cP(t, \omega)$, the r.c.p.d.~$\mathbb{P}^{\vec{\tau}_0^{t, \omega}, \omega' }\in \cP(\tau_0, \omega\otimes_t\omega')$, for $\mathbb{P}$-a.e.~$\omega'\in \Omega$.
		\item For any $\cF_{\vec{\tau}_0^{t, \omega}}$-measurable kernel $\nu:\Omega\to\mathcal{M}_1$ with $\nu({\omega'})\in \cP(\tau_0, \omega\otimes_t\omega')$ for $\mathbb{P}$-a.e. ${\omega'}\in \Omega$, 
		the map $\dbP':=\mathbb{P}\otimes_{\vec{\tau}_0^{t, \omega}} \nu$ defined by 
		\begin{align*}
		    \mathbb{P}'(A)=\int\int (\mathbb{1}_A)^{\vec{\tau}_0^{t, \omega},\omega'}(\omega'')\nu(d\omega''; {\omega'})\mathbb{P}(d\omega'), \quad A\in \cF,
		\end{align*}
		is a probability measure in $\cP(t, \omega)$.
	\end{enumerate}
\end{lemma}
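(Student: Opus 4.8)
The plan is to establish the three assertions in turn, leaning on two structural facts: that $\cP_b\in\cB(\cM_1)$ (Lemma~\ref{lem:xw}) and that concatenation is jointly continuous (Lemma~\ref{oto}), while deferring the stability of the semimartingale structure of $\cP_b$ under conditioning and pasting to the arguments of Nutz \& von Handel \cite{NvH13} and Possama\"\i, Tan \& Zhou \cite{PTZ15}.

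\emph{Measurability of the graph.} By Lemma~\ref{oto}, together with the $\prog\otimes\cB(\dbR)\otimes\cB(\dbR^d)\otimes\cB(\dbS^d)$-measurability of $F$ and the progressive measurability of $\widehat\sigma$, the map $(t,\om,s,\om')\longmapsto f^{0,t,\om}_s(\om')=F_{t+s}\big(\om\otimes_t\om',0,0,\widehat\sigma_s(\om')\big)$ is jointly Borel; hence so is the bounded integrand $e^{-s}\Ind_{\{f^{0,t,\om}_s(\om')=\infty\}}$. Using that $\dbP\longmapsto\dbE^\dbP[\Psi]$ is Borel for every bounded Borel $\Psi$, together with a Fubini argument in the time variable, the function
 $$
 \Phi(t,\om,\dbP):=\dbE^\dbP\Big[\int_0^\infty e^{-s}\Ind_{\{f^{0,t,\om}_s=\infty\}}\,ds\Big]
 $$
is Borel in $(t,\om,\dbP)$. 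Since $\dbP\in\cP(t,\om)$ if and only if $\dbP\in\cP_b$ and $\Phi(t,\om,\dbP)=0$, we conclude that $\llbracket\cP\rrbracket=\big(\dbR_+\times\Om\times\cP_b\big)\cap\Phi^{-1}(\{0\})$ is Borel.

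\emph{Claim (i).} The identity $\cP(t,\om)=\cP(t,\om_{\cdot\wedge t})$ is immediate, as $\cP_b$ does not depend on $(t,\om)$ and $\om\otimes_t\om'$ depends on $\om$ only through $\om_{\cdot\wedge t}$, so the defining constraint is unchanged. For the conditioning statement I would argue in two steps. First, $\dbP^{\vec\tau_0^{t,\om},\om'}\in\cP_b$ for $\dbP$-a.e.~$\om'$: this is the stability under r.c.p.d.\ of the local-martingale property of $X$, the Brownian property of $W$, the absolute continuity of the brackets, and the pathwise densities satisfying $\widehat a=\widehat\sigma\widehat\sigma^{\bf T}$ with $\widehat\sigma$ bounded --- precisely the content adapted from \cite{NvH13,PTZ15}. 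Second, the finiteness constraint transfers by disintegration: the shift identity $f^{0,t,\om}_{\vec\tau_0^{t,\om}(\om')+s}\big(\om'\otimes_{\vec\tau_0^{t,\om}(\om')}\om''\big)=f^{0,\tau_0,\om\otimes_t\om'}_s(\om'')$, which rests on $\om\otimes_t(\om'\otimes_r\om'')=(\om\otimes_t\om')\otimes_{t+r}\om''$ and on the shift-invariance of the pathwise density $\widehat\sigma$, combined with the $\mathrm{Leb}\otimes\dbP$-a.e.\ finiteness of $f^{0,t,\om}$, yields $f^{0,\tau_0,\om\otimes_t\om'}<\infty$, $\mathrm{Leb}\otimes\dbP^{\vec\tau_0^{t,\om},\om'}$-a.e., for $\dbP$-a.e.~$\om'$.

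\emph{Claim (ii) and the main obstacle.} For the pasted measure $\overline\dbP=\dbP\otimes_{\vec\tau_0^{t,\om}}\nu$, that $\overline\dbP\in\cP_b$ is the pasting counterpart of the stability used in (i): concatenating the $\cP_b$-measure $\dbP$ with the measurable family $\{\nu(\om')\}\subseteq\cP_b$ at the stopping time $\vec\tau_0^{t,\om}$ preserves the local-martingale/Brownian structure, the absolute continuity of the brackets and the bound on $\widehat\sigma$ \cite{NvH13,PTZ15}. For the finiteness constraint I would split the space-time integral of $\Ind_{\{f^{0,t,\om}=\infty\}}$ at $\vec\tau_0^{t,\om}$: before $\vec\tau_0^{t,\om}$ the measure $\overline\dbP$ coincides with $\dbP$, so finiteness follows from $\dbP\in\cP(t,\om)$; after $\vec\tau_0^{t,\om}$ the same shift identity as in (i) reduces finiteness under $\nu(\om')$ to the hypothesis $\nu(\om')\in\cP(\tau_0,\om\otimes_t\om')$, and integrating in $\om'$ against $\dbP$ concludes, whence $\overline\dbP\in\cP(t,\om)$. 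The crux of the whole lemma is this structural stability of $\cP_b$ under conditioning and pasting in the two-component canonical setting $(X,W)$, namely the preservation of $\widehat a=\widehat\sigma\widehat\sigma^{\bf T}$ with $\widehat\sigma$ bounded together with the shift-invariance of the pathwise densities; once these are secured, the finiteness constraint propagates by the elementary shift identity and Fubini, and the graph measurability is a routine consequence of Lemmas~\ref{oto} and~\ref{lem:xw}.
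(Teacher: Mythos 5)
Your argument is correct and, at its core, rests on the same pillar as the paper's: the paper's entire proof of Lemma \ref{lem:ppt-prob} is the single sentence ``This follows from \cite[Theorem 4.3]{NvH13}'', together with a pointer to a remark whose label does not actually resolve in the source. You likewise defer the genuinely hard content --- stability of the semimartingale class $\cP_b$ (local-martingale property of $X$, Brownian property of $W$, absolute continuity of the brackets, $\widehat a=\widehat\sigma\widehat\sigma^{\bf T}$ with $\widehat\sigma$ bounded) under r.c.p.d.\ and pasting --- to \cite{NvH13,PTZ15}, which is legitimate since that is exactly what the cited theorem provides. Where your write-up adds value is in making explicit the two things the citation does not literally cover: (a) the Borel measurability of the graph via the auxiliary function $\Phi(t,\om,\dbP)=\dbE^\dbP\big[\int_0^\infty e^{-s}\Ind_{\{f^{0,t,\om}_s=\infty\}}ds\big]$, reducing $\llbracket\cP\rrbracket$ to $(\dbR_+\times\Om\times\cP_b)\cap\Phi^{-1}(\{0\})$ with $\cP_b\in\cB(\cM_1)$ from Lemma \ref{lem:xw}; and (b) the propagation of the extra constraint $f^{0,t,\om}<\infty$, $\mathrm{Leb}\otimes\dbP$-a.e., through conditioning and pasting via the concatenation identity $\om\otimes_t(\om'\otimes_r\om'')=(\om\otimes_t\om')\otimes_{t+r}\om''$ and a split of the time integral at $\vec\tau_0^{t,\om}$ (using that the integrand is progressive, so the pre-$\vec\tau_0^{t,\om}$ part is $\cF_{\vec\tau_0^{t,\om}}$-measurable and $\overline\dbP=\dbP$ there). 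This is precisely the gap the paper's dangling remark was presumably meant to fill, so your version is, if anything, more complete than the printed proof; the only caveat is that the claimed shift-invariance of the pathwise density $\widehat\sigma$ holds only up to null sets under the relevant measures, which suffices here but deserves a word.
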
 

\begin{proof}
	This follows from \cite[Theorem 4.3]{NvH13}.
\end{proof}

\vspace{2mm}

\begin{theorem}[Dynamic programming for $V$]\label{thm:dpp}
	Let  Assumption \ref{assum:bsdeF} hold true. The mapping $\om\longmapsto V_{\tau_0}(\om)$ is $\cF_{\tau_0}^U$-measurable. 
	Moreover, for $(t,\om)\in\llbracket 0, \tau\rrbracket$, and an $\dbF$-stopping time ${\tau_0}$ with $t\wedge\tau\le\tau_0\le\tau$, we have, denoting $\vec{\tau}_0^{t,\om}:=\tau_0^{t,\om}-t$, for all $p\in(1,q),~\eta\in[-\mu,\rho)$, 
	  \begin{align}  \label{eq:uiV}
	  	\cE^{\cP(t, \omega)}\Big[\big|e^{\eta\vec{\tau}_0^{t, \omega}}(V_{\tau_0})^{t, \omega}\big|^p\Big]<\infty, \quad \mbox{and} \quad
	  	\sup_{\tau_0\le \tau}\cE^{\cP_0} \Big[\big|e^{\eta\tau_0}(V_{\tau_0}) \big|^p\Big]<\infty,
	  \end{align}
      and 
      \begin{align}
      	& V_t (\om) = \sup_{\dbP\in \cP(t, \omega)}\dbY^{t, \om, \dbP}\big[V_{\tau_0} , {\tau_0}\big], \label{dppeq} \\ 
      	& V_{t} = \esssup^\dbP_{\dbP'\in \cP_\dbP(t)}\dbE^{\dbP'}\Big[\cY^{\dbP'}_t\big[V_{\tau_0}, {\tau_0}\big]\Big| \cF_t\Big], 
      	           \quad \dbP\mbox{-a.s.}, ~~ \mbox{for all }~ \dbP\in \cP_0.  \label{dppesup}
      \end{align}
\end{theorem}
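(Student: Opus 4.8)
The plan is to establish the three assertions in order: the $\cF^U_{\tau_0}$-measurability of $V_{\tau_0}$, the integrability bounds \eqref{eq:uiV}, and then the two equivalent forms \eqref{dppeq}--\eqref{dppesup} of the dynamic programming principle.

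\textbf{Measurability and integrability.} For the first claim I would combine Proposition \ref{prop:dbY} with Lemma \ref{lem:ppt-prob}: since the graph $\llbracket\cP\rrbracket$ is Borel and $(t,\om,\dbP)\mapsto\dbY^{t,\om,\dbP}[\xi,\tau]$ is Borel-measurable, the projection theorem shows that $(t,\om)\mapsto V_t(\om)=\sup_{\dbP\in\cP(t,\om)}\dbY^{t,\om,\dbP}[\xi,\tau]$ is upper semianalytic, hence universally measurable; composing with the stopping time gives that $V_{\tau_0}$ is $\cF^U_{\tau_0}$-measurable. For \eqref{eq:uiV}, the a priori estimate \eqref{estznk} of Theorem \ref{thm:bsde}, applied to each shifted BSDE \eqref{bsdeshift}, controls $|\dbY^{t,\om,\dbP}[\xi,\tau]|$ by $\|\xi^{t,\om}\|_{\cL^q_{\rho,\vec{\tau}^{t,\om}}}$ and $\overline{F}^{0,t,\om}_{\rho,q}$, which are finite uniformly over $\dbP\in\cP(t,\om)$ by the dynamic Assumption \ref{assum:2bsde-integ}(ii). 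Taking the supremum over $\cP(t,\om)$ bounds $|V_t(\om)|$, and the estimates of Remark \ref{rem:esupassum} then deliver both bounds.

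\textbf{Dynamic programming \eqref{dppeq}.} For the inequality ``$\le$'' I fix $\dbP\in\cP(t,\om)$ and use the tower property of Lemma \ref{lem:BSDEshift}(ii) to write $\dbY^{t,\om,\dbP}[\xi,\tau]=\dbY^{t,\om,\dbP}\big[\dbE^\dbP[\cY^\dbP_{\tau_0}\,|\,\cF_{\tau_0}],\tau_0\big]$; by Lemma \ref{lem:BSDEshift}(i) together with Lemma \ref{lem:ppt-prob}(i), the terminal datum satisfies $\dbE^\dbP[\cY^\dbP_{\tau_0}|\cF_{\tau_0}]=\dbY^{\tau_0,\cdot,\dbP^{\tau_0,\cdot}}[\xi,\tau]\le V_{\tau_0}$, $\dbP$-a.s., so the comparison result of Theorem \ref{thm:bsdecomp-stab}(ii) gives $\dbY^{t,\om,\dbP}[\xi,\tau]\le\dbY^{t,\om,\dbP}[V_{\tau_0},\tau_0]$; taking the supremum over $\dbP$ closes this half. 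For ``$\ge$'' I would use a measurable selection: as $V$ is upper semianalytic, for each $\eps>0$ the Jankov--von Neumann theorem furnishes an analytically measurable kernel $\nu$ with $\nu(\om')\in\cP(\tau_0,\om\otimes_t\om')$ and $\dbY^{\tau_0,\om',\nu(\om')}[\xi,\tau]\ge V_{\tau_0}(\om\otimes_t\om')-\eps$, $\dbP$-a.e.; the concatenation $\overline{\dbP}:=\dbP\otimes_{\vec{\tau}_0^{t,\om}}\nu$ lies in $\cP(t,\om)$ by Lemma \ref{lem:ppt-prob}(ii), and since $\overline{\dbP}=\dbP$ on $\cF_{\tau_0}$ the tower property yields $V_t(\om)\ge\dbY^{t,\om,\overline{\dbP}}[\xi,\tau]\ge\dbY^{t,\om,\dbP}[V_{\tau_0}-\eps,\tau_0]$. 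Letting $\eps\to0$ via the stability estimate of Theorem \ref{thm:bsdecomp-stab}(i) and taking the supremum over $\dbP$ completes \eqref{dppeq}.

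\textbf{The conditional form \eqref{dppesup}.} I would transfer the pointwise supremum in \eqref{dppeq} into an essential supremum under a fixed $\dbP\in\cP_0$, using the r.c.p.d. identity of Lemma \ref{lem:BSDEshift}(i) to identify $\dbE^{\dbP'}[\cY^{\dbP'}_t[V_{\tau_0},\tau_0]|\cF_t]$ with a shifted value, and the stability of $\cP_\dbP(t)$ under concatenation to show that the family is upward directed, so that its essential supremum is attained along a maximizing sequence and matches the pointwise $\sup_{\dbP\in\cP(t,\om)}$. The hard part is the ``$\ge$'' half of \eqref{dppeq}: one must establish upper semianalyticity of $V$ carefully enough to legitimately invoke the Jankov--von Neumann selection theorem, verify that the selected kernel keeps $\overline{\dbP}$ inside $\cP(t,\om)$, and handle the fact that $V_{\tau_0}$ is merely universally (not Borel) measurable, so that the BSDE $\dbY^{t,\om,\dbP}[V_{\tau_0},\tau_0]$ is well posed under each fixed $\dbP$.
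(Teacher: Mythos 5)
Your overall strategy coincides with the paper's: upper semianalyticity of $(t,\om)\mapsto V_t(\om)$ via Proposition \ref{prop:dbY} and the Borel-measurability of the graph $\llbracket\cP\rrbracket$ from Lemma \ref{lem:ppt-prob}, followed by Galmarino's test for the $\cF^U_{\tau_0}$-measurability; the inequality ``$\le$'' in \eqref{dppeq} from the tower property of Lemma \ref{lem:BSDEshift}, Lemma \ref{lem:ppt-prob}~(i) and comparison; the inequality ``$\ge$'' from an $\eps$-optimal Jankov--von Neumann selection, concatenation via Lemma \ref{lem:ppt-prob}~(ii), the tower property and stability; and \eqref{dppesup} by localizing with r.c.p.d.'s in one direction and a second measurable selection in the other (the paper does not need your upward-directedness argument there, but it is a harmless alternative). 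The one place where you genuinely depart from the paper is the integrability bound \eqref{eq:uiV}. You bound $|V_t(\om)|$ pointwise by $\|\xi^{t,\om}\|_{\cL^q_{\rho,\vec{\tau}^{t,\om}}(\cP(t,\om))}$ and $\overline F^{0,t,\om}_{\rho,q}$ using \eqref{estznk} on each shifted BSDE together with Assumption \ref{assum:2bsde-integ}~(ii), and then propose to integrate this pointwise bound via Remark \ref{rem:esupassum}. The paper instead reuses the measurable selection: it picks $\nu^\eps$ with $V_{\tau_0}(\om)\le\dbY^{\tau_0,\om,\nu^\eps(\om)}[\xi,\tau]+\eps$, identifies the right-hand side through Lemma \ref{lem:BSDEshift} with $\dbE^{\dbP\otimes_{\tau_0}\nu^\eps}\big[\cY^{\dbP\otimes_{\tau_0}\nu^\eps}_{\tau_0}\big|\cF_{\tau_0}\big]$ under a single concatenated measure in $\cP_0$, and then applies the global estimate \eqref{estznk} under Assumption \ref{assum:2bsde-integ}~(i) alone. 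The advantage of the paper's route is that it avoids the aggregation step your route needs: Remark \ref{rem:esupassum} controls quantities of the form $\dbE^{\dbQ}\big[\sup_{t}\cE^{\dbP,+}_{t}[\cdot]\big]$, where $\cE^{\dbP,+}_t$ is an essential supremum over $\cP^+_\dbP(t)$, whereas your pointwise bound is expressed through the genuine supremum over $\cP(\tau_0,\om)$; identifying the two $\dbP$-a.s.\ is itself a dynamic-programming statement for the sublinear expectation and requires a selection/concatenation argument of exactly the kind you are trying to bypass. If you supply that identification, or simply run the paper's selection argument for this step as well, your proof closes; as written, this passage is the only real gap.
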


\begin{proof}
	Without loss of generality, we assume in the proof that $(t,\om)=(0, {\bf 0})$. 
	
	\vspace{3mm}
	
	\noindent {\bf 1.} It follows from Proposition \ref{prop:dbY} that $(t, \omega, \mathbb{P})\mapsto\mathbb{Y}^{t, \omega, \mathbb{P}}[\xi, \tau]$ is $\mathcal{B}\big([0, \infty)\big)\otimes\cF_\infty\otimes\mathcal{B}(\mathcal{M}_1)$-measurable, and from Lemma \ref{lem:ppt-prob} that $\llbracket\cP\rrbracket$ is analytic.
	By \cite[Poposition 7.47]{BS96}, we know that the mapping 
	  \begin{align*}
	  	 (t, \omega)\mapsto V_t (\omega):=\sup_{\mathbb{P}\in \cP(t, \omega)}\mathbb{Y}^{t,\omega, \mathbb{P}}[\xi, \tau] 
	  \end{align*}
	  is upper semi-analytic and thus universally measurable, i.e., $\mathcal{B}\big([0, \infty)\big)\otimes\cF^U_\infty$-measurable. 
	Finally, note that $V_t(\om)=V_t(\om_{t\wedge\cdot})$. 
	So, it follows from Galmarino's test that $V_{\tau_0} $ is $\cF^U_{{\tau_0}\wedge \tau}$-measurable.
	
	\vspace{3mm}
	
	\noindent {\bf 2.} We next pove \eqref{eq:uiV}. 
	By the measurable selection theorem (see, e.g., \cite[Proposition 7.50]{BS96}), for each $\varepsilon>0$, there exists an $\cF^U_{\tau_0}$-measurable kernel 
	$\nu^\varepsilon: \omega \mapsto \nu^\varepsilon(\omega)\in \cP\big({\tau_0}(\omega), \omega\big)$, such that for all $\omega\in \Omega$
	   \begin{align}  \label{eq:measurableselection}
	   	  e^{\eta{\tau_0}(\omega)} V_{\tau_0} (\omega) 
	   	   \leq e^{\eta\tau_0(\omega)}\mathbb{Y}^{{\tau_0}, \omega, \nu^\varepsilon(\omega)}[\xi, \tau]+\varepsilon.
	   \end{align}
	By Lemma \ref{lem:BSDEshift}, we have 
	   \begin{align*}
	   	  \dbY^{\tau_0, \omega, \nu^\varepsilon(\omega)}[\xi, \tau]
	   	    = \dbE^{\dbP\otimes_{\tau_0}\nu^\varepsilon}\Big[\cY^{\dbP\otimes_{\tau_0}\nu^\varepsilon}_{\tau_0}\Big|\cF_{\tau_0}\Big](\omega),
	   	      \quad \dbP\mbox{-a.s.}, \quad\mbox{for all }~\dbP\in \cP_0.
	   \end{align*}
    Therefore, for $\dbQ\in \cQ_L(\dbP)$, we have
	   \begin{align*}
	     \mathbb{E}^\mathbb{Q}\big[\big|e^{\eta{\tau_0}}V_{\tau_0} \big|^p\big]
	        &\leq \dbE^\dbQ\Big[\Big|\mathbb{E}^{\mathbb{P}\otimes_{\tau_0} \nu^\varepsilon}\Big[e^{\eta{\tau_0}}\cY^{\mathbb{P}\otimes_{\tau_0}\nu^\varepsilon}_{\tau_0}\Big|\cF_{\tau_0}\Big]+\varepsilon\Big|^p\Big] \\
	        &=C_p\Big(\mathbb{E}^{\mathbb{P}\otimes_{\tau_0}\nu^\varepsilon}\Big[\rmD_{\tau_0}^{\mathbb{Q}|\mathbb{P}}\Big|e^{\eta{\tau_0}}\cY^{\mathbb{P}\otimes_{\tau_0}\nu^\varepsilon}_{\tau_0}\Big|^p\Big]+\varepsilon^p\Big)  \\
	        &\leq C_p\bigg(\sup_{\mathbb{Q}'\in \mathcal{Q}_L(\mathbb{P}\otimes_{\tau_0}\nu^\varepsilon)}\mathbb{E}^{\mathbb{Q}'}\Big[\Big|e^{\eta{\tau_0}}\cY^{\mathbb{P}\otimes_{\tau_0} \nu^\varepsilon}_{\tau_0}\Big|^p\Big]+\varepsilon^p\bigg).
	   \end{align*}
	Then, by the estimate \eqref{estznk}, we obtain
	   \begin{align*}
	   	  \sup_{\dbP\in \cP_0} \sup_{\dbQ\in \cQ_L(\dbP)}\dbE^\dbQ\big[\big|e^{\eta{\tau_0}}V_{\tau_0}\big|^p\big]
	   	   \leq C_{p, q} \Big( \|\xi\|^{p}_{\cL^{q}_{\rho,\tau}(\cP_0)}+\big(\overline{F}^0_{\rho,q,\tau}\big)^{p}\Big) + C_p\varepsilon^p,
	   \end{align*}
	  which induces the required estimate by sending $\varepsilon \rightarrow 0$. 
	
	\vspace{3mm}
	
	\noindent {\bf 3.} To prove \eqref{dppeq}, we start by observing that, by the tower property in Lemma \ref{lem:BSDEshift}, we have
	  \begin{align*}
	  	 V_0 = \sup_{\dbP\in \cP_{0}}\dbE^{\dbP}\big[\cY^{\dbP}_0 [\xi, \tau] \big]
	  	     &= \sup_{\dbP\in \cP_{0}}\dbE^\dbP\Big[\cY^{\dbP}_0\big[\cY^{\dbP}_{\tau_0}[ \xi, \tau], \tau_0 \big]\Big] \\
	  	     &=\sup_{\dbP\in \cP_{0}}\dbE^\dbP\Big[\cY^{\dbP}_0\Big[\dbE^\dbP\big[\cY^{\dbP}_{\tau_0} [\xi, \tau] \big|\cF_{\tau_0}\big], \tau_0\Big]\Big].
	  \end{align*}
	Note that, for all $\dbP\in \cP_{0}$, we have by Lemma \ref{lem:BSDEshift} that for $\dbP$-a.e. $\om$,
	  \begin{align*}
	  	 V_{\tau_0} (\om) = \sup_{\mathbb{P}\in \cP_{0}}\mathbb{Y}^{\tau_0, \om, \mathbb{P}}[\xi, \tau] 
	  	   =\sup_{\mathbb{P}\in \cP_{0}}\mathbb{E}^\mathbb{P}\left[\mathcal Y^{\tau_0,\om,\mathbb{P}}_{0}\big[\xi, \tau\big]\right]
	  	   \geq \mathbb{E}^\mathbb{P}\left[\mathcal Y^{\mathbb{P}}_{\tau_0} [\xi, \tau]\Big|\cF_{\tau_0}\right](\omega).
	  \end{align*}
	By the comparison result of Theorem \ref{thm:bsdecomp-stab} (ii) for the BSDE \eqref{bsdeshift12}, we deduce that 
	  \begin{equation} \label{ineq-oneside-ms} 
	  	 V_0 \leq \sup_{\mathbb{P}\in \cP_{0}}\mathbb{E}^\mathbb{P}\left[\cY^{\mathbb{P}}_0\big[V_{\tau_0} , \tau_0 \big] \right] 
	  	     = \sup_{\mathbb{P}\in \cP_{0}}\mathbb{Y}^{0, {\bf 0}, \mathbb{P}}\left[V_{\tau_0} , \tau_0\right].
	  \end{equation}
	To prove the reverse inequality, we use again the measurable selection theorem to deduce the existence of an $\cF^U_{\tau_0}$-measurable kernel $\nu^\varepsilon: \omega \mapsto \nu^\varepsilon(\omega)\in \cP(\tau_0(\om),\om)$ 
	such that \eqref{eq:measurableselection} holds true for $\eta\in [-\mu,\rho)$.
	Define the concatenated probability $\overline{\mathbb{P}}:=\mathbb{P}\otimes_{\tau_0}\nu^\varepsilon$ and note that $\overline{\mathbb{P}}|_{\cF_{\tau_0}}=\mathbb{P}|_{\cF_{\tau_0}}$. 
	Then, by  the stability result of Theorem \ref{thm:bsdecomp-stab} (i) and Lemma \ref{lem:BSDEshift}, we have 
	  \begin{align*}
	  	V_0 \geq \mathbb{E}^{\overline{\mathbb{P}}}\big[\cY^{\overline{\mathbb{P}}}_0 [\xi, \tau]\big] 
	  	   &= \mathbb{E}^{\mathbb{P}}\Big[\cY^\mathbb{P}_0\Big[\mathbb{E}^{\overline{\mathbb{P}}^{\tau_0, \cdot}}\big[\cY^{\tau_0, \cdot,\overline{\mathbb P}^{\tau_0, \cdot}}_{0} [\xi, \tau]\big], \tau_0\Big]\Big]  \\
	  	   &= \mathbb{E}^{\mathbb{P}}\Big[\cY^\mathbb{P}_0\Big[\mathbb{E}^{\nu^\varepsilon(\cdot)}\big[\cY^{\tau_0 ,\cdot,\nu^\varepsilon(\cdot)}_{0}[\xi,\tau]\big], \tau_0\Big]\Big]. 
	  \end{align*}
	By \eqref{eq:measurableselection}, the right hand side is larger than $\mathbb{E}^{\mathbb{P}}\left[\cY^\mathbb{P}_0\left[V_{\tau_0} , \tau_0\right]\right]-C\varepsilon$ for some $C>0$ independent of $\varepsilon$.
	Therefore, $V_0 \ge \mathbb{Y}^{0, {\bf 0}, \mathbb{P}}\left[V_{\tau_0} , \tau_0\right]-C\varepsilon$, and we obtain by sending $\varepsilon\rightarrow 0$ that 
	  \begin{align*}
	     V_0 \geq \sup_{\mathbb{P}\in \cP_{0}}\mathbb{Y}^{0, {\bf 0}, \mathbb{P}}\left[V_{\tau_0} , \tau_0\right]. 
	  \end{align*}
	
	\vspace{3mm}
	
	\noindent {\bf 4.} We finally prove \eqref{dppesup}. Due to the previous step, we know
	   \begin{align*}
	   	  V_t(\om) \geq \dbY^{t,\om,\dbP'}\big[V_{\tau_0},\tau_0\big], \quad\mbox{for all}\quad \dbP'\in \cP(t,\om).
	   \end{align*}
	Now fix a probability measure $\dbP\in \cP_0$. 
	It follows from Lemma \ref{lem:ppt-prob} (i) that for all $\widetilde\dbP\in \cP_\dbP(t)\subseteq\cP_0$ we have $\widetilde\dbP^{t,\om}\in \cP(t,\om)$. 
	So $V_t(\om) \ge \dbY^{t,\om,\widetilde\dbP^{t,\om}}\big[V_{\tau_0},\tau_0\big]$. 
	By Lemma \ref{lem:BSDEshift}, this provides
	   \begin{align*}
	   	 V_t \geq \dbE^{\widetilde\dbP}\Big[\cY^{\widetilde\dbP}_t [V_{\tau_0},\tau_0] \Big|\cF_t\Big], \quad \dbP\mbox{-a.s.}, 
	   \end{align*}
	   and therefore 
	   \begin{align*}
	   	 V_t \geq \esssup^\dbP_{\widetilde\dbP\in \cP_\dbP(t)}\dbE^{\widetilde\dbP}\Big[\cY^{\widetilde\dbP}_t [V_{\tau_0},\tau_0] \Big|\cF_t\Big],
	   	    ~~\dbP\mbox{-a.s.}
	   \end{align*}
	To prove the reverse inequality, we apply the measurable selection theorem on the optimization problem \eqref{dppeq}, to find an $\cF^U_t$-measurable kernel $\nu^\varepsilon: \omega \mapsto \nu^\varepsilon(\omega)\in \cP(t,\omega)$ such that 
	   $V_t(\om) \leq \dbY^{t,\om,\nu(\om)}[V_{\tau_0},\tau_0]+\varepsilon$. 
	By Lemma \ref{lem:ppt-prob}, $\dbP^\varepsilon:=\dbP\otimes_t \nu^\varepsilon\in \cP_0$, and thus $\dbP^\varepsilon\in \cP_\dbP(t)$. 
	Together with Lemma \ref{lem:BSDEshift}, this provides
	  \begin{align*}
	  	V_t \leq \dbE^{\dbP^\varepsilon}\Big[\cY^{\dbP^\varepsilon}_t [V_{\tau_0},\tau_0] \Big|\cF_t\Big] + \varepsilon 
	  	    \leq \esssup^\dbP_{\widetilde\dbP\in \cP_\dbP(t)}\dbE^{\widetilde\dbP}\Big[\cY^{\widetilde\dbP}_t [V_{\tau_0},\tau_0] \Big|\cF_t\Big] + \varepsilon. 
	  \end{align*}
	The required inequality now follows by sending $\varepsilon\rightarrow 0$.	
\end{proof}

\subsection{A c\`adl\`ag version of the value function}

By \cite[Lemma 3.2]{PTZ15}, the right limit
   \begin{align*}
   	  V^+_t(\omega) := \lim_{r\in\mathbb{Q},r\downarrow t} V_r(\omega)
   \end{align*}
  exists $\cP_0$-q.s.~and the process $V^+$ is c\`adl\`ag $\cP_0$-q.s.~with $V_{\tau_0}^+\in\dbL^p_{\eta,\tau}(\dbQ)$ for all $\dbQ\in\bigcup_{\dbP\in\cP_0}\cQ_L(\dbP)$, $\eta\in[-\mu,\rho)$, $p\in(1,q)$, and all stopping times $\tau_0\leq \tau$.

\begin{proposition}[Dynamic programming for $V^+$]\label{prop:V+DPP}
	Under Assumption \ref{assum:bsdeF}, $V^+\in\cD^p_{\eta,\tau}(\cP_0)$ for any $\eta\in[-\mu,\rho)$, $p\in(1,q)$, and for all $\mathbb{F}^+$-stopping times $0\leq \tau_0\leq \tau_1\leq \tau$, and $\dbP\in \cP_0$, we have
	  \begin{align*}
	  	V^ +_{\tau_0} = \esssup^\dbP_{\dbP' \in \cP_\dbP^+(\tau_0)} \cY^{\dbP'}_{\tau_0}\big[V^+_{\tau_1}, \tau_1\big], \qquad \mathbb{P}\mbox{-a.s.}
	  \end{align*}
\end{proposition}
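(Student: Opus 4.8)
The plan is to prove the two inequalities separately, fixing $\dbP\in\cP_0$ throughout, and to first treat deterministic times $\tau_0=t\le\tau_1=s$, extending to general $\mathbb{F}^+$-stopping times only at the end by right-continuity and approximation from the right by discretely-valued stopping times. The tools are the dynamic programming principle \eqref{dppesup} for $V$ from Theorem \ref{thm:dpp}, the stability estimate of Proposition \ref{prop:BSDEstab2times} together with Theorem \ref{thm:bsdecomp-stab}, the uniform integrability \eqref{eq:uiV}, and the c\`adl\`ag property and integrability of $V^+$ recalled from \cite[Lemma 3.2]{PTZ15}.

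For the lower bound I would fix $\dbP'\in\cP_\dbP^+(t)$, so that $\dbP'\in\cP_\dbP(t+h)$ for some $h>0$. For rationals $r\in(t,t+h)$ and $r'\in(s,\tau]$ one has $\dbP'=\dbP$ on $\cF_r$, hence $\dbP'\in\cP_\dbP(r)$, and \eqref{dppesup} applied at $r$ with terminal time $r'$ gives $V_r\ge\dbE^{\dbP'}\big[\cY^{\dbP'}_r[V_{r'},r']\big|\cF_r\big]$, $\dbP'$-a.s. Letting $r'\downarrow s$ along the rationals, the stability of the BSDE in terminal time and terminal value (Proposition \ref{prop:BSDEstab2times}), combined with $V_{r'}\to V^+_s$ and the uniform integrability \eqref{eq:uiV}, yields $\cY^{\dbP'}_r[V_{r'},r']\to\cY^{\dbP'}_r[V^+_s,s]$; then letting $r\downarrow t$ the c\`adl\`ag property of the BSDE solution together with martingale convergence for $\dbE^{\dbP'}[\,\cdot\,|\cF_r]$ (as $\cap_{r>t}\cF_r=\cF_{t+}$) produces $V^+_t\ge\cY^{\dbP'}_t[V^+_s,s]$, $\dbP'$-a.s., and hence $\dbP$-a.s.\ by the agreement of $\dbP'$ and $\dbP$ on $\cF_{t+h}$, arguing as in Step 1 of the proof of Proposition \ref{prop:representation}. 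Taking the essential supremum over $\dbP'\in\cP_\dbP^+(t)$ gives one inequality.

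For the upper bound I would invoke the measurable-selection part of the proof of \eqref{dppeq}: at each rational $r>t$ one finds, for every $\varepsilon>0$, an $\varepsilon$-optimal kernel producing $\dbP^\varepsilon\in\cP_\dbP(r)\subseteq\cP_\dbP^+(t)$ with $V_r\le\dbE^{\dbP^\varepsilon}\big[\cY^{\dbP^\varepsilon}_r[V_{r'},r']\big|\cF_r\big]+\varepsilon\le\esssup^\dbP_{\dbP''\in\cP_\dbP^+(t)}\dbE^{\dbP''}\big[\cY^{\dbP''}_r[V_{r'},r']\big|\cF_r\big]+\varepsilon$, where the inclusion $\cP_\dbP(r)\subseteq\cP_\dbP^+(t)$ holds precisely because $r>t$. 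Passing to the limit $r'\downarrow s$ and $r\downarrow t$ as above, and using that the family $\{\cY^{\dbP''}_r[V_{r'},r']\}_{\dbP''}$ is upward directed so that the essential supremum is attained along an increasing sequence, I would identify the limit of the right-hand side with $\esssup^\dbP_{\dbP''\in\cP_\dbP^+(t)}\cY^{\dbP''}_t[V^+_s,s]$, thereby obtaining the reverse inequality $V^+_t\le\esssup^\dbP_{\dbP''\in\cP_\dbP^+(t)}\cY^{\dbP''}_t[V^+_s,s]$.

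Finally, the extension from deterministic $t\le s$ to $\mathbb{F}^+$-stopping times $\tau_0\le\tau_1$ would follow by approximating $\tau_0$ and $\tau_1$ from the right by sequences of stopping times taking countably many rational values, applying the deterministic-time identity on each level set, and passing to the limit via the right-continuity of $V^+$ and of $\cY^{\dbP'}$ together with the stability estimates. I expect the main obstacle to be the interchange of the right-limits $r\downarrow\tau_0$ and $r'\downarrow\tau_1$ with the essential supremum over the non-dominated family $\cP_\dbP^+(\tau_0)$: this requires the stability of $\cY^{\dbP'}$ under simultaneous perturbation of initial time, terminal time and terminal datum with constants uniform in $\dbP'$ (which is exactly why Proposition \ref{prop:BSDEstab2times} is stated with a bound independent of the measure), and it requires the directedness of the selection family in order to convert the essential supremum into a countable monotone limit that commutes with the quasi-sure right-limit.
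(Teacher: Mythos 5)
Your lower bound is essentially the paper's own argument (fix one $\dbP'$, apply the one-sided inequality in \eqref{dppesup} at dyadic right-approximations of the stopping times, pass to the limit using Proposition \ref{prop:BSDEstab2times} together with the $L^p$-convergence $V_{\bar\tau^n}\to V^+_{\bar\tau}$, and only then take the essential supremum), and it is sound precisely because the limits are taken for one fixed measure at a time. The upper bound, however, has a genuine gap at exactly the point you flag. You want to send $r'\downarrow s$ and $r\downarrow t$ inside $\esssup^\dbP_{\dbP''\in\cP_\dbP^+(t)}\dbE^{\dbP''}\big[\cY^{\dbP''}_r[V_{r'},r']\,\big|\,\cF_r\big]$. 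The stability estimate of Proposition \ref{prop:BSDEstab2times} does have constants independent of $\dbP''$, but what it reduces the interchange to is the convergence of $\cE^{\dbP''}\big[\,\big|e^{\eta r'}V_{r'}-e^{\eta s}V^+_s\big|\,\big]$ (or a conditional version of it) to zero \emph{uniformly} over $\dbP''\in\cP_\dbP^+(t)$. The only convergence available (Step 1 of the paper's proof) is $\cE^{\dbP}\big[\big|e^{\eta\bar\tau}V^+_{\bar\tau}-e^{\eta\bar\tau^n}V_{\bar\tau^n}\big|^p\big]\to0$ for each \emph{fixed} $\dbP\in\cP_0$; it rests on $\dbP$-a.s.\ convergence plus uniform integrability and does not upgrade to uniformity over the non-dominated family. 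Directedness turns the essential supremum into a monotone limit along a sequence of measures, but that sequence depends on $(r,r')$, so it does not let you commute the two limits either.

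The paper circumvents this entirely: by the comparison theorem and the flow property of the BSDE, $\esssup^\dbP_{\dbP'}\cY^{\dbP'}_{\tau_0}\big[V^+_{\tau_1},\tau_1\big]\ge\esssup^\dbP_{\dbP'}\cY^{\dbP'}_{\tau_0}\big[\cY^{\dbP'}_{\tau_1}[\xi,\tau],\tau_1\big]=\esssup^\dbP_{\dbP'}\cY^{\dbP'}_{\tau_0}[\xi,\tau]$ (display \eqref{V+DPPstep}), so the upper bound only needs to be proved against the \emph{fixed} terminal data $(\xi,\tau)$. Then the only moving object as $n\to\infty$ is the evaluation time $\tau_0^n$ of the fixed processes $\cY^{\dbP'}[\xi,\tau]$, and the resulting error $\dbE^{\dbP'}\big[\int_{\tau_0}^{\tau_0^n}e^{\eta u}\big(f_u(\cY^{\dbP'}_u,\cZ^{\dbP'}_u)+\eta\cY^{\dbP'}_u\big)du\,\big|\,\cF^+_{\tau_0}\big]$ is controlled, uniformly in $\dbP'$, by Cauchy--Schwarz and the a priori estimates, with a factor $2^{-n/2}$ coming from $\tau_0^n-\tau_0\le2^{-n}$. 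You should incorporate this reduction; without it the interchange of limits is not justified. Separately, your proposal does not address the first assertion of the proposition, namely $V^+\in\cD^p_{\eta,\tau}(\cP_0)$: in the paper this is Step 4, and it relies precisely on the representation $V^+_t=\esssup^\dbP_{\dbP'\in\cP_\dbP^+(t)}\cY^{\dbP'}_t[\xi,\tau]$ obtained along the way, combined with the linearization bound for $\cY^{\dbP'}$ and Remark \ref{rem:esupassum}.
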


\begin{proof}
	{\bf 1.} For an $\dbF^+-$stopping time $\bar\tau\le\tau$, we introduce the approximating sequence of stopping times $\bar\tau^n := \frac{\lfloor 2^n \bar\tau\rfloor+1}{2^n}$, and we now verify that
	  \begin{align*}
	  	V^+_{\bar\tau}\in \cL^p_{\eta,\bar\tau}(\cP_0) \quad\mbox{and} \quad
	  	\lim_{n\rightarrow\infty}\cE^\dbP\big[\big|e^{\eta\bar\tau} V^+_{\bar\tau} -e^{\eta \bar\tau^n } V_{\bar\tau^n} \big|^p\big] = 0, 
	  	  \quad \mbox{for all }~\dbP\in\cP_0. 
	  \end{align*}
	Indeed, for all $\dbP\in\cP_0$, and $\dbQ\in\cQ_L(\dbP)$:
	  \begin{align*}
	     \dbE^\dbQ\big[\big(e^{\eta\bar\tau} V^+_{\bar\tau} \big)^p\big]
	        = \lim_{n\rightarrow\infty} \dbE^\dbQ\big[\big|e^{\eta \bar\tau^n } V_{\bar\tau^n}\big|^p\big] 
	        \leq \sup_{\tau'\le\tau}\cE^{\cP_0}\big[\big|e^{\eta \tau' } V_{\tau'}\big|^p\big]
	        =: v_p < \infty,
	  \end{align*}
	  by \eqref{eq:uiV} in Theorem \ref{thm:dpp}, implying that $\cE^{\cP_0}\big[\big|e^{\eta\bar\tau} V^+_{\bar\tau} \big|^p\big] \leq v_p$. 
	Then $\delta_n:=\big|e^{\eta\bar\tau} V^+_{\bar\tau} -e^{\eta \bar\tau^n } V_{\bar\tau^n}\big|$ satisfies for an arbitrary $m\ge 1$: 
	  \begin{align*}
	  	 \cE^\dbP\big[\delta_n^p\big]
	  	  \leq \cE^\dbP\big[\delta_n^p\Ind_{\{\tau\ge m\}}\big] + \cE^\dbP\big[\delta_n^p\Ind_{\{\tau<m\}}\big] 
	  	  \leq 2 v_{p'}^{\frac{p}{p'}} \cE^\dbP\big[\Ind_{\{\tau\ge m\}}\big]^{1-\frac{p}{p'}} + C_m\big( \dbE^\dbP\big[\delta_n^{p'}\big] \big)^{\frac{p}{p'}},
	  \end{align*}
	  which implies the required convergence.
	
	\vspace{3mm}
	
	\noindent {\bf 2.} We now prove that $V^+_{\tau_0}\ge \cY^{\dbP'}_{\tau_0}\big[V^+_{\tau_1},\tau_1\big]$, $\dbP-$a.s.~for all $\dbP'\in\cP_\dbP^+(\tau_0)$, where the right hand is well defined by the integrability of $V^+$ obtained in step \textbf{1}. 
	Recall from Theorem \ref{thm:dpp} that
	   \begin{align*}
	      V_{\tau_0^m} = \esssup_{\dbP'\in\cP_{\dbP}(\tau_0^m)}^\dbP\dbE^{\dbP'}\Big[\cY^{\dbP'}_{\tau_0^m}\big[V_{\tau_1^n},\tau_1^n\big]\Big|\cF_{\tau^m_0}\Big], \quad \dbP\mbox{-a.s.}
	   \end{align*}
	Since for each $m\in\mathbb{N}$, $\cP_\dbP(\tau_0^m)\subseteq\cP_\dbP^+(\tau_0)=\bigcup_{h>0}\cP_\dbP(\tau_0+h)$, we have for any $\dbP'\in\cP_\dbP^+(\tau_0)$ and for $m$ large enough that 
	   \begin{align*}
	       V_{\tau_0^m} 
	         \geq \dbE^{\dbP'}\Big[\cY^{\dbP'}_{\tau_0^m} \big[V_{\tau_1^n},\tau_1^n] \Big|\cF_{\tau_0^m}\Big], \quad \dbP\mbox{-a.s.},
	   \end{align*}
	   where $\tau_0^m$ and $\tau_1^n$ are defined from $\tau_0$ and $\tau_1$ as in the previous step.
	By the stability result of BSDEs in Proposition \ref{prop:BSDEstab2times}, and the result of Step \textbf{1} of the present proof, we have
	   \begin{align*}
	   	  &\lim_{n\rightarrow\infty}
	   	   \Big\|\cY^{\dbP'}_{\tau_0^m}\big[V_{\tau_1^n},\tau_1^n]-\cY^{\dbP'}_{\tau_0^m} \big[V^+_{\tau_1},\tau_1\big]\Big\|_{\dbL^p_{\eta,\tau_0^m}(\dbP')} \\
	   	  &\qquad\leq \lim_{n\rightarrow\infty}\Big\|\cY^{\dbP'}_{\tau_0^m} 
	   	         \big[V_{\tau_1^n},\tau_1^n]-\cY^{\dbP'}_{\tau_0^m}\big[V^+_{\tau_1},\tau_1\big]\Big\|_{\cL^p_{\eta,\tau_0^m}(\dbP')}
	   	     =0.
	   \end{align*}
	Then,  
	   \begin{align*}
	   	 V_{\tau_0^m} \geq \lim_{n\rightarrow\infty} \dbE^{\dbP'}\Big[\cY^{\dbP'}_{\tau_0^m} [V_{\tau_1^n},\tau_1^n]\Big|\cF_{\tau_0^m} \Big]
	   	              = \dbE^{\dbP'}\Big[\cY^{\dbP'}_{\tau_0^m} \big[V^+_{\tau_1},\tau_1\big]\Big| \cF_{\tau_0^m}\Big], \quad \dbP\mbox{-a.s.},
	   \end{align*} 
	  and therefore
	   \begin{align*}
	   	 V^+_{\tau_0} = \lim_{m\rightarrow\infty} V_{\tau_0^m} 
	   	   \geq \lim_{m\rightarrow\infty}  \dbE^{\dbP'}\Big[\cY^{\dbP'}_{\tau_0^m} \big[V^+_{\tau_1},\tau_1\big] \Big| \cF_{\tau_0^m}\Big]
	   	   = \dbE^{\dbP'}\Big[\cY^{\dbP'}_{\tau_0} \big[V^+_{\tau_1},\tau_1\big] \Big|\cF_{\tau_0}^+\Big],
	   \end{align*}
	where the last equality is due to $\cY^{\dbP'} \big[V^+_{\tau_1},\tau_1\big] \in \cD^p_{\eta,\tau_1}(\dbP')$.
	
	\vspace{3mm}
	
	\noindent {\bf 3.} We next prove the reverse inequality. By the comparison result together with the last step of the present proof, we have
	  \begin{align} \label{V+DPPstep}
	  	\esssup^\mathbb{P}_{\mathbb{P}'\in \cP_\dbP^+(\tau_0)} \cY^{\mathbb{P}'}_{\tau_0}\big[V^+_{\tau_1}, \tau_1\big]
	  	   \geq\esssup^\mathbb{P}_{\mathbb{P}'\in \cP_\dbP^+(\tau_0)} \cY^{\mathbb{P}'}_{\tau_0}\big[\cY^{\dbP'}_{\tau_1}[\xi,\tau], \tau_1\big] 
	  	   = \esssup^\mathbb{P}_{\mathbb{P}'\in \cP_\dbP^+(\tau_0)} \cY^{\mathbb{P}'}_{\tau_0} [\xi,\tau].
	  \end{align}
	So it remains to prove that
	  \begin{align} \label{remainstoprove}
	  	 V^+_{\tau_0} \leq \esssup^\mathbb{P}_{\mathbb{P}'\in \cP_\dbP^+(\tau_0)} \cY^{\mathbb{P}'}_{\tau_0} [\xi,\tau].
	  \end{align}
	In the remainder of Step \textbf{3}, we omit the parameter $[\xi,\tau]$ without causing confusion. For any $\eta\in[-\mu,\rho)$, we obtain by the dominated convergence theorem together with the estimate \eqref{eq:uiV} of Theorem \ref{thm:dpp} that
	  \begin{align} \label{longineqV+}
	     e^{\eta\tau_0}V^+_{\tau_0} 
	       &= \lim_{n\rightarrow\infty}\dbE\big[e^{\eta\tau_0^n}V_{\tau_0^n} \big| \cF_{\tau_0}^+\big]
	        = \lim_{n\rightarrow\infty}\dbE\bigg[e^{\eta\tau_0^n}\esssup^\mathbb{P}_{\mathbb{P}'\in \cP_\dbP(\tau_0^n)}\dbE^{\dbP'}\big[\cY^{\mathbb{P}'}_{\tau^n_0} \big|\cF_{\tau_0^n}\big]\bigg| \cF_{\tau_0}^+\bigg] \nonumber \\
	       &\leq \lim_{n\rightarrow\infty} \dbE\bigg[e^{\eta\tau_0^n}\esssup^\mathbb{P}_{\mathbb{P}'\in \cP_\dbP^+(\tau_0)}\dbE^{\dbP'}\big[\cY^{\mathbb{P}'}_{\tau^n_0} \big|\cF_{\tau_0^n}\big]\bigg| \cF_{\tau_0}^+\bigg] \nonumber \\
	       &= \lim_{n\rightarrow\infty} \esssup^\dbP_{\dbP'\in\cP_ \dbP^+(\tau_0)}\dbE^{\dbP'}\Big[e^{\eta\tau_0^n}\cY^{\dbP'}_{\tau_0^n} \Big| \cF^+_{\tau_0} \Big] \nonumber \\
	       &= \lim_{n\rightarrow\infty}\esssup^\dbP_{\dbP'\in\cP_\dbP^+(\tau_0)}\bigg\{e^{\eta\tau_0}\cY^{\dbP'}_{\tau_0}
	           + \dbE^{\dbP'}\bigg[\int_{\tau_0}^{\tau_0^n}e^{\eta s} \big(f_s(\cY^{\dbP'}_s, \cZ^{\dbP'}_s)+\eta\cY^{\dbP'}_s\big)ds\bigg| \cF^+_{\tau_0} \bigg]\bigg\}.
	  \end{align}
	By the Lipschitz property of $F$ in Assumption \ref{assum:bsdeF}, we estimate that
	   \begin{align*}
	   	  \dbE^{\dbP'}\bigg[\int_{\tau_0}^{\tau_0^n}e^{\eta s} \Big(f_s\big(\cY^{\dbP'}_s, \cZ^{\dbP'}_s\big)+\eta\cY^{\dbP'}_s\Big)ds\bigg| \cF^+_{\tau_0} \bigg]
	   	   \leq C 2^{-n}\Big(\|\xi\|_{\cL^q_{\rho,\tau}(\cP_0)}+\big(\overline{F}^0_{\rho,q,\tau}\big)\Big),
	   \end{align*}
	   which provides \eqref{remainstoprove} in view of \eqref{longineqV+}.
	
	\vspace{3mm}
	
	\noindent {\bf 4.} It remains to prove that $V^+$ inherits the integrability property of $V$. 
	By Proposition \ref{prop:V+DPP},
	   \begin{align*}
	   	  e^{p\eta t}\big|V^+_t\big|^p 
	   	     = e^{p\eta t}\bigg|\esssup^\dbP_{\dbP'\in\cP_\dbP^+(t)}\cY^{\dbP'}_t[\xi,\tau]\bigg|^p 
	   	     = \esssup^\dbP_{\dbP'\in\cP_\dbP^+(t)}e^{p\eta t}\big|\cY^{\dbP'}_t[\xi,\tau]\big|^p. 
	   \end{align*}
	As in the proof of Theorem \ref{thm:bsde}, we may find for each $\dbP'$ a measure $\dbQ^{\dbP'}\in\cQ_L$, such that 
	   \begin{align*}
	   	 e^{\eta t}\big|\cY_t^{\dbP'}[\xi,\tau]\big| 
	   	   \leq \dbE^{\dbQ^{\dbP'}}\bigg[e^{\eta\tau}|\xi|+\int_0^\tau e^{\eta s}\big|f^0_s\big|ds\bigg|\cF_t^{+}\bigg]
	   \end{align*}
	Then, 
	   \begin{align*}
	   	  e^{p\eta t}\big|V^+_t\big|^p 
	   	   \leq C_p \esssup_{\dbP'\in\cP_\dbP^+(t)}^\dbP\esssup^{\dbP'}_{\dbQ\in\cQ_L(\dbP')}\dbE^{\dbQ}\bigg[e^{p\eta\tau}|\xi|^p 
	   	         + \bigg(\int_0^\tau e^{\eta s}|f^0_s|ds\bigg)^p\bigg|\cF_t^{+}\bigg].
	   \end{align*}
	   and therefore, 
	   \begin{align*}
	   	  \cE^{\cP_0}\bigg[\sup_{0\leq t\leq\tau}e^{p\eta t}\big|V^+_t\big|^p\bigg] 
	   	   \leq C_p \cE^{\cP_0}\bigg[\sup_{0\leq t\leq\tau}\cE^{\dbP,+}_{t}\bigg[e^{p\eta\tau}|\xi|^p+\bigg(\int_0^\tau e^{\eta s}\big|f^0_s\big|ds\bigg)^p\bigg]\bigg]<\infty,
	   \end{align*}
	   which induces the required result by Remark \ref{rem:esupassum}. 
\end{proof}

\subsection{Proof of Theorem \ref{mainthm}: existence}

\begin{proof}
\noindent {\bf 1.} We first prove the existence of a process $Z$ and a family $(U^\dbP)_{\dbP\in\cP_0}$ such that for all $p\in(1,q)$ and $\eta\in[-\mu,\rho)$, $(Z, U^\dbP)\in \cH^p_{\eta,\tau}\big(\dbP,\dbF^{+,\dbP}\big)\times \cU^p_{\eta,\tau}\big(\dbP,\dbF^{+,\dbP}\big)$, and $U^\dbP$ is a c\`adl\`ag $\dbP$-supermartingale, $[U^\dbP, X]=0$, and 
   \begin{equation} \label{repre}
       V^+_{t\wedge \tau} 
         = \xi + \int_{t\wedge \tau}^\tau F_s\big(V^+_s,  Z^\dbP_s,\widehat \sigma_s\big)ds
               -\int_{t\wedge \tau}^\tau \big(Z^\dbP_s \cdot dX_s + dU^\dbP_s\big),~~t\ge 0,~~\dbP\mbox{-a.s.}
   \end{equation}
Fix $\dbP\in\cP_0$. 
As for any $p<p'<q$, $V^+\in \cD^{p'}_{\eta,\tau}(\cP_0)$, by Proposition \ref{prop:V+DPP}, it follows from Theorem \ref{thm:rbsde} that there exists an unique solution $(Y^\dbP,Z^\dbP,U^\dbP)\in\cD^{p}_{\eta,\tau}(\dbP)\times\cH^{p}_{\eta,\tau}(\dbP)\times\cU^{p}_{\eta,\tau}(\dbP)$ to the RBSDE:
   \begin{equation} \label{semimartRBSDE}
     \begin{cases}
       \displaystyle Y^\dbP_{\cdot \wedge\tau} = \xi +\int_{\cdot\wedge\tau}^\tau f_s(Y^\dbP_s,Z^\dbP_s)ds - \int_{\cdot\wedge\tau}^\tau(Z^\dbP_s\cdot dX_s + dU^\dbP_s), \\
        \vspace{2mm}
       Y^\dbP \geq V^+, \quad \dbP\mbox{-a.s.} \\
        \vspace{2mm}
       \displaystyle \dbE^\dbP\bigg[\int_0^{t\wedge\tau} \big(1\wedge(Y^\dbP_{r-}-V^+_{r-})\big)dU_r\bigg] = 0, \quad \mbox{for all }~t\ge 0.
     \end{cases}
   \end{equation}
Following the same argument as in \cite{STZ12}, see also \cite[Lemma 3.6]{PTZ15}, we now verify that $Y^\dbP = V^+$, $\dbP$-a.s. 
Indeed, assume to the contrary that $2\varepsilon:=Y^\dbP_0-V_0^+>0$ (without loss of generality), so that 
   \begin{equation*}
   	  \tau_{\varepsilon}:=\inf\big\{t>0 : e^{\eta_t}Y^\dbP_t\leq e^{\eta_t}V_t^++\varepsilon\big\}>0, \quad \dbP\mbox{-a.s.} 
   \end{equation*}
Notice that $\tau_{\varepsilon}\le\tau$, as the two processes are equal to $\xi$ at time $\tau$. From the Skorokhod condition, it follows  that $U^\dbP$ is a martingale on $[0,\tau_\varepsilon]$, thus reducing the RBSDE to a BSDE on this time interval. Denoting as usual by $\cY^\dbP\big[V^+_{\tau_\varepsilon},\tau_\varepsilon\big]$, we obtain by standard BSDE techniques that, for some probability measure $\dbQ\in\cQ_L(\dbP)$,
   \begin{align*}
   	  Y^\dbP_0 &\leq \cY_0^\dbP\big[V^+_{\tau_\varepsilon},\tau_\varepsilon\big] 
   	                 + \dbE^\dbQ\big[e^{\eta\tau_\varepsilon}\big(Y^\dbP_{\tau_\varepsilon}-V^+_{\tau_\varepsilon}\big)\big]
   	            \leq \cY_0^\dbP\big[V^+_{\tau_\varepsilon},\tau_\varepsilon\big] + \varepsilon 
   	            \leq V^+_0 + \varepsilon,    
   \end{align*}
   where the last inequality follows from the crucial dynamic programming principle of Proposition \ref{prop:V+DPP}. 
By the definition of $\varepsilon$, the last inequality cannot happen. 

Consequently $Y^\dbP=V^+$. In particular, $V^+$ is a c\`adl\`ag semimartingale which would satisfy \eqref{repre} once we prove that the family $\{Z^\dbP\}_{\dbP\in\cP_0}$ may be aggregated. By Karandikar \cite{KAR95}, the quadratic covariation process $\langle V^+,X\rangle$ may be defined on $\dbR_+\times\Om$. Moreover, $\langle V^+,X\rangle$ is $\cP_0$-q.s.~continuous and hence is $\dbF^{+,\cP_0}$-predictable, or equivalently $\dbF^{\cP_0}$-predictable. Similar to the proof of \cite[Theorem 2.4]{Nut15}, we can define a universal $\dbF^{\cP_0}$-predictable process $Z$ by $ Z_tdt:=\widehat{a}^{-1}_t d\langle V^+,X\rangle_t$, and by comparing to the corresponding covariation under each $\dbP\in\cP_0$, we see that $Z=Z^\dbP$, $\dbP$-a.s.~for all $\dbP\in\cP_0$. This completes the proof of \eqref{repre}.

\vspace{3mm}

\noindent {\bf 2.} It remains to prove that the family of supermartingales $\big\{U^\dbP\big\}_{\dbP\in\cP_0}$ satisfies the minimality condition. 
Let $0\le s\le t$, $\dbP\in\cP_0$, $\dbP'\in\cP_\dbP^+(s\wedge\tau)$, and denote by $\big(\cY^{\dbP'},\cZ^{\dbP'},\cN^{\dbP'}\big)$ the solution of the BSDE with parameters $(F,\xi)$. 
Define $\delta Y:=V^+-\cY^{\dbP'}$, $\delta Z:=Z-\cZ^{\dbP'}$ and $\delta U:=U^{\dbP'}-\cN^{\dbP'}$. 
By It\^o's formula, we have for $\alpha\in [-\mu, \rho)$, 
  \begin{align*}
     & e^{\alpha(s\wedge\tau)}\delta Y_{s\wedge\tau}  \\
     & \quad = \int_{s\wedge\tau}^\tau e^{\alpha(r\wedge\tau)}
                \Big\{\big(F_r\big(V^+_r,Z_r,\widehat\sigma_r\big)-F_r\big(\cY^{\dbP'}_r,\cZ^{\dbP'}_r,\widehat\sigma_r\big)-\alpha \delta Y_r\big)dr 
                    -\delta Z_r \cdot dX_r - d\delta U_r\Big\} \\
     & \quad = \int_{s\wedge\tau}^\tau e^{\alpha(r\wedge\tau)}
          \Big\{\big(a_r^{\dbP'}\delta Y_r + b_r^{\dbP'}\cdot\widehat\sigma_r^{\top}\delta Z_r\big)dr- \big(\widehat\sigma_r^{\top}\delta Z_r\cdot dW_r + d\delta U_r\big)\Big\}, 
  \end{align*}
for some bounded processes $a^{\dbP'}$ and $b^{\dbP'}$, by Assumption \ref{assum:bsdeF}. This provides that 
  \begin{align*}
  	 \Gamma^{\dbP'}_{t\wedge\tau}e^{\alpha(t\wedge\tau)}\delta Y_{t\wedge\tau} - \Gamma^{\dbP'}_{s\wedge\tau}e^{\alpha(s\wedge\tau)}\delta Y_{s\wedge\tau}
  	   = \int_{s\wedge\tau}^{t\wedge\tau}\Gamma^{\dbP'}_r e^{\alpha r}\Big\{\big(\delta Y_r b_r^{\dbP'}+\widehat\sigma_r^{\top}\delta Z_r\big)\cdot dW_r + d\delta U_r\Big\},
  \end{align*}
where 
  \begin{align*}
  	 \Gamma_r^{\dbP'}:=\exp\bigg(\int_{s\wedge\tau}^r\Big(a_u^{\dbP'}-\frac{1}{2}\big|b_u^{\dbP'}\big|^2\Big)du + \int_{s\wedge\tau}^r b_u^{\dbP'}\cdot dW_u\bigg).
  \end{align*}
Recall that $\delta Y\ge 0$, and $U^{\dbP'}$ is a $\dbP'-$supermartingale with decomposition $U^{\dbP'}=N^{\dbP'}-K^{\dbP'}$, for some $\dbP'-$martingale $N^{\dbP'}$ and nondecreasing process $K^{\dbP'}$. 
Then, taking conditional expectation $\dbE^{\dbP'}_{s\wedge\tau}[.]:=\dbE^{\dbP'}\big[.\big| \cF_{s\wedge\tau}^+\big]$, we obtain
  \begin{align*}
 	 e^{|\alpha|t}\delta Y_{s\wedge\tau} 
 	  \geq \dbE^{\dbP'}_{s\wedge\tau}\bigg[\int_{s\wedge\tau}^{t\wedge\tau}\Gamma^{\dbP'}_r dK^{\dbP'}_r\bigg] 
 	  \geq \dbE^{\dbP'}_{s\wedge\tau}\bigg[\gamma_{s,t}^{\dbP'}\int_{s\wedge\tau}^{t\wedge\tau} dK^{\dbP'}_r\bigg], 
 	   ~~\mbox{with}~~ 
 	 \gamma_{s,t}^{\dbP'}:=\inf_{s\wedge\tau \leq r\leq t\wedge \tau}\Gamma^{\dbP'}_r, 
  \end{align*}
  and we then obtain by the Cauchy-Schwarz and H\"older's inequality: 
  \begin{align*}
     0 \leq \dbE^{\dbP'}_{s\wedge\tau}\bigg[\int_{s\wedge\tau}^{t\wedge\tau} \!\!\!-d\delta U_r\bigg]
       &= \dbE^{\dbP'}_{s\wedge\tau}\bigg[\int_{s\wedge\tau}^{t\wedge\tau} \!\!dK^{\dbP'}_r\bigg] \\
       &\leq \dbE^{\dbP'}_{s\wedge\tau}\bigg[\gamma^{\dbP'}_{s,t} \int_{s\wedge\tau}^{t\wedge\tau} dK^{\dbP'}_r\bigg]^\frac{1}{2}  
                 \big(C^{\dbP,p}_{s,t}\big)^{\frac{1}{2p}}
                 \dbE^{\dbP'}_{s\wedge\tau}\Big[\big(\gamma_{s,t}^{\dbP'}\big)^{-\widetilde{p}}\Big]^{\frac{1}{2\widetilde{p}}} \\
       &\leq C e^{\frac{1}{2}|\alpha|t} \big(C^{\dbP,p}_{s,t}\big)^{\frac{1}{2p}}\;\big(\delta Y_{s\wedge\tau}\big)^{\frac{1}{2}}, 
  \end{align*}
  where $p\in(1,q)$, $p^{-1}+\widetilde{p}^{-1}=1$, and
  \begin{align}  \label{CPpst}
  	 C^{\dbP,p}_{s,t} 
  	   := \esssup^{\dbP'}_{\dbP'\in\cP^+_\dbP(s\wedge\tau)}\dbE^{\dbP'}_{s\wedge\tau}\bigg[\bigg(\int_{s\wedge\tau}^{t\wedge\tau} dK^{\dbP'}_r\bigg)^p\bigg].
  \end{align}
Now, the minimality condition in Definition \ref{def:2bsde} follows immediately from Proposition \ref{prop:V+DPP}, provided that $C^{\dbP,p}_{s,t}<\infty$, $\dbP-$a.s. which we now prove.

The family 
   \begin{align*}
   	  \bigg\{\dbE^{\dbP'}\bigg[\bigg|\int^{t\wedge \tau}_{s\wedge \tau} dK^{\dbP'}_{s}\bigg|^p\,\bigg|\cF^+_{s\wedge \tau}\bigg], ~~\dbP'\in\cP^+_\dbP(t\wedge\tau)\bigg\}
   \end{align*}
   is directed upward.\footnote{This follows from the same argument as in \cite[Theorem 4.3]{STZ12}. 
For $\dbP_1,\dbP_2\in\cP_\dbP^+(s\wedge\tau)$, denote $\kappa^{\dbP_i}:=\dbE^{\dbP_i}_{s\wedge \tau}\big[\big|\int^{t\wedge\tau}_{s\wedge\tau} dK^{\dbP'}_{r}\big|^p\big]$, and $A:=\{\kappa^{\dbP_1}>\kappa^{\dbP_2}\}$, and define $E\in\cF\longmapsto\dbP_3(E):= \dbP_1(A\cap E) + \dbP_2(A^c\cap E)$; clearly, $\dbP_3\in\cP^+_{\mathbb{P}}(t\wedge\tau)$, and $\kappa^{\dbP_3}=\kappa^{\dbP_1}\vee\kappa^{\dbP_2}$.}
Then, it follows from \cite[Proposition V-1-1]{Nev75} that the $\esssup$ in \eqref{CPpst} is attained as an increasing limit along some sequence $\{\dbP_n\}_{n\in\dbN}\subseteq\cP^+_\dbP(s\wedge\tau)$. By the monotone convergence theorem, we see that 
  \begin{align*}
  	 \dbE^\dbP\big[C^{\dbP,p}_{s,t}\big] 
  	   = \lim_{n\to\infty}\uparrow\dbE^\dbP\bigg[\dbE^{\dbP_n}_{s\wedge \tau} \bigg[\bigg(\int^{t\wedge\tau}_{s\wedge \tau} dK^{\mathbb{P}_n}_r\bigg)^p\bigg]\bigg]
  	   \leq C\big\| U \big\|_{\cU^p_{\eta,\tau}(\mathcal{P}_0)}^p
  	   < \infty,
  \end{align*}
  by Proposition \ref{EstimationRBSDE} together with the fact that $\|V^+\|_{\cD^{p'}_{\eta,\tau}(\mathcal{P}_0)}<\infty$ due to the wellposedness of the RBSDE. 
Hence, $C^{\dbP,p}_{s,t}<\infty$, $\dbP$-a.s.
\end{proof}

\section{Connection to a fully nonlinear elliptic path-dependent PDE}

In this section, we present an example of pricing under volatility uncertainty from the so-called robust finance.  
The canonical process $X$ represents the price process a financial asset. 
The objective is the hedging of the derivative security defined by the payoff $\xi(X)$ at some maturity ${\rm H}_Q$ defined as the exiting time from some set $Q$. 

In contrast with the standard approach, we assume that the volatility is uncertain. 
The probability space $(\Omega, \cF)$ is endowed with a family of probability measures $\cP^{\rm UVM}$, 
  \begin{align*}
  	\cP^{\rm UVM}:=\left\{\dbP: X \mbox{ is a continuous $\dbP$-martingale and } \frac{d\langle X\rangle_t}{dt}\in\big[\underline{\sigma}^2,\overline{\sigma}^2\big],~\dbP\mbox{-a.s.} \right\}.
  \end{align*}
We assume that $\underline{\sigma}>0$. 
The superhedging problem under volatility uncertainty was initially formulated by Denis and Martini \cite{DM06} and Neufeld and Nutz \cite{NN13}. 
Their superhedging result expresses the cost of robust superhedging as 
  \begin{align} \label{eq:defu0}
  	 u_0 := \cE^{\cP^{\rm UVM}}\left[e^{-r{\rm H}_Q}\xi(X_{{\rm H}_Q\wedge\cdot})\right], 
  \end{align}
  where $r>0$ is the discount rate, $Q$ is a bounded open convex subset of $\dbR^d$ containing ${\bf 0}$, and  
  \begin{equation*}
  	 {\rm H}_Q:=\inf\{t\geq 0:\omega_t\notin Q\}. 
  \end{equation*}

Since the family $\cP^{\rm UVM}$ is saturated, we consider the following saturated 2BSDE
  \begin{align} \label{eq:2BSDEI_example}
     Y_{t\wedge{\rm H}_Q} = \xi - \int_{t\wedge{\rm H}_Q}^{{\rm H}_Q} rY_sds - \int_{t\wedge{\rm H}_Q}^{{\rm H}_Q}Z_s\cdot dX_s + \int_{t\wedge{\rm H}_Q}^{{\rm H}_Q} dK_s, \quad \cP^{\rm UVM}\mbox{-q.s.}
  \end{align}

\vspace{2mm}

\begin{proposition}
	Let $(Y,Z)$ be the solution of the 2BSDE \eqref{eq:2BSDEI_example} above. 
	Then, 
	 \begin{align*}
	 	u_0 = \sup_{\dbP\in\cP^{\rm UVM}}\dbE^{\dbP}[Y_0]. 
	 \end{align*}
\end{proposition}

\begin{proof}
	By Proposition \ref{prop:representation}, the solution of the 2BSDE \eqref{eq:2BSDEI_example} can be represented as the supremum of the solution of BSDEs
	  \begin{align*}
	  	Y_0 = \esssup_{\dbP'\in\cP^+_\dbP(0)}^\dbP\cY^{\dbP'}_0, \quad \dbP\mbox{-a.s. for all }\dbP\in\cP^{\rm UVM}, 
	  \end{align*}
	  where for all $\dbP\in\cP^{\rm UVM}$ is the solution of the following BSDE under $\dbP$
	  \begin{equation*}
	  	\cY_0^\dbP = \xi - \int_0^{{\rm H}_Q} r\cY^\dbP_sds - \int_0^{{\rm H}_Q}\cZ_s^\dbP\cdot dX_s, \quad \dbP\mbox{-a.s.}
	  \end{equation*}
	By It\^o's formula, we obtain that 
	  \begin{align*}
	  	e^{-r{\rm H}_Q}\cY^\dbP_{{\rm H}_Q} = \cY_0^\dbP + \int_0^{{\rm H}_Q}e^{-rs}\cZ_s^\dbP\cdot dX_s, \quad \dbP\mbox{-a.s.}
	  \end{align*}
	Taking conditional expectation implies 
	  \begin{equation*}
	  	\cY^\dbP_0 = \dbE^{\dbP}\left[e^{-r{\rm H}_Q}\xi\big|\cF_0^+\right], \quad \dbP\mbox{-a.s.}, 
	  \end{equation*}
	  therefore,
	  \begin{equation*}
	  	\sup_{\dbP\in\cP^{\rm UVM}}\dbE^{\dbP}[Y_0] 
	  	  = \sup_{\dbP\in\cP^{\rm UVM}}\dbE^{\dbP}\left[\esssup^\dbP_{\dbP'\in\cP^+_\dbP(0)}\dbE^{\dbP'}\left[e^{-r{\rm H}_Q}\xi\big|\cF_0^+\right]\right] 
	  	  = u_0,
	  \end{equation*}
	  where the last equality follows from the fact, which is easy to show, that the family
	  \begin{equation*}
	  	\left\{\dbE^{\dbP'}\left[e^{-r{\rm H}_Q}\xi\big|\cF_0^+\right],~\dbP'\in\cP^+_\dbP(0)\right\}
	  \end{equation*} 
	  is upward directed. 
\end{proof}

\vspace{2mm}

Through the robust hedging problem, we obtain the connection between the random horizon 2BSDE above and the elliptic path-dependent PDE below. 
Under the assumption that $\xi:\Omega\to\dbR$ is bounded uniformly continuous on the boundary $\partial \cQ$, by \cite[Proposition 7.2]{Ren16}, the value function is also a viscosity solution to the following elliptic path-dependent HJB-equation
$$ ru - \sup_{\gamma\in[\underline{\sigma},\overline{\sigma}]}\frac{1}{2}\gamma^2\partial^2_{\omega\omega} u = 0 \quad \mbox{on } \cQ\quad \mbox{and} \quad u=\xi\quad \mbox{on }\partial\cQ, $$
where 
$$ \cQ:= \{\omega\in\Omega^e: \omega_t\in Q, ~~\forall t\geq 0\}\quad \mbox{and}\quad \Omega^e:=\{\omega\in\Omega:\omega =\omega_{t\wedge\cdot} \mbox{ for some }t\geq 0\}. $$
We refer the interested reader to \cite{Ren16} and the references therein for more details about the theory of path-dependent PDE. 

\vspace{2mm}

\begin{remark}
	Here we connect the random horizon 2BSDE to the elliptic path-dependent PDE via the value function of the stochastic control problem \eqref{eq:defu0}. 
	In order to verify that the value function is a viscosity solution to the path-dependent PDE, one need first prove it is uniformly continuous (according to the definition in \cite{Ren16}). 
	This regularity requirement is closely related to the generator and the boundary of the equation. 
	In the example above, we address the most simple case in which the generator is uniformly elliptic and the boundary is convex. 
	In such setting, one may prove the desired uniform continuity, using an elementary argument of which the key ingredient is to verify the uniform continuity of 
	  \begin{equation*}
	     x\mapsto {\rm H}^x_Q:=\inf\{t\geq 0: x+ \omega_t\notin Q\}
	  \end{equation*}
	  under a nonlinear expectation. 
	For more details, see \cite[Proposition 8.1]{Ren16}. 
\end{remark}




\ACKNO{We would like to thank two anonymous referees for their careful reading of our manuscript and their remarks.}

\end{document}